\def\C{{\mathbf C}}
\def\R{{\mathbf R}}
\def\Z{{\mathbf Z}}
\def\Q{{\mathbf Q}}
\def\A{{\mathbf A}}
\def\Qbar{{\overline{\mathbf{Q}}}}
\newtheorem{theorem}{Theorem}[section]
\newtheorem{thm}[theorem]{Theorem}
\newtheorem{mthm}[theorem]{Main Theorem}
\newtheorem{lemma}[theorem]{Lemma}
\newtheorem{lem}[theorem]{Lemma}
\newtheorem{proposition}[theorem]{Proposition}
\newtheorem{prop}[theorem]{Proposition}
\newtheorem{corollary}[theorem]{Corollary}
\newtheorem{coro}[theorem]{Corollary}
\theoremstyle{definition}
\newtheorem{definition}[theorem]{Definition}
\newtheorem{defin}[theorem]{Definition}
\newtheorem{hypothesis}[theorem]{Hypothesis}
\theoremstyle{remark}
\newtheorem{remark}[theorem]{Remark}
\newcommand{\mm}[4]{\(\begin{smallmatrix} #1 & #2\\ #3 & #4\end{smallmatrix}\)}
\newcommand{\mb}[4]{\(\begin{array}{cc} #1 & #2\\ #3 & #4\end{array}\)}
\newcommand{\Hom}{\operatorname{Hom}}
\DeclareMathOperator{\gal}{Gal}
\DeclareMathOperator{\nm}{Nm}
\DeclareMathOperator{\ord}{ord}
\DeclareMathOperator{\tr}{Tr}
\DeclareMathOperator{\ind}{Ind}
\DeclareMathOperator{\GU}{GU}
\DeclareMathOperator{\U}{U}
\DeclareMathOperator{\SU}{SU}
\DeclareMathOperator{\SL}{SL}
\DeclareMathOperator{\GL}{GL}
\DeclareMathOperator{\PGL}{PGL}
\DeclareMathOperator{\charf}{char}
\DeclareMathOperator{\res}{Res}
\DeclareMathOperator{\diag}{diag}
\renewcommand{\(}{\left(}
\renewcommand{\)}{\right)}
\newcommand{\set}[1]{\left\lbrace#1\right\rbrace}
\newcommand{\gen}[1]{\left\langle #1\right\rangle}
\newcommand{\ol}[1]{\overline{#1}}
\newcommand{\inj}{\hookrightarrow}
\newcommand{\iso}{\stackrel{\sim}{\rightarrow}}
\begin{document}

\title{A class number formula for Picard modular surfaces}

\author{Aaron Pollack}
\address{Department of Mathematics, Institute for Advanced Study, Princeton, NJ 08540, USA}\email{aaronjp@math.ias.edu}
\author{Shrenik Shah}
\address{Department of Mathematics, Columbia University, New York, NY 10027, USA}\email{snshah@math.columbia.edu}

\subjclass[2010]{11F67 (primary); 11G18, 11G40, 14G35, 19E15 (secondary)}

\thanks{A.P.\ has been partially supported by NSF grant DMS-1401858 and by the Schmidt fund at the Institute for Advanced Study.  S.S.\ has been supported by NSF grant DMS-1401967.}

\begin{abstract}
We investigate arithmetic aspects of the middle degree cohomology of compactified Picard modular surfaces $X$ attached to the unitary similitude group $\mathrm{GU}(2,1)$ for an imaginary quadratic extension $E/\mathbf{Q}$.  We construct new Beilinson--Flach classes on $X$ and compute their Archimedean regulator.  We obtain a special value formula involving a non-critical $L$-value of the degree six standard $L$-function, a Whittaker period, and the regulator.  This provides evidence for Beilinson's conjecture in this setting.
\end{abstract}
\maketitle

\tableofcontents

\section{Introduction}

Motivated by Beilinson's Archimedean regulator conjecture, we study the arithmetic of the middle degree cohomology of the canonically compactified Picard modular surfaces attached to quasi-split unitary similitude groups of signature $(2,1)$.  The $L$-function associated to this cohomology has ``trivial zeroes'' at integer points to the left of the line of symmetry that can be computed in terms of the $\Gamma$-factors of the completed $L$-function.  Beilinson conjectured that the order of vanishing at these points measures the size of an integral space inside of motivic cohomology, and gave a conjectural formula for the leading term of the Taylor expansion at these values.  His conjecture represents a far-reaching generalization of the class number formula for the residue of the Dedekind $\zeta$-function at $s=1$.

To state his conjecture precisely, Beilinson \cite{beilinson} constructed a regulator map from motivic cohomology -- represented here concretely using Bloch's higher Chow groups -- to Deligne cohomology, which in our setting is closely related to the dual of the space of $d$-closed smooth differential $(1,1)$-forms on the surface modulo coboundaries.  He predicted that classes in motivic cohomology generate a $\mathbf{Q}$-structure in Deligne cohomology via the regulator.

Deligne cohomology carries a natural $\mathbf{R}^\times/\mathbf{Q}^\times$-valued volume form coming from comparison to Betti and de Rham cohomology.  Beilinson further conjectured that the covolume of the motivic $\mathbf{Q}$-structure with respect to this form is equal to the leading term of the $L$-function of the motive at a corresponding integer point up to $\mathbf{Q}^\times$.  In our case, the relevant motivic $L$-function has been decomposed into a product of automorphic $L$-functions in \cite[p.\ 291, Theorem A]{pms}.  The main contribution comes from degree 6 standard $L$-functions of certain cuspidal automorphic representations on the group $\GU(2,1)$.

We study this conjecture in our case by constructing suitable classes in the higher Chow group using the theory of Siegel units and explicitly computing the pairing of their regulator with certain algebraic differential $(1,1)$-forms.  A weak form of our main theorem may be briefly described as follows.  More precise definitions and statements may be found later in this introduction.
\begin{mthm} \label{mthm:main}
Let $E/\mathbf{Q}$ be a quadratic extension and let $\GU(2,1)$ be the unitary similitude group of a Hermitian space of signature $(2,1)$ over $E$.  Let $\pi$ be a generic cuspidal automorphic representation on $\GU(2,1)$ that contributes to the middle-degree cohomology of the compactified Picard modular surface $X_{/E}$ and whose central character $\omega_\pi$ is trivial at infinity.

Assume that the twisted base change of $\pi$ to $\mathrm{Res}_\mathbf{Q}^E (\GL_3 \times \mathbf{G}_m)$ is cuspidal.  Let $\nu$ be an even Dirichlet character (viewed as an automorphic character of $\mathbf{G}_{m/\mathbf{Q}}$) such that $\nu^2\cdot(\omega_\pi|_{Z(\mathbf{A})})$ is non-trivial, where $Z \subseteq \GU(2,1)$ is a diagonally embedded $\mathbf{G}_{m/\mathbf{Q}}$.  Let $E(\nu)$ denote the extension generated by the values of $\nu$.  Then after possibly increasing the level of $X$, there exists a class $\xi$ in the motivic cohomology group $H_\mathcal{M}^3(X_{/E(\nu)},\ol{\mathbf{Q}}(2))$ and an algebraic differential $(1,1)$-form $\omega$ associated to $\pi$ such that
\begin{enumerate}
	\item the Archimedean regulator pairing $\langle \xi,\omega \rangle \ne 0$ and
	\item moreover, there exists an explicit constant $W(\pi) \in \mathbf{C}^\times$ such that
	\[\langle \xi,\omega \rangle =_{\ol{\mathbf{Q}}^\times} W(\pi) L'(\pi \times \nu, \mathrm{Std},0),\]
	where $L'(\pi \times \nu, \mathrm{Std},0)$ is the derivative at 0 of the standard $\nu$-twisted degree 6 $L$-function of $\pi$ and $=_{\ol{\mathbf{Q}}^\times}$ indicates equality up to an algebraic multiple.
\end{enumerate}
\end{mthm}

\subsection{Motivation}

One primary motivation is to obtain evidence in support of Beilinson's conjecture.  Beilinson predicts a formula for the leading term of the Taylor expansion of a motivic $L$-function at an integer point to the left of the center of symmetry with respect to the functional equation.  When this point is exactly $\frac{1}{2}$ to the left of the center, one calls the value near-central.  The conjecture takes a slightly different shape at such values -- it includes a contribution from cycles.  In particular, if $X$ is a smooth proper algebraic surface defined over $\mathbf{Q}$, Beilinson defines a morphism
\[\mathrm{Reg}_X: H_\mathcal{M}^3(X_{/\mathbf{Q}},\mathbf{Q}(2)) \oplus \mathrm{N}^1(X) \rightarrow H_\mathcal{D}^3(X_{/\mathbf{R}},\mathbf{R}(2)),\]
where $\mathrm{N}^1(X)$ is the group of $\mathbf{Q}$-linear combinations of codimension 1 algebraic cycles on $X$ modulo homological equivalence \cite[\S 5]{schneider}.

The group $\mathrm{N}^1(X)$ accounts for the portion of the middle-degree cohomology whose $L$-function has poles $\frac{1}{2}$ to the right of the center of symmetry.  In the case of the Picard modular surface, this contribution comes exactly from endoscopic generic cuspidal automorphic representations contributing to middle-degree cohomology.  Blasius and Rogawski \cite{br} have addressed this part of Beilinson's conjecture, so we focus here on the contribution from non-endoscopic forms.  However, in a sense described in Section \ref{subsubsec:intrononvanish}, endoscopic forms give rise to a local obstruction to non-vanishing of the regulator pairing.

Another motivation is to obtain new Beilinson-Flach classes in the sense of the recent line of works of Kings, Lei, Loeffler, Skinner, and Zerbes constructing Euler systems to study motivic $L$-functions and Iwasawa theory \cite{llz,llz2,klz,lz,lsz}.  Our classes -- the $\xi$ in Main Theorem \ref{mthm:main} -- should be the bases of Euler systems that may be used to study the motives appearing in the middle degree cohomology of Picard modular surfaces.  Part (1) of this theorem ensures that the classes forming these Euler systems will be non-zero in motivic cohomology.

\subsection{Relation to previous work}

Beilinson, following a computation of Bloch, proved a class number formula for the products of modular curves \cite{beilinson}.  He also used a cup product of two Siegel units to prove a similar formula for the modular curve itself \cite{beilinson2}.  These works have been refined and extended by many authors, such as Schappacher--Scholl \cite{ss}, Brunault \cite{brunault}, and Brunault--Chida \cite{brunaultchida}.

Ramakrishnan \cite{ramakrishnan,ramakrishnan2} and Kings \cite{kings} studied the case of Hilbert modular surfaces associated to $\mathrm{Res}_\mathbf{Q}^F \GL_2$ for a quadratic extension $F/\mathbf{Q}$.  More recently, Lemma \cite{lemma,lemma2} considered the case of Siegel modular threefolds.

Beilinson \cite{beilinson,beilinson2} and Schappacher--Scholl \cite{ss} studied the question of extending classes to the compactified variety, as did Ramakrishnan \cite{ramakrishnan2}.  This has typically been achieved only when working with the cohomology of the variety itself rather than of an automorphic vector bundle.  We are in the same setting, and have developed an extension to the boundary in our case as well.  One part of our strategy -- the application of a suitable Hecke operator to eliminate the contribution of the boundary -- is based on an old idea of Manin and Drinfel'd.  The other part, which addresses a difficulty not present in previous cases (explained in Section \ref{subsubsec:introgeom}), uses a computation in coordinates on the Picard modular surface.  We were guided in this by work of Cogdell \cite{cogdell}.

One entirely new feature in the present work is that the non-vanishing of the regulator becomes more subtle for $\GU(2,1)$ than for $\GL_2 \times \GL_2$ or $\mathrm{Res}_\mathbf{Q}^F \GL_2$.  This issue is connected to a distinction question, and forces us to add a local hypothesis in Theorem \ref{coarseThm} below.

Our construction is based on an integral of Rankin-Selberg type that was first studied by Gelbart and Piatetski-Shapiro \cite{gelbartPS}.  Jacquet \cite{jacquet2} fully established all the local properties of the Rankin-Selberg constructions on $\GL_2 \times \GL_2$ and $\mathrm{Res}_\mathbf{Q}^F \GL_2$.  By contrast, less is known about the Gelbart--Piatetski-Shapiro construction \cite{gelbartPS}.  However, many essential properties have been established by Baruch \cite{baruch} and Miyauchi \cite{my1,my2,my3,my4}, which we leverage in Sections \ref{subsec:locrationality} and \ref{subsec:finer} below.  It turns out that although the Bump--Friedberg integral \cite{bf} yields a different local $L$-function for $\GL_3$ than ours at split places, the detailed work of Matringe \cite{matringe,matringe2} and a computation of Miyauchi--Yamauchi \cite{my} may be used to deduce facts about our construction as well.

\subsection{Algebraic differential forms} \label{subsec:algdiff}

Fix a quadratic extension $E$ of $\mathbf{Q}$, and let $-D \in \mathbf{Z}_{<0}$ denote the corresponding fundamental discriminant.  Let $G = \GU(2,1)$ be the quasi-split unitary similitude group over $\mathbf{Q}$ stabilizing the Hermitian form
\[J = \(\begin{array}{ccc}  & & \delta^{-1} \\ & 1 & \\ -\delta^{-1} & &  \end{array}\)\]
of signature $(2,1)$ on an $E$-vector space $V$ of dimension 3.  Here, $\delta = \sqrt{-D}$.  We write $\mu: G \rightarrow \mathbf{G}_m$ for the similitude map.  We let $\mathbf{A}$ denote the adeles of $\mathbf{Q}$ and write $\mathbf{A}_f$ for the finite adeles.  Let $X$ be the space of negative lines in $V \otimes_E \mathbf{C}$ and write $K_f$ for an open compact subgroup of $G(\mathbf{A}_f)$.  The main geometric object of interest is the Shimura variety $S_{K_f} = G(\mathbf{Q}) \backslash X \times G(\mathbf{A}_f) / K_f$, which is an algebraic surface defined over $E$.  (See Section \ref{sec:higherchow} for more details.)

The cohomology of $S_{K_f}$ in degree 2 is a motive.  Its \'etale realization has been determined in \cite[pp.\ 291-3, Theorems A and B]{pms}.  The main contribution to the middle-degree cohomology -- that of Theorem B.(1) of {\it loc.\ cit.}\ -- is parametrized by $L$-packets that contain a unique generic cuspidal automorphic representation $\pi$ of $G(\mathbf{A})$.  If one factors $\pi = \pi_\infty \otimes \pi_f$, the relevant automorphic representations have $\pi_\infty$ a discrete series representation of Blattner parameter $(1,-1)$.  Blasius, Harris, and Ramakrishnan \cite{bhr} show that $\pi_f$ may be concretely realized inside the direct limit of interior coherent cohomology groups of the Shimura varieties $S_{K_f}$.  This yields a representation of $G(\mathbf{A}_f)$ on a $\ol{\mathbf{Q}}$-vector space $V_{\ol{\mathbf{Q}}}$ with the property that $V_{\ol{\mathbf{Q}}} \otimes_{\ol{\mathbf{Q}}} \mathbf{C} \cong \pi_f$.

One can naturally associate harmonic differential $(1,1)$-forms on $S_{K_f}$ to such a $\pi$.  Once a certain choice at infinity has been made (in Definition \ref{defin:v0}), and using our concrete realization of these forms in coherent cohomology, these differential forms are naturally associated to vectors $\varphi_f$ in the space of $\pi_f$.  We denote them by $\omega_{\varphi_f}$, and we say that such a form is algebraic if $\varphi_f \in V_{\ol{\mathbf{Q}}}$.

\subsection{Higher Chow classes} \label{subsec:introhigherchow}

The stabilizer of the vector ${}^t(0,1,0) \in V$ is the group $\GU(1,1) \boxtimes \GU(1)$, where $\boxtimes$ denotes the subgroup where the similitudes are equal, and $\GU(1,1)$ is the unitary similitude group for the form $J_2 = \mm{}{-\delta^{-1}}{\delta^{-1}}{}$.  Write $H \subseteq \GU(1,1) \boxtimes \GU(1)$ for the subgroup whose first factor additionally satisfies $\det=\mu$.  Then $H$ is identified with $\GL_2 \boxtimes \mathrm{Res}_\mathbf{Q}^E \mathbf{G}_m$, where $\boxtimes$ now means that $\det(h_1)=\mathrm{Nm}_\mathbf{Q}^E(h_2)$.  (Here, $h_i$ denotes the $i^\mathrm{th}$ projection.)

Let $K_{H,f} = K_f \cap H(\mathbf{A}_f)$.  We have a morphism from the Shimura variety $C_{K_{H,f}}$ of $H$ to $S_{K_f}$.  There is a natural projection $H \rightarrow \GL_2$ to its first factor, which yields a morphism of the Shimura variety $C_{K_{H,f}}$ to a classical modular curve $X_{K_{2,f}}$, where $K_{2,f} = K_{H,f} \cap \GL_2(\mathbf{A}_f)$.  We have a large source of invertible holomorphic functions on $X_{K_{2,f}}$ -- the Siegel units.  We obtain functions on $C_{K_{H,f}}$ by pullback, which we again call Siegel units.

Beilinson's conjecture is stated for the motives associated to a projective smooth variety.  For this reason, we consider the canonical smooth compactification $\ol{S}_{K_f}$ of $S_{K_f}$.  The variety $S_{K_f}$ has a minimal compactification $S_{K_f}^\mathrm{BB}$ -- the Baily-Borel compactification -- obtained by adjoining a finite set of cusps to $S_{K_f}$.  The variety $\ol{S}_{K_f}$ maps to $S_{K_f}^\mathrm{BB}$, and the preimages of the cusps are CM elliptic curves.

Using the Siegel units on $C_{K_{H,f}}$ and the theory of meromorphic functions on CM elliptic curves, we establish in Section \ref{sec:higherchow} a way to produce elements of higher Chow groups.  Roughly speaking, these are represented by formal sums $\sum_i (C_i,u_i)$ of pairs of curves $C_i \subseteq \ol{S}_{K_f}$ and rational functions $u_i$ on $C_i$ such that $\sum_i \mathrm{div}(u_i) = 0$.  This latter condition is a cancellation of poles and zeroes, so it can be thought of as saying that $\sum_i (C_i,u_i)$ defines a higher ``unit'' on $\ol{S}_{K_f}$.

If $\omega$ is a $d$-closed $(1,1)$-form on $\ol{S}_{K_f}$, we define the regulator pairing
\[\langle \sum_i (C_i,u_i) , \omega \rangle = \sum_i \int_{C^\mathrm{reg}_i} \log|u_i|\omega,\]
where $C_i^\mathrm{reg}$ is the curve $C_i$ with singular points removed.  (This differs by a factor of $2\pi i$ from some conventions in the literature.)  By letting $\omega$ be an algebraic differential form as defined above (extended by 0 to the boundary), it makes sense to ask for algebraicity properties of this pairing.

We remark that the volume (valued in $\mathbf{R}^\times/\ol{\mathbf{Q}}^\times$) induced by the algebraic structure $V_{\ol{\mathbf{Q}}}$ is not the same as the aforementioned canonical volume on Deligne cohomology.  This introduces an extra period, which should conjecturally compare between these volumes.  In this paper, it will take the form of a Whittaker period, but we will not prove the relationship between these volumes.

\subsection{The global integral} \label{subsec:globalintro}

Write $B_2 \subseteq \GL_2$ for the upper-triangular Borel subgroup, write $T_2$ for the diagonal torus of $\GL_2$, and write $V_2$ for the natural 2-dimensional representation of $\GL_2$ on row vectors.  Let $E(g,\Phi,\nu,s) = \sum_{\gamma \in B(\mathbf{Q}) \backslash \GL_2(\mathbf{Q})} f(\gamma g,\Phi,\nu,s)$ denote the usual real-analytic Eisenstein series on $\GL_2$, where
\[f(g,\Phi,\nu,s) = \nu_1(\det(g))|\det(g)|^{s} \int_{\GL_1(\A)}{\Phi(t(0,1)g)(\nu_1\nu_2^{-1})(t)|t|^{2s}\,dt},\]
$\Phi$ is a Schwartz-Bruhat function on $V_2(\mathbf{A}^2)$, $\nu:T_2(\mathbf{Q}) \backslash T_2(\mathbf{A}) \rightarrow \mathbf{C}^\times$ is an automorphic character trivial at infinity, and $s$ is a complex parameter.  We more concretely define $\nu_1$ and $\nu_2$ as the Hecke characters such that $\nu(\diag(t_1,t_2))=\nu_1(t_1)\nu_2(t_2)$.  We regard $E(g,\Phi,\nu,s)$ as a function of $H$ using the first projection.  For our purposes here, we will always assume that $\Phi = \Phi_\infty \otimes \Phi_f$ for $\Phi_\infty(x,y)=e^{-\pi(x^2+y^2)}$.  The Kronecker limit formula (see Section \ref{sec:klf}) then shows that for suitable $\Phi_f$, $E(g,\Phi,\nu,0)$ is the logarithm of a Siegel unit.

This reduces our computation of the regulator pairing to understanding an integral of Rankin-Selberg type: for $\varphi$ an automorphic form in the space of $\pi$, we consider
\[I(\Phi,\varphi,\nu,s) = \int_{H(\mathbf{Q})Z(\mathbf{A}) \backslash H(\mathbf{A})} \varphi(g)E(g,\Phi,\nu,s)dg.\]
Here, $Z$ is the group over $\mathbf{Q}$ of diagonal matrices $Z(R) = \set{\diag(z,z,z): z \in R^\times} \cong \mathbf{G}_{m/\mathbf{Q}}$, which is the intersection of the centers of $G$ and $H$.  This is a modified version of the integral of Gelbart and Piatetski-Shapiro \cite{gelbartPS}.

In the results below, we will only allow $\varphi_f$ (rather than $\varphi$) to vary and select $\varphi$ such that $I(\Phi,\varphi,\nu,s)$ is connected to the integral of an automorphic differential $(1,1)$-form as described above.  (See Section \ref{subsec:diff} for more details.)  For this introduction, we emphasize this by writing $I(\Phi,\varphi_f,\nu,s)$ for the resulting integral.

\subsection{Local integrals and $L$-factors} \label{subsec:localintro}

An unfolding computation in \cite{gelbartPS} shows that for factorizable data, $I(\Phi,\varphi,\nu,s)$ factorizes into local integrals $I_p(\Phi_p,\varphi_p,\nu_p,s)$ involving Whittaker functions.  The unramified computation in {\it loc.\ cit.}\ shows that when all the data is unramified, $I_p(\Phi_p,\varphi_p,\nu_p,s)$ computes $L_p^\mathrm{L}(\pi_p\times\nu_{1,p},\mathrm{Std},s)$, the degree 6 standard $\nu_1$-twisted Langlands $L$-function of $\pi_p$.  We extend this statement in Section \ref{subsec:ramifiedlocal} to the case where $p$ is ramified in $E$ but $\pi_p$ has a vector fixed by $G(\mathbf{Z}_p)$, and write $L_p^\mathrm{L}(\pi_p\times\nu_{1,p},\mathrm{Std},s)$ again for the $L$-factor in this case.  Here, the exponent $\mathrm{L}$ is written to emphasize that this $L$-function is attached to the local Langlands parameter of $\pi_p \times \nu_{1,p}$ composed with the standard representation of its $L$-group, which is given in Definition \ref{defin:stdl} below.

Although the local Langlands parameter has not been constructed for all groups, one can in principle use the work of Rogawski \cite{rogawski} to make a definition of such a parameter for any $\pi_p$ and $\nu_{1,p}$ when $p$ is inert or ramified, but we will not do this or use such $L$-factors below.  On the other hand, at places where $p$ is split, the group $\GU(2,1)$ may be identified with $\GL_3 \times \mathbf{G}_m$.  This group has well-defined Langlands parameters for any $\pi_p$ and $\nu_{1,p}$ and we again write $L_p^\mathrm{L}(\pi_p\times\nu_{1,p},\mathrm{Std},s)$ for the degree 6 $L$-factor corresponding to the aforementioned standard representation of the $L$-group.  (Note that this differs from the usual standard representation of ${}^L\!\GL_3$, which has degree 3.)

We take the following alternative approach, which allows us to concretely define $L$-factors at all places.  By a result of Baruch \cite{baruch}, the local integrals $I_p(\Phi_p,\varphi_p,\nu_p,s)$ have the inverse of a unique polynomial $P(p^{-s})$ with constant term 1 as their GCD (in a suitable sense) as $\Phi_p$ and $\varphi_p$ vary.  We write $L_p^\mathrm{G}(\pi_p\times\nu_{1,p},\mathrm{Std},s)$ for this GCD whether or not it is equal to the Langlands $L$-factor, and write $L(\pi\times\nu_1,\mathrm{Std},s)=\prod_{p \textrm{ prime}}L_p^\mathrm{G}(\pi_p\times\nu_{1,p},\mathrm{Std},s)$.  (See Section \ref{subsec:overviewproof} later in this introduction for further discussion.)  For every result other than the fine regulator formula (Theorem \ref{thm:finecnf}) we will use this definition for the global $L$-function.

Note that if $p$ is inert or ramified and $\pi_p$ has a vector fixed by $G(\mathbf{Z}_p)$, or if $p$ is split, we have given two possibly different definitions of the local $L$-factor.  We apply results of Matringe \cite{matringe} and Miyauchi \cite{my4} to show in Theorem \ref{thm:unramgcd} that if $p$ is inert or split and $\pi_p$ is unramified, then $L_p^\mathrm{L}(\pi_p\times\nu_{1,p},\mathrm{Std},s)=L_p^\mathrm{G}(\pi_p\times\nu_{1,p},\mathrm{Std},s)$.  The $L$-factors are also the same (and equal to $1$) if $p$ is split and $\pi_p$ is supercuspidal.  In particular, the two definitions differ only at finitely many places.

As mentioned above, this $L$-function may also be identified (at almost all places) with that of the motive appearing in the middle-degree cohomology of $\ol{S}_{K_f}$.

\subsection{Main theorems} \label{subsec:maintheorems}

We now state the main theorems of this paper.  We make the following hypothesis for all the results in this section.  Write $[H] = H(\mathbf{Q})Z(\mathbf{A}) \backslash H(\mathbf{A})$.  Then either
\begin{enumerate}
	\item we have
	\[\int_{[H]} \nu_1(\mu(g))\varphi(g)dg = 0\]
	for every $\varphi$ in the space of $\pi$, or
	\item $\nu_1^2\omega_\pi|_{Z(\mathbf{A})}$ is not trivial.
\end{enumerate}

Our first main theorem, Theorem \ref{thm:geom} below, interprets the value of the integral $I(\Phi,\varphi_f,\nu,s)$ at $s=0$ in terms of a regulator pairing.  It may be stated roughly as follows.
\begin{mthm}[Geometrization] \label{mthm:geom}
If $\nu_1^2\omega_\pi|_{Z(\mathbf{A})}$ is trivial, assume that $\Phi(0)=0$.  The value at $s=0$ of the integral $I(\Phi,\varphi_f,\nu,s)$ may be interpreted as the regulator pairing $\langle [\eta],\omega_{\varphi_f} \rangle$ for some higher Chow class $[\eta]$.
\end{mthm}
Due to this result, we may write subsequent theorems purely in terms of the special value $I(\Phi,\varphi_f,\nu,0)$.

Our next theorem, which corresponds to Theorem \ref{thm:rationality} below, concerns algebraicity.  In the following, $W(\pi)$ denotes the equivalence class in $\mathbf{C}^\times/\ol{\mathbf{Q}}^\times$ of the value
\[W(\pi,\varphi_f) = \int_{U(\mathbf{Q}) \backslash U(\mathbf{A})} \chi^{-1}(u)\varphi(u)\,du\]
for any $\varphi_f$ such that the right-hand side is non-zero.  Here, $U$ is the group of unipotent upper-triangular matrices in $G$, $\chi$ is an explicit automorphic character of $U$ given in Definition \ref{defin:whit}, and $\varphi$ is uniquely associated to $\varphi_f$ as discussed earlier.

We also write $W_{0,0}: \mathbf{R}^\times \rightarrow \mathbf{R}$ for the classical Whittaker function with parameters $(0,0)$.  Let $L'(\pi\times\nu_1,\mathrm{Std},s)$ denote the derivative of the standard $L$-function.
\begin{mthm}[Algebraicity] \label{mthm:algebraicity}
For all $\varphi_f \in V_{\ol{\mathbf{Q}}}$, and all $\ol{\mathbf{Q}}$-valued functions $\Phi$, we have
\[I(\Phi,\varphi,\nu,0) \in \Qbar W(\pi)W_{0,0}(8\sqrt{2}\pi D^{-\frac{3}{4}})^{-1} \pi^4 L'(\pi \times \nu_1,\mathrm{Std},0).\]
\end{mthm}

Our third main theorem, which corresponds to Theorem \ref{coarseThm} below, gives a nonvanishing statement for $I(\Phi,\varphi,\nu,0)$.  We will need an auxiliary local hypothesis, which uses the following definition.  A smooth representation $\pi_p$ of the group $G(\mathbf{Q}_p)$ with underlying space $W$ is said to be $(H,\nu_{1,p}^{-1})$ distinguished if there is a nonzero functional $\Lambda:W \rightarrow \mathbf{C}$ such that $\Lambda(\pi(h)w)=\nu_{1,p}^{-1}(\mu(h))\Lambda(w)$ for all $h \in H(\mathbf{Q}_p)$.  (Recall that $\mu$ denotes the similitude.)

The condition of being $(H,\nu_{1,p}^{-1})$-distinguished has been proven to be equivalent to the twisted representation $\pi_p \times {\nu_{1,p}}$ belonging to an endoscopic $L$-packet by Gelbart, Rogawski, and Soudry \cite[Theorem D]{grs4}.

\begin{mthm}[Nonvanishing] \label{mthm:nonvanishing}
Assume that the twisted base change of $\pi$ to $\mathrm{Res}_\mathbf{Q}^E \GL_3 \times \mathbf{G}_m$ remains cuspidal.  We break into two cases.
\begin{enumerate}
	\item If $\nu_1^2 \cdot \omega_\pi|_{Z(\mathbf{A})}$ is trivial, assume that $\pi_\ell$ is not $(H,\nu_{1,\ell}^{-1})$-distinguished for at least one finite place $\ell$. Then there exists $\varphi_f \in V_{\ol{\mathbf{Q}}}$ and a factorizable $\mathbf{Q}$-valued Schwartz-Bruhat function $\Phi$ such that $\Phi(0)=0$ and $I(\Phi,\varphi,\nu,0) \neq 0$.
	\item If $\nu_1^2 \cdot \omega_\pi|_{Z(\mathbf{A})}$ is non-trivial, then there exists $\varphi_f \in V_{\ol{\mathbf{Q}}}$ and a factorizable $\mathbf{Q}$-valued Schwartz-Bruhat function $\Phi$ such that $I(\Phi,\varphi,\nu,0) \neq 0$.
\end{enumerate}
\end{mthm}

We may combine Theorems \ref{mthm:geom}, \ref{mthm:algebraicity}, and \ref{mthm:nonvanishing} to obtain the following corollary, which corresponds to Corollary \ref{coro:chowreg} below.
\begin{coro}[Class number formula, coarse form] \label{coro:cnf}
Maintain the hypotheses in Main Theorem \ref{mthm:nonvanishing}.  Then there exists a higher Chow class $[\eta]$ and $\varphi_f \in V_{\ol{\mathbf{Q}}}$ such that
\[ \langle [\eta] , \omega_{\varphi_f} \rangle =_{\ol{\mathbf{Q}}^\times} W(\pi)W_{0,0}(8\sqrt{2}\pi D^{-\frac{3}{4}})^{-1} \pi^4 L'(\pi \times \nu_1,\mathrm{Std},0),\]
where $=_{\ol{\mathbf{Q}}^\times}$ means that the two sides are nonzero and equal up to algebraic multiple.
\end{coro}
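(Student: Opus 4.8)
\emph{Proof proposal for Corollary \textup{\ref{coro:cnf}}.} The plan is to assemble the three preceding Main Theorems, being careful that a \emph{single} choice of data $(\varphi_f,\Phi)$ is admissible for all three simultaneously. First I would invoke Main Theorem~\ref{mthm:nonvanishing} under its standing hypothesis (twisted base change to $\mathrm{Res}_\mathbf{Q}^E\GL_3\times\mathbf{G}_m$ cuspidal, plus the local non-distinction hypothesis at some $\ell$ in the case that $\nu_1^2\omega_\pi|_{Z(\mathbf{A})}$ is trivial). This produces a vector $\varphi_f\in V_{\ol{\mathbf{Q}}}$ and a factorizable $\mathbf{Q}$-valued Schwartz--Bruhat function $\Phi$ with $I(\Phi,\varphi,\nu,0)\neq 0$, and moreover with $\Phi(0)=0$ precisely in the case where $\nu_1^2\omega_\pi|_{Z(\mathbf{A})}$ is trivial. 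Before proceeding I would check that this data satisfies the dichotomy hypothesis imposed at the start of Section~\ref{subsec:maintheorems}: in the non-trivial case, condition~(2) there holds tautologically; in the trivial case, the assumed failure of $(H,\nu_{1,\ell}^{-1})$-distinction at $\ell$, via the equivalence of Gelbart--Rogawski--Soudry \cite{grs4} with membership in an endoscopic packet, forces the global period $\int_{[H]}\nu_1(\mu(g))\varphi(g)\,dg$ to vanish for all $\varphi$ in the space of $\pi$, which is condition~(1) there. Hence all subsequent results apply to this $(\varphi_f,\Phi)$.

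Next I would apply Main Theorem~\ref{mthm:geom}: its only extra hypothesis is that $\Phi(0)=0$ when $\nu_1^2\omega_\pi|_{Z(\mathbf{A})}$ is trivial, which holds for our data by construction. This furnishes a higher Chow class $[\eta]$ on $\ol{S}_{K_f}$ (after possibly raising the level) with
\[
\langle [\eta],\omega_{\varphi_f}\rangle = I(\Phi,\varphi_f,\nu,0).
\]
Then I would apply Main Theorem~\ref{mthm:algebraicity}, which holds for \emph{all} $\varphi_f\in V_{\ol{\mathbf{Q}}}$ and all $\ol{\mathbf{Q}}$-valued $\Phi$ -- in particular for our $\mathbf{Q}$-valued choice -- to conclude
\[
I(\Phi,\varphi,\nu,0)\in \ol{\mathbf{Q}}\, W(\pi)\,W_{0,0}\!\left(8\sqrt{2}\,\pi D^{-\frac{3}{4}}\right)^{-1}\pi^4\, L'(\pi\times\nu_1,\mathrm{Std},0).
\]
Combining the two displays gives $\langle [\eta],\omega_{\varphi_f}\rangle = c\cdot W(\pi)W_{0,0}(8\sqrt{2}\,\pi D^{-\frac34})^{-1}\pi^4 L'(\pi\times\nu_1,\mathrm{Std},0)$ for some $c\in\ol{\mathbf{Q}}$, and the nonvanishing $I(\Phi,\varphi,\nu,0)\neq 0$ from the first step forces both $c\neq 0$ and $L'(\pi\times\nu_1,\mathrm{Std},0)\neq 0$, so the relation is an equality of nonzero quantities up to $\ol{\mathbf{Q}}^\times$, which is exactly the assertion $=_{\ol{\mathbf{Q}}^\times}$.

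The substantive difficulty here is entirely upstream, in the three Main Theorems themselves; the present deduction is bookkeeping. The one genuine subtlety I would watch is the compatibility of the $\Phi(0)=0$ constraint: Main Theorem~\ref{mthm:geom} needs it in the trivial-twist case (to control the boundary contribution of the Eisenstein series and obtain an honest Chow class rather than a class with a residual term), while Main Theorem~\ref{mthm:nonvanishing}(1) is precisely engineered to produce nonvanishing data \emph{within} the subspace $\Phi(0)=0$; it is the matching of these two that makes the corollary nontrivial, and it is why the non-distinction hypothesis at $\ell$ cannot be dropped. A secondary point worth spelling out is that raising the level to accommodate $[\eta]$ does not disturb the automorphic representation $\pi$ or its Whittaker period $W(\pi)$, so the constants on the right-hand side are unchanged.
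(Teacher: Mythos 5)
Your proposal is correct and is essentially the paper's own proof: Corollary \ref{coro:chowreg} is deduced in exactly this way, by combining Theorem \ref{thm:geom} (geometrization), Theorem \ref{thm:rationality} (algebraicity), and Theorem \ref{coarseThm} (nonvanishing) applied to the single datum $(\varphi_f,\Phi)$ produced by the nonvanishing theorem, with the observation that $I(\Phi,\varphi,\nu,0)\neq 0$ forces both the algebraic constant and $L'(\pi\times\nu_1,\mathrm{Std},0)$ to be nonzero. One small remark: in the trivial-twist case the vanishing of the global period over $[H]$ follows directly from the failure of $(H,\nu_{1,\ell}^{-1})$-distinction at $\ell$ (a nonzero period would restrict to a nonzero local invariant functional at $\ell$), so the detour through the Gelbart--Rogawski--Soudry equivalence with endoscopic packets is unnecessary, and this is how the paper argues in the proof of Theorem \ref{coarseThm}.
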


We finally give a refined version of the special value formula that removes the unknown algebraic factor.  We introduce a number of additional hypotheses in order to do this -- for instance, we require $\pi$ to have fixed vectors under $G(\mathbf{Z}_p)$ at all primes $p$ that ramify in $E$.  See Section \ref{subsec:finer} for a detailed statement of the hypotheses and notation as well as the main result, which is Theorem \ref{thm:finecnf} there.  We provide the following weaker statement for this introduction.
\begin{mthm}[Class number formula, fine form] \label{mthm:cnffine}
We make the following hypotheses.
\begin{itemize}
	\item $\pi$ has trivial central character.
	\item If $p$ is inert or ramified in $E$, $\pi_p$ has a fixed vector under $G(\mathbf{Z}_p)$
	\item If $p$ splits in $E$, $\pi_p$ is either unramified or supercuspidal, and the latter occurs for at least one $p$.
	\item The vector $\varphi_f$ is always fixed by $G(\mathbf{Z}_p)$ and $\Phi_p = \mathrm{char}(\mathbf{Z}_p^2)$ unless $p$ is split and $\pi_p$ is supercuspidal of conductor $n=n(\pi_p)$ in the sense of \cite{jpss2}, in which case $\varphi_f$ is a new vector in the sense of {\it loc.\ cit.}\ and $\Phi_p=(p^{2n}-p^{2n-2})\cdot \mathrm{char}(p^n\mathbf{Z}_p \oplus (1+p^n\mathbf{Z}_p))$.  Here, $\mathrm{char}(\cdot)$ denotes the characteristic function.
\end{itemize}
This gives a well-defined $\ol{\mathbf{Q}}$-line of choices of $\varphi_f$.  Then $L(\pi, \mathrm{Std},s)$ as defined using the GCD is also the global Langlands $L$-function and we have the formula
\[I(\Phi,\varphi_f,\nu,0) = W(\varphi_f,\pi)8iW_{0,0}(8\sqrt{2}\pi D^{-\frac{3}{4}})^{-1} \pi^4 D^{-\frac{3}{2}}L'(\pi, \mathrm{Std},0).\]
Moreover, $I(\Phi,\varphi_f,\nu,0)$ may be interpreted as a regulator pairing.
\end{mthm}

\subsection{Overview of proof} \label{subsec:overviewproof}

We explain the ideas that go into each of the four main results.

\subsubsection{Geometrization} \label{subsubsec:introgeom}

We can use the Kronecker limit formula to find a level $K_f$ and unit $u(\Phi,\nu_1)$ on the open curve $C_{K_{H,f}}$ (where $K_{H,f} = K_f \cap H(\mathbf{A}_f)$) such that
\[I(\Phi,\varphi_f,\nu,0) = \int_{C_{K_{H,f}}} \log|u(\Phi,\nu_1)|\omega_{\varphi_f}.\]
(See Section \ref{sec:klf} below.)  However, we need to construct a class $[\eta]$ in the higher Chow group that also computes $I(\Phi,\varphi_f,\nu,0)$.  This is achieved in two stages.
\begin{enumerate}
  \item Apply a Hecke operator $T$ to the pair $(C_{K_{H,f}},u(\Phi,\nu_1))$ that annihilates functions of the cusps of $S_{K_f}^\mathrm{BB}$, but acts on $\pi$ by a nonzero scalar.
	\item Assume that a formal sum $\sum_i (C_i,u_i)$ of embedded modular curves and units on them is given with the following property: for each boundary elliptic curve $E$ of $\ol{S}_{K_f}$, the restriction of the 0-cycle $\sum_i \mathrm{div}(u_i)$ to $E$ is of degree 0.  Prove that after multiplying by a nonzero integer, this 0-cycle on $E$ is the divisor of a rational function.
\end{enumerate}
The Hecke operator described (1) has the property that $(C_{K_{H,f}},u(\Phi,\nu_1))\cdot T$ satisfies the properties assumed in (2).  Adding rational functions on the boundary curves yields the representative $\eta$ of the needed higher Chow class.

To achieve (1), we formulate in Section \ref{sec:hecke} a precise Hecke action on (formal sums of) pairs $(C_i,u_i)$ that is adjoint via the pairing to the Hecke action on differential forms.

For (2), one considers the set $\Sigma$ of all points on the boundary $\partial S_{K_f} = \ol{S}_{K_f} \setminus S_{K_f}$ that can appear as the cusp of an embedded curve arising from a Shimura subvariety attached to $H$.  By explicit calculations in coordinates, we prove that any degree 0 divisor on any boundary elliptic curve fully supported on $\Sigma$ is always torsion in the Picard group, which implies the needed result.  These arguments are carried out in Section \ref{sec:higherchow}.

\subsubsection{Algebraicity}

As above, write $I_p(\Phi_p,\varphi_p,\nu_p,s)$ for the local integral.  Baruch \cite{baruch} showed that $I_p(\Phi_p,\varphi_p,\nu_p,s)$ is always a rational function in $p^{-s}$ and admits a GCD that is the inverse of a polynomial in $p^{-s}$.  We show that in general, $I_p(\Phi_p,\varphi_p,\nu_p,s)$ is always a ratio of polynomials with coefficients in $\ol{\mathbf{Q}}$ under suitable assumptions on $\Phi_p$ and $\varphi_p$.  (See Proposition \ref{prop:locrationality}.)

From the definition of the GCD, for factorizable data, $I(\Phi,\varphi,\nu,s)$ is always a multiple of $L(\pi\times\nu_1, \mathrm{Std},s)$ by an Archimedean factor together with quotients $\frac{I_p(\Phi_p,\varphi_p,\nu_p,s)}{L_p^\mathrm{G}(\pi_p\times \nu_{1,p},\mathrm{Std},s)}$ at a finite set of bad places.  By the result above, the quotients are always polynomials $P_p(p^{-s})\in\ol{\mathbf{Q}}[p^{-s}]$.  The Archimedean factor has been computed by Koseki and Oda \cite{ko}.  The algebraicity statement follows.

\subsubsection{Nonvanishing} \label{subsubsec:intrononvanish}

Our proof of nonvanishing of the regulator requires understanding $I(\Phi,\varphi,\nu,s)$ near $s=0$ by the Kronecker limit formula.  To do this, we (1) study a partial $L$-function involving only places where the data is unramified and (2) understand the behavior of local integrals at the remaining places.

For (1), we first apply the functional equation of the Eisenstein series to relate the Taylor expansions at $s=0$ and $s=1$, compare the partial $L$-function with that of the base change to $\mathrm{Res}^E_\mathbf{Q}(\GL_3 \times \mathbf{G}_m)$, and apply the Jacquet--Shalika prime number theorem \cite{js} and a bound of Jacquet--Shalika and Rudnick--Sarnak \cite{js2,rs} towards the Generalized Ramanujan Conjecture.

For (2), it is simple to choose data to ensure the nonvanishing of local integrals when we do not require that $\Phi(0)=0$ -- in fact, we can make the integral constant.  However, to interpret $I(\Phi,\varphi,\nu,s)$ as a regulator pairing at $s=0$, we need $\Phi_\ell(0)=0$ for one place $\ell$, which can sometimes \emph{force} the local integral to vanish.  We prove the nonvanishing at $\ell$ under a non-distinction hypothesis in Section \ref{subsec:nonvanishing}.

The basic idea of this result, which is fully applicable to other situations, is to translate the question into representation theory.  The condition $\Phi_\ell(0)=0$ corresponds to looking at the Steinberg representation inside the degenerate principal series $\tau$ in which the local section lives.  If we regard the local integral as a functional on $\pi_\ell \times \tau$, then vanishing whenever $\Phi_\ell(0)=0$ means that this functional factors through the quotient of $\tau$ on which $H(\mathbf{Q}_\ell)$ acts by $\nu_{1,\ell} \circ \mu$, which gives the needed result.

\subsubsection{Finer computation}

We apply the works of Miyauchi \cite{my1,my2,my3,my4} at inert places to make explicit computations of local integrals for suitable data.  We apply a computation of Miyauchi and Yamauchi \cite{my} in the context of the Bump--Friedberg integral \cite{bf} in order to make explicit computations at split places as well, though in this case we do not always obtain the exact local Langlands $L$-factor for our choice of data.  This leads to the factors $\alpha_p$ appearing in the statement of Theorem \ref{thm:finecnf} below.

Finally, at ramified places, we make an exact computation of the local integral when $\varphi_p$ is fixed by $K_p=G(\mathbf{Z}_p)$ in Section \ref{subsec:ramifiedlocal}.  We first show that such a vector is a test function for the Whittaker model.  We then apply the technique of Piatetski-Shapiro and Rallis, which is usually associated with \emph{non-unique} models.  More precisely, we compute the inverse Satake transform of the standard local $L$-function, which may be viewed as an element of the power series ring $\mathfrak{H}_p(T(\mathbf{Q}_p),T(\mathbf{Z}_p))^{W_G}[[p^{-s}]]$ over the Weyl group invariants of the spherical Hecke algebra of the maximal torus.  The transform yields a bi-$K$-invariant function $\Delta(g,s) \in \mathfrak{H}_p(G(\mathbf{Q}_p),K_p)[[p^{-s}]]$ -- a power series in $p^{-s}$ with coefficients in the $K_p$-spherical Hecke algebra of $G(\mathbf{Q}_p)$.  We then check that the convolution of $\Delta(g,s)$ with the ramified $K_p$-spherical Whittaker function is our local integral, as needed.

\subsection{Acknowledgements}

It is a pleasure to thank George Boxer, Wee Teck Gan, Michael Harris, Kai-Wen Lan, David Loeffler, Dinakar Ramakrishnan, Giovanni Rosso, Anthony Scholl, Christopher Skinner, David Soudry, Jack Thorne, Eric Urban, Akshay Venkatesh, Sarah Zerbes, and Wei Zhang for helpful discussions about this work.

Both authors would like to thank the Bernoulli Center at EPFL for invitations to participate in part of the semester on ``Euler systems and special values of $L$-functions'', where some aspects of this work were discussed.

The second named author would also like to thank Stanford University for their incredible hospitality during his visits there, as well as to thank Jack Thorne for his invitation to visit Cambridge for discussions regarding this project.

\section{Higher Chow classes on Picard modular surfaces} \label{sec:higherchow}

The goal of this section is to prove Proposition \ref{prop:cycles}, which gives a way to construct classes in higher Chow groups.  We give relevant definitions and references for these groups in Section \ref{subsec:beilinson}.  As discussed in Section \ref{subsec:introhigherchow}, a class in the higher Chow group of interest may be represented by a formal sum $\sum_i (C_i,u_i)$ of pairs of divisors $C_i$ on the compactified Picard modular surface and rational functions $u_i$ on $C_i$, which must satisfy certain conditions.

We will limit ourselves to two types of divisors $C_i$.  Namely, these are
\begin{enumerate}
	\item the closures of embedded modular curves on compactified Picard modular surfaces and
	\item CM elliptic curves along the boundary of the compactification.
\end{enumerate}
The key will be to compute the intersections of these divisors.  Works of Cogdell \cite{cogdell} and Kudla \cite{kudla} carry out similar computations in a more classical setting.  Due to the usage of certain special coordinate systems in \cite{cogdell} that may not exist on every component, we vary the method there slightly to handle the Shimura varieties considered below.

In Section \ref{subsec:maps}, we define subgroups of $\GU(2,1)$ that give rise to embedded modular curves.  We define the relevant open varieties in Section \ref{subsec:shimura}.  We then study the compactified varieties in Section \ref{subsec:embeddings}.  We perform precise computatations of intersections on a limited class of divisors in Section \ref{subsec:imagecusp} and extend this to Hecke translates in Section \ref{subsec:translates}.  We check a smoothness property in Section \ref{subsec:smoothness}.  Finally, we combine our calculations to explain how to extend units on the open variety to elements of the relevant higher Chow group in Section \ref{subsec:higherchow}.  Note that our classes as defined here could in principle be zero in motivic cohomology.  We will only see that certain of these classes are nontrivial once we compute their regulator in Section \ref{sec:regulatorcomp}.

\subsection{Maps between unitary similitude groups} \label{subsec:maps}

We define the reductive group $G=\GU(2,1)$ over $\mathbf{Z}$ as follows.  Let $E/\mathbf{Q}$ be an imaginary quadratic extension generated by $\sqrt{-D}$, where $-D \in \mathbf{Z}_{< 0}$ is a fundamental discriminant, i.e.\ either $-D \equiv 1 \pmod{4}$ and is square-free or $-D$ is 4 times a square-free integer $D'$ with $D' \equiv 2\textrm{ or }3 \pmod{4}$.  In either case, write $\delta = \sqrt{-D}$. We define
\begin{equation}\label{eqn:jdef} J  = \(\begin{array}{ccc}  & & \delta^{-1} \\ & 1 & \\ -\delta^{-1} & &  \end{array}\).\end{equation}
Let $V \cong E^3$ be the underlying Hermitian space, so that the Hermitian form attached to $J$ is calculated by ${}^t\ol{v}Jv$ for $v \in V$, viewed as a column vector, where ${}^t$ denotes transpose and $\ol{\cdot}$ is the conjugation in $E$.  For any pair $(V,J)$ of a Hermitian space of dimension $n$ with form $J$, we define $\GU(J)$ by setting
\begin{equation}\label{eqn:gudef} \GU(J)(R) = \set{g \in \GL_n(R\otimes_\mathbf{Z} \mathfrak{O}_E)| {}^*gJg = \mu(g)J, \mu(g)\in R^\times}\end{equation}
for a ring $R$, where we write ${}^*g = {}^t\ol{g}$.  The similitude is a map $\mu:\GU(J) \rightarrow \mathbf{G}_m$.  For $(V,J)$ as in (\ref{eqn:jdef}), we write $G=\GU(2,1)=\GU(J)$.  We think of $g \in \GU(2,1)$ as acting on the left of $V$.  We also write $\gen{\cdot,\cdot}$ for the pairing $J$.

We write $\mathbf{P}(V)^+$, $\mathbf{P}(V)^0$, and $\mathbf{P}(V)^-$ for the positive, isotropic, and negative lines in $\mathbf{P}(V)$, where these words refer to the sign of the restriction of the Hermitian pairing to the line.  For $W \subseteq V$ a subspace, we define $W^\perp = \set{w \in V: \langle w, W \rangle = 0}$.  Then for $W\in \mathbf{P}(V)^+$, $W^\perp$ has signature $(1,1)$ at the Archimedean place with respect to the restriction of $J$.  Define $J_2 = \mb{}{\delta^{-1}}{-\delta^{-1}}{}$, and write $\GU(1,1) = \GU(J_2)$.  We have the following proposition.
\begin{prop} \label{prop:w}
For $W \in \mathbf{P}(V)^+$, the following are equivalent.
\begin{enumerate}
	\item The group $\GU(J|_{W^\perp}) \cong \GU(J_2)$ as algebraic groups over $\mathbf{Q}$.
	\item The space $W^\perp$ contains an isotropic line.
	\item The norm of any $w \in W$ is in the image of the norm homomorphism $\mathrm{Nm}_\mathbf{Q}^E: E^\times \rightarrow \mathbf{Q}^\times$.
\end{enumerate}
\end{prop}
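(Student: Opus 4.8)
The plan is to fix a nonzero $w \in W$ and set $c = \langle w, w\rangle$. This lies in $\mathbf{Q}^\times_{>0}$ since $J$ is Hermitian and $W$ is a positive line, and replacing $w$ by $\lambda w$ with $\lambda \in E^\times$ scales $c$ by $\mathrm{Nm}_\mathbf{Q}^E(\lambda)$, so the class of $c$ in $\mathbf{Q}^\times/\mathrm{Nm}_\mathbf{Q}^E(E^\times)$ is an invariant of $W$ and condition (3) says precisely that this class is trivial. I would then prove $(3) \Leftrightarrow (2)$, then the trivial implication $(2) \Rightarrow (1)$, and finally the substantive implication $(1) \Rightarrow (2)$.

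For $(3) \Leftrightarrow (2)$: nondegeneracy of $W$ gives the orthogonal splitting $V = W \oplus W^\perp$, hence (writing $\mathrm{disc}(\cdot)$ for the class of the determinant of a Gram matrix in $\mathbf{Q}^\times/\mathrm{Nm}_\mathbf{Q}^E(E^\times)$) the relation $\mathrm{disc}(V) = c\cdot \mathrm{disc}(W^\perp)$. One computes $\det J = \delta^{-2} = -D^{-1}$, and since $D = \mathrm{Nm}_\mathbf{Q}^E(\delta)$ this is $\equiv -1$, so $\mathrm{disc}(W^\perp) \equiv -c$. Since $W^\perp$ is a nondegenerate binary Hermitian space, diagonalizing it as $\langle a\rangle \perp \langle b\rangle$ shows it is isotropic over $\mathbf{Q}$ exactly when $-b/a \in \mathrm{Nm}_\mathbf{Q}^E(E^\times)$, i.e.\ exactly when $\mathrm{disc}(W^\perp) \equiv -1$. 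Combining, (2) holds iff $c \in \mathrm{Nm}_\mathbf{Q}^E(E^\times)$, which is (3).

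The implication $(2) \Rightarrow (1)$ is immediate: a nondegenerate binary Hermitian space containing an isotropic line is a hyperbolic plane, hence isometric to $(E^2, J_2)$, and isometric Hermitian spaces have $\mathbf{Q}$-isomorphic unitary similitude groups. For $(1) \Rightarrow (2)$ I would argue via quasi-splitness: because $J_2$ is a split Hermitian form, $\GU(J_2)$ is quasi-split over $\mathbf{Q}$, the stabilizer of the isotropic line ${}^t(1,0)$ being a Borel subgroup. Quasi-splitness is an isomorphism invariant of an algebraic group, so (1) forces $\GU(J|_{W^\perp})$ to possess a Borel subgroup over $\mathbf{Q}$; since every proper $\mathbf{Q}$-parabolic subgroup of the unitary similitude group of a binary Hermitian space is the stabilizer of a $\mathbf{Q}$-isotropic line, this produces an isotropic line in $W^\perp$, which is (2).

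I expect $(1) \Rightarrow (2)$ to be the only step requiring real care, as it is the one passing from the abstract group back to geometric data on the form. The delicate points are that quasi-splitness descends through any $\mathbf{Q}$-isomorphism of algebraic groups, and the identification of proper $\mathbf{Q}$-parabolics of $\GU$ of a binary form with line stabilizers; alternatively one could descend to the derived group, use that the outer automorphism group of type $A_1$ is trivial so that $\mathrm{SU}$ of a binary form is an inner form of $\mathrm{SL}_2$, and recall that this inner form is $\mathrm{SL}_2$ precisely when the binary form is isotropic. The remaining implications are, respectively, elementary Hermitian-form theory over the imaginary quadratic field $E$ and purely formal.
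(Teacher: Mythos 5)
Your argument is correct, and it overlaps with the paper's proof in one place while reorganizing the rest. The implication $(1)\Rightarrow(2)$ is exactly the paper's: $\GU(J_2)$ is quasi-split because the stabilizer of an isotropic line is a rational Borel, and transporting a $\mathbf{Q}$-Borel through the isomorphism forces $J|_{W^\perp}$ to be isotropic (your remark that a Borel is a \emph{proper} subgroup, or the alternative via $\SU$ of a binary form being an inner form of $\SL_2$, supplies the detail the paper leaves implicit). Where you diverge is in the remaining implications. The paper proves $(2)\Rightarrow(3)$ by a geometric orbit argument: it computes norms of vectors in $L^\perp$ for an isotropic line $L\subseteq W^\perp$, uses transitivity of $G(\mathbf{Q})$ on isotropic lines to move $L$ to $E\cdot{}^t(1,0,0)$, and compares with the norm-one vector ${}^t(0,1,0)$; it then proves $(3)\Rightarrow(1)$ by the discriminant relation $-1=\det(J)=\det(J|_W)\det(J|_{W^\perp})$ in $\mathbf{Q}^\times/\mathrm{Nm}_\mathbf{Q}^E(E^\times)$ together with the classification of Hermitian spaces by signature and discriminant. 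You instead get $(2)\Leftrightarrow(3)$ in one stroke from the same discriminant relation combined with the elementary criterion that a nondegenerate binary Hermitian form $\langle a\rangle\perp\langle b\rangle$ over $E/\mathbf{Q}$ is isotropic iff $-b/a$ is a norm, and you get $(2)\Rightarrow(1)$ from uniqueness of the hyperbolic plane. Your route is slightly more self-contained: it avoids both the orbit computation and the appeal to the full classification theorem (and does not even need the signature of $W^\perp$, since $\mathrm{disc}\equiv-1$ already forces signature $(1,1)$ because norms from the imaginary quadratic field are positive). The paper's route, on the other hand, isolates the norm computation $(2)\Rightarrow(3)$ in a form that is reused implicitly elsewhere (e.g.\ in the transitivity statement of the following proposition). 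Both proofs are complete at the same level of rigor.
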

\begin{proof}
If $\GU(J|_{W^\perp}) \cong \GU(J_2)$, the rational upper-triangular Borel subgroup of $\GU(J_2)$ maps to a rational Borel subgroup of $\GU(J|_{W^\perp})$, whose existence is tantamount to $J|_{W^\perp}$ containing an isotropic vector.  So (1) implies (2).

Fix $w \in W$.  If $W^\perp$ contains an isotropic line $L$, every vector in $L^\perp$ can be written as $a_1v+a_2w$ for a fixed $v \in L$ and varying $a_1,a_2\in E$.  We have $\langle a_1v+a_2w,a_1v+a_2w \rangle = \mathrm{Nm}_\mathbf{Q}^E(a_2) \langle w,w \rangle$.  In particular, the norm of every vector in $L^\perp \setminus L$ is the same in $\mathbf{Q}^\times/\mathrm{Nm}_\mathbf{Q}^E(E^\times)$.  Since the rational Borel subgroups of $G(\mathbf{Q})$ are all conjugate, the isotropic lines are in a single orbit.  In particular, we may choose $g \in G(\mathbf{Q})$ taking $L$ to the line $L_0=E\cdot {}^t(1,0,0)$.  Multiplying a vector in $L^\perp$ by $g$ does not change the norm, so by combining this fact with the preceding calculation, every element of $L^\perp \setminus L$ must have the same norm, considered as an element of $\mathbf{Q}^\times/\mathrm{Nm}_\mathbf{Q}^E(E^\times)$, as a vector in $L_0^\perp \setminus L_0$.  But ${}^t(0,1,0)$ is an element in $L_0^\perp \setminus L_0$ of norm 1, so (2) implies (3).

The determinant of a Hermitian form, viewed as an element of $\mathbf{Q}^\times/\mathrm{Nm}_\mathbf{Q}^E(E^\times)$, is independent of the basis.  We have $-1=\det(J)=\det(J|_{W})\det(J|_{W^\perp}) \in \mathbf{Q}^\times/\mathrm{Nm}_\mathbf{Q}^E(E^\times)$.  If $w \in W$ has $\langle w,w \rangle \in \mathrm{Nm}_\mathbf{Q}^E(E^\times)$ for some $w$, it follows that $\det(J|_{W^\perp}) \in -1 \cdot \mathrm{Nm}_\mathbf{Q}^E(E^\times)$.  It is a standard fact that Hermitian spaces are classified by their signature together with this invariant, so $\GU(J|_{W^\perp}) \cong \GU(J_2)$.
\end{proof}
We define $\Xi$ to be the set of $W \in \mathbf{P}(V)^+$ satisfying the equivalent conditions of Proposition \ref{prop:w}.  There is a natural action of $G(\mathbf{Q})$ on $\Xi$ via its left action on $\mathbf{P}(V)^+$.
\begin{prop} \label{prop:transxi}
The group $G(\mathbf{Q})$ acts transitively on $\Xi$.
\end{prop}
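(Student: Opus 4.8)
The plan is to show that any two elements $W_1, W_2 \in \Xi$ can be carried to one another by an element of $G(\mathbf{Q})$, using the classification of Hermitian spaces together with Witt's theorem. First I would observe that, by the definition of $\Xi$ and the third characterization in Proposition \ref{prop:w}, for any $W \in \Xi$ we may pick a generator $w$ of $W$ whose norm $\langle w, w \rangle$ lies in $\mathrm{Nm}_\mathbf{Q}^E(E^\times)$; after rescaling $w$ by a suitable element of $E^\times$ (which only multiplies the norm by a norm, hence stays in the same coset), we may in fact arrange $\langle w, w \rangle = 1$. Thus every $W \in \Xi$ has a generator of norm exactly $1$. So it suffices to show that $G(\mathbf{Q})$ acts transitively on the set of norm-$1$ vectors in $V$, since two such vectors generating $W_1$ and $W_2$ respectively will be conjugate, and the conjugating element will carry $W_1$ to $W_2$.

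Next I would produce the group element carrying one norm-$1$ vector to another. Fix the ``standard'' vector $w_0 = {}^t(0,1,0)$, which has norm $1$ with respect to $J$ as in (\ref{eqn:jdef}). Given any $w \in V$ with $\langle w, w\rangle = 1$, the line $E w$ and the line $E w_0$ are both positive, so their orthogonal complements $(Ew)^\perp$ and $(Ew_0)^\perp = E\cdot{}^t(1,0,0) \oplus E\cdot{}^t(0,0,1)$ are Hermitian spaces of dimension $2$ and signature $(1,1)$. Both of these complements have the same discriminant invariant in $\mathbf{Q}^\times / \mathrm{Nm}_\mathbf{Q}^E(E^\times)$: indeed $\det(J) = \det(J|_{Ew}) \cdot \det(J|_{(Ew)^\perp})$ and $\det(J|_{Ew}) = \langle w,w\rangle = 1$ in this quotient, and the same computation applies to $w_0$. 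By the classification of Hermitian spaces (signature plus discriminant), there is an $E$-linear isometry $(Ew)^\perp \xrightarrow{\sim} (Ew_0)^\perp$; extending it by $w \mapsto w_0$ gives an isometry of $(V, J)$ to itself, i.e.\ an element $g \in \U(J)(\mathbf{Q}) \subseteq G(\mathbf{Q})$ with $g w = w_0$. This proves transitivity on norm-$1$ vectors, and combined with the first paragraph, transitivity on $\Xi$.

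The main obstacle — really the only subtlety — is the reduction in the first paragraph: one must be sure that every $W \in \Xi$ genuinely admits a norm-$1$ generator over $\mathbf{Q}$ (equivalently over $E$), not merely a generator whose norm is a \emph{rational} norm from $E$. This follows because if $\langle w, w\rangle = \mathrm{Nm}_\mathbf{Q}^E(\alpha)$ for some $\alpha \in E^\times$, then $w' = \alpha^{-1} w$ satisfies $\langle w', w'\rangle = \mathrm{Nm}_\mathbf{Q}^E(\alpha)^{-1}\langle w,w\rangle = 1$, using that the Hermitian form scales by $\mathrm{Nm}_\mathbf{Q}^E$ under scalar multiplication. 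I would also note that the element $g$ constructed above lies in $\U(J)$, so a fortiori in $G = \GU(J)$; if one wanted the conjugating element to have a prescribed similitude this would require more care, but transitivity of the $G(\mathbf{Q})$-action only needs existence of \emph{some} element, so the unitary group already suffices. Everything else — the factorization of determinants modulo norms, the classification of Hermitian spaces by signature and discriminant, and Witt extension — is standard and can be invoked directly.
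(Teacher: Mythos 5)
Your proof is correct and follows essentially the same route as the paper's: after arranging that the chosen generators have matching norm (you normalize to norm $1$, the paper scales to equal norms), both arguments produce the required element of $G(\mathbf{Q})$ by matching the orthogonal complements via an isometry, which is exactly the content of Proposition \ref{prop:w} and the signature-plus-discriminant classification of Hermitian spaces already invoked there. Your fixing of the base point $E\cdot{}^t(0,1,0)$ and your explicit discriminant computation are only cosmetic differences from the paper's choice of bases of $W^\perp$ and $W'^{\perp}$ adapted to $\GU(J_2)$.
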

\begin{proof}
Let $W, W'\in \Xi$.  We may pick $w \in W$ and $w' \in W'$ of the same norm using the center of $G(\mathbf{Q})$.  Choose bases $(v_1,v_2)$ and $(v_1',v_2')$ of $W^\perp$ and $W^{\prime \perp}$, respectively, that correspond to the standard basis of $\GU(J_2)$ under the isomorphisms provided by Proposition \ref{prop:w}.(1).  Then the element of $\GL_3(E)$ mapping the ordered basis $(v_1,w,v_2)$ to $(v_1',w',v_2')$ preserves $J$, so it lies in $G(\mathbf{Q})$ and maps $W$ to $W'$ as needed.
\end{proof}

For $W \in \Xi$, we define $G_W$ to be the subgroup of $\GU(2,1)$ defined by
\[G_W = \set{(g_1,g_2) \in \GU(J|_{W}) \times \GU(J_{W^\perp}): \mu(g_1)=\mu(g_2)=\det(g_2)},\]
which is a subgroup of the full stabilizer of $W$ in $\GU(2,1)$.  Let $\GU(1,1)' \subseteq \GU(1,1)$ denote the kernel of $\mu/\det$.  By Proposition \ref{prop:w}.(1), $G_W$ is isomorphic to the subgroup $\GU(1) \boxtimes \GU(1,1)' \subseteq \GU(1) \times \GU(1,1)'$ of pairs with the same similitude.  Here, $\GU(1) = \GU(J_1)$, where $J_1 = (1)$; this group is isomorphic to $\res^E_\mathbf{Q} \mathbf{G}_m$.  In this way, we obtain different embeddings of unitary groups over $\mathbf{Q}$ of signature $(1,1)$ into $\GU(2,1)$.  It will be useful later to fix one particular embedding; let $W = E\cdot {}^t(0,1,0)$.

\subsection{Shimura data} \label{subsec:shimura}

To describe the maximal compact subgroup of $\GU(2,1)(\mathbf{R})$, we will work instead with the form
\begin{equation}\label{eqn:j2} J'  = \(\begin{array}{ccc} 1 & & \\ & 1 & \\  & & -1 \end{array}\),\end{equation}
which induces an isomorphic Hermitian space over $\mathbf{R}$ to the one defined by $J$.  More precisely, let
\begin{equation}\label{eqn:chgjtoj2} C=\(\begin{array}{ccc} \frac{D^{\frac{1}{4}}}{\sqrt{2}} & & \frac{D^{\frac{1}{4}}}{\sqrt{2}}\\ & 1 & \\ -\frac{D^{\frac{1}{4}}}{\sqrt{-2}} & & \frac{D^{\frac{1}{4}}}{\sqrt{-2}} \end{array}\),\end{equation}
where $(\cdot)^{\frac{1}{4}}$ denotes the positive real fourth root.  Then ${}^t\ol{C}JC  = J'$ and $g \mapsto C^{-1}gC$ gives a morphism $\GU(J) \rightarrow \GU(J')$.  (This particular choice of $C$ will be important in the Archimedean calculation below.)  Using the Hermitian form $J'$, the maximal compact subgroup $K_\infty$ and the maximal split torus $Z_\infty$ in the center of $\GU(2,1)(\mathbf{R})$ are
\[K_\infty = \set{g \in \GU(2,1)(\mathbf{R}): g=\diag(A,b), A \in U(2)(\mathbf{R}), b\in U(1)(\mathbf{R})}\]
and $Z_\infty=\set{\diag(t,t,t) \in \GU(2,1)(\mathbf{R}): t \in \mathbf{R}^\times}$, respectively.

The Lie algebra $\mathfrak{g} = \mathrm{Lie}(\GU(2,1)(\mathbf{R}))$ has a decomposition $\mathfrak{g} = \mathfrak{z} \oplus \mathfrak{k} \oplus \mathfrak{p}$ into the direct sum of $\mathfrak{z} = \mathrm{Lie}(Z_\infty)$ and the eigenspaces of the Cartan involution $X \mapsto -X^*$, where $\mathfrak{k} = \mathrm{Lie}(K_\infty)$.  There is a natural adjoint action of $K_\infty$ on $\mathfrak{p}$.  With respect to the form $J'$, we define a map
\[h: \mathbf{S} \rightarrow \GU(2,1)_{/\mathbf{R}}\]
by $h(z) = \diag(z,z,\overline{z})$, where $\mathbf{S}$ is the Weil restriction of scalars of $\mathbf{G}_m$ from $\mathbf{C}$ to $\mathbf{R}$, and $\GU(2,1)_{/\mathbf{R}}$ is the extension of scalars of $\GU(2,1)$ to $\mathbf{R}$.  The associated Hodge structure on $\mathfrak{g}$ gives a splitting $\mathfrak{p}_\mathbf{C} = \mathfrak{p}^+ \oplus \mathfrak{p}^-$, where $\mathfrak{p}_\mathbf{C}$ is the complexification and $\mathfrak{p}^+$ and $\mathfrak{p}^-$ are the spaces on which $z\in \mathbf{S}(\mathbf{R})$ acts by $z\ol{z}^{-1}$ and $z^{-1}\ol{z}$, respectively.  We define $X$ to be the $G(\mathbf{R})$-conjugacy class of $h$.

Given $W \in \Xi$ as above, we can also associate a Shimura datum to $G_W$.  Recall that $G_W \cong \GU(1) \boxtimes \GU(1,1)'$, so we need only define a Shimura datum for $H=\GU(1) \boxtimes \GU(1,1)'$.  We define $J_2' = \diag(1,-1)$; this defines the same group as $J_2$.  The morphism $h: \mathbf{S} \rightarrow (\GU(1) \boxtimes \GU(J_2')')_{/\mathbf{R}}$ is defined by $z \mapsto (z,\diag(z,\ol{z}))$, and we define $X_H$ to be its $H(\mathbf{R})$-conjugacy class.  The embedding $G_W \rightarrow G$ induces a morphism of Shimura data.  We also note that the maximal compact $K_{2,\infty}$ of $H(\mathbf{R})$ is
\[K_{2,\infty} = \set{g=(g_1,g_2) \in H(\mathbf{R}): g_1=a, g_2=\diag(b,b^{-1}), a,b \in U(1)(\mathbf{R})}.\]
We write $X_W$ for the symmetric space associated to $G_W$.

Let $\mathbf{A}$ denote the adeles of $\mathbf{Q}$, and let $\mathbf{A}_f$ denote the finite adeles.  Define $K_\mathrm{max}$ to be the open compact subgroup $K_\mathrm{max} = \prod_p G(\mathbf{Z}_p) \subseteq G(\mathbf{A}_f)$.  (The group $G(\mathbf{Z}_p)$ consists of integral matrices with respect to the form $J$ defined in (\ref{eqn:jdef}).)  Let $K_f = \prod_p K_p \subseteq K_\mathrm{max}$ be a factorizable open subgroup contained in $K_\mathrm{max}$.  Define $\Gamma_{K_f} = K_f \cap G(\mathbf{Q})$.  For $W \in \Xi$, define $K_{W,f} = K_f \cap G_W$ and $\Gamma_{W,K_f} = K_f \cap G_W(\mathbf{Q})$.  Then the double quotients $S_{K_f} = G(\mathbf{Q}) \backslash X \times G(\mathbf{A}_f) / K_f$ and $C_{W,K_f} = G_W(\mathbf{Q}) \backslash X_W\times G_W(\mathbf{A}_f) / K_{W,f}$ naturally possess the structure of complex varieties.  Here, $\gamma \in G(\mathbf{Q})$ acts by conjugating the map $h_x \in X$ and acts by left multiplication on $G(\mathbf{A}_f)$, and $K_f$ acts trivially on $X$ and acts on $G(\mathbf{A}_f)$ by right multiplication.  The double quotient for $G_W$ is defined similarly.

The following lemma, which identifies the derived subgroup $\SU(1,1)$ of $H$ with the kernel of a single character, will be useful for studying the connected components of the Shimura varieties.
\begin{lem} \label{lem:obstruction}
We have the following two facts.
\begin{enumerate}
	\item The group $\SU(2,1)$ is the kernel of the surjective map $\det \cdot \nu^{-1}: \GU(2,1) \rightarrow \res^E_\mathbf{Q} \mathbf{G}_m$.
	\item The kernel of $\det|_H \cdot \nu|_H^{-1}: H \rightarrow \res^E_\mathbf{Q} \mathbf{G}_m$ is $\SU(1,1) \subseteq \GU(1,1)'$.
\end{enumerate}
Both of these maps are surjective on points.  Moreover, when written using the form $J_2$, $\GU(1,1)'$ is naturally identified with $\GL_{2/\mathbf{Q}}$.
\end{lem}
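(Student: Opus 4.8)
The plan is to verify each claim by a direct computation with matrices, reducing everything to the following two standard facts about Hermitian forms: for a Hermitian matrix $J$ of size $n$ and $g \in \GU(J)(R)$, applying $\det$ (over $R \otimes \mathfrak{O}_E$) and $*$ to the relation ${}^*gJg = \mu(g)J$ gives $\overline{\det(g)}\det(g) = \mu(g)^n$, i.e. $\mathrm{Nm}^E_\mathbf{Q}(\det g) = \mu(g)^n$; and that $\SU(J)$ is by definition the subgroup on which $\det = 1$ (equivalently, $\mu=1$ and $\det=1$). For $n=3$ this reads $\mathrm{Nm}(\det g) = \mu(g)^3$, so the character $\det \cdot \mu^{-1}$ — which I read as the ``$\nu$'' in the statement, since $\nu$ has not been introduced and $\mu$ is the only natural $\mathbf{G}_m$-valued character around — lands in the kernel of $\mathrm{Nm}^E_\mathbf{Q}$ composed with cubing\dots. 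Actually the cleaner route: $\det\cdot\mu^{-1}$ visibly has image in $\res^E_\mathbf{Q}\mathbf{G}_m$ and its kernel is exactly $\{g : \det g = \mu(g)\}$; one checks that this condition together with $g \in \GU(2,1)$ forces, via $\mathrm{Nm}(\det g) = \mu(g)^3$, that $\mu(g)^3 = \mathrm{Nm}(\mu(g)) = \mu(g)^2$ (as $\mu(g) \in R^\times$, hence $\overline{\mu(g)} = \mu(g)$), so $\mu(g) = 1$ and therefore $\det g = 1$; this is precisely $\SU(2,1)$. Conversely $\SU(2,1) \subseteq \ker(\det\cdot\mu^{-1})$ is immediate. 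This proves (1).

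For (2), the same argument runs in $\GU(1,1)'$, where by definition $\mu = \det$ already, so inside $\GU(1,1)'$ the condition $\det\cdot\mu^{-1} = 1$ is automatic on the $\GU(1,1)$-factor; the content is that the kernel of $\det|_H\cdot\mu|_H^{-1}$ (again reading $\nu|_H$ as $\mu|_H$) is cut out on the $\GU(1)$-factor, and one checks that a pair $(g_1, g_2) \in H = \GU(1)\boxtimes\GU(1,1)'$ with $\det(g_1 g_2)\mu^{-1} = 1$ forces $g_1 = 1$ (using $\mathrm{Nm}(g_1) = \mu(g_1)$ for $\GU(1)$ and $\mu(g_1) = \mu(g_2) = \det g_2$), leaving exactly $\SU(1,1) \subseteq \GU(1,1)'$. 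For surjectivity of both maps on $R$-points for $R$ a field (or more generally after passing to an étale cover), I would exhibit explicit preimages: scale a vector to realize any prescribed similitude, and use the block structure $\diag(A,b)$ to realize any target value of $\det\cdot\mu^{-1}$ in $\res^E_\mathbf{Q}\mathbf{G}_m$; this is a short linear-algebra construction writing down, for a target $t \in E^\times$, an element of $\GU(2,1)$ (resp.\ $H$) with the right determinant-to-similitude ratio.

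The last sentence — that $\GU(1,1)'$, written with respect to $J_2 = \mm{}{\delta^{-1}}{-\delta^{-1}}{}$, is naturally $\GL_{2/\mathbf{Q}}$ — I would prove by writing the defining equations of $\GU(J_2)$ explicitly for a matrix $g = \mm{a}{b}{c}{d}$ with entries in $R\otimes\mathfrak{O}_E$: the relation ${}^*gJ_2g = \mu(g)J_2$ together with $\det g = \mu(g)$ pins down the Galois-conjugate entries $\bar a,\bar b,\bar c,\bar d$ as polynomial functions of $a,b,c,d$ and $\mu(g)^{-1}$, so that $g$ is determined by its ``holomorphic'' block $\mm{a}{b}{c}{d} \in \GL_2(R)$, and conversely every element of $\GL_2(R)$ arises; one must check this identification is a group isomorphism of algebraic groups over $\mathbf{Q}$, which follows since the formulas are defined over $\mathbf{Q}$ and multiplicative. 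This is essentially the standard fact that a quasi-split $\U(1,1)$-similitude group is an inner (here, actual) form of $\GL_2$ via the ``first projection'' used already in Section \ref{subsec:introhigherchow} to map $H$ to $\GL_2$, so the identification here must be made compatibly with that.

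The main obstacle I expect is purely bookkeeping: keeping the conjugation $\bar{\cdot}$, the similitude $\mu$, and the two different Hermitian forms ($J$ vs.\ $J_2$ vs.\ their $J'$, $J_2'$ avatars over $\mathbf{R}$) straight, and making sure the ``$\nu$'' in the statement is indeed the similitude (or disambiguating if it is meant to be something else, e.g.\ $\nu = \det\circ(\text{base change})$ — though nothing in the excerpt supports that). There is no deep input; the only genuine point of care is that ``surjective on points'' means surjective on $\mathbf{Q}$-points (or on $\mathbf{A}$-points, as used later for connected components), which needs the explicit preimage constructions above rather than just surjectivity as a map of algebraic groups.
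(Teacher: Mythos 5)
Your proposal is correct and follows essentially the same route as the paper: the norm identity $\det(g)\ol{\det(g)}=\nu(g)^3$ forces $\nu(g)=\det(g)=1$ on the kernel, the $\det=\mu$ condition on $\GU(1,1)'$ reduces (2) to the $\GU(1)$ factor, surjectivity is by explicit diagonal preimages (the paper uses $\diag(a\ol{a},a,1)$ and $(a,\diag(a\ol{a},1))$, which is exactly the construction you sketch), and the identification $\GU(1,1)'\cong\GL_{2/\mathbf{Q}}$ comes from the same computation with $J_2$, which the paper phrases as comparing ${}^*gJ_2g=\det(g)J_2$ with the identity ${}^tgJ_2g=\det(g)J_2$ to conclude $g=\ol{g}$. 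Your reading of $\nu$ as the similitude character is the intended one.
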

\begin{proof}
If $\det(g) = \nu(g)$, taking norms of both sides yields $\det(g)\ol{\det(g)} = \nu(g)^2$.  Meanwhile, on $\GU(2,1)$, we always have the identity $\det(g)\ol{\det(g)} = \nu(g)^3$.   It follows that $\nu(g)=\det(g)=1$.

The kernel of the restriction of $\det \cdot \nu^{-1}$ to $H$ is the same as the intersection of $H$ with the kernel of $\det \cdot \nu^{-1}$, so $\nu$ and $\det$ must both be trivial by the above.  The kernel of $\nu$ is $\U(1) \times \SU(1,1)$ due to the $\det=\nu$ condition on $\GU(1,1)'$, and the kernel of $\det$ on this subgroup is just $\SU(1,1)$.

To see that the map is surjective, observe that $\diag(a\ol{a},a,1) \in \GU(2,1)(R)$ and $(a,\diag(a\ol{a},1)) \in H(R)$ map to any given $a \in \res^E_\mathbf{Q} \mathbf{G}_m(R)$.

For the final claim, observe that by definition, ${}^*g J_2 g = \mu(g) J_2 = \det(g) J_2$ for $g \in \GU(1,1)'$.  On the other hand, ${}^tg J_2 g = \det(g)J_2$ for any $g \in \mathrm{Res}^E_\mathbf{Q}\GL_2$.  Therefore $g=\ol{g}$ as needed.
\end{proof}

Note that $G(\mathbf{R})$ and $G_W(\mathbf{R})$ are both connected.  (The connectivity of $G_W(\mathbf{R})$ is due to the condition denoted $\boxtimes$; $\GU(1,1)'(\mathbf{R})$ is disconnected.)  Let $U=\res_\mathbf{Q}^E \mathbf{G}_m$ and $\eta=\det \cdot \nu^{-1}: \GU(2,1) \rightarrow U$. Using Lemma \ref{lem:obstruction}, it is then a standard fact that since $G(\mathbf{R})$ and $G_W(\mathbf{R})$ are connected and the derived groups of $G$ and $G_W$ are simply connected, the connected components of the Shimura varieties $S_{K_f}$ and $C_{W,K_{W,f}}$ are parametrized by
\[U(\mathbf{Q}) \backslash U(\mathbf{A}_f) / \eta(K_f) \textrm{ and }U(\mathbf{Q}) \backslash U(\mathbf{A}_f) / \eta(K_{W,f}),\]
respectively.  Moreover, the map $\eta$ on the factor $G(\mathbf{A}_f)$ or $G_W(\mathbf{A}_f)$ defines the image of a point of $S_{K_f}$ or $C_{W,K_f}$, respectively.

To simplify notation, we fix a neat $K_f = \prod_{v \ne \infty} K_v$ for the remainder of the entire section, and drop $K_f$ from the subscripts throughout.  The embedding $G_W \rightarrow G$ induces natural maps $\iota_W^\circ:C_W \rightarrow S$ defined on double cosets in the obvious way, and the corresponding cycles are defined over $E$.  (See, for instance, \cite{harris}.)  We would like to determine whether the image of this map is the same for two choices $W,W' \in \Xi$.
\begin{prop} \label{prop:cwequiv}
Let $\gamma \in G(\mathbf{Q})$ take $W$ to $W'$.  Then the following are equivalent.
\begin{enumerate}
	\item The images of $C_W$ and $C_{W'}$ in $S$ are the same.
	\item The intersection of the images of $C_W$ and $C_{W'}$ in $S$ is of dimension 1.
	\item The intersection $K_f \cap \gamma G_W(\mathbf{A}_f)$ is nonempty.
\end{enumerate}
\end{prop}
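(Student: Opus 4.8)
The plan is to establish the cycle of implications (1) $\Rightarrow$ (2) $\Rightarrow$ (3) $\Rightarrow$ (1), unwinding the description of the images of $C_W$ and $C_{W'}$ in $S$ as subsets of the double quotient $G(\mathbf{Q}) \backslash X \times G(\mathbf{A}_f)/K_f$. Since $\gamma$ conjugates $G_W$ to $G_{W'}$ and carries $X_W$ into $X_{W'}$, the image of $C_{W'}$ is the same as the image of the translated cycle: a point $[x,gK_f] \in S$ lies in the image of $\iota_{W'}^\circ$ if and only if it lies in the image of the map sending $C_W$ via $g_0 \mapsto [\gamma \cdot g_0]$ (acting on both the Archimedean and finite components). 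Thus the image of $C_{W'}$ equals $G(\mathbf{Q}) \backslash (G(\mathbf{Q}) \cdot X_W \times \gamma G_W(\mathbf{A}_f) K_f)/K_f$, while the image of $C_W$ is $G(\mathbf{Q}) \backslash (G(\mathbf{Q}) \cdot X_W \times G_W(\mathbf{A}_f) K_f)/K_f$. The implication (1) $\Rightarrow$ (2) is immediate since both images are irreducible curves (the components are one-dimensional and $C_W$ is nonempty after our neatness assumption), so equality forces the intersection to be one-dimensional.

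For (2) $\Rightarrow$ (3): suppose the intersection is a curve. At the level of the Archimedean symmetric space, $X_W$ and $X_{W'} = \gamma X_W$ are both complex submanifolds of $X$; an intersection of positive dimension between their images in $S$ must, after possibly replacing $\gamma$ by $\delta \gamma$ for some $\delta \in G(\mathbf{Q})$ stabilizing the relevant component, force $\delta\gamma$ to preserve $X_W$ — this is where one uses that the connected components are controlled by $\eta$ via Lemma \ref{lem:obstruction}, together with the rigidity of Shimura subvarieties (two distinct $G_W$-special curves through a common point that share a one-dimensional intersection must coincide, since a connected complex curve is determined by any arc). Once $\delta\gamma$ normalizes $G_W$ and preserves $X_W$, matching up the finite components of a point in the intersection produces $h \in G_W(\mathbf{A}_f)$, $k \in K_f$, and $q \in G(\mathbf{Q})$ with $q \cdot h = \gamma^{-1} \cdot (\text{something}) \cdot k$; chasing the definitions and using that the $G(\mathbf{Q})$-component can be absorbed into $G_W(\mathbf{Q})$ by the normalization, this yields $K_f \cap \gamma G_W(\mathbf{A}_f) \neq \emptyset$. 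For (3) $\Rightarrow$ (1): if $k = \gamma g_0$ with $k \in K_f$ and $g_0 \in G_W(\mathbf{A}_f)$, then for any $[x, g K_f]$ in the image of $C_{W'}$ we may write it as $[\gamma x_0, \gamma g_1 K_f]$ with $x_0 \in X_W$, $g_1 \in G_W(\mathbf{A}_f)$; then $\gamma g_1 K_f = \gamma g_1 g_0^{-1} \gamma^{-1} k K_f = (\gamma g_1 g_0^{-1}\gamma^{-1}) K_f$, and since $\gamma g_1 g_0^{-1} \gamma^{-1} \in G_{W'}(\mathbf{A}_f) = \gamma G_W(\mathbf{A}_f)\gamma^{-1}$ we must still track the Archimedean part — here one uses $\gamma x_0 \in X_{W'}$ and that the class only depends on the $G_{W'}(\mathbf{A}_f)$-orbit to land back in the image of $C_W$. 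The cleanest route is to show directly that $\gamma G_W(\mathbf{A}_f) K_f = G_W(\mathbf{A}_f) K_f$ follows from (3) (since $\gamma \in G_W(\mathbf{A}_f) K_f$ implies $\gamma G_W(\mathbf{A}_f) K_f \subseteq G_W(\mathbf{A}_f) K_f G_W(\mathbf{A}_f) K_f$; but note $G_W(\mathbf{A}_f)$ and $K_f$ need not normalize each other, so one works instead with the bi-invariant structure), and that this equality of adelic double cosets, combined with $X_{W'} = \gamma X_W$ and $G(\mathbf{Q})$-equivariance, gives equality of the images.

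The main obstacle I expect is the Archimedean bookkeeping in (2) $\Rightarrow$ (3): making precise the rigidity statement that a positive-dimensional intersection of the images of two special curves — which a priori could meet transversally at countably many points and only become a curve after the $G(\mathbf{Q})$-quotient — actually forces an element of $G(\mathbf{Q})$ relating $G_W$ and $G_{W'}$ that simultaneously respects the symmetric space and can be combined with $\gamma$. One must carefully argue that if the images of $C_W$ and $C_{W'}$ share an irreducible component, then lifting that component to $X \times G(\mathbf{A}_f)$ and comparing with the lifts of $C_W$ and $C_{W'}$ produces the desired $q \in G(\mathbf{Q})$ with $q \gamma \in $ (stabilizer of $W$) $\cdot (G_W(\mathbf{A}_f) K_f)$, at which point condition (3) follows by absorbing the rational stabilizer — which by Proposition \ref{prop:w} and the structure of $G_W$ lies in $G_W(\mathbf{Q})$ up to the center — into $G_W(\mathbf{A}_f)$. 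The implications (1) $\Rightarrow$ (2) and (3) $\Rightarrow$ (1) should be comparatively routine once the orbit descriptions of the images are set up correctly.
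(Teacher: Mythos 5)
The decisive implication here is (2) $\Rightarrow$ (3), and your proposal does not actually prove it: you yourself flag as the ``main obstacle'' the possibility that the intersection only becomes one-dimensional after the $G(\mathbf{Q})$-quotient, and then leave the resolution as ``one must carefully argue that \dots''. An appeal to ``rigidity of Shimura subvarieties'' is not a substitute, because the issue is precisely how to lift a one-dimensional component of the intersection in $S$ to a one-dimensional intersection of two specific sheets upstairs. The paper closes this gap with a short countability argument: the preimages in $X \times G(\mathbf{A}_f)/K_f$ of the two images are the $G(\mathbf{Q})$-saturations $G(\mathbf{Q})X_W\times G_W(\mathbf{A}_f)K_f/K_f$ and $G(\mathbf{Q})X_{W'}\times G_{W'}(\mathbf{A}_f)K_f/K_f$, each a \emph{countable} union of sheets $X_{W''}\times gK_f$ (countability comes from $G(\mathbf{Q})$ and $G_W(\mathbf{A}_f)/(K_f\cap G_W(\mathbf{A}_f))$ being countable); and two sheets $X_{W_1}\times gK_f$, $X_{W_2}\times gK_f$ are either disjoint, a single point, or equal, since $X_{W_i}$ is the set of negative lines orthogonal to $W_i$. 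Hence a dimension-one intersection forces two sheets to coincide, producing $\alpha,\alpha'\in G(\mathbf{Q})$ with $\alpha'\alpha^{-1}W=W'$ and matching finite parts, from which $K_f\cap\gamma G_W(\mathbf{A}_f)\neq\emptyset$ follows after absorbing the rational stabilizer of $W$. Without this (or an equivalent) argument, your (2) $\Rightarrow$ (3) is a statement of intent, not a proof.

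Your (3) $\Rightarrow$ (1) also needs repair. The ``cleanest route'' you propose, namely deducing $\gamma G_W(\mathbf{A}_f)K_f = G_W(\mathbf{A}_f)K_f$ from (3), does not follow (as you note, $K_f$ need not normalize $G_W(\mathbf{A}_f)$), and in the direct computation you write a point of the image of $C_{W'}$ as $[\gamma x_0,\gamma g_1 K_f]$ with $g_1\in G_W(\mathbf{A}_f)$, conflating $\gamma G_W(\mathbf{A}_f)$ with $G_{W'}(\mathbf{A}_f)=\gamma G_W(\mathbf{A}_f)\gamma^{-1}$; the subsequent manipulation then lands you back in the image of $C_{W'}$ rather than that of $C_W$. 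The correct two-line computation, as in the paper, is to start from the image of $C_W$, written as $G(\mathbf{Q})\left(X_W\times G_W(\mathbf{A}_f)\right)K_f$, right-translate by $k^{-1}\in K_f$ and left-translate by $\gamma\in G(\mathbf{Q})$ (using $k=\gamma g$ with $g\in G_W(\mathbf{A}_f)$) to obtain $G(\mathbf{Q})\left(X_{W'}\times \gamma G_W(\mathbf{A}_f)\gamma^{-1}\right)K_f = G(\mathbf{Q})\left(X_{W'}\times G_{W'}(\mathbf{A}_f)\right)K_f$, which is the image of $C_{W'}$.
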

\begin{proof}
Item (1) clearly implies (2).  To see that (2) implies (3), assume that the dimension of $\iota_W(C_W) \cap \iota_{W'}(C_{W'})$ is 1.  Consider, as subsets of $X \times G(\mathbf{A}_f)/K_f$,
\[G(\mathbf{Q})X_W\times G_W(\mathbf{A}_f)K_f/K_f \textrm{ and } G(\mathbf{Q})X_{W'}\times G_{W'}(\mathbf{A}_f)K_f/K_f.\]
Since $K_f$ acts only on the $G_W$ (or $G_{W'}$) factor, and $G_W(\mathbf{A}_f)/(K_f \cap G_W(\mathbf{A}_f))$ is countable, both sides are a countable union of sets of the form $X_{W''} \times gK_f$ for various $W''$.  For $W_1,W_2 \in \Xi$, the possibilities for $X_{W_1} \times gK_f \cap X_{W_2} \times gK_f$ are either the empty set, a single point, or $X_{W_1} \times gK_f = X_{W_2} \times gK_f$, as is easy to see from the realization of $X_{W_i}$ as the set of negative lines in $V \otimes_E \mathbf{C}$ perpendicular to $W_i$.  In particular, the intersection $\iota_W(C_W) \cap \iota_{W'}(C_{W'})$ can only have countable size unless there exist $\alpha,\alpha' \in G(\mathbf{Q})$, $g\in G_W(\mathbf{A}_f)$, and $g' \in G_{W'}(\mathbf{A}_f)$ such that $X_{\alpha^{-1} W}\times \alpha^{-1}gK_f = X_{\alpha^{\prime -1} W'} \times \alpha^{\prime -1}g'K_f$.  Therefore, $\alpha'\alpha^{-1} W= W'$ and $\alpha'\alpha^{-1} gk = g'k'$ for some $k,k' \in K_f$.  Since $\alpha\alpha^{\prime -1}$ differs from the given element $\gamma$ by an element of $G_W(\mathbf{Q})$ on the right, we obtain $g^{\prime -1}\gamma h \in K_f$, where $h \in G_W(\mathbf{A}_f)$.  We can rewrite this as $\gamma (\gamma^{-1} g^{\prime -1}\gamma) h$, which is an element of $\gamma G_W(\mathbf{A}_f)$ as needed.

To see that (3) implies (1), if $K_f \cap \gamma G_W(\mathbf{A}_f)$ is nonempty, we can write $k =\gamma g$ for $k \in K_f$, $g \in G_W(\mathbf{A}_f)$, and $\gamma \in G(\mathbf{Q})$ with $\gamma W = W'$.  Then $\gamma X_W\times G_W(\mathbf{A}_f) k^{-1} = X_{W'} \times \gamma G_W(\mathbf{A}_f)g^{-1}\gamma^{-1} = X_{W'} \times G_{W'}(\mathbf{A}_f)$, so the images of $C_W$ and $C_{W'}$ are the same.
\end{proof}

We can reinterpret the condition of Proposition \ref{prop:cwequiv} as follows.  Let $V(\mathbf{A}_f) = V \otimes_\mathbf{Q} \mathbf{A}_f$.  There is a natural diagonal embedding $V \inj V(\mathbf{A}_f)$.  This induces an embedding of $\Xi \subseteq \mathbf{P}(V)^+ \inj \mathbf{P}(V(\mathbf{A}_f))$.  The space $\mathbf{P}(V(\mathbf{A}_f))$ admits an action by left multiplication by $G(\mathbf{A}_f)$ and thus $K_f$.  Then the result of the proposition is that two elements of $\Xi$ are in the same $K_f$-orbit in $\mathbf{P}(V(\mathbf{A}_f))$ if and only if the images of $C_W$ and $C_{W'}$ in $S$ are the same.

We can associate an invariant to $W \in \Xi$.  For any $w \in W$ and prime $p$ of $\mathbf{Z}$, we define a fractional $\mathbf{Z}_p$-ideal
\[\mathfrak{I}_{W,p} = \frac{(\gen{w,\mathfrak{L}_p})_p(\ol{\gen{w,\mathfrak{L}_p}})_p}{(\gen{w,w})_p} \cap \mathbf{Q}_p,\]
where $\mathfrak{L}_p$ is the $\mathbf{Z}_p$-lattice generated by the fixed standard basis of $V$ and $(\gen{w,\mathfrak{L}_p})_p$ is the fractional ideal in $\mathbf{Q}_p \otimes_\mathbf{Q} E$ generated by $\set{\gen{w,v}: v \in \mathfrak{L}_p}$.  It is easy to see that $\mathfrak{I}_{W,p}$ is independent of the choice of $w \in W$ and preserved by the equivalence in Proposition \ref{prop:cwequiv}.  We write $\mathfrak{I}_W = \prod_p \mathfrak{I}_{W,p} \subseteq \mathbf{A}_f$.

\begin{prop}
There are infinitely many distinct curves $C_W$ in $S$.
\end{prop}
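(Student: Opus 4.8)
The plan is to produce infinitely many elements $W \in \Xi$ whose invariants $\mathfrak{I}_W \subseteq \mathbf{A}_f$ are pairwise distinct, since by the remark following Proposition \ref{prop:cwequiv} together with the fact that $\mathfrak{I}_W$ is preserved by the equivalence there, distinct invariants force distinct images $C_W$ in $S$. First I would observe that the invariant $\mathfrak{I}_{W,p}$ is trivial (equal to $\mathbf{Z}_p$) for all but finitely many $p$ and that, as $W$ ranges over $\Xi$, the product invariant $\mathfrak{I}_W$ takes infinitely many values; so it suffices to exhibit, for infinitely many primes $p$, an element $W^{(p)} \in \Xi$ with $\mathfrak{I}_{W^{(p)},p} \ne \mathbf{Z}_p$ while $\mathfrak{I}_{W^{(p)},q} = \mathbf{Z}_q$ for $q \ne p$.

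The concrete construction: fix the base vector $w_0 = {}^t(0,1,0)$, which lies in $W_0 = E\cdot w_0 \in \Xi$ (this is the distinguished $W$ singled out at the end of Section \ref{subsec:maps}), and has norm $\langle w_0, w_0 \rangle = 1$, manifestly in $\mathrm{Nm}_\mathbf{Q}^E(E^\times)$. For a prime $p$ that splits or is inert in $E$ (so that $\mathrm{Nm}_\mathbf{Q}^E$ is surjective onto $\mathbf{Q}_p^\times$, or at least hits units $p$-adically as needed), I would replace $w_0$ by a nearby vector $w_p \in V$ that is congruent to $w_0$ modulo a high power of every prime $q \ne p$ but has been scaled or perturbed at $p$ so that $(\langle w_p, \mathfrak{L}_q \rangle)_q (\overline{\langle w_p, \mathfrak{L}_q\rangle})_q / (\langle w_p, w_p\rangle)_q = \mathbf{Z}_q$ for $q \ne p$ while at $p$ the ratio defining $\mathfrak{I}_{W_p, p}$ is a proper power $p^{k}\mathbf{Z}_p$ with $k \ge 1$; the norm of $w_p$ must be kept in the image of $\mathrm{Nm}_\mathbf{Q}^E$ so that $W_p = E \cdot w_p$ genuinely lies in $\Xi$ by Proposition \ref{prop:w}.(3). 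Concretely one can take $w_p$ of the form ${}^t(a, b, c)$ with $a, c$ $p$-adically small and $q$-adically trivial for $q\neq p$ and $b$ a suitable unit, arranging the pairing $\langle w_p, \mathfrak{L}_p \rangle$ to generate a nonunit ideal while $\langle w_p, w_p\rangle$ stays a unit at $p$; the explicit shape of $J$ in \eqref{eqn:jdef} makes these $p$-adic computations routine. Since the $\mathfrak{I}_{W_p,p}$ are concentrated at distinct primes $p$, the invariants $\mathfrak{I}_{W_p}$ are pairwise distinct, so the curves $C_{W_p}$ are pairwise distinct in $S$, and there are infinitely many of them.

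The main obstacle I anticipate is the interplay between two constraints on $w_p$: the membership condition $\langle w_p, w_p\rangle \in \mathrm{Nm}_\mathbf{Q}^E(E^\times)$ (a global condition, though by Hasse--Minkowski checkable place by place, and automatic at the infinite place since $w_p$ should stay positive) and the requirement that the local invariant be a genuine nonunit ideal at $p$ but trivial everywhere else. One must pick $p$ so that perturbing $w_0$ at $p$ does not destroy positivity or the norm condition at any other place — this is where restricting to $p$ split in $E$ (where $\mathrm{Nm}_\mathbf{Q}^E$ is locally surjective) is cleanest, and there are infinitely many such $p$ by Chebotarev. A secondary technical point is verifying that $\mathfrak{I}_{W,p}$ really is independent of the choice of representative $w \in W$ and invariant under the $K_f$-action (both asserted just before the Proposition), which one uses to conclude that distinct $\mathfrak{I}_W$ rule out equality of images; granting those facts, the argument closes.
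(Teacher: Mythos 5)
Your overall strategy is the same as the paper's: distinguish the curves $C_W$ by showing the invariant $\mathfrak{I}_W$ takes infinitely many values, using its invariance under the equivalence of Proposition \ref{prop:cwequiv}. However, your concrete construction has a genuine gap. If $w_p={}^t(a,b,c)$ with $b$ a $p$-adic unit, then the pairing of $w_p$ with the basis vector $e_2$ is (up to conjugation) $b$ itself, so the ideal $(\langle w_p,\mathfrak{L}_p\rangle)_p$ contains a unit and is forced to be the unit ideal -- it can never be the ``nonunit ideal'' you ask for. Worse, with $a,c$ $p$-adically small and $b$ a unit, the norm $\langle w_p,w_p\rangle = b\ol{b}+\delta^{-1}(\ol{a}c-\ol{c}a)$ is a $p$-adic unit, so the ratio defining $\mathfrak{I}_{W_p,p}$ is exactly $\mathbf{Z}_p$: your recipe produces a \emph{trivial} invariant at $p$, the opposite of what you need. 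In fact, at a prime $p\nmid 2D$ and for a primitive integral representative $w$, the pairing ideal is automatically the unit ideal, so $\mathfrak{I}_{W,p}=p^{-v}\mathbf{Z}_p$ with $v=\mathrm{val}_p\langle w,w\rangle\ge 0$; a ``proper power $p^k\mathbf{Z}_p$ with $k\ge 1$'' is not attainable this way, and non-triviality requires the norm of a primitive vector to be divisible by $p$ (equivalently, a representative that is non-integral relative to its norm), which your shape of $w_p$ excludes.

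The fix is simpler than your plan suggests, and it is what the paper does: there is no need to vary the prime, nor to invoke Chebotarev or Hasse--Minkowski to control the global norm condition. Fix one large odd prime $p$ and take $w=(-\frac{\delta}{p^k},0,\frac{p^k}{2})$ for varying $k$; then $\langle w,w\rangle=1$, so $W=E\cdot w$ lies in $\Xi$ by Proposition \ref{prop:w}.(3) with no further argument, while $(\langle w,\mathfrak{L}_p\rangle)_p(\ol{\langle w,\mathfrak{L}_p\rangle})_p$ is generated by a negative power of $p$ depending on $k$, so the invariants $\mathfrak{I}_{W,p}$ are pairwise distinct as $k$ varies. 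Your secondary worries (scale-independence of $\mathfrak{I}_{W,p}$ and its invariance under the $K_f$-action) are indeed the facts asserted before the proposition and are used in the same way by the paper; the missing piece in your write-up is an explicit vector whose invariant is actually non-trivial.
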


\begin{proof}
We need only check that $\mathfrak{I}_W$ can take on infinitely many values.  Fix a large odd prime $p$.  Let $w=(-\frac{\delta}{p^k},0,\frac{p^k}{2})$ for $k \in \mathbf{Z}$.  Then $\langle w,w \rangle = 1$, so the line $W$ generated by $w$ is in $\Xi$.  Moreover, $(\gen{w,\mathfrak{L}_p})_p(\ol{\gen{w,\mathfrak{L}_p}})_p$ is the fractional ideal generated by $p^{-k}$.  Letting $k$ vary gives distinct possibilities for $\mathfrak{I}_{W,p}$.
\end{proof}

\subsection{Cycles on compactified Picard modular surfaces} \label{subsec:embeddings}

We extend the discussion of Section \ref{subsec:shimura} to the Picard modular surface obtained by compactifying $S$.  In particular, we describe the embedded $\GU(1,1)$-Shimura subvarieties on $S$.  The cusps of $S=S_{K_f}$ are in natural bijection with the set $\sigma=\sigma_{K_f} = B(\mathbf{Q}) \backslash G(\mathbf{A}_f) / K_f$, where $B \subseteq \GU(2,1)$ is the upper-triangular Borel subgroup with respect to the form $J$ and the map $B(\mathbf{Q}) \inj G(\mathbf{A}_f)$ is the diagonal embedding.

An alternative characterization of the cusps uses isotropic lines in $V$.  In particular, note that $\mathbf{P}(V)^0 \cong B(\mathbf{Q}) \backslash G(\mathbf{Q})$, where the isomorphism is induced by letting $\gamma \in G(\mathbf{Q})$ act on $V$ by multiplying by $\gamma^{-1}$, which identifies $B(\mathbf{Q})$ as the stabilizer of the isotropic line $E\cdot{}^t(1,0,0) = L_0 \in \mathbf{P}(V)^0$.  We obtain an isomorphism
\[f: G(\mathbf{Q}) \backslash \mathbf{P}(V)^0 \times G(\mathbf{A}_f) / K_f \rightarrow \sigma_{K_f},\]
where $\gamma \in G(\mathbf{Q})$ acts by $(Bg,h)\mapsto (Bg\gamma^{-1},\gamma h)$, $k \in K_f$ acts trivially on $\mathbf{P}(V)^0$ and by right multiplication on $G(\mathbf{A}_f)$, and the map $f$ is given by $(Bg,h)\mapsto gh$.  We write $\sigma_W$ for the set of cusps of $C_W$.  These cusps are in bijection with $G_W(\mathbf{Q}) \backslash \mathbf{P}(W^\perp)^0 \times G_W(\mathbf{A}_f) / K_{W,f}$.

The surface $S$ has a canonical toroidal compactification $\ol{S}$.  See Larsen \cite{larsen} for a detailed discussion of the construction of this compactification and its geometry.  We also have compactifications $\ol{C}_W$, and $\sigma_W$ is naturally the boundary $\ol{C}_W \setminus C_W$.  By considering the closure of the image of the open curve $C_W$ in $\ol{S}$, we obtain a map $\iota_W: \ol{C}_W \rightarrow \ol{S}_W$ extending $\iota_W^\circ$.  Write $K_\mathrm{max} = G(\widehat{\mathbf{Z}})$.  The exceptional divisors of $\ol{S}$ over each point of the boundary of the Baily-Borel compactification $\ol{S}^{\mathrm{min}}$ of $S$ are smooth genus one curves $E_j$ for $j \in \sigma$, and at level $K_f = K_\mathrm{max}$, they have a distinguished identity point.  At higher level, this is no longer the case.

In Section \ref{subsec:imagecusp}, we will write $E_j$ in coordinates.  We explicitly calculate the image under $\iota_W$ of a cusp in a given $\sigma_W$.

\subsection{Beilinson's regulator} \label{subsec:beilinson}

We introduce some definitions related to Beilinson's conjecture for the Archimedean regulator.  We continue to write $\ol{S}$ for the compactified Picard modular surface, and for concreteness, make definitions for this case only.

\begin{defin}
By restriction of scalars from $E$ to $\mathbf{Q}$, we always regard $\ol{S}$ as being defined over $\mathbf{Q}$.  (So $\ol{S}(\mathbf{C})$ now consists of two copies of the original surface.)
\end{defin}

It follows from results of Bloch \cite{bloch} and Levine \cite{levine} that Beilinson's motivic cohomology group $H^3_\mathcal{M}(\ol{S},\mathbf{Q}(2))$ can be realized as a higher Chow group $\mathrm{Ch}^2(\ol{S},1)$.  In Bloch's definition, elements of $\mathrm{Ch}^2(\ol{S},1)$ may be represented by cycles inside the product of $\ol{S}$ with a 1-simplex.  We will use a slightly different definition here, which can be identified with Bloch's definition -- see \cite[\S 5]{ms} for details on this identification.

\begin{definition} \label{def:higherchow}
Let $F$ be a number field.  The group $\mathrm{Ch}^2(\ol{S}_{/F},1)$ is the quotient of $Z^2(\ol{S}_{/F},1)$ by $B^2(\ol{S}_{/F},1)$, where these $\mathbf{Q}$-vector spaces have the following definitions.
\begin{itemize}
	\item An element of $Z^2(\ol{S},1)$ is a $G_F$-invariant formal sum $\sum_i a_i(D_i,f_i)$ of codimension 1 irreducible $\ol{\mathbf{Q}}$-rational cycles $D_i$ on $\ol{S}$ and rational functions $f_i$ on $D_i$ with coefficients $a_i \in \mathbf{Q}$ so that
	\begin{equation} \label{eqn:vancond} \sum_i a_i\mathrm{div}(f_i)=0\end{equation}
	as a cycle of codimension 2.  We also regard $n(D,f)$ to be the same as $(D,f^n)$ for $n \in \mathbf{Z}$.  By $G_F$-equivariant, we require that the entire sum is carried to itself by any $\sigma \in G_F$.
	\item The subspace $B^2(\ol{S},1) \subseteq Z^2(\ol{S},1)$ is the $\mathbf{Q}$-linear span of the ``tame symbols'', defined as follows.  Let $S'$ be an irreducible component of $\ol{S}$ over $F$ and let $(g,h)$ be a pair of non-zero rational functions on $S'$ defined over $F$.  To this pair we attach the sum $\sum_i (D_i,f_{D_i(g,h)})$, where the $D_i$'s are all the poles and roots of $g$ and $h$, and
	\[f_{D_i(g,h)} = (-1)^{\ord_{D_i}(g)\ord_{D_i}(h)}\left.\(\frac{g^{\ord_{D_i}(h)}}{h^{\ord_{D_i}(g)}}\)\right|_{D_i}.\]
\end{itemize}
We write $Z^2(\ol{S},1) = Z^2(\ol{S}_{/\mathbf{Q}},1)$, $B^2(\ol{S},1) = B^2(\ol{S}_{/\mathbf{Q}},1)$, and $\mathrm{Ch}^2(\ol{S},1)=\mathrm{Ch}^2(\ol{S}_{/\mathbf{Q}},1)$.
\end{definition}

As mentioned in the introduction, we will not be considering the integrality of the classes we construct, nor will we consider the question of whether these classes exhaust the higher Chow group.  However, our calculations below are closely related to Beilinson's conjecture for the value at $s=0$ of the $L$-function of $\pi$.  Note that we are using automorphic normalizations, so $s=0$ is the left near-central point.

Beilinson's conjecture concerns a regulator morphism
\[\mathrm{Reg}_\mathrm{Beil}: H^3_\mathcal{M}(\ol{S},\mathbf{Q}(2)) \rightarrow H^3_\mathcal{D}(\ol{S}_{/\mathbf{R}},\mathbf{R}(2)).\]
The group $H^3_\mathcal{D}(\ol{S}_{/\mathbf{R}},\mathbf{R}(2))$ is a Deligne cohomology group carrying a canonical volume modulo $\mathbf{Q}^\times$, which we will not define.  (See Schneider \cite{schneider} for a survey of this conjecture and relevant definitions.)

Using the explicit realization of elements of the higher Chow group above, it is possible to give an alternative definition of a regulator morphism via integration of differential forms -- see Lewis \cite[Example 8.11]{lewis} for more details.  In our setting, our regulator morphism group will be valued in the space $H^{1,1}(\ol{S}(\mathbf{C}),\mathbf{C})^\vee$ dual to the group of $d$-closed smooth differential $(1,1)$-forms on the $\mathbf{C}$-points of the restriction of scalars of $\ol{S}$ modulo coboundaries (as in \cite[p.\ 32]{lewis}); this target is closely related to that of Beilinson.  We then define the morphism
\[\mathrm{Reg}: \mathrm{Ch}^2(\ol{S}_{/F},1) \rightarrow H^{1,1}(\ol{S}(\mathbf{C}),\mathbf{C})^\vee\]
as follows.  Let $\xi = [\sum_j a_i(D_j,f_j)]\in \mathrm{Ch}^2(\ol{S}_{/F},1)$ and let $\omega$ be a $d$-closed smooth differential $(1,1)$-form on $\ol{S}$.  Then
\[\mathrm{Reg}(\xi)(\omega) = \langle \mathrm{Reg}(\xi),\omega \rangle = \sum_j \int_{D_j} \log |f_j| \omega.\]

\subsection{The image of a cusp of an embedded modular curve} \label{subsec:imagecusp}

Each $\iota_W$ sends the points of $\sigma_W$ to points on the various curves $E_j$, and we will need to understand these maps explicitly.  

We first prove Proposition \ref{prop:elliptic} below, which merely computes the effect of changing the coordinate in the discussion of \cite[\S 2]{kr}.  (Also see the very similar computations in \cite[\S 2]{cogdell}.)  Let $(L,g) \in \mathbf{P}(V)^0 \times G(\mathbf{A}_f)$ be a representative of a cusp in $\sigma$.  Define $B_L$ to be the stabilizer of $L$ and write $B_L = M_LN_L$ for its Levi decomposition.  Recall that $K_f$ has been assumed neat, and so $B_L(\mathbf{Q}) \cap gK_fg^{-1} = N_L(\mathbf{Q}) \cap gK_fg^{-1}$; define $\Gamma_{L,g}=N_L(\mathbf{Q}) \cap gK_fg^{-1}$.  Write $E_{L,g}$ for the genus one curve in $\ol{S}$ over the cusp $(L,g) \in \sigma$.  Finally, let $\mathbf{v}=\set{v_1,v_2,v_3}$ be a basis of the vector space $V$ over $E$ such that $v_1 \in L$ and our fixed Hermitian form has the same form (\ref{eqn:jdef}) when rewritten in this basis.

We give a description of $E_{L,g}$ by using a coordinate system associated to the choice of $\mathbf{v}=\set{v_1,v_2,v_3}$.
\begin{prop} \label{prop:elliptic}
Define the $\mathbf{Z}$-lattice $\mathfrak{L}_{\mathbf{v},g} \subseteq E$ to be generated by the elements $s(\gamma) \in \mathbf{C}$ for $\gamma \in \Gamma_{L,g}$, where $s(\gamma)$ is defined by writing
\begin{equation}\label{eqn:gammaeq}\gamma = \(\begin{array}{ccc} 1 & \ol{s(\gamma)}\delta & r(\gamma)+s(\gamma)\ol{s(\gamma)}\frac{\delta}{2}\\  & 1 & s(\gamma) \\ & & 1 \end{array}\)\textrm{ for }r(\gamma) \in \mathbf{Q}\textrm{ and } s(\gamma) \in E\end{equation}
(The matrix is written using the basis $\mathbf{v}$.)  Then there is an isomorphism $\eta_{\mathbf{v},g}: \mathbf{C}/\mathfrak{L}_{\mathbf{v},g} \rightarrow E_{L,g}$.

Given a choice of a second $v_1' \in L$ and a basis $\mathbf{v}'$ extending $v_1'$, one can explicitly identify the map $\eta_{\mathbf{v}',g}^{-1} \circ \eta_{\mathbf{v},g}$.

Suppose that $(L,g)$ and $\mathbf{v}$ are given.  Given another $(L',g')$ representing the same cusp, one can choose a coordinate system $\mathbf{v}'$ and explicitly identify $E_{L,g}$ with $E_{L',g'}$ via their respective identifications with $\mathbf{C}/\mathfrak{L}_{\mathbf{v},g}$ and $\mathbf{C}/\mathfrak{L}_{\mathbf{v}',g'}$.
\end{prop}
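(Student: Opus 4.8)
The plan is to reduce the whole statement to the explicit local description of the toroidal compactification near a cusp, essentially as in \cite[\S 2]{kr} (compare \cite[\S 2]{cogdell}), and then to carry out the linear algebra needed to transport that description between coordinate systems. First I would fix the cusp representative $(L,g)$ and the basis $\mathbf{v}$, so that in the coordinates of $\mathbf{v}$ the form is \eqref{eqn:jdef}, the stabilizer $B_L$ of $L$ is upper triangular, and its unipotent radical $N_L$ is the Heisenberg group whose elements are written as in \eqref{eqn:gammaeq} with $(r,s)\in\mathbf{Q}\times E$, with centre $Z(N_L)=\{s=0\}$ parametrised by $r$. In the Siegel-domain realization of $X$ near $L$ the group $N_L$ acts by translations; filling in the punctured disc in the $Z(N_L)$-direction and passing to the quotient by $\Gamma_{L,g}$ produces the boundary divisor $E_{L,g}$. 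The essential point is that $Z(N_L)$ is absorbed into the normal direction to $E_{L,g}$, so $E_{L,g}$ is the quotient of the complex line of $s$-coordinates by the image of $\Gamma_{L,g}$ under $\gamma\mapsto s(\gamma)$; because conjugation in the Heisenberg group $N_L$ is trivial on the abelianization $N_L/Z(N_L)\cong E$, this image is a subgroup, and neatness of $K_f$ together with arithmeticity of $\Gamma_{L,g}$ forces it to be a full rank-two lattice $\mathfrak{L}_{\mathbf{v},g}\subseteq E\subseteq\mathbf{C}$. This yields $\eta_{\mathbf{v},g}\colon \mathbf{C}/\mathfrak{L}_{\mathbf{v},g}\iso E_{L,g}$; since $\mathfrak{L}_{\mathbf{v},g}$ is a module over an order $\mathcal{O}$ in $E$, the curve $E_{L,g}$ has complex multiplication by $E$, consistent with what was said earlier.

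For the second assertion, suppose $\mathbf{v}'$ is another basis with $v_1'\in L$ in which the form is again \eqref{eqn:jdef}. Then the change-of-basis matrix $P$ (with $\mathbf{v}'=\mathbf{v}P$) preserves $L$ and satisfies ${}^*PJP=J$, hence lies in $B_L(E)$ with $\mu(P)=1$; write $P=mn_0$ with $m=\diag(a,b,\ol{a}^{-1})$, $b\ol{b}=1$, in the Levi $M_L$, and $n_0\in N_L(E)$. The matrix of $\gamma\in\Gamma_{L,g}$ in the basis $\mathbf{v}'$ is $P^{-1}\gamma P$; since $n_0$ conjugates trivially on $N_L/Z(N_L)$ while $m$ acts on $N_L/Z(N_L)\cong E$ through the explicit character $s\mapsto b\ol{a}\,s$, one gets $s_{\mathbf{v}'}(\gamma)=\lambda\,s_{\mathbf{v}}(\gamma)$ for an explicit $\lambda=\lambda(\mathbf{v},\mathbf{v}')\in E^\times$ read off from $m$, with no additive term. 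Hence $\mathfrak{L}_{\mathbf{v}',g}=\lambda\,\mathfrak{L}_{\mathbf{v},g}$ and $\eta_{\mathbf{v}',g}^{-1}\circ\eta_{\mathbf{v},g}$ is multiplication by $\lambda$ on $\mathbf{C}$.

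For the third assertion, if $(L',g')$ represents the same cusp, write $L'=\gamma L$, $g'=\gamma g k$ with $\gamma\in G(\mathbf{Q})$, $k\in K_f$; then $N_{L'}=\gamma N_L\gamma^{-1}$, $\Gamma_{L',g'}=\gamma\Gamma_{L,g}\gamma^{-1}$, and $E_{L,g}=E_{L',g'}$ as subvarieties of $\ol{S}$ (there is a single toroidal divisor over each cusp of the Baily--Borel boundary). I would take $\mathbf{v}'=\gamma\mathbf{v}$: this is an $E$-basis with $v_1'\in L'$, and its Gram matrix is $\mu(\gamma)J$; since the identity $\det(\gamma)\ol{\det(\gamma)}=\mu(\gamma)^3$ from Lemma \ref{lem:obstruction} shows $\mu(\gamma)$ is a norm from $E^\times$, one may further rescale $\mathbf{v}'$ by a diagonal matrix over $E$ (a change of the type handled in the previous step) to restore the normalization \eqref{eqn:jdef}. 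Before the rescaling, the matrix of $\gamma\gamma_0\gamma^{-1}$ in the basis $\gamma\mathbf{v}$ equals the matrix of $\gamma_0$ in the basis $\mathbf{v}$, so $s_{\gamma\mathbf{v}}(\gamma\gamma_0\gamma^{-1})=s_{\mathbf{v}}(\gamma_0)$ and $\mathfrak{L}_{\gamma\mathbf{v},g'}=\mathfrak{L}_{\mathbf{v},g}$, with $\eta_{\gamma\mathbf{v},g'}$ agreeing with $\eta_{\mathbf{v},g}$ under the equality $E_{L,g}=E_{L',g'}$; composing with the rescaling of the previous paragraph then makes the identification of $\mathbf{C}/\mathfrak{L}_{\mathbf{v},g}$ with $\mathbf{C}/\mathfrak{L}_{\mathbf{v}',g'}$ completely explicit.

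The main obstacle is the first step: extracting from \cite{kr} the precise shape of the boundary divisor in the present normalization, in particular verifying that the Heisenberg centre drops out of $E_{L,g}$, that the residual action of $\Gamma_{L,g}$ is honest translation by $s(\gamma)$, and that the resulting subgroup of $\mathbf{C}$ is a rank-two lattice. The two change-of-coordinate computations are then routine; the only subtlety there is the absence of an additive ambiguity in the second step, which is exactly the abelianness of $N_L/Z(N_L)$.
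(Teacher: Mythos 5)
Your treatment of the first and third assertions follows essentially the paper's route: the identification $\eta_{\mathbf{v},g}\colon \mathbf{C}/\mathfrak{L}_{\mathbf{v},g}\to E_{L,g}$ is extracted from the local description of the toroidal boundary in \cite[\S 2]{kr}, \cite[\S 2]{cogdell} (the center of $N_L$ is absorbed into the disk direction, and the residual action on the boundary is translation by the abelianized Heisenberg lattice), and for a second representative one takes $\mathbf{v}'=\gamma\mathbf{v}$ and observes that matrices of conjugated elements are unchanged, so the lattices agree. However, your second step contains a genuine error. You correctly note that the unipotent part $n_0$ of the change-of-basis matrix acts trivially on $N_L/Z(N_L)$, so that $s_{\mathbf{v}'}(\gamma)=\lambda\, s_{\mathbf{v}}(\gamma)$ and $\mathfrak{L}_{\mathbf{v}',g}=\lambda\,\mathfrak{L}_{\mathbf{v},g}$ with $\lambda$ read off from the Levi part; but you then conclude that $\eta_{\mathbf{v}',g}^{-1}\circ\eta_{\mathbf{v},g}$ is multiplication by $\lambda$ ``with no additive term.'' This conflates how the lattice transforms (via conjugation on the abelianization, where indeed no translation appears) with how the boundary coordinate transforms (via the change of coordinates on the Siegel domain). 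Writing a negative line as $zv_1+uv_2+v_3$ and re-expressing it in $\mathbf{v}'$ with $v_2=\ol{s}'\delta v_1+v_2'$ and $v_3=(r'+s'\ol{s}'\tfrac{\delta}{2})v_1+s'v_2'+v_3'$, the new coordinates are $(z+u\ol{s}'\delta+r'+s'\ol{s}'\tfrac{\delta}{2},\,u+s')$; on the boundary this is translation by $s'$, even though the lattice is unchanged. So in general $\eta_{\mathbf{v}',g}^{-1}\circ\eta_{\mathbf{v},g}$ is the affine map $\zeta\mapsto\lambda\zeta+s'$, not a homothety.

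This is not cosmetic: the additive term is precisely why the base point $\eta_{\mathbf{v},g}(0)$ is only well defined up to the $\mathbf{Q}$-equivalence class $\mathbf{e}_{L,g}$ of Corollary \ref{coro:oneclass}, and the proof of Proposition \ref{prop:cuspmap} consists in checking that the formula $u_{\mathbf{v},w,g}=-\gen{w,v_3}/\gen{w,v_2}$ shifts by exactly this $s'$ under such a change of basis. If the transition maps were pure homotheties, the images of the cusps $\sigma_W$ would be canonically pinned points of the boundary curves, which is false and would break the later torsion arguments. Your third step can survive (your observation that $\mu(\gamma)$ is a norm, so the Gram matrix $\mu(\gamma)J$ can be renormalized by a diagonal rescaling, is fine, and a diagonal rescaling introduces no translation), but the second assertion of the proposition must record the translation term, as in the paper's Case 2.
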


\begin{proof}
Choose inhomogenous coordinates $(z,u)$ associated to the basis $\mathbf{v}=\set{v_1,v_2,v_3}$ as follows.  The negative definite line $x=\mathbf{C}\cdot (x_1,x_2,x_3) \subseteq V \otimes_E \mathbf{C}$, written in the coordinate system $\mathbf{v}$, is given the coordinates $z(x) = \frac{x_1}{x_3}$ and $u(x) = \frac{x_2}{x_3}$.  In these coordinates, $L$ is naturally the ``cusp at infinity'', and $X \times gK_f \subseteq X \times (G(\mathbf{A}_f)/K_f)$ is identified with $\set{(z,u):\mathrm{Tr}^E_\mathbf{Q}(\delta^{-1}z)-|u|^2 > 0}$.  Define $X(\epsilon) = \set{(z,u):\mathrm{Tr}^E_\mathbf{Q}(\delta^{-1}z)-|u|^2 > \epsilon^{-1}}$.  For $\epsilon >0$ sufficiently small, the image of $X(\epsilon)$ in $S$ is $X(\epsilon)/\Gamma_{L,g}$.

Write $\Gamma_{L,g}'$ for the commutator $[\Gamma_{L,g},\Gamma_{L,g}]$.  As explained in \cite[\S 2]{kr} or \cite[\S 2]{cogdell}, there exists an isomorphism between $X(\epsilon)/\Gamma_{L,g}'$ and a punctured disk bundle $F$ over $\mathbf{C}$ via a map $(z,u) \mapsto (w,u)=(\exp(2\pi \delta^{-1}zq_g),u)$ for some value $q_g \in \mathbf{Q}_{>0}$.  Moreover, there is a natural action of $\Gamma_{L,g}/\Gamma_{L,g}'$ on $F$ compatible with the one on $X(\epsilon)/\Gamma_{L,g}'$, and the isomorphism of quotient spaces identifies $X(\epsilon)/\Gamma_{L,g}$ with a punctured disk bundle over the elliptic curve $\mathbf{C}/\mathfrak{L}_{\mathbf{v},g}$.  There is a unique way to extend this bundle to a disk bundle over the same curve; the adjoined points can be thought of as points $(w,u)=(0,u)$, or, alternatively, the images of the points $(z,u)=(\infty,u)$ for $u \in \mathbf{C}$.  (That is, we let the imaginary part of $z$ tend to $+\infty$.)  This identification gives the map $\eta_{\mathbf{v},g}$.

Let $v_1' \in L$ and let $\mathbf{v}' = \set{v_1',v_2',v_3'}$ be a basis extending $v_1'$.  If $v_1' = av_1$ for $a \in E$, we can factor the change of basis between $\mathbf{v}$ and $\mathbf{v}'$ through an intermediate basis $\mathbf{v}'' = \set{av_1,a^{-1}\ol{a}v_2,\ol{a}^{-1}v_3}$.  By composing the changes of basis from $\mathbf{v}$ to $\mathbf{v}''$ and from $\mathbf{v}''$ to $\mathbf{v}'$, this reduces us to considering two cases.

{\bf Case 1} ($\mathbf{v}' = \set{av_1,a^{-1}\ol{a}v_2,\ol{a}^{-1}v_3}$): We write $(z,u)_\mathbf{v} \in V$ for the line parametrized by $(z,u) \in X$ written using the coordinate system $\mathbf{v}$.  From the definition of $\mathbf{v}'$, we have $(z,u)_{\mathbf{v}} = (a^{-1}\ol{a}^{-1}z,a\ol{a}^{-2}u)_{\mathbf{v}'}$, so the induced map $\mathbf{C}/\mathfrak{L}_{\mathbf{v},g} \rightarrow \mathbf{C}/\mathfrak{L}_{\mathbf{v}',g}$ is given by $\zeta \mapsto a\ol{a}^{-2}\zeta$.  Correspondingly, $\mathfrak{L}_{\mathbf{v}',g} = a\ol{a}^{-2}\mathfrak{L}_{\mathbf{v},g}$.

{\bf Case 2} ($\mathbf{v}' = \set{v_1,v_2',v_3'}$):  We have
\begin{equation}\label{eqn:chgbase} v_2 = \ol{s}'\delta v_1+v_2'\textrm{ and }v_3 = (r'+s'\ol{s}'\frac{\delta}{2})v_1+s'v_2'+v_3'\end{equation}
for some $r'\in \mathbf{Q}, s' \in E$.  We compute
\begin{align*}
  (z,u)_\mathbf{v} &= E (zv_1+uv_2+v_3) = E (zv_1+u(\ol{s}'\delta v_1+v_2')+(r'+s'\ol{s}'\frac{\delta}{2})v_1+s'v_2'+v_3')\\
  &= E ((z+u\ol{s}'\delta+r'+s'\ol{s}'\frac{\delta}{2})v_1+(u+s')v_2'+v_3')\\
  &= (z+u\ol{s}'\delta+r'+s'\ol{s}'\frac{\delta}{2},u+s')_{\mathbf{v}'}.
\end{align*}
The induced map $\eta_{\mathbf{v}',g}^{-1} \circ \eta_{\mathbf{v},g}$ ignores the translation in the $z$ coordinate; it is given exactly by translation by $s'$.

Finally, suppose that $(L,g)$ and $\mathbf{v}$ are fixed, and a second representative $(L',g')$ of the cusp is given.  We choose the coordinate system $\mathbf{v}' = \gamma \mathbf{v}$.  Multiplication on the right by an element of $K_f$ clearly has no effect on the preceding computations, so we may assume that $(L',g') = \gamma(L,g)$.  Observe that $\Gamma_{L,g}$ and $\Gamma_{L',\gamma g}$ are related by
\[\Gamma_{L',\gamma g} = N_{L'}(\mathbf{Q}) \cap \gamma gK_fg^{-1}\gamma^{-1} = \gamma ( N_L(\mathbf{Q}) \cap gK_fg^{-1})\gamma^{-1} = \gamma \Gamma_{L,g} \gamma^{-1}.\]
The definition of $\mathfrak{L}_{\mathbf{v},g}$ (resp.\ $\mathfrak{L}_{\mathbf{v}',\gamma g}$) uses matrices with respect to $\mathbf{v}$ (resp.\ $\mathbf{v}'$), so as ideals, $\mathfrak{L}_{\mathbf{v},g} = \mathfrak{L}_{\mathbf{v}',\gamma g'}$.  Then the map $\mathbf{C}/\mathfrak{L}_{\mathbf{v},g} \rightarrow \mathbf{C}/\mathfrak{L}_{\mathbf{v}',\gamma g'}$ given by the identity on $\mathbf{C}$ naturally identifies these curves.  We may compose with a change of basis in $\mathbf{v}'$ to obtain the most general formula for a change of coordinates or representative.
\end{proof}

\begin{defin}
Let $E$ be an elliptic curve defined over a subfield $F$ of $\mathbf{C}$.  We say that two points $p,q \in E(\mathbf{C})$ are $\mathbf{Q}$-equivalent if $p-q$ is torsion in the Picard group $\mathrm{Pic}(E)$.  It is easy to see that this forms an equivalence relation on points that is stable by $\mathrm{Gal}(\mathbf{C}/F)$.
\end{defin}

Then we have the following immediate corollary of Proposition \ref{prop:elliptic}.

\begin{coro} \label{coro:oneclass}
Fix $j \in \sigma$.  As one varies over the pairs $(L,g) \in \mathbf{P}(V)^0 \times G(\mathbf{A}_f)$ that represent the cusp $j$ and the coordinate systems $\mathbf{v} = \set{v_1,v_2,v_3}$ that have $v_1 \in L$, the values $\eta_{\mathbf{v},g}(0)$ all lie in a single $\mathbf{Q}$-equivalence class of the elliptic curve $E_j$.
\end{coro}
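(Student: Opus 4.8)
The plan is to reduce the corollary to the change-of-coordinate and change-of-representative formulas already established in Proposition \ref{prop:elliptic}, and then to check that each of those changes moves the image of $0$ within a single $\mathbf{Q}$-equivalence class. Concretely, suppose $(L,g)$, $\mathbf{v}$ and $(L',g')$, $\mathbf{v}'$ are two data representing the same cusp $j$. First I would use Proposition \ref{prop:elliptic} to reduce to the case $L = L'$ and $g' = \gamma g k$ for some $\gamma \in G(\mathbf{Q})$ fixing $L$ and $k \in K_f$; right multiplication by $k$ has no effect on the construction, and by the final paragraph of the proof of Proposition \ref{prop:elliptic} the choice $\mathbf{v}' = \gamma \mathbf{v}$ identifies $E_{L,g}$ with $E_{L',g'}$ via the identity map on $\mathbf{C}$, so in that case $\eta_{\mathbf{v}',g'}(0) = \eta_{\mathbf{v},g}(0)$ and there is nothing to prove. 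Hence it remains to analyze the effect of changing the coordinate system $\mathbf{v}$ while keeping $(L,g)$ fixed, i.e.\ the two cases in the proof of Proposition \ref{prop:elliptic}.

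In Case 2 ($\mathbf{v}' = \set{v_1, v_2', v_3'}$), the map $\eta_{\mathbf{v}',g}^{-1}\circ\eta_{\mathbf{v},g}$ is translation by $s' \in E$ on $\mathbf{C}/\mathfrak{L}_{\mathbf{v}',g}$. I claim this moves $\eta_{\mathbf{v},g}(0)$ within its $\mathbf{Q}$-equivalence class, i.e.\ that $s'$ is torsion in $\mathbf{C}/\mathfrak{L}_{\mathbf{v}',g}$. This is where the neatness of $K_f$ enters: since $\gamma$ can be taken in $G(\mathbf{Q})$ and $s' \in E$ is a rational entry of a matrix conjugating one parabolic datum to another, some positive integer multiple of $s'$ lies in the lattice $\mathfrak{L}_{\mathbf{v}',g}$, which is generated by the $s(\gamma')$ for $\gamma' \in \Gamma_{L,g}$; more precisely the unipotent element of $N_L(\mathbf{Q})$ with upper entry $s'$ has a power lying in $gK_fg^{-1}$ because $N_L(\mathbf{Q})/\Gamma_{L,g}$ is a torsion group (a rational vector space mod a full lattice, up to the $r$-coordinate). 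In Case 1 ($\mathbf{v}' = \set{av_1, a^{-1}\ol a v_2, \ol a^{-1}v_3}$), the induced map is multiplication by $a\ol a^{-2}$, which sends $0$ to $0$, so it has no effect on the image of $0$. Composing the reductions, $\eta_{\mathbf{v}',g'}(0)$ and $\eta_{\mathbf{v},g}(0)$ differ by a torsion point, hence lie in the same $\mathbf{Q}$-equivalence class on $E_j$.

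I expect the main obstacle to be the bookkeeping in Case 2: one must verify carefully that the translation parameter $s'$ is genuinely torsion in $\mathbf{C}/\mathfrak{L}_{\mathbf{v}',g}$, using that $\Gamma_{L,g}$ is commensurable with the full integral points of $N_L$ and that $K_f$ neat forces $B_L(\mathbf{Q}) \cap gK_fg^{-1} = N_L(\mathbf{Q}) \cap gK_fg^{-1}$, as recorded just before Proposition \ref{prop:elliptic}. The subtlety is that $s'$ depends on the chosen change of basis over $E$, so one has to argue that any such $s'$ arising from two bases both extending vectors in $L$ is forced to be rational over $\mathbf{Q}$ in the relevant coordinate and then $\mathbf{Z}$-proportional to an element of $\mathfrak{L}_{\mathbf{v}',g}$; once that is in place the corollary follows immediately from Proposition \ref{prop:elliptic} and the definition of $\mathbf{Q}$-equivalence.
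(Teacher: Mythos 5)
Your argument is correct and takes essentially the same route the paper intends: the corollary is presented there as an immediate consequence of Proposition \ref{prop:elliptic}, and your reduction (change of representative handled by the choice $\mathbf{v}'=\gamma\mathbf{v}$, which gives the identity map; Case 1 is multiplication and fixes $0$; Case 2 is translation by $s'\in E$, which is torsion modulo $\mathfrak{L}_{\mathbf{v},g}$) is exactly the intended verification. One small simplification: the torsion of $s'$ needs neither neatness nor powers of unipotent elements landing in $gK_fg^{-1}$ --- since $s'\in E$ and the lattice $\mathfrak{L}_{\mathbf{v},g}\subseteq E$ spans $E$ over $\mathbf{Q}$, some positive integer multiple of $s'$ already lies in $\mathfrak{L}_{\mathbf{v},g}$, which is the same one-line observation used in Corollary \ref{coro:torsion}.
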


\begin{defin}
We write $\mathbf{e}_{L,g} \subseteq E_{L,g}$ for the $\mathbf{Q}$-equivalence class of Corollary \ref{coro:oneclass}.
\end{defin}

We think of $\eta_{\mathbf{v},g}$ as specifying a coordinate on $E_{L,g}$ that depends not only on the particular representative $(L,g)$, but on the choice of a basis $\mathbf{v}$ of $V$ that places $L$ at the ``point at infinity''.  Using this coordinate, we may calculate the image of each cusp of $C_W$ in the toroidal compactification of $S$ explicitly.
\begin{prop} \label{prop:cuspmap}
Let $(L,g)\in \mathbf{P}(W^\perp)^0 \times G_W(\mathbf{A}_f)$ represent a cusp of $\sigma_W$.  We also use $(L,g)$ to represent the boundary curve $E_{L,g}$ of $\ol{\sigma}$.  Fix $v_1 \in L$ and $w \in W$, and extend $v_1$ to a basis $\set{v_1,v_2,v_3}$.  Then define $u_{\mathbf{v},w,g} =-\frac{\gen{w,v_3}}{\gen{w,v_2}} \in \mathbf{C}/\mathfrak{L}_{\mathbf{v},g}$.  Then $\eta_{\mathbf{v},g}(u_{\mathbf{v},w,g})$ is independent of the choices of $w$ and $\mathbf{v}$ and defines the image of the cusp $(L,g) \in \sigma_W$.
\end{prop}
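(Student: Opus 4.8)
The plan is to reduce the statement to the explicit coordinate model of Proposition~\ref{prop:elliptic}. Fix the cusp representative $(L,g)$ of $\sigma_W$, a vector $v_1\in L$, a vector $w\in W$, and a basis $\mathbf{v}=\{v_1,v_2,v_3\}$ of $V$ of the kind used in Proposition~\ref{prop:elliptic} (so the Hermitian form takes the shape \eqref{eqn:jdef} in $\mathbf{v}$), with associated inhomogeneous coordinates $(z,u)$ on the domain $X$ of negative lines. The first point to settle is that $u_{\mathbf{v},w,g}$ is well defined, i.e.\ that $\langle w,v_2\rangle\neq 0$. Since $L\subseteq W^{\perp}$ we have $\langle w,v_1\rangle=0$; if in addition $\langle w,v_2\rangle=0$, then from the shape of the form one computes $\{v_1,v_2\}^{\perp}=Ev_1=L$, so $w\in W\cap L$, which is $0$ because a positive line and an isotropic line are distinct. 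Hence $w=0$, a contradiction, and $u_{\mathbf{v},w,g}$ is well defined.

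Next I would identify the image of the cusp. The embedding $X_W\hookrightarrow X$ realizes $X_W$ as the set of negative lines of $V\otimes_E\mathbf{C}$ orthogonal to $W$; writing such a line as $\mathbf{C}\cdot(zv_1+uv_2+v_3)$, orthogonality to $w$ is the single equation $z\langle w,v_1\rangle+u\langle w,v_2\rangle+\langle w,v_3\rangle=0$, which by the previous paragraph is precisely $u=u_{\mathbf{v},w,g}$. Hence, in the coordinates attached to $\mathbf{v}$, the curve $C_W$ lies along the slice $\{u=u_{\mathbf{v},w,g}\}$, with its cusp $(L,g)$ at the end where $z$ degenerates. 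In the toroidal model recalled in the proof of Proposition~\ref{prop:elliptic}, $E_{L,g}$ is the zero section of a disk bundle over $\mathbf{C}/\mathfrak{L}_{\mathbf{v},g}$ in which $u$ is the base coordinate and the boundary point attached to a slice of fixed $u$ is $\eta_{\mathbf{v},g}(u\bmod\mathfrak{L}_{\mathbf{v},g})$; therefore $\iota_W$ carries $(L,g)\in\sigma_W$ to the point $\eta_{\mathbf{v},g}(u_{\mathbf{v},w,g})$ of $E_{L,g}$. Since $\iota_W(L,g)$ is an intrinsic point of $\overline{S}$, independence of $\eta_{\mathbf{v},g}(u_{\mathbf{v},w,g})$ from the choices of $w$ and $\mathbf{v}$ then follows at once.

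The step requiring the most care is this last one: one must check that near the cusp of $C_W$ the image $\iota_W(C_W)$ genuinely lies over the boundary curve $E_{L,g}$ with locally constant base coordinate equal to $u_{\mathbf{v},w,g}$, so that passing through the covering $X_W(\epsilon)\to X(\epsilon)\to S$ and adjoining the boundary introduces no further identification of the limit point. This follows from the $\Gamma_{L,g}$-equivariant disk-bundle picture in Proposition~\ref{prop:elliptic}, since $\Gamma_{L,g}$ acts on the base coordinate by translations in the lattice $\mathfrak{L}_{\mathbf{v},g}$, so the slice descends to the slice at $u_{\mathbf{v},w,g}\bmod\mathfrak{L}_{\mathbf{v},g}$ and its closure meets the zero section only at $\eta_{\mathbf{v},g}(u_{\mathbf{v},w,g})$.

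Finally I would also record the direct verification of independence, as it provides a useful check of the transition formulas of Proposition~\ref{prop:elliptic}. Replacing $w$ by $aw$ with $a\in E^{\times}$ multiplies both $\langle w,v_2\rangle$ and $\langle w,v_3\rangle$ by $\ol{a}$, leaving $u_{\mathbf{v},w,g}$ unchanged. For a change of basis, factor it as in Proposition~\ref{prop:elliptic} through its two elementary cases: for $\mathbf{v}''=\{av_1,a^{-1}\ol{a}v_2,\ol{a}^{-1}v_3\}$ one finds $u_{\mathbf{v}'',w,g}=a\ol{a}^{-2}u_{\mathbf{v},w,g}$, matching the transition map $\zeta\mapsto a\ol{a}^{-2}\zeta$ of that case; and for $\mathbf{v}'=\{v_1,v_2',v_3'\}$ with $v_2,v_3$ expressed through \eqref{eqn:chgbase}, using $\langle w,v_1\rangle=0$ one finds $u_{\mathbf{v}',w,g}=u_{\mathbf{v},w,g}+s'$, matching the translation by $s'$. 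Composing these, $\eta_{\mathbf{v},g}(u_{\mathbf{v},w,g})$ is unchanged under any admissible change of basis, which gives an alternative proof of independence and completes the argument.
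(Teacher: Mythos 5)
Your proof is correct, and it is organized differently from the paper's. The paper establishes independence of $\mathbf{v}$ by brute force — checking that under the two elementary basis changes of Proposition~\ref{prop:elliptic} (rescaling $\set{av_1,a^{-1}\ol{a}v_2,\ol{a}^{-1}v_3}$ and the translation case (\ref{eqn:chgbase})) the quantity $-\gen{w,v_3}/\gen{w,v_2}$ transforms exactly as the coordinate $\zeta$ does — and then outsources the actual derivation that this quantity \emph{is} the image of the cusp to Cogdell's Lemma 3.2, with his lattice replaced by $\mathfrak{L}_{\mathbf{v},g}$. You instead derive the formula directly: since $X_W\subseteq X$ is cut out by orthogonality to $w$, and $\gen{w,v_1}=0$ while $\gen{w,v_2}\neq 0$, the curve sits on the slice $u=u_{\mathbf{v},w,g}$ in the coordinates of Proposition~\ref{prop:elliptic}, and in the $\Gamma_{L,g}$-equivariant disk-bundle model (where $\Gamma_{L,g}$ translates the base coordinate by $\mathfrak{L}_{\mathbf{v},g}$) the closure of that slice meets $E_{L,g}$ only at $\eta_{\mathbf{v},g}(u_{\mathbf{v},w,g})$. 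This makes the independence of $w$ and $\mathbf{v}$ automatic, since the point is intrinsically the image of the cusp under $\iota_W$; your explicit Case 1/Case 2 verification then reproduces the paper's computation as a consistency check rather than as the load-bearing step. Your approach buys self-containedness (no appeal to Cogdell) and also records the well-definedness point $\gen{w,v_2}\neq 0$, which the paper leaves implicit; the paper's approach is shorter on the geometric side precisely because it leans on the reference. Both computations agree, so there is no gap.
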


\begin{proof}
The definition of $\eta_{\mathbf{v},g}(u_{\mathbf{v},w,g})$ is clearly independent of $w \in W$.  For coordinate independence, suppose that we are given another $\mathbf{v}'=\set{v_1',v_2',v_3'}$.  Comparing the change of basis formula in Case 1 of the proof of Proposition \ref{prop:elliptic} to the definition of $\mathfrak{L}_{\mathbf{v},g}$ shows that the effect of rescaling $\mathbf{v}$ to $\set{av_1,a^{-1}\ol{a}v_2,\ol{a}^{-1}v_3}$ for $a \in E^\times$ is compatible with the definition $u_{\mathbf{v},w,g}=-\frac{\gen{w,v_3}}{\gen{w,v_2}}$.  So we can assume $v_1 = v_1'$.

Suppose that $v_2',v_3'$ are defined as in (\ref{eqn:chgbase}).  Write $w = av_1+bv_2$; it has no $v_3$ component by definition of $\Sigma$.  Then $-\frac{\gen{w,v_3}}{\gen{w,v_2}} = -\delta^{-1}\frac{\ol{a}}{\ol{b}}$.  We also have $w = av_1 + b(\ol{s}'\delta v_1+v_2') = (a+\ol{s}'\delta)v_1 + bv_2'$.  It follows that
\[-\frac{\gen{w,v_3'}}{\gen{w,v_2'}}= -\frac{\gen{(a+\ol{s}'\delta)v_1 + bv_2',v_3'}}{\gen{(a+\ol{s}'\delta)v_1 + bv_2',v_2'}} = -\delta^{-1}\frac{\ol{a}}{\ol{b}}+s',\]
which matches the effect of changing coordinates from Case 2 of Proposition \ref{prop:elliptic}.

Finally, the derivation of the formula $-\frac{\gen{w,v_3}}{\gen{w,v_2}}$ is identical to the computation of Cogdell \cite[Lemma 3.2]{cogdell} once his $\mathfrak{N}_0\mathfrak{a}\ol{\mathfrak{a}}^2$ is replaced by $\mathfrak{L}_{\mathbf{v},g}$.
\end{proof}

The following corollary is immediate.
\begin{coro} \label{coro:torsion}
The image of $(L,g)\in \sigma_W$ is in $\mathbf{e}_{L,g}$.
\end{coro}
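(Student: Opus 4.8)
The plan is to deduce this directly from Proposition \ref{prop:cuspmap} together with the definitions of $\mathbf{Q}$-equivalence and of $\mathbf{e}_{L,g}$. By Proposition \ref{prop:cuspmap}, the image of the cusp $(L,g)\in\sigma_W$ under $\iota_W$ is the point $\eta_{\mathbf{v},g}(u_{\mathbf{v},w,g})\in E_{L,g}$, where $u_{\mathbf{v},w,g}=-\gen{w,v_3}/\gen{w,v_2}$ for any $w\in W$ and any basis $\mathbf{v}=\set{v_1,v_2,v_3}$ with $v_1\in L$. The first step is to record that this displacement is an $E$-rational point of the complex torus $\mathbf{C}/\mathfrak{L}_{\mathbf{v},g}$: writing $w=av_1+bv_2$ with $a,b\in E$ (here $b\ne 0$ because $W\cap W^\perp=0$ and $v_1\in L\subseteq W^\perp$), the computation carried out in the proof of Proposition \ref{prop:cuspmap} gives $u_{\mathbf{v},w,g}=-\delta^{-1}\ol{a}/\ol{b}\in E$.

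Next I would unwind the definition of $\mathbf{e}_{L,g}$. By Corollary \ref{coro:oneclass} and the definition immediately following it, $\mathbf{e}_{L,g}$ is the $\mathbf{Q}$-equivalence class of the point $\eta_{\mathbf{v},g}(0)$ in $E_{L,g}$, i.e.\ the set of points $p$ for which $p-\eta_{\mathbf{v},g}(0)$ is torsion in $\mathrm{Pic}^0(E_{L,g})$. Since $\eta_{\mathbf{v},g}:\mathbf{C}/\mathfrak{L}_{\mathbf{v},g}\iso E_{L,g}$ is an isomorphism of complex curves, and translations act trivially on degree-zero divisor classes, the class of $\eta_{\mathbf{v},g}(u_{\mathbf{v},w,g})-\eta_{\mathbf{v},g}(0)$ in $\mathrm{Pic}^0(E_{L,g})$ equals the image of the class of $u_{\mathbf{v},w,g}$ in $\mathbf{C}/\mathfrak{L}_{\mathbf{v},g}$ under the induced isomorphism $\mathbf{C}/\mathfrak{L}_{\mathbf{v},g}\iso\mathrm{Pic}^0(E_{L,g})$. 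Hence it suffices to show that the class of $u_{\mathbf{v},w,g}$ is torsion in $\mathbf{C}/\mathfrak{L}_{\mathbf{v},g}$.

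For the final step, recall that $\mathfrak{L}_{\mathbf{v},g}$ is a rank-$2$ $\mathbf{Z}$-lattice in $\mathbf{C}$ (this is exactly what makes $\mathbf{C}/\mathfrak{L}_{\mathbf{v},g}$ an elliptic curve in Proposition \ref{prop:elliptic}) which is contained in $E$. Its two generators are $\mathbf{R}$-linearly independent, hence $\mathbf{Q}$-linearly independent, so they span the $2$-dimensional $\mathbf{Q}$-vector space $E$; thus $E/\mathfrak{L}_{\mathbf{v},g}$ is a torsion abelian group. Since $u_{\mathbf{v},w,g}\in E$, its image in $\mathbf{C}/\mathfrak{L}_{\mathbf{v},g}$ is therefore torsion, which is what we needed. (Alternatively, one can identify $\mathfrak{L}_{\mathbf{v},g}$ with the image of the arithmetic group $\Gamma_{L,g}$ under the homomorphism $N_L\to\mathrm{Res}^E_\mathbf{Q}\mathbf{G}_a$ recording the $s$-coordinate in \eqref{eqn:gammaeq}, a full-rank lattice in $E\otimes_\mathbf{Q}\mathbf{R}$, and conclude the same way.)

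I do not expect a genuine obstacle here: all the geometric content is already contained in Proposition \ref{prop:cuspmap}, and this corollary merely packages the elementary fact that the cusp is displaced from $\eta_{\mathbf{v},g}(0)$ by an $E$-rational -- hence torsion -- point of $\mathbf{C}/\mathfrak{L}_{\mathbf{v},g}$. The one point worth stating carefully is that $\eta_{\mathbf{v},g}$ need not be a group homomorphism, so the identification of $\eta_{\mathbf{v},g}(u_{\mathbf{v},w,g})-\eta_{\mathbf{v},g}(0)$ with the class of $u_{\mathbf{v},w,g}$ must be made in $\mathrm{Pic}^0$, where the translation ambiguity in $\eta_{\mathbf{v},g}$ is invisible; this is precisely the setting in which $\mathbf{Q}$-equivalence is defined, so the argument goes through without difficulty.
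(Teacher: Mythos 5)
Your argument is correct and is essentially the paper's proof: by Proposition \ref{prop:cuspmap} the image of the cusp is $\eta_{\mathbf{v},g}(u_{\mathbf{v},w,g})$ with $u_{\mathbf{v},w,g}\in E$, and since $\mathfrak{L}_{\mathbf{v},g}$ is a full-rank lattice inside $E$, an integer multiple of this value lies in the lattice, so the point is $\mathbf{Q}$-equivalent to $\eta_{\mathbf{v},g}(0)$ and hence lies in $\mathbf{e}_{L,g}$ by Corollary \ref{coro:oneclass}. You simply make explicit two points the paper leaves implicit (why $E/\mathfrak{L}_{\mathbf{v},g}$ is torsion, and the passage through $\mathrm{Pic}^0$), which is fine.
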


\begin{proof}
Fix any coordinate system $\mathbf{v}$ with $v_1 \in L$.  We have $-\frac{\gen{w,v_3}}{\gen{w,v_2}} \in E \subseteq \mathbf{C}$, so some positive integer multiple of this value is in $\mathfrak{L}_{\mathbf{v},g}$.  In other words, writing $p = \iota_W((L,g))$, we have that $n(p-\eta_{\mathbf{v},g}(0))=0 \in \mathrm{Pic}^0(E_j)(\mathbf{C})$ for some $n \in \mathbf{Z}_{\ge 1}$.  Applying Corollary \ref{coro:oneclass}, we have $p \in \mathbf{e}_{L,g}$.
\end{proof}

\subsection{Translation operators} \label{subsec:translates}

We define translation operators on the Shimura variety and discuss how they interact with the preceding results.
\begin{defin} \label{defin:translation}
If $g \in G(\A_f)$, denote by $T(g): S_{K_f} \rightarrow S_{g^{-1}K_fg}$ the usual translation operator defined on complex points by
\[G(\Q)(x,h)K_f \mapsto G(\Q)(x,h)K_fg = G(\Q)(x,hg)(g^{-1}K_fg).\]
Recall that the map $T(g)$ is an isomorphism defined over the reflex field.  Observe that $T(g) \circ T(h) = T(hg)$ and that $T(k)$ acts by the identity for $k \in K_f$.
\end{defin}

A special fact in this setting is that for $K_f$ sufficiently small, $T(g)$ extends canonically to an isomorphism $\ol{T}(g): \ol{S}_{K_f} \rightarrow \ol{S}_{g^{-1}K_fg}$.  This is due to the existence of a canonical toroidal compactification.  The paper \cite[\S2]{kr} gives an explicit description of this map, which we will describe using the proof of Proposition \ref{prop:elliptic} above.

\begin{defin} \label{defin:heckecompact}
On the boundary of the Baily-Borel compactification, the map $T(g)$ has the natural extension
\[G(\mathbf{Q})(L,h)K_f \mapsto G(\mathbf{Q})(L,hg)g^{-1}K_fg.\]
We may explicitly identify the map $\ol{T}(g)$ in a neighborhood of a point of $E_{L,h}$ as follows.  We will use the exponent $g$ when considering the Shimura variety $\ol{S}_{g^{-1}K_fg}$.  So the image of $\ol{T}(g)|_{E_{L,h}}$ will be $E_{L,hg}^g$.  In the notation of Section \ref{subsec:imagecusp}, we have $\Gamma_{L,h} = N_L(\mathbf{Q}) \cap gK_fg^{-1}$ and $\Gamma_{L,hg}^g = N_L(\mathbf{Q}) \cap hg(g^{-1}K_fg)g^{-1}h^{-1} = \Gamma_{L,g}$ as subsets of $N_L(\mathbf{Q})$.  We may use the same coordinate system $\mathbf{v}$ for both curves.  Then we have $\mathfrak{L}_{\mathbf{v},h} = \mathfrak{L}_{\mathbf{v},hg}^g$ and may define the map $\ol{T}(g): E_{L,h} \rightarrow E_{L,hg}^g$ as the identity map $\mathbf{C}/\mathfrak{L}_{\mathbf{v},h} \rightarrow \mathbf{C}/\mathfrak{L}_{\mathbf{v},hg}^g$.
\end{defin}

We now check that Corollary \ref{coro:torsion} extends to translates.

\begin{prop} \label{prop:torsion}
For any $g \in G(\mathbf{A}_f)$, every cusp of $\ol{T}(g)(\ol{C}_W)$ belongs to some $\mathbf{e}_{L,hg}^g$.
\end{prop}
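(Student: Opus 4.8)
The plan is to deduce this from Corollary \ref{coro:torsion} together with the explicit description of $\ol{T}(g)$ on boundary curves in Definition \ref{defin:heckecompact}. First I would observe that $\ol{T}(g)\colon \ol{S}_{K_f}\to \ol{S}_{g^{-1}K_fg}$ is an isomorphism carrying $S_{K_f}$ to $S_{g^{-1}K_fg}$, hence the boundary to the boundary. Since $\iota_W(C_W)\subseteq S_{K_f}$ and $\ol{C}_W$ is proper, every cusp (i.e.\ boundary point) of $\ol{T}(g)(\ol{C}_W)=\ol{T}(g)(\iota_W(\ol{C}_W))$ is the image under $\ol{T}(g)$ of a point $\iota_W((L,h))$ for some cusp $(L,h)\in\mathbf{P}(W^\perp)^0\times G_W(\mathbf{A}_f)$ of $\sigma_W$. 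By Corollary \ref{coro:torsion}, $\iota_W((L,h))\in \mathbf{e}_{L,h}\subseteq E_{L,h}$. Thus it suffices to prove that $\ol{T}(g)\colon E_{L,h}\to E_{L,hg}^g$ carries $\mathbf{e}_{L,h}$ onto $\mathbf{e}_{L,hg}^g$.

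Next I would unwind the two $\mathbf{Q}$-equivalence classes in a common coordinate system. Fix a basis $\mathbf{v}=\set{v_1,v_2,v_3}$ of $V$ with $v_1\in L$ for which the Hermitian form has the shape (\ref{eqn:jdef}). By Definition \ref{defin:heckecompact}, in the coordinates $\eta_{\mathbf{v},h}$ and $\eta_{\mathbf{v},hg}^g$ the map $\ol{T}(g)\colon E_{L,h}\to E_{L,hg}^g$ is the identity map $\mathbf{C}/\mathfrak{L}_{\mathbf{v},h}\to \mathbf{C}/\mathfrak{L}_{\mathbf{v},hg}^g$, and moreover $\mathfrak{L}_{\mathbf{v},h}=\mathfrak{L}_{\mathbf{v},hg}^g$, since $\Gamma_{L,h}$ and $\Gamma_{L,hg}^g$ coincide as subgroups of $N_L(\mathbf{Q})$ and the lattice $\mathfrak{L}_{\mathbf{v},\cdot}$ depends only on this subgroup and on $\mathbf{v}$ through the matrix entry $s(\gamma)$ in (\ref{eqn:gammaeq}). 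On the other hand, by the definition of $\mathbf{Q}$-equivalence and the definition of $\mathbf{e}_{L,h}$ as the class of $\eta_{\mathbf{v},h}(0)$, the set $\eta_{\mathbf{v},h}^{-1}(\mathbf{e}_{L,h})$ is exactly the torsion subgroup of $\mathbf{C}/\mathfrak{L}_{\mathbf{v},h}$ for the group law with identity $0$, and similarly $(\eta_{\mathbf{v},hg}^g)^{-1}(\mathbf{e}_{L,hg}^g)$ is the torsion subgroup of $\mathbf{C}/\mathfrak{L}_{\mathbf{v},hg}^g$. As these two lattices are literally equal and $\ol{T}(g)$ is the identity on $\mathbf{C}$, it carries the one torsion subgroup bijectively onto the other, so $\ol{T}(g)(\mathbf{e}_{L,h})=\mathbf{e}_{L,hg}^g$. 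Combined with the reduction of the first paragraph, this proves the proposition.

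I do not expect a serious obstacle here; the content is coordinate bookkeeping rather than a new idea. The only points that need care are (i) recording precisely that $\eta_{\mathbf{v},h}^{-1}(\mathbf{e}_{L,h})$ is the full torsion subgroup of $\mathbf{C}/\mathfrak{L}_{\mathbf{v},h}$, which follows from the definitions and is consistent with the independence of choices established in Corollary \ref{coro:oneclass}, and (ii) confirming, straight from Definition \ref{defin:heckecompact}, that $\mathfrak{L}_{\mathbf{v},h}$ and $\mathfrak{L}_{\mathbf{v},hg}^g$ are equal as lattices in $\mathbf{C}$ and not merely homothetic, since it is this equality that makes ``identity on $\mathbf{C}$'' preserve torsion exactly.
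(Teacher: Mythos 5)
Your proposal is correct and follows essentially the same route as the paper: reduce via Corollary \ref{coro:torsion} to showing $\ol{T}(g)$ carries $\mathbf{e}_{L,h}$ to $\mathbf{e}_{L,hg}^g$, and then use the explicit description in Definition \ref{defin:heckecompact} (identity map on $\mathbf{C}$ with equal lattices $\mathfrak{L}_{\mathbf{v},h}=\mathfrak{L}_{\mathbf{v},hg}^g$) together with Corollary \ref{coro:oneclass}. Your extra observation that $\eta_{\mathbf{v},h}^{-1}(\mathbf{e}_{L,h})$ is exactly the torsion subgroup just makes explicit what the paper leaves implicit.
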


\begin{proof}
It suffices to check that any translation operator sends any $\mathbf{e}_{L,g} \subseteq \ol{S}_{K_f}$ to $\mathbf{e}_{L,hg}^g \subseteq \ol{S}_{g^{-1}K_fg}$.  This follows immediately from the explicit description in Definition \ref{defin:heckecompact} of the maps $\ol{T}(g)$ and Corollary \ref{coro:oneclass}.
\end{proof}

\subsection{Smoothness} \label{subsec:smoothness}

The following result is an extension of a result of Cogdell \cite[Lemma 3.2]{cogdell} to the similitude case.  Deligne \cite[Proposition 1.15]{deligne} proves the embedding below for the open variety, so in our proof, we focus on the boundary.
\begin{prop} \label{prop:cwembed}
Fix $W \in \Xi$.  There exists an open compact $K_f(W) \subseteq G(\mathbf{A}_f)$ such that for any $K_f \subseteq K_f(W)$, we have a smooth embedding $\ol{C}_{W,K_{f,W}} \rightarrow \ol{S}_{K_f}$.  (Here, $K_{f,W} = K_f \cap G_W(\mathbf{A})$.)
\end{prop}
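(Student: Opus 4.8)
The plan is to start from Deligne's embedding of the open variety and upgrade it to the compactification using the explicit coordinates developed in Proposition \ref{prop:elliptic} and Proposition \ref{prop:cuspmap}. Deligne \cite[Proposition 1.15]{deligne} provides an open compact $K_f^0(W)$ such that $\iota_W^\circ: C_{W,K_{f,W}} \to S_{K_f}$ is a closed embedding for $K_f \subseteq K_f^0(W)$, so the entire remaining task is to analyze $\iota_W$ along the (finitely many) cusps of $\ol{C}_{W,K_{f,W}}$ and then to check that the two pieces assemble into a closed immersion of a smooth projective curve. Since ``smooth embedding'' for the smooth source $\ol{C}_{W,K_{f,W}}$ means exactly closed immersion, it suffices to prove that $\iota_W$ is injective on points and everywhere an immersion, after possibly shrinking $K_f$.

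First I would pin down the local shape of $\iota_W$ at a cusp. Fix a cusp of $\sigma_W$ with representative $(L,g)$ and a basis $\mathbf{v}=\set{v_1,v_2,v_3}$ with $v_1 \in L$ as in Section \ref{subsec:imagecusp}, and use the inhomogeneous coordinates $(z,u)$ of the proof of Proposition \ref{prop:elliptic}. A short computation with the Hermitian form shows that $X_W \subseteq X$ is exactly the slice $\set{(z,u_0): \mathrm{Tr}^E_\mathbf{Q}(\delta^{-1}z) > |u_0|^2}$, where $u_0 = -\frac{\gen{w,v_3}}{\gen{w,v_2}}$ is the value $u_{\mathbf{v},w,g}$ of Proposition \ref{prop:cuspmap}. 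Passing to the disk-bundle coordinate $(w,u) = (\exp(2\pi\delta^{-1}zq_g),u)$ and quotienting by the unipotent part of the cusp stabilizer in $G_W$, a neighborhood of the corresponding cusp of $\ol{C}_{W,K_{f,W}}$ becomes a disk $\set{|w| < \epsilon'}$ mapping by $w \mapsto (w,\eta_{\mathbf{v},g}(u_0))$ into $\ol{S}_{K_f}$. Hence $\iota_W$ is an immersion at the cusp, and its image branch meets the boundary divisor $E_{L,g} = \set{w=0}$ transversally in the single point $\eta_{\mathbf{v},g}(u_0)$; together with Deligne's result this shows that $\iota_W$ is an immersion on all of $\ol{C}_{W,K_{f,W}}$ for $K_f \subseteq K_f^0(W)$.

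Next I would prove injectivity. An interior point and a cusp of $\ol{C}_{W,K_{f,W}}$ cannot collide, since the former maps into $S_{K_f}$ and the latter into $\partial \ol{S}_{K_f} = \set{w=0}$ by the computation above, and distinct interior points are separated by Deligne. It remains to separate the cusps of $\ol{C}_{W,K_{f,W}}$ from one another, which I would arrange by shrinking $K_f$. Because a cusp of $C_W$ represented by $(L,g)$ lands on the boundary curve $E_j$ of $\ol{S}_{K_f}$ indexed by the image $j$ of $(L,g)$ in $\sigma = \sigma_{K_f}$, it suffices to show that the cusp map $\sigma_W \to \sigma$ is injective for $K_f$ small (alternatively one can compare the explicit image points of Proposition \ref{prop:cuspmap} directly). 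At infinite level this map $\sigma_{W,\infty} \to \sigma_\infty$ is injective: if $(L_1,g_1)$ and $(L_2,g_2)$ with $g_i \in G_W(\mathbf{A}_f)$ become equal in $\sigma_\infty$, the witnessing $\gamma \in G(\mathbf{Q})$ equals $g_2 g_1^{-1} \in G_W(\mathbf{A}_f)$, hence lies in $G(\mathbf{Q}) \cap G_W(\mathbf{A}_f) = G_W(\mathbf{Q})$, so the two cusps already agreed in $\sigma_{W,\infty}$. A finiteness argument then produces $K_f(W) \subseteq K_f^0(W)$ with the cusp map injective for all $K_f \subseteq K_f(W)$.

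Finally, a morphism from a smooth projective curve to the smooth projective surface $\ol{S}_{K_f}$ that is injective on points and everywhere an immersion is a closed immersion, which is the assertion of the proposition with $K_f(W)$ as above. I expect the main obstacle to be the last step of the injectivity argument: descending injectivity of the cusp map from infinite level to a finite level that works uniformly for every $K_f \subseteq K_f(W)$. Concretely one must rule out spurious coincidences $\gamma g_1 k = g_2$ with $k \in K_f \setminus G_W(\mathbf{A}_f)$ and $\gamma \in G(\mathbf{Q}) \setminus G_W(\mathbf{Q})$; I anticipate that handling this cleanly will require shrinking $K_f$ into the stabilizer of $W \otimes \widehat{\mathbf{Z}}$ and re-examining the formula $u_{\mathbf{v},w,g} = -\frac{\gen{w,v_3}}{\gen{w,v_2}}$ of Proposition \ref{prop:cuspmap} to confirm that the image points of distinct cusps landing on a common boundary curve remain distinct.
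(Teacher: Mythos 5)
Your opening and closing steps match the paper's architecture (Deligne for the open locus, the explicit $(z,u)$-coordinates for an immersive, transverse branch at each cusp --- the paper gets this from Cogdell's chart), but the heart of the proposition is the separation of cusps, and there your argument has a genuine gap. You reduce to showing that the cusp map $\sigma_W \rightarrow \sigma_{K_f}$ into the boundary of the \emph{minimal} compactification is injective for small $K_f$, and you propose to get this from injectivity at infinite level plus ``a finiteness argument.'' That descent is exactly the problematic step, and it does not follow: as $K_f$ shrinks, the cusp sets on both sides grow, the witnessing elements $\gamma$ range over the infinite set $B_L(\mathbf{Q})$, and the representatives $g_1,g_2$ range over the non-compact $G_W(\mathbf{A}_f)$, so there is no compactness from which to extract a level $K_f(W)$ working uniformly for all smaller $K_f$. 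Worse, the statement you are aiming at is stronger than what is true or needed: nothing prevents two distinct cusps of $\ol{C}_{W,K_{f,W}}$ from landing on the \emph{same} boundary elliptic curve $E_j$ (the intersection number $\ol{C}_W\cdot E_j$ is typically larger than $1$); what must be shown is only that they land at \emph{distinct points} of $E_j$.

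This pointwise separation on $E_j$ is precisely what the paper proves, and it is where Propositions \ref{prop:elliptic} and \ref{prop:cuspmap} do their real work --- the route you only mention parenthetically at the end. Concretely: if $(L,g)$ and $(L,g')$ in $\sigma_W$ have the same image point, then after using transitivity of $G_W(\mathbf{Q})$ on $\mathbf{P}(W^\perp)^0$ and neatness of $K_f$ (shrinking $K_f(W)$), the witnessing element may be taken in $N_L(\mathbf{Q})$. Choosing the basis $\mathbf{v}=\set{v_1,w,v_3}$ with $w\in W$ of norm $1$, both cusp images are the origin $\eta_{\mathbf{v},g}(0)$, resp.\ $\eta_{\mathbf{v},g'}(0)$, in their respective parametrizations of $E_j$; the change of parametrization induced by $\gamma$ is translation by $s(\gamma)$ in the coordinate of Proposition \ref{prop:elliptic}, so equality of the image points forces the matrix of $\gamma$ in the basis $\mathbf{v}$ to have $s=0$ in (\ref{eqn:gammaeq}), i.e.\ $\gamma\in G_W(\mathbf{Q})$, and the two cusps already coincided in $\sigma_W$. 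If you replace your ``finiteness argument'' by this comparison of image points (which requires no injectivity of $\sigma_W\rightarrow\sigma_{K_f}$ at all), the rest of your outline --- immersion at the cusps, transversality of the branch with $\set{w=0}$, and the interior separation via Deligne --- assembles into a correct proof.
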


\begin{proof}
We may assume that $K_f(W)$ is always taken small enough that $K_f$ is neat.  Cogdell's proof of Lemma 3.2 in \cite{cogdell} gives a chart for $\ol{S}$ around a point $P\in \iota_W(\sigma_W)$ that implies that the intersection between $\ol{C}_{W,K_{f,W}}$ and the boundary elliptic curve containing $P$ is transverse as long as $\ol{C}_W$ has no self-intersection at $P$.  Thus we need only verify this latter fact.

First consider the map on minimal compactifications, which is given by
\[G_W(\mathbf{Q}) \backslash \mathbf{P}(W^\perp)^0 \times G_W(\mathbf{A}_f) / K_{W,f} \rightarrow G(\mathbf{Q}) \backslash \mathbf{P}(V)^0 \times G(\mathbf{A}_f) / K_f.\]
Suppose that $(L,g)$ and $(L',g')$ have the same image.  Since $G_W(\mathbf{Q})$ acts transitively on the isotropic vectors, we may assume $L = L'$ by changing the representative.  Then there exists $\gamma \in B_L(\mathbf{Q})$ and $k \in K_f$ so that $\gamma (L,g) k = (\gamma^{-1} L,\gamma^{-1} gk) = (L,g')$.  In particular, $\gamma \in g K_f g^{'-1}$ for $\gamma \in B_L(\mathbf{Q})$ and $g,g' \in G_W(\mathbf{A}_f)$.  By shrinking $K_f(W)$ if needed, we may assume that we have $\gamma \in N_L(\mathbf{Q})$ instead.

Now consider the curve $E_{L,g} = E_{L,g'} \subseteq \ol{S}$.  Fix $v_1 \in L$, $w \in W$ of norm 1, and let $\mathbf{v} = \set{v_1,w,v_3}$ so that the Hermitian form is given by the matrix $J$.  The image of $(L,g)$ and $(L,g') \in \sigma_W$ are respectively $\eta_{\mathbf{v},g}(0)$ and $\eta_{\mathbf{v},g'}(0)$ by Proposition \ref{prop:cuspmap}.  Since we have assumed $(L,g)$ and $(L,g')$ have the same image, $\eta_{\mathbf{v},g}(0)=\eta_{\mathbf{v},g'}(0)$.  The change of coordinate map $\mathbf{C} / \mathfrak{L}_{\gamma \mathbf{v},g} \rightarrow \mathbf{C} / \mathfrak{L}_{\mathbf{v},g'}$ is the identity by Proposition \ref{prop:elliptic}, so $\eta_{\mathbf{v},g}(0)=\eta_{\gamma \mathbf{v},g}(0)$ as well.  In particular, the coordinate change induced by $\gamma$ fixes the identity.  It follows from this and $\gamma \in N_L(\mathbf{Q})$ that the matrix of $\gamma$, written in the basis $\mathbf{v}$, is of the form (\ref{eqn:gammaeq}) with $s=0$.  In particular, $\gamma \in G_W(\mathbf{Q})$ as needed.
\end{proof}

We can use this to control pullbacks of cycles.

\begin{defin} For $K_f' \subseteq K_f$, write $\pi_{K_f'}^{K_f}: \ol{S}_{K_f'} \rightarrow \ol{S}_{K_f}$ for the covering.  We call the full preimage $\pi_{K_f'}^{K_f-1}(\ol{C}_{W,K_f})$ of the cycle $\ol{C}_{K_f}$ in $\ol{S}_{W,K_f'}$ the pullback of $\ol{C}_{K_f}$ to $\ol{S}_{K_f'}$.  Since the covering map is defined over the reflex field, the pullback preserves rationality of cycles and divisors.\end{defin}

\begin{prop} \label{prop:pullbacksmooth}
Fix $W \in \Xi$ and assume $K_f'(W) \subseteq K_f(W)$ is normal in $K_\mathrm{max} = G(\widehat{\mathbf{Z}})$, where $K_f(W)$ is as in Proposition \ref{prop:cwembed}.  Then for any open compact $K_f \subseteq K_\mathrm{max}$ and $K_f' \subseteq K_f'(W) \cap K_f$, the pullback of $\ol{C}_{K_f}$ to $\ol{S}_{K_f'}$ is a union of smooth irreducible components.
\end{prop}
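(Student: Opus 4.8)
The plan is to reduce the assertion to two separate claims---smoothness and irreducibility of components---and to handle each using the machinery already in place. First I would observe that away from the boundary, the statement for the open variety $\pi_{K_f'}^{K_f-1}(C_{W,K_f}) \subseteq S_{K_f'}$ is classical: since $K_f'$ is neat the covering $\pi_{K_f'}^{K_f}$ is \'etale, and by Deligne \cite[Proposition 1.15]{deligne} the open cycle $C_{W,K_f} \to S_{K_f}$ is a closed immersion, so its full preimage is a disjoint union of translates $T(g)(C_{W,K_f'})$ for $g$ ranging over coset representatives of $K_f \cap G_W(\A_f) \backslash K_f / K_f'$ (up to the obvious identifications), each of which is smooth and irreducible because the $C_{W,K_f'}$ are Shimura curves with connected components indexed as in Section \ref{subsec:shimura}. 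So the only real content is at the boundary.

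At the boundary, the key point is that $K_f' \subseteq K_f'(W) \cap K_f$ where $K_f'(W)$ is chosen as in Proposition \ref{prop:cwembed}, so each translate $\ol{T}(g)(\ol{C}_{W,K_f'})$ embeds smoothly into $\ol{S}_{K_f'}$ by that proposition applied at the smaller level. Smoothness of the pullback is then a local statement: near a boundary point of $\ol{S}_{K_f'}$ I would use Cogdell's chart (as invoked in the proof of Proposition \ref{prop:cwembed}) to see that each branch of the pullback meets the boundary elliptic curve transversally, and that distinct branches through a common point are in fact distinct boundary curves' translates---here one uses that the cusps of the various $\ol{T}(g)(\ol{C}_{W,K_f'})$ land in the torsion packets $\mathbf{e}_{L,hg}^g$ by Proposition \ref{prop:torsion}, so two branches through the same point of $\ol{S}_{K_f'}$ would force a coincidence already visible at the level of isotropic lines and adelic cosets, which the normality of $K_f'(W)$ in $K_\mathrm{max}$ controls uniformly across all components. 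The normality hypothesis is exactly what makes $K_f'(W)$ work simultaneously for every conjugate $g^{-1} K_f'(W) g$, so that the bound ``$K_f \subseteq K_\mathrm{max}$ arbitrary'' can be accommodated.

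Irreducibility of the components is the other half. The pullback decomposes into finitely many pieces each of the form $\ol{T}(g)(\ol{C}_{W,K_f'})$ for suitable $g$, and I would argue that each such piece is irreducible by identifying its connected components with the set $U(\Q) \backslash U(\A_f) / \eta(g^{-1} K_{W,f}' g)$ from Lemma \ref{lem:obstruction} and Section \ref{subsec:shimura}, together with the fact that $\ol{C}_{W,K_f'}$ is the canonical compactification of a Shimura curve whose open part already has these as its connected components (the boundary being a finite union of points, one per cusp, which attach to but do not disconnect the components). A mild subtlety---and the step I expect to be the main obstacle---is ensuring that the smooth locus coincides with the union of \emph{irreducible} (not merely connected) components: a priori a connected piece $\ol{T}(g)(\ol{C}_{W,K_f'})$ could be reducible if the curve $\ol{C}_{W,K_f'}$ has multiple components glued at a cusp, so I would need to invoke that at neat level the boundary points of $\ol{C}_{W,K_f'}$ are smooth points of the compactified curve (distinct cusps give distinct points, and a single cusp on a smooth compactified modular curve is a smooth point), so each connected component is already irreducible. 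Once that is in hand, the smooth embedding from Proposition \ref{prop:cwembed} shows the images remain smooth and irreducible in $\ol{S}_{K_f'}$, and the pullback is their (disjoint, by the transversality/torsion-packet argument above) union, completing the proof.
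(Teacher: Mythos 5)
Your core argument is the paper's: reduce to $K_f' = K_f'(W) \cap K_f$, use normality of $K_f'(W)$ in $K_\mathrm{max}$ so that $K_f'$ is normal in $K_f$ and the extended translations $\ol{T}(k)$, $k \in K_f$, are automorphisms of $\ol{S}_{K_f'}$, observe that the pullback is covered by the translates $\ol{T}(k)(\ol{C}_W)$ (at level $K_f'$) as $k$ runs over $K_f/K_f'$, and conclude because each translate is smoothly embedded by Proposition \ref{prop:cwembed}, which applies since $K_f' \subseteq K_f'(W) \subseteq K_f(W)$. That is exactly the published proof, and your boundary paragraph contains this step correctly.

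Two of your auxiliary claims overreach, however, and should be dropped. First, Deligne's Proposition 1.15 gives an embedding only for sufficiently small level; for an arbitrary $K_f \subseteq K_\mathrm{max}$ the map $C_{W,K_f} \rightarrow S_{K_f}$ can have self-intersections, which is precisely why one pulls back to $K_f' \subseteq K_f'(W)$ in the first place, so your opening reduction as stated is not justified (though it is also not needed). Second, you neither need nor can expect disjointness of the translates: distinct components of the pullback may meet in isolated points (compare Proposition \ref{prop:cwequiv}, which allows zero-dimensional intersections of distinct embedded curves), and the statement only asserts that each irreducible component is smooth, not that their union is smooth. Consequently the transversality/torsion-packet argument you sketch for separating branches is unnecessary, and as sketched it would not in fact rule out such crossings. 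Once these embellishments are removed, what remains is the paper's argument; no boundary analysis beyond Proposition \ref{prop:cwembed} and the existence of the operators $\ol{T}(k)$ is required.
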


\begin{proof}
It suffices to verify this for $K_f' = K_f'(W) \cap K_f$.  By our assumption that $K_f'(W)$ is normal in $K_\mathrm{max}$, $K_f'$ is normal in $K_f$, so the translations $T(k)$ for $k \in K_f$ are automorphisms of $S_{K_f'}$.  On $S_{K_f}$, the pullback is the union of the double cosets represented by the set of pairs $\{(x,gk): x \in X_W, g \in G_W(\mathbf{A}_f),k \in K_f/K_f'\}$.  In particular, $C_{W,K_{W,f'}}$ is covered by the translates $T(k)(C_{W,K_{W,f'}})$ as $k$ varies over the finite set of representatives of $K_f/K_f'$.  Since $\ol{C}_{W,K_{W,f'}}$ is smoothly embedded in $\ol{S}_{K_f'}$ by Proposition \ref{prop:cwembed}, so is each translate by the existence of the extended translation operators $\ol{T}(k)$.  (See Definition \ref{defin:translation}.)  It follows that the full pullback is a union of smooth irreducible components.
\end{proof}

\subsection{Construction of elements of $\mathrm{Ch}^2(\ol{S},1)$} \label{subsec:higherchow}

We are interested in constructing elements inside the higher Chow group $\mathrm{Ch}^2(\ol{S},1)$.  (See Definition \ref{def:higherchow} above.)  In this section, we focus on how to satisfy (\ref{eqn:vancond}), and leave addressing the field of definition for future sections.  Our main goal is to prove Proposition \ref{prop:cycles}, which will allow us to produce classes in $\mathrm{Ch}^2(\ol{S},1)$ in a flexible way.

We will be interested only in higher Chow classes that have a particular form.
\begin{enumerate}
	\item We will consider two types of cycles.
	\begin{enumerate}
		\item The first is an embedded curve $\ol{C}_i \subseteq \ol{S}_{K_f}$ given by an irreducible component of $\ol{T}(g)(\ol{C}_W)$, where $\ol{C}_W \subseteq \ol{S}_{gK_fg^{-1}}$ is an embedded curve and $\ol{T}(g)$ is the translation operator of Definition \ref{defin:heckecompact}.
		\item The second is one of the boundary elliptic curves $E_j$ for $j \in \sigma$.
	\end{enumerate}
  \item We will only consider rational functions $u_i$ on $\ol{C}_i$ whose divisor is supported on the set $\sigma_i=\partial \ol{C}_i$ of cusps.  On $E_j$, we will allow an arbitrary rational function.
\end{enumerate}
Amongst the formal sums $\sum_i (\ol{C}_i,u_i)$ involving only the first type of cycle, we would like to classify the ones for which there exist choices of rational functions $v_j$ on each $E_j$ such that
\begin{equation} \label{eqn:maineq} \sum_i \mathrm{div}(u_i) + \sum_{j \in \sigma} \mathrm{div}(v_j) = 0\end{equation}
as formal sums of points on $S$.

Our goal of finding cycles satisfying (\ref{eqn:maineq}) is simplified by the following result of Manin and Drinfel'd; it allows us to work with divisors (which are easy to construct) rather than rational functions.
\begin{prop} \label{prop:md}
For every divisor $\Xi_i$ on $\ol{C}_i$ that is supported on $\sigma_i$ and has degree 0 on $\ol{C}_i$, there exists a positive integer $n \in \mathbf{Z}$ so that $n\Xi_i$ is the divisor of a rational function $u_i$.
\end{prop}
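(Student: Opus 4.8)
The statement is the classical Manin--Drinfel'd theorem: on a modular-type curve, the divisors supported on the cusps are torsion in the Jacobian. The curve $\ol{C}_i$ is (a component of a Hecke translate of) the compactification of a Shimura curve for $H \cong \GL_2 \boxtimes \mathrm{Res}_\mathbf{Q}^E\mathbf{G}_m$, which maps to a classical modular curve $X_{K_{2,f}}$; so this is essentially the Manin--Drinfel'd theorem for the modular curve itself, pulled back.

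The plan is to reduce to the known statement via the Hecke action. First I would record that $\ol{C}_i$ is a smooth projective curve with a finite set of cusps $\sigma_i$, and that its Jacobian $J_i = \mathrm{Pic}^0(\ol{C}_i)$ is an abelian variety. It suffices to show that the subgroup $\langle \sigma_i \rangle^0 \subseteq J_i(\mathbf{C})$ generated by degree-$0$ differences of cusps is finite; equivalently, that $\mathrm{Pic}^0(\ol{C}_i)$ modulo this subgroup is torsion-free, or directly that each such class is torsion. Since finiteness of a subgroup of an abelian variety over $\ol{\mathbf{Q}}$ is equivalent to the subgroup consisting of torsion points, I would exhibit enough commuting endomorphisms (Hecke correspondences) that (a) permute the cusps up to a common finite set and (b) act on $H^0(\ol{C}_i,\Omega^1)$, equivalently on $H_1$, with eigenvalues $\lambda$ such that for a suitable Hecke operator $T$ one has $T$ acting on the holomorphic $1$-forms by a scalar not equal to its action on divisors of cusps. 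Concretely: the cusps span a space on which the Hecke operators act through their action on Eisenstein series, while the quotient abelian variety has Hecke eigenvalues matching cusp forms; choosing $T - \lambda$ annihilating one side but acting invertibly (over $\mathbf{Q}$, up to isogeny) on the other forces the cusp divisor class to be killed after multiplication by an integer. This is exactly the Manin--Drinfel'd argument, and it applies because $\ol{C}_i$ carries such Hecke correspondences coming from $H(\mathbf{A}_f)$, or alternatively because $\ol{C}_i$ admits a finite map to the classical modular curve $X_{K_{2,f}}$ and one may pull back and push forward divisors.

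In more detail, the cleanest route is via the map $f_i \colon \ol{C}_i \to X$ to a classical modular curve induced (on a component) by the projection $H \to \GL_2$. The cusps of $\ol{C}_i$ map to the cusps of $X$, and for the modular curve the Manin--Drinfel'd theorem is classical: any degree-$0$ divisor supported on the cusps of $X$ is torsion in $\mathrm{Pic}^0(X)$. Given $\Xi_i$ supported on $\sigma_i$ of degree $0$ on $\ol{C}_i$, I would first split $\Xi_i$ into its components over the connected/irreducible pieces; on each, push forward by $f_i$ to get a divisor $f_{i,*}\Xi_i$ on $X$. If this pushforward also has degree $0$, Manin--Drinfel'd on $X$ makes $m \cdot f_{i,*}\Xi_i = \mathrm{div}(\phi)$ for some $m$ and rational function $\phi$ on $X$; then $f_i^* \phi$ has divisor $m \cdot f_i^*(f_{i,*}\Xi_i)$, and one must compare $f_i^*f_{i,*}\Xi_i$ with $\deg(f_i) \cdot \Xi_i$. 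These differ by a divisor supported on cusps that is "Galois-symmetric" in the fibers of $f_i$; handling that discrepancy — which need not vanish if the fibers of $f_i$ over cusps contain several cusps of $\ol{C}_i$ — is the main obstacle.

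To overcome it, rather than relying on a single map I would instead invoke the Hecke action on $\ol{C}_i$ directly: the correspondences $T_\ell$ (for $\ell$ not dividing the level) act on $\mathrm{Pic}^0(\ol{C}_i)$, preserve the span of cuspidal divisor classes, and on that span act with eigenvalues of "Eisenstein type" $\ell^k + \ell$ (or products of Hecke characters evaluated at $\ell$) which are provably distinct from any eigenvalue occurring on $H^0(\ol{C}_i,\Omega^1)$ by the Ramanujan/Weil bound $|a_\ell| \le 2\sqrt{\ell}$ for cusp forms. Hence for suitable $\ell$ the operator $T_\ell - (\ell^k+\ell)$ kills every cuspidal divisor class in $\mathrm{Pic}^0(\ol{C}_i)$; since $T_\ell - (\ell^k+\ell)$ is an isogeny onto its image, the cuspidal class $[\Xi_i]$ lies in a finite group, hence is torsion, giving the integer $n$ with $n\Xi_i = \mathrm{div}(u_i)$. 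I expect the bookkeeping of how the Hecke correspondences act on the specific boundary structure of these Shimura curves — verifying that the cuspidal subspace is Hecke-stable with Eisenstein eigenvalues, and that the relevant bound applies to the coherent-cohomology cusp forms attached to our $\pi$ — to be the only real work; everything else is formal.
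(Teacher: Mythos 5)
The paper does not actually prove Proposition \ref{prop:md}: it is quoted as ``a result of Manin and Drinfel'd,'' so there is no in-paper argument to compare against, and your proposal is in effect supplying the standard proof of that classical theorem. Your second route (Hecke correspondences with Eisenstein eigenvalue versus the Weil/Eichler--Shimura bound on cusp-form eigenvalues, so that $T_\ell$ minus the Eisenstein eigenvalue kills cuspidal classes while being an isogeny on the Jacobian) is exactly the classical Manin--Drinfel'd argument and is sound; your first route via $f_{i*}$ and $f_i^*$ is indeed awkward for the reason you identify, but it is also unnecessary. The only paper-specific input you need, and the one you should make explicit, is that each irreducible component $\ol{C}_i$ of a translate $\ol{T}(g)(\ol{C}_W)$ is itself isomorphic to a compactified congruence modular curve with $\sigma_i$ going to the cusps: this follows from Lemma \ref{lem:obstruction} (which identifies $\GU(1,1)'$ with $\GL_{2/\mathbf{Q}}$, so each connected component of $C_W$ is $\Gamma\backslash\mathfrak{h}$ for a congruence subgroup $\Gamma$) together with the fact that the translation operators $\ol{T}(g)$ of Definition \ref{defin:heckecompact} are isomorphisms carrying boundary to boundary. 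Once that is said, Drinfel'd's theorem for congruence modular curves applies verbatim and no Hecke bookkeeping on $\ol{C}_i$ itself is required. Two small imprecisions to tighten if you do run the Hecke argument directly: in weight $2$ the Eisenstein eigenvalue is $1+\ell$ (your ``$\ell^k+\ell$'' is off), and ``permuting the cusps up to a common finite set'' is not enough for $T_\ell-(1+\ell)$ to annihilate cuspidal classes --- you need $T_\ell$ to fix each cusp as a divisor, which is arranged by taking $\ell\equiv 1$ modulo the level (or by first passing to a suitable power of the permutation action).
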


Using Proposition \ref{prop:pullbacksmooth}, we may assume every $\ol{C}_i$ is smoothly embedded.  Due to this and Proposition \ref{prop:md}, to study (\ref{eqn:maineq}), one can consider a collection of degree 0 divisors $\Xi_i$ on $\sigma_i$ itself.

Recall that as described in Section \ref{subsec:beilinson}, we regard $\ol{S}$ as being defined over $\mathbf{Q}$ using restriction of scalars.  A formal sum $\sum_i (\ol{C}_i,\Xi_i)$ of curves and divisors $\Xi_i \in \mathbf{Z}[\sigma_i]$ is said to be defined over $\mathbf{Q}$ if it is invariant by the action of $G_\mathbf{Q}$.

\begin{prop} \label{prop:cycles}
Let $\Xi$ denote the formal sum $\sum_i (\ol{C}_i,\Xi_i)$, where $\ol{C}_i$ are irreducible components of some $\ol{T}(g)(\ol{C}_W)$, $\Xi_i$ is supported on the cusps $\sigma_i = \partial \ol{C}_i$, and $\Xi$ is defined over $\mathbf{Q}$.  Using Proposition \ref{prop:pullbacksmooth}, we pull back $\Xi$ to a level $K_f$ that is sufficiently small that each $\ol{C}_i$ is smoothly embedded.  (Note that pullback preserves the field of definition.)  Then the following are equivalent.
\begin{enumerate}
	\item There exist rational functions $u_i$ on $\ol{C}_i$, rational functions $f_j$ on $E_j$ for $j\in \sigma$, and a positive integer $n$ such that
	\begin{equation}\label{eqn:cycles} \mathrm{div}(u_i) = n\Xi_i, \quad \sum_i (\ol{C}_i,u_i) + \sum_{j\in \sigma} (E_j,f_j) \in Z^2(\ol{S},1),\end{equation}
	and $\sum_i (\ol{C}_i,u_i) + \sum_{j\in \sigma} (E_j,f_j)$ is stable by $G_\mathbf{Q}$.
	\item We have the following properties of $\Xi$.
	\begin{enumerate}
		\item Each $\Xi_i$ has degree 0.
		\item Let $\iota_i^\mathrm{min}: \sigma_i \rightarrow \sigma$ denote the map on boundaries induced by $\ol{C}_i \rightarrow \ol{S}_{K_f}^\mathrm{min}$, where $\ol{S}_{K_f}^\mathrm{min}$ is the minimal compactification.  Then $\sum_i \iota_{i*}^\mathrm{min} \Xi_i=0$ as a divisor on $\sigma$.
	\end{enumerate}
\end{enumerate}
\end{prop}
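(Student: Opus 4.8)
The plan is to prove the two directions separately, with the implication $(1) \Rightarrow (2)$ being essentially formal and $(2) \Rightarrow (1)$ requiring the real work. First, suppose $(1)$ holds. Applying $\mathrm{div}$ to the second condition in (\ref{eqn:cycles}) and using (\ref{eqn:vancond}) (which is part of the definition of $Z^2(\ol{S},1)$), we obtain $\sum_i \mathrm{div}(u_i) + \sum_{j\in\sigma}\mathrm{div}(f_j) = 0$ as a codimension-$2$ cycle on $\ol{S}$, i.e.\ as a formal sum of points. Pushing forward along $\ol{S} \to \ol{S}^{\mathrm{min}}$, the contributions of the $E_j$ collapse to the cusps, while $\mathrm{div}(u_i) = n\Xi_i$ pushes forward to $n\iota_{i*}^{\mathrm{min}}\Xi_i$; since the image of each $E_j$ is a single point, $\sum_{j}\mathrm{div}(f_j)$ pushes forward to $0$, giving $n\sum_i \iota_{i*}^{\mathrm{min}}\Xi_i = 0$, hence $(2)(b)$. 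For $(2)(a)$: restricting the identity $\sum_i\mathrm{div}(u_i) + \sum_j \mathrm{div}(f_j) = 0$ to a fixed boundary curve $E_j$ and taking degrees, the terms $\mathrm{div}(f_j)$ have degree $0$ automatically, and the divisors $\mathrm{div}(u_i)$ restricted to $E_j$ are precisely $n$ times the part of $\Xi_i$ landing on $E_j$; summing the degrees over all $j$ and using that $\Xi_i$ is supported on $\sigma_i$ shows $n\deg(\Xi_i) = 0$ for each $i$ (using that distinct $\ol{C}_i$ meet $E_j$ in finitely many points — here one uses Proposition \ref{prop:torsion} and the smoothness from Proposition \ref{prop:pullbacksmooth} to make the bookkeeping clean), hence $\deg(\Xi_i)=0$.

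For the converse $(2) \Rightarrow (1)$, assume $(2)(a)$ and $(2)(b)$. By $(2)(a)$ and the Manin--Drinfel'd result Proposition \ref{prop:md}, for each $i$ there is a positive integer $n_i$ and a rational function $u_i$ on $\ol{C}_i$ with $\mathrm{div}(u_i) = n_i\Xi_i$; replacing $\Xi$ by a common multiple, we may take a single $n$ working for all $i$. Now consider the codimension-$2$ cycle $Z := \sum_i \mathrm{div}(u_i) = n\sum_i \Xi_i$ on $\ol{S}$. By construction $Z$ is supported on the boundary curves $E_j$, $j\in\sigma$; write $Z = \sum_{j} Z_j$ with $Z_j$ a $0$-cycle on $E_j$. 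The key point is that each $Z_j$ is the divisor of a rational function $f_j$ on $E_j$, after possibly multiplying $n$ by a further integer. For this, by the standard theory of divisors on an elliptic (genus one) curve it suffices to check that $Z_j$ has degree $0$ and sums to $0$ (torsion) in $\mathrm{Pic}^0(E_j)$. The degree-$0$ condition follows from $(2)(a)$: $\deg(Z_j) = n\sum_i (\text{multiplicity of } \Xi_i \text{ on } E_j)$ and summing over $j$ recovers $n\sum_i\deg\Xi_i = 0$ — more carefully, one needs $\deg Z_j = 0$ for each individual $j$, which follows because $\Xi_i$ is a boundary divisor on $\ol{C}_i$ and the pushforward of $\mathrm{div}(u_i)$ to $E_j$ has degree $0$ curve-by-curve. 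The torsion condition in $\mathrm{Pic}^0(E_j)$ is where Proposition \ref{prop:torsion} (equivalently Corollaries \ref{coro:torsion} and \ref{coro:oneclass}) enters decisively: every point of $Z_j$ arising as $\iota_W((L,h))$ or its translate lies in a single $\mathbf{Q}$-equivalence class $\mathbf{e}_{L,hg}^g$, i.e.\ any two such points differ by a torsion element of $\mathrm{Pic}(E_j)$; hence the degree-$0$ cycle $Z_j$, being a $\mathbf{Z}$-combination of differences of such points, is torsion in $\mathrm{Pic}^0(E_j)$. So after enlarging $n$ we get $f_j$ with $\mathrm{div}(f_j) = -Z_j$; then $\sum_i(\ol{C}_i,u_i) + \sum_{j}(E_j,f_j)$ satisfies the vanishing condition (\ref{eqn:vancond}) and hence lies in $Z^2(\ol{S},1)$, using $(2)(b)$ to ensure nothing is left over at the level of the minimal compactification (i.e.\ that the $Z_j$ exhaust all of $Z$ with no residual contribution). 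Finally, Galois-equivariance of the resulting sum: since $\Xi$ is defined over $\mathbf{Q}$ and the $u_i, f_j$ are determined up to scalars by their divisors, one arranges a $G_{\mathbf{Q}}$-stable choice by the usual averaging argument over the (finite) Galois orbit, at the cost of possibly another multiple of $n$.

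The main obstacle is the verification that each $Z_j$ is a principal divisor on $E_j$, and specifically the torsion claim in $\mathrm{Pic}^0(E_j)$. Degree $0$ on its own is not enough on a genus-one curve: one genuinely needs that the points appearing are all $\mathbf{Q}$-equivalent, which is exactly the content of the coordinate computations of Section \ref{subsec:imagecusp} culminating in Corollary \ref{coro:torsion} and its translate-version Proposition \ref{prop:torsion} — that the image of a cusp $(L,h)$ of an embedded $\GU(1,1)$-curve, in the coordinate $\eta_{\mathbf{v},g}$ on the boundary elliptic curve, is given by $-\langle w,v_3\rangle/\langle w,v_2\rangle \in E \subseteq \mathbf{C}$, hence a torsion point relative to $\eta_{\mathbf{v},g}(0)$. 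A secondary technical point, which I would handle carefully but do not expect to be hard, is the bookkeeping on the minimal compactification: one must check that $(2)(b)$ is precisely the obstruction to the cycle $Z = \sum_i\mathrm{div}(u_i)$ being supported (as a $0$-cycle that one can kill by functions $f_j$ on the $E_j$) in a way compatible with the fiber structure $\ol{S} \to \ol{S}^{\mathrm{min}}$ — i.e.\ that no "horizontal" contribution survives once the $\Xi_i$ push forward to zero on $\sigma$.
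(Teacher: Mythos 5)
Your overall strategy for $(2)\Rightarrow(1)$ is the same as the paper's (Proposition \ref{prop:md} on the $\ol{C}_i$, then the torsion statement of Proposition \ref{prop:torsion}/Corollary \ref{coro:torsion} to make the induced degree-zero divisor on each boundary curve $E_j$ principal after a further integer multiple), but the step where you establish $\deg Z_j=0$ for each individual $j$ is wrong as written, and this is exactly where hypothesis (2)(b) must enter. You assert that the pushforward of $\mathrm{div}(u_i)$ to $E_j$ has degree $0$ ``curve-by-curve,'' deducing $\deg Z_j=0$ essentially from (2)(a) alone. That is false: $\mathrm{div}(u_i)=n\Xi_i$ has total degree $0$ on $\ol{C}_i$, but the cusps of a single $\ol{C}_i$ may map to several different boundary curves, so the part of $n\Xi_i$ lying over a fixed $E_j$ can have nonzero degree (e.g.\ $\Xi_i=P-Q$ with $P$ over $E_j$ and $Q$ over some other $E_{j'}$). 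The correct argument is the paper's: each $E_j$ collapses to the single cusp $j$ in $\ol{S}_{K_f}^\mathrm{min}$, so $\deg Z_j$ is precisely the coefficient of $j$ in $\sum_i \iota_{i*}^\mathrm{min}\Xi_i$, and it is condition (2)(b) that forces $\deg Z_j=0$ for every $j$. In your write-up, (2)(b) is instead assigned the vacuous role of ensuring that $Z$ is supported on the boundary with ``no residual contribution''; that support statement is automatic from the hypothesis that each $\Xi_i$ is supported on $\sigma_i$ and needs no hypothesis at all. So as written the proof uses (2)(b) for nothing and derives the crucial degree claim from a false statement; the fix is easy, but the logical roles of the two hypotheses must be swapped.

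There is also a smaller slip in $(1)\Rightarrow(2)$: your argument for (2)(a) (restrict the relation to each $E_j$, take degrees, sum over $j$) only yields $\sum_i\deg\Xi_i=0$, not $\deg\Xi_i=0$ for each $i$. The intended one-line argument is simply that $u_i$ is a rational function on the complete curve $\ol{C}_i$, so $\deg\mathrm{div}(u_i)=0$ and hence $n\deg\Xi_i=0$; this is why the paper treats that direction as obvious. Your derivation of (2)(b) by pushing forward to the minimal compactification is fine, your torsion argument on the $E_j$ matches the paper's use of Proposition \ref{prop:torsion}, and your handling of $G_{\mathbf{Q}}$-equivariance (choosing the $u_i$, $f_j$ compatibly at the cost of a further multiple) is an acceptable variant of the paper's argument that the boundary intersection is defined over $\mathbf{Q}$.
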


\begin{proof}
It is obvious that (1) implies (2); we now show the converse.  Condition (a) together with Proposition \ref{prop:md} implies that we may replace $\Xi$ by an integer multiple so that each $\Xi_i$ is a rational divisor on $\ol{C}_i$.  This leaves us to construct the functions $f_j$.

Let $\iota_i: \sigma_i \rightarrow \cup_j E_j$ be the map on boundaries induced by $\ol{C}_i \rightarrow \ol{S}_{K_f}$, and write $\Psi_j$ for the divisor $\left.\sum_i \iota_{i*}\right|_{E_j}$.  By condition (b), $\Psi_j$ has degree 0.  By Proposition \ref{prop:torsion}, if we fix any coordinate system $\eta_{\mathbf{v},g}$ for $E_j$, $\Psi_j$ is supported on torsion points with respect to the identity $\eta_{\mathbf{v},g}(0)$.  It is an easy fact that such a divisor defines a torsion point of $\mathrm{Pic}^0(E_j)$.  Therefore, we may multiply by an additional integer so that each $\Psi_j$ is a rational divisor.  Take $f_j$ to be the inverse of the corresponding rational function.  Then the existence of $n,u_i,$ and $f_j$ satisfying (\ref{eqn:cycles}) is now verified, save for the field of definition.

Recall that we have assumed that the formal sum $\sum_i (\ol{C}_i,u_i)$ is carried to itself by any element of $G_\mathbf{Q}$.  Due to our restriction of scalars, the boundary $\partial S_{K_f}$ is defined over $\mathbf{Q}$ \cite{harris}.  So the intersection of $\partial S_{K_f}$ with the sum of the divisors of the $u_i$ is also defined over $\mathbf{Q}$.  Then it follows from the equality $\sum_{j\in \sigma} \mathrm{div}(f_j)=-\sum_i \mathrm{div}(u_i)$ that the formal sum $\sum_{j\in \sigma} (E_j,f_j)$ is also defined over $\mathbf{Q}$.
\end{proof}

\section{Rankin-Selberg method}  \label{sec:diff}

We will study the regulator pairing defined in Section \ref{subsec:beilinson} by interpreting it as a special value of an integral of the type considered by Rankin and Selberg.  There are several ingredients that are required to translate between these notions, some of which will be treated later in Sections \ref{sec:hecke} and \ref{sec:klf}.

In this section, starting with a suitable automorphic representation $\pi$ on the unitary group $G = \GU(2,1)$, we explain how to associate an automorphic differential $(1,1)$-form $\omega$ on the Shimura variety of Section \ref{sec:higherchow} to a vector in the finite part $\pi_f$.  We will integrate these forms over a cycle of the type considered in Section \ref{sec:higherchow}; this construction is closely related to the Rankin-Selberg integral of Gelbart and Piatetski-Shapiro \cite{gelbartPS}.  We factorize the integral and compute the local factors at unramified, Archimedean, and certain ramified places.

\subsection{Automorphic differential $(1,1)$-forms} \label{subsec:diff}

We define a class of automorphic representations $\pi$ on $\GU(2,1)$ and construct associated differential $(1,1)$-forms on the Shimura varieties considered in Section \ref{sec:higherchow}.  We also introduce some group-theoretic preliminaries to be used later.

Suppose that we have a generic cuspidal automorphic representation $\pi$ on $\GU(2,1)$ with trivial central character at $\infty$ such that $\pi \cong \pi_\infty \otimes \pi_f$ with $\pi_\infty$ a discrete series representation.  Write $V_\pi$ for the underlying space of automorphic forms and write $V_\infty$ for the underlying space of $\pi_\infty$.  Denote the minimal $K_\infty$-type of $\pi_\infty$ by $V_\tau \subseteq V_\infty$, where $\tau$ is the corresponding representation of $K_\infty$.  Then we have the following result, which combines a special case of a theorem of Blasius, Harris, and Ramakrishnan with a multiplicity formula of Rogawski.
\begin{thm}[{\cite[Theorem 3.2.2\textrm{ and }(2.3.3)]{bhr},{\cite[Theorem 13.3.1]{rogawski}}}]
The representation $\pi_f$ is defined over a number field.  Moreover, $\pi_f$ has a canonical realization over $\ol{\mathbf{Q}}$ inside the interior coherent cohomology group of the tower of Shimura varieties $\ol{S}_{K_f}$ corresponding to $\pi_\infty$, where it appears with multiplicity one.
\end{thm}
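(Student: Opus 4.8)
The plan is to obtain the statement by assembling the coherent cohomology realization of \cite{bhr} with Rogawski's classification of the discrete spectrum \cite{rogawski}, keeping track throughout of the $\ol{\mathbf{Q}}$-structure. First I would attach to the minimal $K_\infty$-type $\tau$ the automorphic vector bundle $\mathcal{V}_\tau$ on the canonical model of $\ol{S}_{K_f}$ over the reflex field, and recall from the theory of canonical models of automorphic vector bundles that for every $K_f$ the coherent cohomology groups $H^i(\ol{S}_{K_f},\mathcal{V}_\tau)$ and their interior subquotients $H^i_!(\ol{S}_{K_f},\mathcal{V}_\tau)$ carry natural $\ol{\mathbf{Q}}$-structures on which the prime-to-level Hecke algebra acts $\ol{\mathbf{Q}}$-linearly; this $\ol{\mathbf{Q}}$-rationality together with the Hecke equivariance is essentially the content of \cite[Theorem 3.2.2]{bhr}. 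Passing to the direct limit over $K_f$ produces a smooth admissible representation of $G(\mathbf{A}_f)$ on a $\ol{\mathbf{Q}}$-vector space $\mathcal{H}^i_{!}$, whose base change to $\mathbf{C}$ decomposes, via the coherent-cohomology analogue of Matsushima's formula, as $\bigoplus_{\sigma} m_{\mathrm{disc}}(\sigma)\, H^i(\mathfrak{p}^-,K_\infty;\sigma_\infty\otimes V_\tau^\vee)\otimes\sigma_f$, the sum running over the discrete automorphic spectrum of $G$.

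Next I would pin down the degree and the archimedean multiplicity. By the relative Lie algebra cohomology computation underlying \cite[(2.3.3)]{bhr}, among all tempered representations of $G(\mathbf{R})$ with the relevant infinitesimal character there is exactly one, namely the generic discrete series $\pi_\infty$ of Blattner parameter $(1,-1)$, for which $H^\bullet(\mathfrak{p}^-,K_\infty;-\otimes V_\tau^\vee)$ is nonzero; this cohomology is then one-dimensional and concentrated in a single degree $i$. In particular the holomorphic and antiholomorphic members of the archimedean $L$-packet of $\pi_\infty$ have different minimal $K_\infty$-types (they contribute instead to the cohomology of line bundles, in degrees $0$ and $2$) and so do not appear in $\mathcal{H}^\bullet_{!}$ at all. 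Combining this with the decomposition above shows that the $\pi_f$-isotypic part of $\mathcal{H}^i_{!}\otimes\mathbf{C}$ is $m_{\mathrm{disc}}(\pi_\infty\otimes\pi_f)\cdot\pi_f$ and that $\pi_f$ occurs in no other degree of the interior coherent cohomology.

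It remains to see that $m_{\mathrm{disc}}(\pi_\infty\otimes\pi_f)=1$, which follows from Rogawski's multiplicity formula \cite[Theorem 13.3.1]{rogawski} applied to the global packet containing the generic cuspidal representation $\pi$: the generic constituent always occurs in the discrete spectrum with multiplicity one, the relevant sign-character obstruction being automatically trivial for it. Therefore the $\pi_f$-isotypic subspace $V_{\ol{\mathbf{Q}}}\subseteq\mathcal{H}^i_{!}$ is a Hecke-stable $\ol{\mathbf{Q}}$-subrepresentation with $V_{\ol{\mathbf{Q}}}\otimes_{\ol{\mathbf{Q}}}\mathbf{C}\cong\pi_f$, and since $\mathcal{H}^i_{!}$ is an increasing union of finite-dimensional Hecke modules whose Hecke eigensystems lie in number fields (e.g.\ by the existence of the associated Galois representations, cf.\ \cite{pms}, or by Shimura rationality), $\pi_f$ is in fact defined over a number field; taking $V_{\ol{\mathbf{Q}}}$ gives the asserted canonical realization. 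The main obstacle is the middle paragraph: one must establish rigorously (a) the $\ol{\mathbf{Q}}$-rationality and Hecke-equivariance of \emph{interior} coherent cohomology, and (b) that a single archimedean representation contributes, in a single degree and with multiplicity one — points (a) and (b) being precisely the substance of \cite{bhr} and of Rogawski's local and global classification, respectively.
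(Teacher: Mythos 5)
Your proposal is correct and follows essentially the same route as the paper, which does not reprove this statement but simply quotes it as the combination of \cite[Theorem 3.2.2 and (2.3.3)]{bhr} (the $\ol{\mathbf{Q}}$-rational, Hecke-equivariant realization of $\pi_f$ in interior coherent cohomology of the automorphic vector bundle attached to the minimal $K_\infty$-type, with the archimedean multiplicity pinned down by the $\mathfrak{p}^-$-cohomology computation) with Rogawski's multiplicity formula \cite[Theorem 13.3.1]{rogawski}. Your sketch is precisely the intended assembly of those cited results, so nothing further is needed.
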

We write $V_{\ol{\mathbf{Q}}}$ for the canonical model of $\pi_f$ over $\ol{\mathbf{Q}}$ given by this subspace of interior coherent cohomology.  Also write $V_f$ for $V_{\ol{\mathbf{Q}}} \otimes_{\ol{\mathbf{Q}}} \mathbf{C}$.

We now define $\pi_\infty$ by requiring that its minimal $K$-type is the three-dimensional representation appearing in $\mathfrak{p}^+ \otimes \mathfrak{p}^-$.  With this choice, there is a canonical isomorphism
\begin{equation}\label{eqn:alpha} \alpha: V_{\ol{\mathbf{Q}}} \otimes_{\ol{\mathbf{Q}}} \mathbf{C} \rightarrow \Hom_{K_\infty}(\mathfrak{p}^+ \otimes \mathfrak{p}^-,V_\pi).\end{equation}
(See, for instance, \cite[(3.3.8)]{grobseb}.)

We deduce that for any $\varphi_f \in V_f$, there is a canonically defined map $F_{\varphi_f}: \GU(2,1)(\mathbf{A}) \rightarrow \mathfrak{p}^{+\vee} \otimes \mathfrak{p}^{-\vee}$ given by $F_{\varphi_f}(g)(v)=(\alpha(\varphi_f)(v))(g)$ for $v \in \mathfrak{p}^{+\vee} \otimes \mathfrak{p}^{-\vee}$ and $g\in \GU(2,1)(\mathbf{A})$.  (Here we have used the realization of vectors in $\pi$ as complex-valued functions on $\GU(2,1)(\mathbf{A})$.)  This function satisfies
\begin{equation} \label{eqn:kaction} F_{\varphi_f}(gk) = k^{-1}F_{\varphi_f}(g).\end{equation}

We may now name a differential $(1,1)$-form $\omega_{\varphi_f}$ attached to $\varphi_f$.  Since the map $F_{\varphi_f}$ is invariant on the left by $\GU(2,1)(\mathbf{Q})$ and we are assuming trivial central character at $\infty$, we obtain a map $\omega_{\varphi_f}: \GU(2,1)(\mathbf{Q}) \backslash \GU(2,1)(\mathbf{A}) / Z_\infty \rightarrow \mathfrak{p}^{+\vee} \otimes \mathfrak{p}^{-\vee}$ satisfying (\ref{eqn:kaction}).  Since $\mathfrak{p}^+$ and $\mathfrak{p}^-$ are respectively the holomorphic and anti-holomorphic tangent spaces at the identity, a function $\omega_{\varphi_f,\alpha}$ satisfying (\ref{eqn:kaction}) is tantamount to a differential $(1,1)$-form on the $\GU(2,1)$ symmetric space.

\subsection{Global integrals} \label{subsec:globalints}

We consider integration over a single cycle corresponding to the line $W_0 = \gen{e_2}$, where $\set{e_1,e_2,e_3}$ is the fixed basis used to define $J$ in (\ref{eqn:jdef}).  This will lead directly to the integral of Gelbart and Piatetski-Shapiro.

Recall the group $G_{W_0} \cong \GU(1,1)' \boxtimes \GU(1)$ defined in Section \ref{subsec:shimura} and the matrix $J_2 = \mm{}{\delta^{-1}}{-\delta^{-1}}{}$.  For convenience, we write $H = G_{W_0}$ in this section as we will only be considering this particular embedded group.  Recall that the final claim of Lemma \ref{lem:obstruction} provides an identification $\GU(1,1)' \cong \GL_2$.  Thus the group $H$ is identified with the pairs $(g,\lambda) \in \GL_2 \times \res_\mathbf{Q}^E\mathbf{G}_m$ that satisfy $\det(g) = \nm_\mathbf{Q}^E\lambda$.  The embedding $H \rightarrow \GU(J)$ is then given by
\[ (\left(\begin{array}{cc} a & b \\ c& d\end{array}\right), \lambda) \mapsto \left(\begin{array}{ccc} a& & b \\& \lambda & \\ c & & d \end{array}\right).\]
We also define $Z \subseteq H$ to be the intersection $Z_G \cap Z_H$ of the centers of $G$ and $H$.  It is given by the subgroup
\begin{equation} \label{eqn:zdef} Z(R) = \set{(t \mathbf{1}_2,t):t \in R^\times}.\end{equation}

We write $T \subseteq \GL_2$ for the diagonal torus.  Fix a character $\nu: T(\mathbf{Q})\backslash T(\mathbf{A}) \rightarrow \mathbf{C}^\times$ with trivial Archimedean component.  If $\varphi_f$ is as in Section \ref{subsec:diff} and $K_f \subseteq \GU(2,1)(\mathbf{A}_f)$ is an open compact subgroup, we wish to compute the integral
\begin{equation} \label{eqn:regpairing} I(\varphi_f,\Phi,\nu,K_f,s) = \int_{C_{K_f}}{E(\Phi,\nu,s)\omega_{\varphi_f}}\end{equation}
where the notation is as follows.
\begin{itemize}
\item $C_{K_f}$ is, for the remainder of this section, shorthand for the Shimura curve $C_{W_0,K_f \cap H}$ associated to $H$, which was defined in Section \ref{subsec:shimura}.  There is a close relationship between the curves $C_{K_f}$ and the usual modular curves for $\GL_{2/\mathbf{Q}}$.
\item By abuse of notation, $\omega_{\varphi_f}$ denotes the $(1,1)$-form on $C_{K_f}$ obtained by pulling back the $(1,1)$-form of the same name on $S_{K_f}$.
\item The function $E(\Phi,\nu,s)$ on $C_{K_f}$ is obtained as the composition of the projection map $H(\mathbf{A}) \rightarrow \GL_2(\mathbf{A})$ with the usual real-analytic Eisenstein series $E(g,\Phi,\nu,s)$ on $\GL_2(\mathbf{A})$ defined in terms of the character $\nu$ on $T$ and Schwartz-Bruhat function $\Phi$ on $V_2(\mathbf{A})$, where $V_2$ is the defining representation of $\GL_2$ acting on row vectors on the right.  We will make this more precise in Definition \ref{defin:eis} below.
\end{itemize}

We now explain the relationship between $I(\varphi_f,\Phi,\nu,K_f,s)$ and the Rankin-Selberg integral of Gelbart and Piatetski-Shapiro.  To relate the integral (\ref{eqn:regpairing}) to one of Rankin-Selberg type, we will make the following choices of measure.
\begin{defin}
We normalize the Haar measure for a reductive or unipotent group $G'$ over $\mathbf{Z}_p$ so that $\mathrm{meas}(G'(\mathbf{Z}_p))=1$.  At the Archimedean place, we use the Iwasawa decomposition to determine the measure.  More precisely, we define the measure on the unipotent radical of the Borel of $G(\mathbf{R})$ so that the integral points have covolume 1, choose the Haar measure on the respective maximal compacts $K_\infty$ and $K_{0,\infty}$ of $G(\mathbf{R})$ and $H(\mathbf{R})$ so that $\mathrm{meas}(K_\infty)=\mathrm{meas}(K_{0,\infty})=1$, and on the maximal split torus $\mathrm{diag}(a_1,a_2,*)$, we use the multiplicative Haar measure $\frac{da_1da_2}{|a_1a_2|}$.  For quotients of groups (such as on $C_{K_f}$), we use the unique measure compatible with these choices.
\end{defin}
We will also make the following choice of basis, which will play a role in the definition of the Whittaker period in Section \ref{subsec:whittakerperiod}.
\begin{defin}\label{defin:v0}
Write $X_0$ for the symmetric space of $H = G_{W_0}$, and for $x \in X_0$, consider the $(1,1)$-tangent space $T_P^{(1,1)}C_{K_f}$ at the point $P\in C_{K_f}$ represented by $(x,1)$.  We have an identification $T_P^{(1,1)}C_{K_f} \cong \mathfrak{p}_{\PGL_2}^{+} \otimes \mathfrak{p}_{\PGL_2}^{-}$, so $T_P^{(1,1)}C_{K_f}$ is $1$-dimensional. Note that a basis vector $v_2$ of this space defines an invariant volume form on $C_{K_f}$; we choose $v_2$ so that it induces the quotient Haar measure on $X_0$.  Our embedding of groups gives an inclusion $\mathfrak{p}_{\PGL_2}^{+} \otimes \mathfrak{p}_{\PGL_2}^{-} \inj \mathfrak{p}^+\otimes \mathfrak{p}^-$, and we write $v_0$ for the projection of the image of $v_2$ to $V_\tau$.  (Note that $\alpha$ factors through this projection.)
\end{defin}

Due to our compatible choices of $v_0$ and the Haar measure on $H(\mathbf{A})$, the integral $I(\varphi_f,\Phi,\nu,K_f,s)$ is equal to 
\[[\GL_2(\widehat{\mathbf{Z}}):K_f] I(\alpha_{\varphi_f}(v_0),\Phi,\nu,s),\]
where, for $\varphi$ a cusp form in the space of $\pi$,
\begin{equation} \label{eqn:idef} I(\varphi,\Phi,\nu,s) =  \int_{H(\Q)Z(\mathbf{A})\backslash H(\mathbf{A})}{\varphi(g)E(g_1,\Phi,\nu,s)\,dg}.\end{equation}
This follows from the definition of $\omega_{\varphi_f,\alpha}$, the definition of pullback of differential forms, and the relationship between Haar measure on $\GL_2$ and the tangent vector $v_2$ in $\mathfrak{p}_{\PGL_2}^{+} \otimes \mathfrak{p}_{\PGL_2}^{-}$.  We also need the invariance of $v_0$ by $K_{0,\infty}$, which can be checked easily.  (Also recall that $Z \cong \mathbf{G}_{m / \mathbf{Q}}$ was defined in (\ref{eqn:zdef}) and that we have written $g_1$ in $E(g_1,\Phi,\nu,s)$ to emphasize that it factors through the projection $H \rightarrow \GU(1,1)' \cong \GL_2$.)

The Eisenstein series $E(g,\Phi,\nu,s)$ is a function of $\GL_2(\mathbf{A})$ defined as follows.
\begin{defin} \label{defin:eis}
Let $V_2$ denote the defining representation of $\GL_2$ on row vectors.  Let $\Phi$ denote a Schwartz-Bruhat function on $V_2(\A)$ and as above let $\nu = (\nu_1,\nu_2): T(\A)/T(\Q) \rightarrow \C^\times$ be a character of the diagonal torus of $\GL_2$ trivial at $\infty$.  We now define a section
\begin{equation} \label{eqn:eisdefin} f(g,\Phi,\nu,s) = \nu_1(\det(g))|\det(g)|^{s} \int_{\GL_1(\A)}{\Phi(t(0,1)g)(\nu_1\nu_2^{-1})(t)|t|^{2s}\,dt}\end{equation}
and set $E(g,\Phi,\nu,s) = \sum_{\gamma \in B(\Q) \backslash \GL_2(\Q)}{f(\gamma g,\Phi,\nu,s)}$, a function on $\GL_2(\mathbf{A})$.  Note that the central character of $E(g,\Phi,\nu,s)$ is $\nu_1 \nu_2$.  We write $f(g_1,\Phi,\nu,s)$ and $E(g_1,\Phi,\nu,s)$ for the functions on $H(\mathbf{A}) = \GL_2(\mathbf{A}) \boxtimes \GU(1)(\mathbf{A})$ obtained by projecting to the first factor.
\end{defin}

We give a definition of the standard automorphic $L$-function attached to $\pi$.  See Skinner \cite[\S 2]{skinner} for a similar description for the unitary case.
\begin{defin} \label{defin:stdl}
The connected component of the Langlands dual group of $G$, which we denote by ${}^LG^\circ$, is $\mathbf{G}_m(\mathbf{C})\times \GL_3(\mathbf{C})$ together with the action of the nontrivial element $\sigma \in \mathrm{Gal}(E/\mathbf{Q})$ given by $(x,g) \mapsto (x\det g , \Phi_3^{-1} {}^t g^{-1} \Phi_3)$.  Here, $\Phi_3$ is the $3 \times 3$ matrix given by $\Phi_{ij} = ((-1)^{i-1}\delta_{i,4-j})$, where $\delta_{ij}$ is the Kronecker delta function.  Then ${}^LG = {}^LG^\circ \rtimes \mathrm{Gal}(E/\mathbf{Q})$.

Define $G' = \mathrm{Res}^E_\mathbf{Q} (\mathbf{G}_m \times \GL_3)$.  The connected component of the dual group ${}^LG^{\prime\circ}$ is ${}^LG^\circ \times {}^LG^\circ$ together with an action of the nontrivial element $\sigma \in \mathrm{Gal}(E/\mathbf{Q})$ given by $\sigma(g,h) = (\sigma(h),\sigma(g))$, where $\sigma(g),\sigma(h)$ denotes the action on ${}^LG^\circ$ defined above.  Taking ${}^LG' = {}^LG^{\prime\circ} \rtimes \mathrm{Gal}(E/\mathbf{Q})$, we have an embedding ${}^LG \rightarrow {}^LG'$ extending the diagonal embedding ${}^LG^\circ \rightarrow {}^LG^{\prime \circ}$.  We define a representation ${}^LG^{\prime} \rightarrow \GL_6(\mathbf{C})$ via sending $((x_1,g_1),(x_2,g_2)) \rtimes 1 \mapsto \diag(x_1g_1,x_2\det(g_2)\Phi_3 {}^tg_2^{-1} \Phi_3^{-1})$ and $1 \rtimes \sigma \mapsto \mm{}{\mathbf{1}_3}{\mathbf{1}_3}{}$.  Finally let $r$ be the composition ${}^LG \rightarrow \GL_6(\mathbf{C})$.  The representation $r$ is the standard representation of ${}^LG$.

Note that the standard $L$-function is now well-defined as a meromorphic function whenever $\pi_p$ has a vector fixed by the special maximal compact $G(\mathbf{Z}_p)$ using the Satake transform of Cartier \cite{cartier} or Haines-Rostami \cite{hro}.  We will write $L_p^\mathrm{L}(\pi_p,\mathrm{Std},s)$ for the local $L$-factor in this case.  (We will compute these factors explicitly in Sections \ref{subsec:unram} and \ref{subsec:ramifiedlocal} below.)

If $\nu: \mathbf{Q}^\times \backslash \mathbf{A}^\times \rightarrow \mathbf{C}^\times$ is the Hecke character associated to a Dirichlet character, we define the $\nu$-twisted standard representation of ${}^LG$ as follows.  Let $G_\mathrm{tw} = G \times \mathbf{G}_m$ so that ${}^LG_\mathrm{tw} = {}^L G \times \mathbf{C}^\times$, and define the representation $r_\mathrm{tw}$ to be $r \circ p_1$ multiplied by the character $p_2$, where $p_i$ on ${}^L G \times \mathbf{C}^\times$ denotes projection to the $i^\mathrm{th}$ factor.  A Langlands parameter for $G_\mathrm{tw}$ is just the combination of the one attached to $\pi$ for $G$ with the Langlands parameter for the character $\nu$ on the second factor.

From its definition, it is easy to see that we could have defined the twisted standard $L$-function of $\pi \times \nu$ as the standard $L$-function of $(\nu \circ \mu)\pi$, where $(\nu\circ\mu)\pi$ is the product of the representation $\pi$ by the automorphic character $\nu\circ\mu$ of $G$.

We write $L_p^\mathrm{L}(\pi_p \times \nu_p,\mathrm{Std},s)$ for the $\nu_p$-twisted standard $L$-function of $\pi_p$ whenever $\pi_p$ has a vector fixed by the special maximal compact $G(\mathbf{Z}_p)$ and $\nu_p$ is unramified.
\end{defin}

\begin{remark}
In light of the second definition of the twisted $L$-function, defining the integral to explicitly accommodate a twist by $\nu$ may seem redundant.  However, in view of our geometric interpretation of the Eisenstein series in Section \ref{sec:klf}, we would like to separate the information of $\pi$ and $\nu$.
\end{remark}

\begin{remark}
As can be seen in the second definition, our notion of twisting by a character of $G$ is not the most general possible.  Lemma \ref{lem:obstruction} shows that one may define the twist of $\pi$ by an automorphic character $\nu$ of $\res_\mathbf{Q}^E \mathbf{G}_m$ by composing $\nu$ with the map $\det \cdot \mu^{-1}:G(\mathbf{Q}) \backslash G(\mathbf{A}) \rightarrow \mathrm{Res}^E_\mathbf{Q} \mathbf{G}_m(\mathbf{Q}) \backslash \mathrm{Res}^E_\mathbf{Q} \mathbf{G}_m(\mathbf{A})$.  One could accommodate such a twisting by incorporating a character of the $\GU(1)$ factor of $H$ into the integral representation, but we do not do this here.
\end{remark}

\begin{remark}
One could make a definition at all places using Arthur's approach of using the twisted base change of $\pi$ to $\mathrm{Res}^E_\mathbf{Q} (\mathbf{G}_m \times \GL_3)$ in place of the Langlands parameter of $\pi$, as the Langlands parameter at each place for the latter group is well-defined by strong multiplicity one.  However, we will only define $L_p^\mathrm{L}(\pi_p,\mathrm{Std},s)$ at certain places where it can be easily defined intrinsically (i.e.\ without global input).  These are the ones given in Definition \ref{defin:stdl} together with all places where $p$ is split, since $G \cong \GL_3 \times \mathbf{G}_m$ in that case.  (See Section \ref{subsec:unram}, or Section \ref{subsubsec:finesplit} for an explicit enumeration of possible representations and $L$-factors.)
\end{remark}

\subsection{Unfolding} The Rankin-Selberg integral $I(\varphi,\Phi,\nu,s)$ defined above is essentially that of Gelbart and Piatetski-Shapiro \cite{gelbartPS}.  There are some small differences in our construction due to the interpretation in terms of Shimura varieties, so we briefly give the unfolding of the integral.  We begin by defining local and global Whittaker functions.

\begin{defin} \label{defin:whit}
Define $\Lambda: \pi \rightarrow \mathbf{C}$ to be the nonzero Whittaker functional on $\pi$ given by
\[\Lambda(\varphi) = \int_{U_B(\Q)\backslash U_B(\A)}{\chi^{-1}(u)\varphi(u)\,du}.\]
Here, $U_B$ is the unipotent radical of the upper-triangular Borel subgroup of $G$ and the character $\chi:U_B(\mathbf{Q}) \backslash U_B(\mathbf{A})\rightarrow \mathbf{C}^\times$ is defined by $\chi(u) = \psi(\tr_{E/\Q}(\delta^{-1} u_{23}))$, where $\psi$ is the standard additive character $\psi:\Q\backslash \A \rightarrow \C^\times$ and $u_{23}$ is the entry of $u$ in position $(2,3)$ with respect to the matrix $J$.  Observe that the kernel of $\chi$ includes $U_B(\widehat{\mathbf{Z}})$.

For a cusp form $\varphi$ in the space of $\pi$, denote
\[W_{\varphi}(g) = \int_{U_B(\mathbf{Q})\backslash U_B(\mathbf{A})}{\chi^{-1}(u)\varphi(ug)\,du} = \Lambda(g\varphi).\]
the Whittaker function on $G$ associated to $\varphi$.

Assuming that we have a fixed cusp form $\varphi$ that is a pure tensor in some decomposition $\pi = \otimes_v\pi_v$ and such that $\Lambda(\varphi) = W_\varphi(1) \ne 0$, we make the following definition.  For each place $v$ of $\mathbf{Q}$, we define a local Whittaker function $W_{\varphi,v}: G(\mathbf{Q}_v) \rightarrow \mathbf{C}$ by setting $W_{\varphi,v}(g) = \Lambda(g\varphi)/\Lambda(\varphi)$.  We then have
\[W_{\varphi}(g) = W_\varphi(1)\prod_{v}{W_{\varphi,v}(g_v)}\]
where the product is over places $v$ of $\Q$.  In what follows, we will often view $\varphi$ as a fixed pure tensor and write $W_v$ in place of $W_{\varphi,v}$.
\end{defin}

Since the domain of integration above includes a quotient by $Z(\mathbf{A})$, we must introduce the following hypothesis.
\begin{hypothesis} \label{hypo:central}
Denote the central character of $\pi$ by $\omega_{\pi}:Z_G(\mathbf{Q}) \backslash Z_G(\mathbf{A}) \rightarrow \mathbf{C}^\times$. For the remainder of this paper, we assume that the character $\nu$ and the central character $\omega_\pi$ are related by $\omega_{\pi}|_{Z(\mathbf{A})}=(\nu_1\nu_2)^{-1}$.
\end{hypothesis}

Define $U_2$ to be the upper-triangular unipotent subgroup $\mm{1}{*}{}{1}$ of $\GL_2$.  For a Whittaker function $W_v$ on $\GU(J)(\Q_v)$ and a Schwartz-Bruhat function $\Phi_v$ on $V_2(\Q_v)$, define
\begin{align} \label{eqn:locint} I_v(W_v,\Phi_v,\nu_v,s) &= \int_{Z(\Q_v)U_2(\Q_v)\backslash H(\Q_v)}{f(g_{1,v},\Phi_v,\nu_v,s)W_v(g_v)\,d{g_v}} \\ \label{eqn:locint2} &= \int_{U_2(\Q_v)\backslash H(\Q_v)}{\nu_1(\det(g_{1,v}))\Phi_v((0,1)g_{1,v})W_v(g_v)|\det(g_{1,v})|_v^{s}\,d{g_v}},\end{align}
where $g_{1,v}$ is the projection of $g_v$ to $\GL_2(\mathbf{Q}_v)$ and
\[f(g_{1,v},\Phi_v,\nu_v,s) = \nu_1(\det g_{1,v}))|\det(g_{1,v})|_v^{s}\int_{\GL_1(\Q_v)}{\Phi_v(t(0,1)g_{1,v})(\nu_1\nu_2^{-1})(t)|t|_v^{2s}\,dt}.\]
The following result is due to works of Gelbart and Piatetski-Shapiro \cite{gelbartPS} and Baruch \cite{baruch}.
\begin{proposition} \label{prop:unfolding} With factorizable data and notation as above,
\begin{equation}\label{factorizationGI} I(\varphi,\Phi,\nu,s) = W_{\varphi}(1) \prod_{v}{I_v(W_v,\Phi_v,\nu_v,s)}.\end{equation}
For all finite places $p$ of $\Q$, the local integral $I_p(W_p,\Phi_p,\nu_p,s)$ is a rational function of $p^{-s}$, converges absolutely for $\mathrm{Re}(s) \gg 0$, and has meromorphic continuation in $s$.  If the finite place $p$ is not $2$, $p$ is unramified in $E$, $\pi_p$ and $\nu_p$ are unramified, $\varphi$ is fixed by the maximal compact subgroup $G(\mathbf{Z}_p)$, and $\Phi_p$ is the characteristic function of $V_2(\mathbf{Z}_p)$, then
\[I_p(W_p,\Phi_p,\nu_p,s) = L_p^\mathrm{L}(\pi_p \times \nu_{1,p},\mathrm{Std},s).\]
Hence,
\begin{align*} I(\varphi,\Phi,\nu,s) =& \left(\int_{U_2(\Q_S)\backslash H(\Q_S)}{W_{\varphi}(g)\nu_1(\det(g_1))|\det(g_1)|^{s}\Phi((0,1)g_1)\,dg}\right)\\ &\times I_{\infty}(W_{\infty},\Phi_{\infty},s) \prod_{p \notin S} L_p^\mathrm{L}(\pi_p \times \nu_{1,p},\mathrm{Std},s)\end{align*}
where $S$ is a set of bad finite places.
\end{proposition}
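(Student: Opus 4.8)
The plan is to follow the unfolding of Gelbart and Piatetski-Shapiro \cite{gelbartPS}, adapted to our similitude group and to the twist by $\nu_1$, and then to invoke Baruch \cite{baruch} for the analytic properties of the local integrals. First I would record that the integral is well posed and convergent: by Hypothesis \ref{hypo:central} the integrand $\varphi(g)E(g_1,\Phi,\nu,s)$ is genuinely left $Z(\mathbf{A})$-invariant, since the central character $\omega_\pi$ of $\varphi$ on $Z(\mathbf{A})$ cancels the central character $\nu_1\nu_2$ of $E(g_1,\Phi,\nu,s)$ on the image of $Z$ in $\GL_2$; and for $\mathrm{Re}(s)$ large the Eisenstein series is of moderate growth while the cusp form $\varphi$ is rapidly decreasing on $Z(\mathbf{A})\backslash H(\mathbf{A})$, so $I(\varphi,\Phi,\nu,s)$ converges absolutely there, which legitimizes the manipulations below.

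Next is the unfolding itself. Substituting $E(g_1,\Phi,\nu,s) = \sum_{\gamma \in B(\mathbf{Q})\backslash \GL_2(\mathbf{Q})} f(\gamma g_1,\Phi,\nu,s)$ and interchanging sum and integral, I would note that the image of $H(\mathbf{Q})$ in $\GL_2(\mathbf{Q})$ contains $\SL_2(\mathbf{Q})$ and hence acts transitively on $B(\mathbf{Q})\backslash\GL_2(\mathbf{Q}) \cong \mathbf{P}^1(\mathbf{Q})$, with stabilizer of the base point the preimage $B_H$ in $H$ of the Borel of $\GL_2$; collapsing the sum against the quotient replaces $I(\varphi,\Phi,\nu,s)$ by $\int_{Z(\mathbf{A})B_H(\mathbf{Q})\backslash H(\mathbf{A})} \varphi(g)\,f(g_1,\Phi,\nu,s)\,dg$. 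Writing $B_H = T_H U_2$ and using that $f(\cdot,\Phi,\nu,s)$ is left $U_2(\mathbf{A})$-invariant, the inner $U_2(\mathbf{Q})\backslash U_2(\mathbf{A})$-integral produces the constant term of $\varphi$ along $U_2$. Under the embedding $H \hookrightarrow G$ the image of $U_2$ is precisely the centre $Z(U_B)$ of the Heisenberg unipotent radical $U_B$ of the Borel of $G$ -- visible from the matrix shape (\ref{eqn:gammaeq}), where $U_B$ is parametrized by $(r,s)$ with $r\in\mathbf{Q}$, $s\in E$, and $U_2$ is the $s=0$ subgroup -- so I would Fourier-expand this constant term along $U_B/Z(U_B) \cong \mathrm{Res}^E_\mathbf{Q}\mathbf{G}_a$. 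The trivial Fourier mode is the full constant term of $\varphi$ along $U_B$, which vanishes by cuspidality; the nonzero modes form a single orbit under $T_H(\mathbf{Q})$ with trivial stabilizer, and collapsing the torus sum against them -- here the transformation of the section $f(\cdot,\Phi,\nu,s)$ under $T_H$ is exactly accounted for because $f$ is a section of the relevant degenerate principal series -- leaves $I(\varphi,\Phi,\nu,s) = \int_{Z(\mathbf{A})U_2(\mathbf{A})\backslash H(\mathbf{A})} W_\varphi(g)\,f(g_1,\Phi,\nu,s)\,dg$, where $W_\varphi$ is the global Whittaker function for $\chi$ of Definition \ref{defin:whit} (the kernel of $\chi$ contains $U_2 = Z(U_B)$, so $W_\varphi$ is left $U_2$-invariant, consistent with the domain, and $\mathrm{meas}(U_2(\mathbf{Q})\backslash U_2(\mathbf{A}))=1$). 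Factorizing $W_\varphi = W_\varphi(1)\prod_v W_v$, $f(g_1,\Phi,\nu,s) = \prod_v f(g_{1,v},\Phi_v,\nu_v,s)$, and $dg = \prod_v dg_v$ gives the Euler product (\ref{factorizationGI}); combining the inner $\GL_1$-integral in each $f(g_{1,v},\Phi_v,\nu_v,s)$ with the $Z(\mathbf{Q}_v)$-direction, using Hypothesis \ref{hypo:central} locally, passes between the two forms (\ref{eqn:locint}) and (\ref{eqn:locint2}) of the local integral.

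For the analytic statements I would cite Baruch \cite{baruch}: absolute convergence of $I_p(W_p,\Phi_p,\nu_p,s)$ for $\mathrm{Re}(s)\gg 0$ follows from gauge estimates on $W_p$, and rationality in $p^{-s}$ -- hence meromorphic continuation -- is proved there via the asymptotic behaviour of Whittaker functions. For the unramified identity I would run the computation of \cite{gelbartPS}: for $p\ne 2$ with $p$ unramified in $E$, $\pi_p$ and $\nu_p$ unramified, $\varphi$ spherical, and $\Phi_p = \mathrm{char}(V_2(\mathbf{Z}_p))$, the Iwasawa decomposition of $H(\mathbf{Q}_p)$ reduces $I_p$ to a torus sum of values of the normalized spherical Whittaker function, and the Casselman--Shalika formula together with the branching identity for the degree six standard representation of ${}^LG$ of Definition \ref{defin:stdl} evaluates this sum as $L_p^\mathrm{L}(\pi_p \times \nu_{1,p},\mathrm{Std},s)$; the twist by $\nu_{1,p}$ merely translates the Satake parameter, equivalently replaces $\pi_p$ by $(\nu_{1,p}\circ\mu)\pi_p$ as in Definition \ref{defin:stdl}. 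The concluding displayed formula then results from (\ref{factorizationGI}) by regrouping the local factors: the factor $W_\varphi(1)$ together with the factors at the finitely many bad finite places $p\in S$, written in the form (\ref{eqn:locint2}), assemble into the displayed integral over $U_2(\mathbf{Q}_S)\backslash H(\mathbf{Q}_S)$; the Archimedean factor is $I_\infty(W_\infty,\Phi_\infty,s)$; and the remaining factors are $\prod_{p\notin S} L_p^\mathrm{L}(\pi_p \times \nu_{1,p},\mathrm{Std},s)$.

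The main obstacle is making the unfolding rigorous, and especially its second stage: one must pin down the open $H(\mathbf{Q})$-orbit on $\mathbf{P}^1(\mathbf{Q})$ and its stabilizer $B_H$, verify that $U_2$ maps onto $Z(U_B)$ so that the $U_2$-constant term may be Fourier-expanded along $U_B/Z(U_B)$, and check that cuspidality together with the $T_H(\mathbf{Q})$-orbit on Fourier characters collapses the expansion to exactly one copy of $W_\varphi$, with the transformation of the section $f$ under $T_H$ correctly absorbed. The remaining points are either routine convergence-and-measure bookkeeping or direct appeals to \cite{gelbartPS} and \cite{baruch}.
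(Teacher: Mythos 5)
Your proposal is correct and follows essentially the same route as the paper: the paper treats the unfolding (Eisenstein collapse plus Fourier expansion along $U_B/Z(U_B)$, with cuspidality killing the trivial mode) as immediate, invokes Baruch for rationality, convergence, and meromorphic continuation of the local integrals, invokes Gelbart--Piatetski-Shapiro for the unramified identity (reproved in Section \ref{subsec:unram} via the Iwasawa decomposition and Casselman--Shalika, exactly as you propose), and then regroups factors to get the final display. The only points you gloss over are minor: the paper explicitly passes through the map $H \rightarrow H' \cong \U(1,1)$ with kernel $Z$ (surjective by Hilbert 90), so that Baruch's and Gelbart--Piatetski-Shapiro's results, stated for the unitary group with the twist becoming $\nu_1\circ\nm_{\mathbf{A}_\mathbf{Q}}^{\mathbf{A}_E}$, apply verbatim; and the stabilizer of the Whittaker character in $T_H(\mathbf{Q})$ is $Z(\mathbf{Q})$ rather than trivial, which is harmless only because the domain already carries the $Z(\mathbf{A})$-quotient.
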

\begin{proof} Using Hypothesis \ref{hypo:central}, the global integral $I(\varphi,\Phi,\nu,s)$ unfolds immediately to give
\begin{align}
  &I(\varphi,\Phi,\nu,s) = \int_{Z(\mathbf{A})U_2(\mathbf{A})\backslash H(\mathbf{A})}{f(g_1,\Phi,\nu,s)W_{\varphi}(g)\,dg} \label{eqn:unfold} \\
  =& \int_{Z(\mathbf{A})U_2(\mathbf{A})\backslash H(\mathbf{A})}\nu_1(\det(g_1))|\det(g_1)|^{s} \int_{\GL_1(\A)}\Phi(t(0,1)g_1)(\nu_1\nu_2^{-1})(t)|t|^{2s}\,dt\, W_{\varphi}(g)\,dg\\
	=& \int_{Z(\mathbf{A})U_2(\mathbf{A})\backslash H(\mathbf{A})} \int_{\GL_1(\A)} \nu_1(\det(tg_1))|\det(tg_1)|^{s} \Phi((0,1)tg_1)W_{\varphi}(tg)\,dt\,dg\\
	=& \int_{U_2(\mathbf{A})\backslash H(\mathbf{A})}{\nu_1(\det(g_1))|\det(g_1)|^{s}\Phi((0,1)g_1)W_{\varphi}(g)\,dg}.\label{eqn:unfold2}
\end{align}
The factorization (\ref{factorizationGI}) follows from the well-known uniqueness of the Whittaker model.

We can relate this integral to that of Gelbart and Piatetski-Shapiro \cite{gelbartPS}.  In fact, let $H' \subseteq G$ be the subgroup stabilizing $W_0$ with trivial similitude; then $H' \cong \U(J_2) =\U(1,1)$.  We claim that
\[I(\varphi,\Phi,\nu,s) = \int_{H'(\Q)\backslash H'(\mathbf{A})}{\varphi(\sigma(g))E(\sigma(g)_1,\Phi,\nu,s)\,dg}\]
for a map $\sigma: H'(\Q)\backslash H'(\mathbf{A}) \stackrel{\sim}{\rightarrow} Z(\mathbf{A})U_2(\mathbf{A})\backslash H(\mathbf{A})$.  Indeed, there is an isomorphism between the domains of integration given as follows.  Consider the map $H \rightarrow H'$ given by $\GU(1,1)' \boxtimes \GU(1) \ni (g,\lambda) \mapsto g\lambda^{-1}$.  The kernel of this map is exactly $Z$.  The map is surjective since $\SU(1,1)$ is in the image of elements of the form $(g,1)$ and the image of elements of the form $(\diag(\lambda\ol{\lambda},1),\lambda)$ surjects (by Hilbert's Theorem 90) via the determinant onto the group $\U(1)$.  It is similarly possible to relate the integrands; the twist in \cite{gelbartPS} and \cite{baruch} becomes $\nu_1 \circ \nm_{\mathbf{A}_\mathbf{Q}}^{\mathbf{A}_E}$.

The claim regarding the local integrals at finite places is given by \cite[Proposition 3.4]{baruch}.  The unramified calculation is in \cite[\S 4]{gelbartPS}.
\end{proof}

\subsection{Nonarchimedean calculations} \label{subsec:unram}

We calculate the local integrals $I_{v}(W_v,\Phi_v,\nu,s)$ when $v$ is finite and all the data are unramified.  All the results in this section are originally due to \cite[\S 4]{gelbartPS}; we reproduce them for the reader's convenience, as the details of the split place computation were omitted from {\it loc. cit.} and the prime $p=2$ was excluded.  To be precise, in this section we prove the following statement.
\begin{proposition}[\cite{gelbartPS}] \label{prop:inertsplit} Suppose $v < \infty$ is unramified in $E$, $\pi_{v}$ is unramified, $W_v$ is the $G(\mathbf{Z}_p)$-spherical Whittaker function satisfying $W_v(1) = 1$, and $\Phi_v$ is the characteristic function of $\Z_{v}^2$.  Then $I_{v}(W_v,\Phi_v,\nu_v,s) = L^\mathrm{L}(\pi_v \times \nu_{1,v},\mathrm{Std}, s)$. \end{proposition}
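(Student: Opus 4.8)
The plan is to evaluate the local zeta integral \eqref{eqn:locint2} by the Iwasawa decomposition together with an explicit description of the $G(\mathbf{Z}_v)$-spherical Whittaker function $W_v$, reproducing the argument of \cite[\S 4]{gelbartPS} at inert $v$ and supplying the analogous computation, omitted there, at split $v$ and at $v=2$.

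First I would reduce \eqref{eqn:locint2} to the torus. Since $\nu_v$ is unramified, $\Phi_v = \mathrm{char}(\mathbf{Z}_v^2)$ is $\GL_2(\mathbf{Z}_v)$-invariant, and $W_v$ is right $G(\mathbf{Z}_v)$-invariant, the integrand on $U_2(\mathbf{Q}_v)\backslash H(\mathbf{Q}_v)$ is right-invariant under the maximal compact $K_{H,v} = H(\mathbf{Q}_v)\cap G(\mathbf{Z}_v)$. Applying the Iwasawa decomposition on the $\GL_2$-factor — the remaining $\GU(1)$-freedom being compact modulo $Z$ at inert $v$ — collapses the integral to a sum over $A(\mathbf{Q}_v)/A(\mathbf{Z}_v)$ (with $A$ the diagonal torus of $\GL_2$), weighted by $\nu_{1,v}$, the modulus character, $|\det|_v^s$, and the value of $\Phi_v$ along $A$. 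The support of $\Phi_v$ together with the vanishing of spherical Whittaker functions off the dominant cone then cut this down to a sum over dominant coweights.

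At inert $v$ this last sum is evaluated by the Casselman--Shalika formula for the unramified principal series of the quasi-split $\GU(2,1)(\mathbf{Q}_v)$: substituting the formula turns the sum into a geometric series in $p^{-s}$, and the usual Weyl-character-formula manipulation identifies its value with $\prod_i (1-\alpha_i\, p^{-s})^{-1}$, the product running over the Satake parameters $\alpha_i$ of $\pi_v\times\nu_{1,v}$ under the degree-$6$ standard representation $r$ of ${}^L\GU(2,1)$ of Definition \ref{defin:stdl}; this is $L_v^{\mathrm{L}}(\pi_v\times\nu_{1,v},\mathrm{Std},s)$. For $v=2$: the hypothesis that $2$ is unramified in $E$ forces $-D$ odd, so $\delta^{-1}$ and the form $J$ of \eqref{eqn:jdef} lie in $\GL_3(\mathfrak{O}_{E,2})$ and $G(\mathbf{Z}_2)$ is hyperspecial; hence the Casselman--Shalika formula and all measure normalizations apply without change and one simply reruns the inert computation. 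At split $v$ one has $\GU(2,1)(\mathbf{Q}_v)\cong\GL_3(\mathbf{Q}_v)\times\mathbf{Q}_v^\times$, $W_v$ is the $\GL_3(\mathbf{Z}_v)$-spherical Whittaker function of the $\GL_3$-component $\sigma_v$ of $\pi_v$, and \eqref{eqn:locint2} becomes a Rankin--Selberg-type integral of the restriction of this $\GL_3$-Whittaker function to $\GL_2$ against the $\GL_2$-Eisenstein section $f(\,\cdot\,,\Phi_v,\nu_v,s)$; I would evaluate it using Shintani's formula for the $\GL_3$ spherical Whittaker function, obtain a product of two degree-$3$ factors attached to $\sigma_v$ and its contragredient twisted by unramified characters, and check via Definition \ref{defin:stdl} and Hypothesis \ref{hypo:central} (which pins down $\nu_2$ in terms of $\nu_1$ and $\omega_\pi$) that this product is exactly the degree-$6$ factor $L_v^{\mathrm{L}}(\pi_v\times\nu_{1,v},\mathrm{Std},s)$.

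The main obstacle I expect is bookkeeping rather than structural: getting all normalizations right — the modulus character on $H(\mathbf{Q}_v)$, the contribution of the $\GU(1)$-factor to the Iwasawa integral at inert $v$, and the precise unramified twists appearing in the two degree-$3$ factors at split $v$ — so that the six Euler factors emerge matching the weights of $r$ with no spurious shift in $s$ and no extraneous $\zeta$-factor. The split case carries the extra subtlety flagged in the introduction, namely that the Gelbart--Piatetski-Shapiro and Bump--Friedberg integrals attach a priori different $\GL_3$-local $L$-factors; here one must verify that for our unramified data the integral lands on the honest degree-$6$ standard factor, using the explicit computation rather than any formal comparison.
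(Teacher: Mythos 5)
Your proposal is correct and follows essentially the same route as the paper: Iwasawa reduction to the torus, the spherical Whittaker formula (the paper quotes the $\U(J)$ formula of \cite[\S 4.5]{gelbartPS}, i.e.\ Casselman--Shalika in the inert case, and Casselman--Shalika/Shintani for $\GL_3$ in the split case), and a symmetric-function identity (the paper uses the Pieri rule) to factor the split-place sum into the standard and exterior-square (equivalently, twisted contragredient) degree-$3$ pieces matching Definition \ref{defin:stdl}, with the central-character relation of Hypothesis \ref{hypo:central} supplying the final identification. The treatment of $v=2$ is likewise in the same spirit, since the paper's computation never uses $p\neq 2$ at unramified places.
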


For $v$ either inert or split, we must compute the integral
\begin{equation} \label{eqn:localint} \int_{Z(\mathbf{Q}_p)\backslash T(\Q_p)}{f(t_1,\Phi,\nu,s)W(t)\delta_{B_H}(t)^{-1} \,dt}.\end{equation}
This is obtained from (\ref{eqn:locint}) by applying the Iwasawa decomposition.  In the remainder of the section, we simplify our notation by writing $p$ for $v$ and writing $G,B$, etc.\ for $G(\mathbf{Q}_p)$, $B(\mathbf{Q}_p)$, etc.\ since everything is completely local.

\subsubsection{Inert case}

We begin by computing the local $L$-factor.  Suppose $p$ is inert and $\pi_p$ is the unramified principal series $\ind_B^G(\delta_{B}^{\frac{1}{2}} \eta)$ where $\eta: T \rightarrow \C^\times$ is a character of the diagonal torus of $G=\GU(J)$.  Assume $\eta(\mathrm{diag}(t_1 t_2, t_2, \ol{t_1}^{-1} t_2)) = \eta_1(t_1) \eta_2(t_2)$ for $\eta_1, \eta_2$ characters of $\mathbf{G}_m(E_p)$.  Write $\alpha_{\eta_i} = \eta_i(p)$.  Then the Frobenius conjugacy class associated to $\pi_p$ is $(\alpha_{\eta_2},\mathrm{diag}(\alpha_{\eta_1}, (\alpha_{\eta_1} \alpha_{\eta_2})^{-1}, 1)) \rtimes \sigma \in (\mathbf{G}_m(\C) \times \GL_3(\C)) \rtimes \mathrm{Gal}(E_p/\Q_p)$.  We also write $\alpha_{\nu_i} = \nu_i(p)$.

From Definition \ref{defin:stdl}, we obtain
\[L^\mathrm{L}(\pi_p \times \nu_{1,p},\mathrm{Std},s) = \left((1 - \alpha_{\nu_1}^2\alpha_{\eta_2}\alpha_{\eta_1} p^{-2s})(1-\alpha_{\nu_1}^2\alpha_{\eta_2} p^{-2s})(1- \alpha_{\nu_1}^2\alpha_{\eta_2}\alpha_{\eta_1}^{-1} p^{-2s})\right)^{-1}.\]

We carry out the unramified computation.  Define $\tau = \mathrm{diag}(p,1,p^{-1})$.  We have $f(\tau_1^n,\Phi,\nu,s) = (1-\alpha_{\nu_1}\alpha_{\nu_2}^{-1}p^{-2s})^{-1} \left(\alpha_{\nu_1}\alpha_{\nu_2}^{-1} |p|^{2s}\right)^{n}.$  Since $\GU(J) = Z_G\cdot \U(J)$, the restriction of $\pi_p$ from $\GU$ to $\U$ is irreducible.  Thus the Whittaker function on $\GU(J)$ restricts to one on $\U(J)$.  The latter Whittaker function is given in \cite[\S 4.5]{gelbartPS}, which implies
\[\delta_{B_H}^{-1}(\tau^n) W(\tau^n) = \frac{\alpha_{\eta_1}^{n+1} -\alpha_{\eta_1}^{-n-1}}{\alpha_{\eta_1} - \alpha_{\eta_1}^{-1}}.\]

Define $X = \alpha_{\nu_1}\alpha_{\nu_2}^{-1}|p|^{2s}$ and write $\alpha$ for $\alpha_{\eta_1}$. The local integral is then
\begin{align*} &\sum_{n \geq 0}{f(\tau_1^n,\Phi,\nu,s)\delta_{B_H}^{-1}(\tau^n) W(\tau^n)} = (1-X)^{-1} \sum_{n \geq 0}{X^n \frac{\alpha^{n+1} - \alpha^{-n-1}}{\alpha - \alpha^{-1}}} \\ &= (1-X)^{-1} \frac{1}{\alpha - \alpha^{-1}}\left(\frac{\alpha}{1 - \alpha X} - \frac{\alpha^{-1}}{1-\alpha^{-1}X}\right) = \frac{1}{1-X} \frac{1}{1-\alpha X} \frac{1}{1-\alpha^{-1} X} \\ &= (1-\alpha_{\nu_1}\alpha_{\nu_2}^{-1}p^{-2s})^{-1} (1-\alpha_{\nu_1}\alpha_{\nu_2}^{-1}\alpha_{\eta_1}p^{-2s})^{-1} (1-\alpha_{\nu_1}\alpha_{\nu_2}^{-1} \alpha_{\eta_1}^{-1}p^{-2s})^{-1} = L^\mathrm{L}(\pi_p \times \nu_{1,p}, \mathrm{Std},s).\end{align*}
For the last equality, recall that the central character $\alpha_{\eta_2}$ of $\pi_p$ is $(\alpha_{\nu_1}\alpha_{\nu_2})^{-1}$.

\subsubsection{Split case} \label{subsubsec:splitcase}

We begin by using the splitting of $p$ in order to rewrite the domain of integration $Z(\mathbf{Q}_p)\backslash T(\mathbf{Q}_p)$ from (\ref{eqn:localint}) as well as the groups $H$ and $G$; this will allow us to compute the local $L$-factor and simplify the local integral.

Recall that $T(\mathbf{Q}_p)$ is the group of diagonal matrices in $H(\mathbf{Q}_p)$.  We rewrite this subgroup by fixing a splitting
\begin{equation}\label{eqn:splitting} E \otimes_\mathbf{Q} \mathbf{Q}_p = \mathbf{Q}_p \oplus \mathbf{Q}_p\end{equation}
and writing a diagonal matrix in $H(\mathbf{Q}_p) = (\GU(1,1)' \boxtimes \GU(1))(\mathbf{Q}_p)$ as
\[(\diag((t_1,t_1'),(t_2,t_2')),(t_3,t_3')),\]
where each pair $(t_i,t_i')$ denotes the two $\mathbf{Q}_p$ factors of that entry.  Then $T(\mathbf{Q}_p)$ is cut out by the defining conditions of $H$, which simplify to $t_1=t_1',t_2=t_2'$, and $t_1t_2'=t_3t_3'$.  The embedded $Z(\mathbf{Q}_p) \cong \mathbf{Q}_p^\times$ consists of elements of the form $(\diag((t,t),(t,t)),(t,t))$, so we may take $t=t_3'$ and divide to see that $Z(\mathbf{Q}_p)\backslash T(\mathbf{Q}_p)$ maps isomorphically to its image under the first embedding of the $(\GU(1,1)')(\mathbf{Q}_p)$ factor.  In particular, we may write the integral over $(\mathbf{Q}_p^\times)^2$, where $(t_1,t_2) \in (\mathbf{Q}_p^\times)^2$ corresponds to the element $(\diag((t_1,t_1),(t_2,t_2)),(t_1t_2,1))$.

Similarly, $H$ is isomorphic to its first projection to $\GL_2 \times \mathbf{G}_m$ over $\mathbf{Q}_p$.  The inverse of this isomorphism is $(\mm{a}{b}{c}{d},\lambda) \mapsto ((\mm{a}{b}{c}{d},\mm{a}{b}{c}{d}),(\lambda,\lambda^{-1}(ad-bc))) \in (\GU(1,1)' \boxtimes \GU(1))(\mathbf{Q}_p)$.

The group $G(\mathbf{Q}_p)=\GU(2,1)(\mathbf{Q}_p)$ is isomorphic to $(\GL_3 \times \mathbf{G}_m)(\mathbf{Q}_p)$ via the product of the projection to the first factor of the splitting in (\ref{eqn:splitting}) with the similitude map.  In this notation, the map of $H(\mathbf{Q}_p) \cong (\GL_2 \times \mathbf{G}_m)(\mathbf{Q}_p)$ to $G(\mathbf{Q}_p)$ is given by
\begin{equation}\label{eqn:hembedsplit} (\mb{a}{b}{c}{d},\lambda) \mapsto (\(\begin{array}{ccc} a & & b \\  & \lambda & \\ c & & d\end{array}\),ad-bc).\end{equation}
Note that the embedded subgroup $Z = \set{(\mm{\lambda}{}{}{\lambda},\lambda):\lambda \in \mathbf{G}_m}$ maps to
\begin{equation}\label{eqn:zg} \set{(\diag(\lambda,\lambda,\lambda),\lambda^2):\lambda \in \mathbf{G}_m} \subseteq \GL_3 \times \mathbf{G}_m.\end{equation}

We regard the local component of the central character $\omega_{\pi_v}:(\GL_3 \times \mathbf{G}_m)(\mathbf{Q}_p) \rightarrow \mathbf{C}^\times$ as the pair of the character $\omega_\mu:\mathbf{G}_m(\mathbf{Q}_p) \rightarrow \mathbf{C}^\times$ given by restriction to the second factor together with $\omega_d: \mathbf{G}_m(\mathbf{Q}_p) \rightarrow \mathbf{C}^\times$, defined to be the restriction of $\omega_{\pi_v}$ to the center of $\GL_3(\mathbf{Q}_p)$.

We will use these identifications to write the functions $f$ and $W$ in (\ref{eqn:localint}) as functions of $(t_1,t_2) \in (\mathbf{Q}_p^\times)^2$ in the computation below.

We may write the Satake parameter of $\pi$ as the conjugacy class of $(\diag(\alpha_1,\alpha_2,\alpha_3),\alpha_\mu) \in (\GL_3 \times \mathbf{G}_m)(\mathbf{C})$.  Then $\alpha_\mu = \omega_\mu(p)$, $\alpha_1$ is the image of $(1,(p,1,1))$ under the character of the torus induced to obtain $\pi_p$, and $\alpha_2$ and $\alpha_3$ are similarly the images of $(1,(1,p,1))$ and $(1,(1,1,p))$, respectively.  We write $\alpha_{\nu_i} = \nu_i(p)$.  Also note that $\omega_d(p) = \alpha_1\alpha_2\alpha_3$.  We write $\alpha$ for $\diag(\alpha_1,\alpha_2,\alpha_3)$.  Using (\ref{eqn:zg}), the compatibility condition $\omega_\pi|_{Z(\mathbf{A})} = (\nu_1\nu_2)^{-1}$ becomes the statement $\alpha_1\alpha_2\alpha_3\alpha_\mu^2 = (\alpha_{\nu_1}\alpha_{\nu_2})^{-1}$.

It now follows from Definition \ref{defin:stdl} that the local $L$-factor is
\[L^\mathrm{L}(\pi_p \times \nu_{1,p},\mathrm{Std},s) = \prod_{i=1}^3 (1-\alpha_\mu\alpha_{\nu_1}\alpha_i p^{-s})^{-1}(1-\alpha_\mu\alpha_{\nu_1}\widehat{\alpha_i}p^{-s})^{-1},\]
where $\widehat{\alpha_i}$ is the product of the $\alpha_j$ with $j\ne i$.

Note that $\delta_{B_H}(t) = \delta_B(t)^\frac{1}{2}$ on the torus $T$.  We are therefore interested in
\[\int_{(\mathbf{Q}_p^\times)^2}{f(\diag(t_1,t_2),\Phi,\nu,s)W((\diag(t_1,t_1t_2,t_2),t_1t_2))\delta_B(t)^{-\frac{1}{2}} \,dt_1 dt_2}\]
Since all the data is unramified, we may write $t_1 = p^a$ and $t_2 = p^{-b}$ for $a,b \in \mathbf{Z}$, and rewrite this integral as a sum
\[\sum_{a,b \ge 0} f(\diag(p^a,p^{-b})),\Phi,\nu,s)W((\diag(p^a,p^{a-b},p^{-b}),p^{a-b}))\delta_B(t)^{-\frac{1}{2}},\]
where the condition $a,b \ge 0$ follows from the nonvanishing conditions for the Whittaker model.  We rewrite this using the Casselman-Shalika formula as
\[\sum_{a,b \ge 0} f(\diag(p^a,p^{-b})),\Phi,\nu,s)\alpha_\mu^{a-b}A_2^\alpha[b,a,-b],\]
where $A_2^\alpha[b,a,-b]$ denotes the trace of $\alpha$ on the representation of $\GL_3$ with fundamental weights $[b,a,-b]$.  Note that $A_2^\alpha[b,a,-b] = (\det\alpha)^{-b}A_2^\alpha[b,a,0]$.  Moreover,
\[f(\diag(p^a,p^{-b}),\Phi,\nu,s) = (1-\alpha_{\nu_1}\alpha_{\nu_2}^{-1} p^{-2s})^{-1}\alpha_{\nu_1}^a\alpha_{\nu_2}^{-b}|p^{a+b}|^s.\]
The sum becomes
\[(1-\alpha_{\nu_1}\alpha_{\nu_2}^{-1} p^{-2s})^{-1}\sum_{a,b \ge 0} (\alpha_{\nu_1}\alpha_{\nu_2})^{-b}X^{a+b}\alpha_\mu^{-2b}(\det \alpha)^{-b}A_2^\alpha[b,a,0],\]
where we have written $X = \alpha_\mu\alpha_{\nu_1}p^{-s}$.  We have the identity $\alpha_\mu^2\det\alpha = (\alpha_{\nu_1}\alpha_{\nu_2})^{-1}$ coming from the compatibility assumption on central characters.  So the expression simplifies to
\[(1-\alpha_{\nu_1}^2\alpha_\mu^2\det \alpha p^{-2s})^{-1}\sum_{a,b \ge 0} X^{a+b}A_2^\alpha[b,a,0].\]
We rewrite the factor in front to get
\[(1-A_2^\alpha[0,0,1] X^2)^{-1}\sum_{a,b \ge 0} X^{a+b}A_2^\alpha[b,a,0],\]
which by a simple application of the Pieri rule is equal to
\[(\sum_{b \ge 0} A_2^\alpha[b,0,0]X^b)(\sum_{a \ge 0} A_2^\alpha[0,a,0]X^a).\]
Since the $L$-function is attached to the twist by $\omega_\mu$ and $\nu_1$ of the sum of the standard and exterior square representations of the $\GL_3$ factor at a split place, and the $n^\mathrm{th}$ symmetric powers of the standard and exterior square representations are $A_2[n,0,0]$ and $A_2[0,n,0]$, this is exactly the local $L$-factor.

\subsection{Archimedean calculation} \label{subsec:arch}

We compute the Archimedean local integral in terms of $\Gamma$-functions.  This section is merely a translation to our setting of the more general results of Koseki-Oda \cite{ko}.  In this section we omit $\nu$ from the notation as we have assumed that $\nu_\infty$ is trivial.

Koseki-Oda \cite{ko} use the form (\ref{eqn:j2}), which is related to the form $J$ by the matrix $C$ in (\ref{eqn:chgjtoj2}).  The map $U(J')\rightarrow U(J)$ induced by $C$ takes the maximal split torus
\[ T_\mathrm{spl}' = \set{\(\begin{array}{ccc} \cosh t & & \sinh t \\  & 1 &  \\ \sinh t & & \cosh t \end{array}\): t \in \mathbf{R}} \]
and the compact torus $T_\mathrm{c}'= \set{\diag(z,1,z^{-1}): z \in S^1 \subseteq \mathbf{C}^\times}$ written in the form $J'$ to the subgroups $T_\mathrm{spl}=\set{\diag(\exp(t),1,\exp(-t)): t\in \mathbf{R}}$ and
\[T_\mathrm{c}= \set{\(\begin{array}{ccc} x & & y \\  & 1 &  \\ -y & & x \end{array}\): z = x+iy \in S^1 \subseteq \mathbf{C}}, \]
respectively.  We lastly note that the subgroup $T_\mathrm{\U(1)} = \set{\diag(z^{-1},z^2,z^{-1}):z \in S^1 \subseteq \mathbf{C}^\times} \subseteq U(J')$ is taken to itself in $U(J)$.  Since Koseki-Oda write their calculations with respect to $T_\mathrm{spl}'$ and $T_\mathrm{c}'$, we can use their results once we write our Archimedean integral in terms of functions on $T_\mathrm{spl}$ and $T_\mathrm{c}$.

Let $V_\tau$ denote the minimal $K_\infty$-type of $\pi_\infty$.  The representation $V_\tau$ is isomorphic to $V_\mu$ as defined on page 965 of \cite{ko}, where $\mu = (\mu_1,\mu_2) = (1,-1)$, the Blattner parameter of $\pi_\infty$.  Up to $\mathbf{C}^\times$ multiple, there is a unique vector in $V_\tau$ that maps to to the vector $v_1^{(-1,1)}$ in {\it loc. cit.}; we call it $v_\tau$.  It follows from the formulas on page 965 of \cite{ko} that $\mathbf{C}v_1^{(1,-1)}$ is the kernel of the action of $H_{13}'$, which is an operator that generates the Lie algebra of $T_\mathrm{c}'$.  The corresponding vector $v_\tau$ is the unique vector stabilized by $T_\mathrm{c}$.  Note that $T_\mathrm{c}$ is the maximal compact subgroup of $\SL_2(\mathbf{R})$.

We assume that $\varphi_\infty = v_\tau$ and define $W_\infty$ with respect to this choice of $v_\tau$ as in Definition \ref{defin:whit} so that $W_\infty(1)=1$.  (We will see shortly that any nonzero Whittaker functional is nonvanishing on $v_\tau$, so this definition is valid.)  Also note that scaling $v_\tau$ yields the same function $W_\infty$.  Our goal is to compute the integral
\begin{align}\label{IminInt} I_\infty(W_\infty,\Phi_\infty,s) &= \int_{Z(\R)U_2(\R)\backslash H(\R)}{f(g_1,\Phi_\infty,s)W_\infty(g)\,d{g}} \\ &= \int_{U_2(\R)\backslash H(\R)}{\Phi_{\infty}((0,1)g_1)W_\infty(g)|\det(g_1)|^{s}\,d{g}}\nonumber \end{align}
where 
\[f(g_1,\Phi_\infty,s) = |\det(g_1)|^{s}\int_{\GL_1(\R)}{\Phi_\infty(t(0,1)g_1)|t|^{2s}\,dt}\]
and $\Phi_{\infty}(x_1,x_2) = e^{-\pi(x_1^2 + x_2^2)}$ is a Gaussian.  Observe that
\[f(g_1,\Phi_\infty,s) = \pi^{-s}\Gamma(s) |\det(g_1)|^s |j(g_1,i)|^{-2s},\]
where $j(g_1,i) = ci+d$ is the usual automorphy factor of $g_1 = \mm{a}{b}{c}{d} \in \GL_2(\mathbf{R})$.

The following is the main result of this section.
\begin{proposition}[{\cite{ko}}]
We have
\[I_{\infty}(W_\infty,\Phi_\infty,s)= 8iW_{0,0}(8\sqrt{2}\pi D^{-\frac{3}{4}})^{-1} \pi^4 D^{\frac{3s-3}{2}}\Gamma_\mathbf{C}(s)\Gamma_\mathbf{C}(s+1) \Gamma_\mathbf{C}(s+1),\]
where $\Gamma_\C(s) = 2(2\pi)^{-s}\Gamma(s)$.
\end{proposition}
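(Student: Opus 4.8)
The plan is to reduce the integral (\ref{IminInt}) to a one-variable Mellin integral and then import Koseki--Oda's \cite{ko} explicit radial formula for the minimal-$K$-type Whittaker vector of the discrete series with Blattner parameter $(1,-1)$; as advertised, this proposition is essentially a coordinate translation of results in {\it loc.\ cit.}\ First I would apply the Iwasawa decomposition to $H(\mathbf{R}) = \GU(1,1)'(\mathbf{R}) \boxtimes \GU(1)(\mathbf{R})$. The integrand is left $Z(\mathbf{R})U_2(\mathbf{R})$-invariant by construction, $W_\infty$ transforms under $K_{0,\infty}$ through the one-dimensional minimal $K$-type line spanned by $v_\tau = v_0$ (cf.\ Definition \ref{defin:v0}), and $v_0$ is $K_{0,\infty}$-invariant as noted after (\ref{eqn:idef}); combined with the explicit identity $f(g_1,\Phi_\infty,s) = \pi^{-s}\Gamma(s)|\det(g_1)|^s|j(g_1,i)|^{-2s}$, this collapses all compact directions --- the $S^1$-factor of $\GU(1)(\mathbf{R})$ and the two-torus $K_{0,\infty}$, each of normalized volume $1$ --- into an explicit constant, and leaves the integral of the radial part of $W_\infty$ over the one-parameter split torus $T_{\mathrm{spl}} = \set{\diag(\exp t,1,\exp(-t))}$ of $\SL_2(\mathbf{R}) \subseteq H(\mathbf{R})$, weighted by a power of the $\GL_2$-coordinate coming from the automorphy factor.

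Next I would transport everything through the change-of-group matrix $C$ of (\ref{eqn:chgjtoj2}), which carries $T_{\mathrm{spl}}$ to Koseki--Oda's $T'_{\mathrm{spl}}$ up to the scaling built into the $\frac{D^{1/4}}{\sqrt{2}}$-entries of $C$. Tracking this scaling through the weight $|\det(g_1)|^s$ of $f$ and through the weight of $W_\infty$ produces the prefactor $D^{\frac{3s-3}{2}}$. The vector $v_\tau$ is the one annihilated by the operator $H_{13}'$, hence fixed by $T_{\mathrm{c}}$, so its radial part along $T'_{\mathrm{spl}}$ is exactly the function recorded on p.\ 965 of \cite{ko}: a constant multiple of $y^{a} W_{0,0}(8\sqrt{2}\pi D^{-\frac{3}{4}}\, y)$ in an appropriate positive variable $y$, for an explicit exponent $a$. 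Evaluating at the point corresponding to the identity shows that the normalization $W_\infty(1) = 1$ forces division by $W_{0,0}(8\sqrt{2}\pi D^{-\frac{3}{4}})$, which is precisely how that factor enters the answer; since $W_{0,0}$ has no zeros on $\mathbf{R}_{>0}$, this simultaneously verifies the claim (used implicitly in the text) that any nonzero Whittaker functional is nonvanishing on $v_\tau$, so that the normalization $W_\infty(1)=1$ makes sense.

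Having done this, the problem reduces to a one-variable integral $\int_0^\infty W_{0,0}(cy)\, y^{\beta(s)}\, \frac{dy}{y}$ for an explicit affine function $\beta(s)$ of $s$, convergent for $\mathrm{Re}(s)$ large. I would evaluate it by the classical Mellin transform of the $\GL_2$ Whittaker function, combine the resulting quotient of $\Gamma$-functions with the factor $\pi^{-s}\Gamma(s)$ coming from $f$, and repackage the product via the Legendre duplication formula as $\Gamma_{\mathbf{C}}(s)\Gamma_{\mathbf{C}}(s+1)\Gamma_{\mathbf{C}}(s+1)$ --- the degree $6$ Archimedean factor matching the standard $L$-function --- while meromorphic continuation is then automatic.

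The main obstacle is not conceptual but bookkeeping: faithfully transcribing Koseki--Oda's normalization conventions for the Whittaker function, and then tracking \emph{every} power of $2$, $\pi$, and $D$, together with the factor $i$ arising from the orientation of the invariant $(1,1)$-form (i.e.\ from the comparison between $dz \wedge d\bar{z}$ and the real Haar measure on $X_0$ fixed by our choice of $v_2$ in Definition \ref{defin:v0}), so that the collected constant is exactly $8i\, W_{0,0}(8\sqrt{2}\pi D^{-\frac{3}{4}})^{-1}\pi^4$. A secondary point requiring care is confirming that the identification $V_\tau \cong V_\mu$ with $\mu = (1,-1)$ matches the minimal $K$-type we selected --- the three-dimensional constituent of $\mathfrak{p}^+ \otimes \mathfrak{p}^-$ --- so that it is the correct one of Koseki--Oda's explicit Whittaker functions that gets integrated.
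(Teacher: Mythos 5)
Your plan is essentially the paper's own proof: apply the Iwasawa decomposition and use the $T_{\mathrm{c}}$-, $T_{\U(1)}$-, and central invariance of $v_\tau$ to collapse all compact directions, identify the radial restriction of $W_\infty$ along $T_{\mathrm{spl}}$ with Koseki--Oda's explicit function $c_1^{(1,-1)}(t)=\gamma_1 t^{7/2}W_{0,0}(8\sqrt{2}\pi D^{-\frac{3}{4}}t)$ (normalizing by $W_{0,0}(8\sqrt{2}\pi D^{-\frac{3}{4}})$ so that $W_\infty(1)=1$), and then evaluate the resulting one-variable integral by the Mellin formula \cite[(5.2)]{ko} before collecting constants. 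Two bookkeeping remarks: in the paper the power $D^{\frac{3s-3}{2}}$ arises from the parameter $b=-\eta_+\eta_-=32\pi^2D^{-\frac{3}{2}}$ of the Whittaker character $\chi$ (computed via conjugation by $C$), i.e.\ through the argument of $W_{0,0}$, rather than from the weight $|\det(g_1)|^s$ under the change of basis, and no duplication formula is needed --- the Mellin transform already yields $\Gamma(s+1)^2$, so passing to $\Gamma_\mathbf{C}$-factors is purely a matter of powers of $2$ and $\pi$.
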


We now apply the Iwasawa decomposition to (\ref{IminInt}).  Using the fact that $\Phi_\infty$ and $v_\tau$ are stable by the maximal compact subgroup $T_\mathrm{c}$ of $\SL_2(\mathbf{R}) \subseteq H(\mathbf{R})$, (\ref{IminInt}) simplifies to
\begin{equation} \label{eqn:wints1} \pi^{-s} \Gamma(s) \int_{\GL_1(\mathbf{R}) \times S^1} |t|^{2s-2}W_\infty(\iota(t,c))dt\,dc,\end{equation}
where the map $\iota:\GL_1(\mathbf{R})\times S^1 \rightarrow H(\mathbf{R})$ is given by $(t,c) \mapsto \left(\mm{t}{}{}{t^{-1}},c\right)$.  The image of this element in $G(\mathbf{R})$ is $\mathrm{diag}(t,c,t^{-1})$.  The group $T_{\U(1)}$ acts trivially on $v_\tau$.  Combining this with the assumed triviality of the central character yields that $\mathrm{diag}(1,c,1)$ also acts trivially on $v_\tau$.  It follows that we may simplify the integral to
\begin{equation} \label{eqn:wint} \pi^{-s} \Gamma(s) \int_{\GL_1(\mathbf{R})} |t|^{2s-2}W_\infty(\iota(t,1))dt = 2\pi^{-s} \Gamma(s) \int_{\mathbf{R}_{>0}} |t|^{2s-2}W_\infty(\iota(t,1))d^\times t,\end{equation}
where we write $d^\times t$ to emphasize that we are using the multiplicative Haar measure.  We therefore need to understand the function $W_\infty|_{T_\mathrm{spl}}$.  This is computed in \cite{ko}, but we need to make some identifications.

We identify $T_{\mathrm{spl}}$ with $\mathbf{R}^\times$, and note that $W_\infty$ is determined by its restriction to $\mathbf{R}_{>0}$.  The integral (\ref{eqn:wint}) amounts to the computation of a Mellin transform of $W_\infty$.  Using the identifications of the preceding paragraph, we see that up to a constant, $W_\infty$ is the same as the function $c_1^{(1,-1)}$ defined on page 974 of \cite{ko}.

Since we would like a precise result, we must compute this constant.  In the notation of \cite{ko}, the matrices $E_{2,+},E_{2,-} \in \mathrm{Lie}\ \U(J')(\mathbf{R})$ defined by
\[E_{2,+} = \(\begin{array}{ccc} & -1 & \\ 1 & & -1 \\ & -1 & \end{array}\) \textrm{ and }E_{2,-} = \(\begin{array}{ccc} & -i & \\ -i & & i \\ & -i & \end{array}\)\]
conjugate to upper-triangular nilpotent matrices with $-\sqrt{-2}D^{-\frac{1}{4}}$ and $-\sqrt{2}D^{-\frac{1}{4}}$ in position (2,3), respectively.  Recall that we have defined the character $\chi:U_B(\mathbf{Q}) \backslash U_B(\mathbf{A}) \rightarrow \mathbf{C}^\times$ to be $\chi(u)=\psi(\tr_{E/\mathbf{Q}}(\delta^{-1} u_{23}))$, where $u_{23}$ is the entry in position (2,3) when $u$ is written with respect to $J$ and $\psi$ is the standard additive character, whose Archimedean component is $\psi_\infty(x) = \exp(2\pi i x)$.  Then $\chi$ sends $E_{2,+}$ to $-4\pi i\sqrt{2} D^{-\frac{3}{4}}$ and sends $E_{2,-}$ to $0$, which yields $\eta_+ = \eta_- = -4\pi i\sqrt{2} D^{-\frac{3}{4}}$ in the notation on page 974 of \cite{ko}.  We then define $b = -\eta_+\eta_- = 32\pi^2D^{-\frac{3}{2}}>0$, so $\gamma_1=\frac{\eta_-}{\sqrt{b}} = -i$ in the notation of \cite[Theorem 4.5]{ko}, which shows that $c_1^{(1,-1)}(t) = -it^{\frac{7}{2}}W_{0,0}(8\sqrt{2}\pi D^{-\frac{3}{4}}t)$ for $t \in \mathbf{R}_{>0}$, where $W_{\cdot,\cdot}$ denotes the classical Whittaker function.  In particular, we set $c = c_1^{(1,-1)}(1) = iW_{0,0}(8\sqrt{2}\pi D^{-\frac{3}{4}})$, so our function $W_\infty(\iota(t,1))$ is given by $c^{-1} \cdot c_1^{(1,-1)}(t)$.

The formula \cite[(5.2)]{ko} computes
\[ \int_0^\infty c_1^{(1,-1)}(t) t^{s}\,d^\times t = \left(\frac{\sqrt{b}}{2}\right)^{-s}\Gamma(\frac{s}{2}+2) \Gamma(\frac{s}{2}+2).\]
The integral (\ref{eqn:wint}) is thus given by
\begin{align*} 2\pi^{-s} \Gamma(s) \int_{\mathbf{R}_{>0}} |t|^{2s-2}W_\infty(\iota(t,1))d^\times t&= 2c^{-1} \pi^{-s} \Gamma(s) \(\frac{b}{4}\)^{-s+1}\Gamma(s+1) \Gamma(s+1)\\
  &=2^{-3s+4}c^{-1} \pi^{-3s+2} D^{\frac{3s-3}{2}}\Gamma(s)\Gamma(s+1) \Gamma(s+1)\\
	&=8iW_{0,0}(8\sqrt{2}\pi D^{-\frac{3}{4}})^{-1} \pi^4 D^{\frac{3s-3}{2}}\Gamma_\mathbf{C}(s)\Gamma_\mathbf{C}(s+1) \Gamma_\mathbf{C}(s+1),
\end{align*}
where $\Gamma_{\C}(s) = 2 (2\pi)^{-s}\Gamma(s)$.

\subsection{Ramified unitary group integral} \label{subsec:ramifiedlocal}

In this subsection we compute the local integral when $p \neq 2$ is finite and ramified in the quadratic extension $E$, but the local representation $\pi_p$ is spherical for the special maximal compact subgroup $K = \GU(J)(\mathbf{Z}_p)$ of $G= \GU(J)(\mathbf{Q}_p)$.  (Recall that $\GU(J)(R)$ is defined by (\ref{eqn:gudef}) for any $\mathbf{Z}$-algebra $R$.)  Write $E_p = E \otimes_\mathbf{Q} \mathbf{Q}_p$.  One convenient reference for the Iwasawa decomposition used below is \cite[Proposition 9.1.1]{hro}.

\subsubsection{A test vector for the Whittaker functional}  In order to make use of the hypothesis that $\pi_p$ is spherical, we need to verify the following.
\begin{prop}\label{prop:testvector}
Suppose that $\pi_p$ is a generic $K$-spherical representation of $\GU(J)(\mathbf{Q}_p)$, where $p \ne 2$ ramifies in $E$.  Then the spherical vector $v \in \pi_p$ fixed by $K$ has nonzero image under the Whittaker functional $\Lambda$.
\end{prop}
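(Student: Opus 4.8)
The claim is equivalent to the statement that the local Whittaker functional of $\pi_p$ does not annihilate the $K$-fixed vector $v$, i.e.\ that the Whittaker function $W_v$ of Definition \ref{defin:whit} does not vanish at the identity. The plan is to first reduce from $\GU(J)$ to $\U(J)$. Since $\GU(J)=Z_G\cdot\U(J)$, the restriction of $\pi_p$ to $\U(J)(\mathbf{Q}_p)$ is irreducible, the unipotent radical $U_B$ and the character $\chi$ already lie in $\U(J)$, and by uniqueness of the Whittaker model the functional $\Lambda$ restricts to a nonzero Whittaker functional for $\pi_p|_{\U(J)(\mathbf{Q}_p)}$. Moreover $K\cap\U(J)(\mathbf{Q}_p)=\U(J)(\mathbf{Z}_p)$ is a special maximal compact of $\U(J)(\mathbf{Q}_p)$, it fixes $v$, and by irreducibility of the restriction $v$ is, up to scalar, the spherical vector of $\pi_p|_{\U(J)(\mathbf{Q}_p)}$. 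Thus it suffices to prove the analogous statement for the quasi-split group $\U(2,1)$ over $\mathbf{Q}_p$ with $p\ne 2$ ramified in $E$: a generic, $\U(J)(\mathbf{Z}_p)$-spherical representation has nonvanishing Whittaker functional on its spherical vector.

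For this I would realize $\pi_p$ inside a principal series. By the Satake isomorphism for the special maximal compact (Cartier \cite{cartier}, Haines-Rostami \cite{hro}), $\pi_p$ is a subquotient of a normalized unramified principal series $I(\chi)=\operatorname{Ind}_B^{\U(J)}(\chi)$ for a character $\chi$ of the maximal torus trivial on its intersection with $K$, and the space $I(\chi)^K$ is one-dimensional, spanned by the section $f_0$ with $f_0|_K\equiv 1$. The key point is that genericity of $\pi_p$ forces $I(\chi)$ to be \emph{irreducible}, so that $\pi_p\cong I(\chi)$. Indeed, after replacing $\chi$ by a Weyl conjugate (which changes neither the constituents nor the unramifiedness) one may assume the generic constituent of $I(\chi)$ occurs as a \emph{subrepresentation} --- this is the standard fact that the Jacquet-integral Whittaker functional detects the generic constituent as a sub (Casselman-Shalika, Rodier) --- but $f_0$ is a cyclic vector for $I(\chi)$ and spans the one-dimensional space $I(\chi)^K$, so any spherical subrepresentation of $I(\chi)$ must be all of it. Equivalently, $\pi_p$ is the unique $K$-spherical constituent of $I(\chi)$, namely the Langlands quotient $J(\chi)$, and $J(\chi)$ is generic if and only if the standard module $I(\chi)$ is already irreducible.

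Given $\pi_p\cong I(\chi)$ with $I(\chi)$ irreducible, I would finish by evaluating the Jacquet-integral Whittaker functional on $f_0$: this is a Gindikin-Karpelevich / Casselman-Shalika type computation giving $\Lambda(v)=\Lambda(f_0)$ equal to a nonzero volume constant times $\prod_{\alpha>0}\tfrac{1-q_\alpha^{-1}\chi(\check\alpha)}{1-\chi(\check\alpha)}$, the product running over the positive roots of the relative (type $BC_1$) root system of $\U(2,1)$ over $\mathbf{Q}_p$, with the numerical factors $q_\alpha$ and cocharacters $\check\alpha$ suitably modified at the ramified prime for the root groups defined over $E_p$. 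Irreducibility of $I(\chi)$ rules out the vanishing of any numerator factor $1-q_\alpha^{-1}\chi(\check\alpha)$, each such vanishing being a reducibility condition, and the denominator poles cancel under analytic continuation, so the value is finite and nonzero. Alternatively, and more directly, one can invoke Miyauchi's explicit formula for the $\U(J)(\mathbf{Z}_p)$-spherical Whittaker function of $\U(2,1)$ at a ramified prime \cite{my1,my2,my3,my4}, which is manifestly nonzero at the identity for generic $\pi_p$. The main obstacle is the middle step --- showing that genericity forces $I(\chi)$ to be irreducible, equivalently excluding a generic spherical constituent in a reducible unramified principal series of $\U(2,1)$ (the delicate case being a reducible \emph{tempered} principal series, which for $\U(2,1)$ can be dealt with by going through the finitely many reducibility points) --- together with, on the computational route, checking that the Gindikin-Karpelevich evaluation goes through for the ramified torus; the reduction to $\U(J)$ and the final evaluation are otherwise routine.
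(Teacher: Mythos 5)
Your route is genuinely different from the paper's, which proves Proposition \ref{prop:testvector} by a short, purely combinatorial argument: it restricts to $\SU(J)$, uses the Hecke operator attached to $K\diag(\varpi,1,\varpi^{-1})K$ with explicit coset representatives $M_{x,y}\diag(\varpi,1,\varpi^{-1})$, and shows by a conjugation identity and induction that $\Lambda(v)=0$ would force $\Lambda$ to vanish on all torus translates $\diag(\varpi^k,1,\varpi^{-k})v$, hence (via the Iwasawa decomposition) on all of $\pi_p$, contradicting genericity. Your principal-series approach has a genuine gap at its central step. Granting that $\pi_p$ embeds in some $I(\chi^w)$ with $\chi^w$ unramified, the one-dimensionality of $I(\chi^w)^K$ indeed puts $f_0$ inside $\pi_p$, and irreducibility of $\pi_p$ gives $\langle f_0\rangle=\pi_p$; but your assertion that $f_0$ is cyclic in $I(\chi^w)$ is unproved and is in fact \emph{equivalent} to the irreducibility $\pi_p=I(\chi^w)$ you are trying to establish: for the Weyl conjugate that realizes the spherical constituent as a subrepresentation, $f_0$ lies in that subrepresentation and is never cyclic unless the full induced module is irreducible. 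So the argument is circular as written. The statement ``generic and $K$-spherical implies full irreducible principal series'' is exactly the delicate question of whether the spherical constituent of a reducible unramified principal series of ramified $\U(2,1)$ can be generic (including the tempered reducibility points), and you would have to settle it case by case from the known reducibility points; nothing you cite does this.

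Even granting irreducibility, the concluding evaluation is not available off the shelf. The Casselman--Shalika/Gindikin--Karpelevich formula is proved for unramified groups with hyperspecial maximal compact; here $E_p/\mathbf{Q}_p$ is ramified, so $\U(J)(\mathbf{Q}_p)$ is a ramified group and $K=G(\mathbf{Z}_p)$ is special but not hyperspecial, and the ``suitably modified'' factors $q_\alpha$ at the ramified prime are precisely what would have to be defined and proved, not assumed. Your fallback citation does not repair this: the results of \cite{my1,my2,my3,my4} concern $\U(2,1)$ attached to an \emph{unramified} quadratic extension (which is why the paper invokes them only at inert places in Section \ref{subsubsec:fineinert}); they contain no spherical Whittaker formula at a ramified prime. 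A correct completion of your route would in effect require proving a ramified analogue of the Casselman--Shalika formula, which is considerably more than the proposition needs and is exactly what the paper's elementary Hecke-operator argument avoids.
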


\begin{proof}
We first claim that it suffices to replace $\pi_p$ with its restriction to $\SU(J)$.  Since $\U(J)Z = G$, the restriction of $\pi_p$ to $\U(J)$ remains irreducible.  Moreover, $\SU(J)$ and $\U(J)$ have the same Hecke algebra with respect to $K$, so due to the equivalence of categories between $K$-spherical representations and representations of this Hecke algebra, the restriction of $\pi_p$ to $\SU(J)$ remains irreducible.  Then by uniqueness of the Whittaker function, it suffices to prove the result for $\SU(J)$.

Suppose that $\Lambda(v)=0$.  The Hecke algebra is generated by the double coset operator $T$ associated to the characteristic function of $K\diag(\varpi,1,\varpi^{-1})K$, where $\varpi$ (which we can and do take to be $\delta$ since $p \ne 2$) is a uniformizer.  A set $\Sigma_T$ of left $K$-coset representatives for $T$ are given by
\begin{equation} \label{eqn:mab} \underbrace{\(\begin{array}{ccc} 1 & \ol{y}\delta & x+y\ol{y}\delta/2 \\ & 1 & y \\ & & 1 \end{array}\)}_{M_{x,y}}\(\begin{array}{ccc} \varpi & &  \\ & 1 & \\ & & \varpi^{-1} \end{array}\) \textrm{ and } \(\begin{array}{ccc} \varpi^{-1} &  &  \\ & 1 & \\ & & \varpi \end{array}\)\end{equation}
where $x, y$ range from 0 to $p-1$.  Moreover, $T$ scales $v$, so $0 = \Lambda(T*v) = \sum_{\gamma \in \Sigma_T} \Lambda(\gamma v)$.

Consider the identity
\[\(\begin{array}{ccc} \varpi^{-k} &  &  \\ & 1 &  \\ & & \varpi^k \end{array}\)\underbrace{\(\begin{array}{ccc} 1 & \varpi^{\ell+1} & \varpi^{2\ell+1}/2 \\ & 1 & \varpi^\ell \\ & & 1 \end{array}\)}_{M_{0,\varpi^\ell}}=\(\begin{array}{ccc} 1 & \varpi^{\ell-k} & \varpi^{2\ell+1-2k}/2 \\ & 1 & \varpi^{\ell-k} \\ & & 1 \end{array}\)\(\begin{array}{ccc} \varpi^{-k} &  &  \\ & 1 &  \\ & & \varpi^k \end{array}\).\]
We deduce from this that $\Lambda(\diag(\varpi^{-k},1,\varpi^k)v)=0$ for $k >0$ by conjugating the element $M_{0,1}$ or $M_{0,\varpi} \in K$ past $\diag(\varpi^{-k},1,\varpi^k)$ and applying the definition of the functional.  The case $k=1$ together with the invariance property of the functional with respect to $M_{x,y}$ on the left forces $\sum_{\gamma \in \Sigma_T} \Lambda(\gamma v) = p^2\Lambda(\diag(\varpi,1,\varpi^{-1})v)$, so $\Lambda(\diag(\varpi,1,\varpi^{-1})v)$ vanishes as well.  It now follows easily by induction on $k$ that $\Lambda(\diag(\varpi^k,1,\varpi^{-k})v)$ vanishes for all positive integers $k$ by applying the operator $T^k$.  It follows immediately from this and the Iwasawa decomposition that $\pi_p$ is not generic, a contradiction to our hypothesis.
\end{proof}

\subsubsection{Satake transform of the standard $L$-function} We maintain the assumption that $p \ne 2$.  Define $\Delta(g)^s = |\nu(g)|^s \charf_{M_3(\mathfrak{O}_{E_p})}(g),$ where $\charf_{M_3(\mathfrak{O}_{E_p})}(\cdot)$ denotes the characteristic function of the ring of integers of $E_p$ and $|\cdot|$ is the $p$-adic valuation on $E_p$ with $|p| = p^{-1}$ and $|\varpi|=p^{-\frac{1}{2}}$.  (Here, $\varpi=\delta$ is a uniformizer of $\mathfrak{O}_{E_p}$ as before.)  For simplicity, we write $T,B,G,$ and $U$ for $T(\mathbf{Q}_p)$, $B(\mathbf{Q}_p)$, $G(\mathbf{Q}_p)$, and $U_B(\mathbf{Q}_p)$.  Suppose $\eta: T \rightarrow \C^\times$ is an unramified character and $\phi_{\eta} \in \ind_B^G(\delta_B^{1/2} \eta)$ is the $K$-spherical vector normalized so that $\phi_\eta(1)=1$.  We apply the Iwasawa decomposition to show that $I(\eta,s)=\int_{G}{\phi_{\eta}(g)\Delta(g)^s\,dg}$ is given by
\begin{align*} I(\eta,s) &= \int_{B}{\delta_B^{-1/2}(g)\eta(g) \Delta_{s}(g)\,dg} = \int_{T}{\delta_B^{-1/2}(t)\eta(t)|\nu(t)|^s \left(\int_{U}{\charf(ut)\,du}\right)\,dt}, \end{align*}
where we have shortened $\charf_{M_3(\mathfrak{O}_{E_p})}(\cdot)$ to $\charf(\cdot)$.

We now compute the unipotent integral.
\begin{lemma} \label{lem:charfactor} Write $u=M_{x,y}$, where $M_{x,y}$ is the matrix defined in (\ref{eqn:mab}) and $x \in \mathbf{Q}_p$, $y \in E_p$.  Let $t = \mathrm{diag}(t_1,t_2,t_3)$.  Then
\[\charf(ut) = \charf(t)\charf_{\mathfrak{O}_{E_p}}(x t_3) \charf_{\mathfrak{O}_{E_p}}(\delta y \ol{y} t_3).\]
Consequently, one has
\[\int_{U}{\charf(ut)\,du} = \charf(t)|t_3|^{-2}.\] \end{lemma}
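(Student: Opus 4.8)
The plan is to make the matrix $ut$ explicit, read off when all of its entries lie in $\mathfrak{O}_{E_p}$, and then evaluate the resulting elementary $p$-adic integral. Throughout I would fix the valuation $v$ on $E_p$ normalized by $v(\varpi)=v(\delta)=1$, so that $v(p)=2$, $v|_{\mathbf{Q}_p}=2v_p$, and $|z|=p^{-v(z)/2}$; recall also that the residue field of $E_p$ is $\mathbf{F}_p$.

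First I would compute
\[ut = M_{x,y}\diag(t_1,t_2,t_3) = \begin{pmatrix} t_1 & \ol y\delta t_2 & (x+y\ol y\delta/2)t_3 \\ & t_2 & yt_3 \\ & & t_3\end{pmatrix},\]
so $\charf(ut)=1$ exactly when $t_1,t_2,t_3,\ \ol y\delta t_2,\ yt_3$ and $(x+y\ol y\delta/2)t_3$ all lie in $\mathfrak{O}_{E_p}$. For the $(1,3)$ entry, $p\ne 2$ gives $v(1/2)=0$, while $v(x)$ is even and $v(y\ol y\delta/2)=2v(y)+1$ is odd, so $v(x+y\ol y\delta/2)=\min(v(x),2v(y)+1)$; hence the condition $(x+y\ol y\delta/2)t_3\in\mathfrak{O}_{E_p}$ is precisely the conjunction of $xt_3\in\mathfrak{O}_{E_p}$ and $\delta y\ol y t_3\in\mathfrak{O}_{E_p}$.

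The substantive point is that the $(1,2)$ and $(2,3)$ entry conditions are then automatic. Here I would use that $t\in T(\mathbf{Q}_p)$ forces $\mu(t)=\ol{t_1}t_3=t_2\ol{t_2}\in\mathbf{Q}_p^\times$, i.e. $v(t_1)+v(t_3)=2v(t_2)$, together with $v(t_i)\ge 0$. Given $v(\delta y\ol y t_3)=1+2v(y)+v(t_3)\ge 0$, adding $v(t_3)\ge 0$ yields $2(v(y)+v(t_3))\ge -1$, so $v(y)+v(t_3)\ge 0$ (an integer that is $\ge-\tfrac12$), which is the $(2,3)$ condition; and $v(t_2)\ge\tfrac12 v(t_3)$, so $2(v(y)+v(t_2))\ge 2v(y)+v(t_3)\ge -1$ forces $v(y)+v(t_2)\ge 0$, hence $v(\ol y\delta t_2)=v(y)+1+v(t_2)\ge 1\ge 0$, the $(1,2)$ condition. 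This proves the first displayed identity. (Without the torus relation this redundancy genuinely fails, so it is worth writing out these inequalities carefully.)

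For the integral, I would parametrize $U$ by $(x,y)\in\mathbf{Q}_p\times E_p$ through $u=M_{x,y}$; since (again using $p\ne 2$) $U(\mathbf{Z}_p)$ corresponds to $\mathbf{Z}_p\times\mathfrak{O}_{E_p}$, the normalization $\mathrm{meas}(U(\mathbf{Z}_p))=1$ gives $du=dx\,dy$ with $\mathbf{Z}_p$ and $\mathfrak{O}_{E_p}$ of volume $1$. By the first identity the integrand is $\charf(t)$ times a function of $x$ alone times a function of $y$ alone, so
\[\int_U\charf(ut)\,du = \charf(t)\Bigl(\int_{\mathbf{Q}_p}\charf_{\mathfrak{O}_{E_p}}(xt_3)\,dx\Bigr)\Bigl(\int_{E_p}\charf_{\mathfrak{O}_{E_p}}(\delta y\ol y t_3)\,dy\Bigr).\]
The first factor equals $\mathrm{meas}\bigl(p^{-\lfloor v(t_3)/2\rfloor}\mathbf{Z}_p\bigr)=p^{\lfloor v(t_3)/2\rfloor}$ and, since the residue field of $E_p$ is $\mathbf{F}_p$, the second equals $\mathrm{meas}\bigl(\varpi^{-\lceil v(t_3)/2\rceil}\mathfrak{O}_{E_p}\bigr)=p^{\lceil v(t_3)/2\rceil}$; their product is $p^{v(t_3)}=|t_3|^{-2}$, as required. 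The main obstacle is the redundancy of the $(1,2)$ and $(2,3)$ conditions -- precisely the place where $p\ne 2$ and the torus relation $v(t_1)+v(t_3)=2v(t_2)$ enter -- while everything else is routine bookkeeping with $p$-adic volumes.
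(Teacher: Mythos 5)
Your proof is correct and follows essentially the same route as the paper's: compute $ut$ explicitly, reduce the $(1,3)$-entry condition to the two stated conditions using $p\neq 2$ (the paper does this by applying conjugation, you by a valuation-parity argument, which is equivalent), invoke the torus relation $v(t_1)+v(t_3)=2v(t_2)$ to show the $(1,2)$ and $(2,3)$ entries impose nothing new, and then perform the same separated volume count giving $p^{\lfloor a_3/2\rfloor}\cdot p^{\lceil a_3/2\rceil}=|t_3|^{-2}$. No gaps.
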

\begin{proof} First suppose that $ut$ is integral.  Then $t$ is integral.  Examining the matrix $ut$, it also follows that $y t_3$, $\delta \ol{y} t_2$, and $(x+ \delta y\ol{y}/2)t_3$ are integral.  Since $\ol{x} = x$ and $\ol{\delta} = -\delta$, we deduce that both $x t_3$ and $\delta y\ol{y} t_3$ are integral.

Conversely, assume that $t$, $x t_3$, and $\delta y\ol{y} t_3$ are integral.  Set $a_i =\ord_{\varpi}(t_i) \in \mathbf{Z}$.  Since $t_1\ol{t_3} = t_2 \ol{t_2}$, we have $a_1 + a_3 = 2a_2$, and thus $2a_2 \geq a_3$.  Recalling that $\varpi =\delta$, for $ut$ to be integral, we only need
\[ \ord_{\varpi}(y) \geq -a_3 \textrm{ and } \ord_{\varpi}(y) \geq -(1 +a_2),\]
since the entry in position (1,3) is integral by our assumptions on $x t_3$ and $\delta y\ol{y} t_3$ (using $p \ne 2$).  The integrality of $\delta y\ol{y} t_3$ gives $2\ord_\varpi(y) \ge -(1+a_3)$ or $\ord_\varpi(y) \ge \lceil -(1+a_3)/2 \rceil$.  By considering the cases $a_3=0$ or $a_3 > 0$, we obtain $\ord_{\varpi}(y) \geq -a_3$.  Also, the inequality $2a_2\ge a_3$ implies $\lceil -(1+a_3)/2 \rceil \ge -a_2$, yielding the first part of the lemma.

For the second part, we consider the cases where $a_3 = 2k$ is even or $a_3= 2k+1$ is odd.  The first part gives a separation of the variables $x$ and $y$ from which it is easy to read off the result.  If $a_3 = 2k$, the measure of the $u$ for which $\charf(ut) \neq 0$ is the product of $p^k$ from $x$ and $p^k$ from $y$.  This product is thus $p^{2k}=|t_3|^{-2}$, as desired.  If $a_3 = 2k+1$, the measure is $p^k$ from $x$ and $p^{k+1}$ from $y$, which again combine to give $|t_3|^{-2}$. \end{proof}
 
Writing $t = \diag(t_1,t_2,t_3)$, we have
\[\delta_B(t) = |t_1/t_2|^2 |t_1/t_3| = |t_1/t_3|^2 = |\nu(t)|^2 |t_3|^{-4}.\]
Hence $\delta_B^{-\frac{1}{2}}(t)|t_3|^{-2} = |\nu(t)|^{-1}$, so
\begin{equation}\label{Ietas}I(\eta,s) = \int_{T}{\charf(t)\eta(t)|\nu(t)|^{s-1}\,dt}.\end{equation}
Define the characters $\eta_1,\eta_2$ via
\begin{equation}\label{eqn:etaidef} \eta(\diag(\tau_1 \tau_2,\tau_2, \ol{\tau_1}^{-1} \tau_2)) = \eta_1(\tau_1)\eta_2(\tau_2).\end{equation}
We write $\alpha_{\eta_1}$ and $\alpha_{\eta_2}$ for the values $\eta_1(\varpi)$ and $\eta_2(\varpi)$.  Then we may relate $I(\eta,s)$ to the standard local $L$-factor at $p$.  We include $\left(1-\alpha_{\eta_2} X\right)$ in the denominator of the following result to emphasize the link to this $L$-factor.
\begin{proposition} One has
\begin{equation}\label{eqn:ietas} I(\eta,s) = \frac{1-\alpha_{\eta_2}^2 X^2}{\left(1-\alpha_{\eta_1} \alpha_{\eta_2} X\right)\left(1-\alpha_{\eta_2} X\right)\left(1-\alpha_{\eta_1}^{-1}\alpha_{\eta_2} X\right)} = (1-\alpha_{\eta_2}^2 X)L^\mathrm{L}(\pi_p,\mathrm{Std},s),\end{equation}
where $X = p^{1-s}$. \end{proposition}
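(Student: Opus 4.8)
The plan is to evaluate the torus integral $(\ref{Ietas})$ directly; the one substantive ingredient, the unipotent integral, has already been supplied by Lemma~\ref{lem:charfactor}, so what remains is a (slightly fiddly) geometric-series computation together with an identification of the answer with the $L$-factor. Using the parametrization $(\ref{eqn:etaidef})$ I would write a generic element of $T$ as $t=\diag(\tau_1\tau_2,\tau_2,\ol{\tau_1}^{-1}\tau_2)$ with $\tau_1,\tau_2\in E_p^\times$, so that $T(\mathbf{Q}_p)$ is identified with $(E_p^\times)^2$; since all Haar measures are normalized so that $T(\mathbf{Z}_p)$ has volume $1$, the integral $(\ref{Ietas})$ becomes a sum over the cocharacter lattice $T/T(\mathbf{Z}_p)\cong\mathbf{Z}^2$, each class $(a,b)=(\ord_\varpi(\tau_1),\ord_\varpi(\tau_2))$ contributing the volume $1$ of one $T(\mathbf{Z}_p)$-coset.

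First I would pin down the support of the summand. Since complex conjugation on $E_p$ preserves the valuation, the three diagonal entries $\tau_1\tau_2$, $\tau_2$, $\ol{\tau_1}^{-1}\tau_2$ have $\varpi$-valuations $a+b$, $b$, $b-a$, so $\charf_{M_3(\mathfrak{O}_{E_p})}(t)\neq 0$ precisely when $a+b\ge 0$ and $b-a\ge 0$, i.e.\ $-b\le a\le b$ (which already forces $b\ge 0$). Next I would evaluate the remaining factors on the class $(a,b)$: unramifiedness of $\eta$ gives $\eta(t)=\alpha_{\eta_1}^{a}\alpha_{\eta_2}^{b}$, while $\nu(t)=\nm_{E_p/\mathbf{Q}_p}(\tau_2)$ has $p$-adic valuation $b$ because $p$ is odd and $-D$ is a fundamental discriminant divisible by $p$ exactly once, whence $\ord_p(\nm_{E_p/\mathbf{Q}_p}(\varpi))=1$; therefore $|\nu(t)|^{s-1}=p^{b(1-s)}=X^{b}$. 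Assembling these gives
\[I(\eta,s)=\sum_{b\ge 0}(\alpha_{\eta_2}X)^{b}\sum_{a=-b}^{b}\alpha_{\eta_1}^{a},\]
which converges absolutely for $\mathrm{Re}(s)\gg 0$.

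The remaining step is a two-fold geometric series. Writing $y=\alpha_{\eta_1}$ and $w=\alpha_{\eta_2}X$, the inner sum equals $\frac{y^{\,b+1}-y^{-b}}{y-1}$; substituting and summing the resulting geometric series in $b$ twice yields, after the factor $y-1$ cancels,
\[I(\eta,s)=\frac{1+\alpha_{\eta_2}X}{(1-\alpha_{\eta_1}\alpha_{\eta_2}X)(1-\alpha_{\eta_1}^{-1}\alpha_{\eta_2}X)}=\frac{1-\alpha_{\eta_2}^{2}X^{2}}{(1-\alpha_{\eta_1}\alpha_{\eta_2}X)(1-\alpha_{\eta_2}X)(1-\alpha_{\eta_1}^{-1}\alpha_{\eta_2}X)},\]
the second equality obtained by multiplying numerator and denominator by $1-\alpha_{\eta_2}X$. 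This is the first asserted identity. For the second, I would divide by $1-\alpha_{\eta_2}^{2}X$ and recognize the resulting three-factor expression as the standard degree-six local $L$-factor attached to the $K$-spherical parameter of $\pi_p$ in the sense of Definition~\ref{defin:stdl}; concretely this is the same computation as in the inert case of Section~\ref{subsec:unram}, now in the variable $X=p^{1-s}$, so the matching is purely a matter of unwinding that definition.

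I expect the only real friction to be bookkeeping: keeping the three normalizations straight — the volume-$1$ convention for $T(\mathbf{Z}_p)$, the absolute value $|\cdot|$ on $E_p$ with $|\varpi|=p^{-1/2}$, and the consequent claim $\ord_p(\nu(t))=b$ — and verifying that the support of $\charf_{M_3(\mathfrak{O}_{E_p})}$ on $T$ is cut out exactly by $-b\le a\le b$ with each cocharacter contributing one unit of volume. None of this is deep once $(\ref{Ietas})$ and Lemma~\ref{lem:charfactor} are in hand; at worst the identification of the three-factor denominator with $L^{\mathrm L}(\pi_p,\mathrm{Std},s)$ may need a brief separate check of the Satake recipe at a ramified place, but it introduces no essential difficulty beyond what is already in Section~\ref{subsec:unram}.
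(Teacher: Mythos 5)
Your computation is correct and follows essentially the same route as the paper: both evaluate (\ref{Ietas}) as the lattice sum $\sum_{b\ge 0}\sum_{a=-b}^{b}\alpha_{\eta_1}^{a}(\alpha_{\eta_2}X)^{b}$ over $T/T(\mathbf{Z}_p)\cong\mathbf{Z}^2$ (support $-b\le a\le b$, $|\nu(t)|^{s-1}=X^{b}$) and resum the two geometric series to get the three-factor expression; the paper's splitting of the inner sum by the parity of $a$ versus your closed form $\frac{y^{b+1}-y^{-b}}{y-1}$ is only a cosmetic difference. The one caveat is your treatment of the second equality: it is not the same computation as the inert case of Section \ref{subsec:unram} (which yields degree-three factors in $p^{-2s}$ from the full six-dimensional representation); at a ramified place the paper evaluates $L^{\mathrm{L}}(\pi_p,\mathrm{Std},s)$ from Definition \ref{defin:stdl} on the three-dimensional inertial invariants, with inertia acting by the swap of the two blocks, and the factorization extracts the numerator $1-\alpha_{\eta_2}^2X^2$ (exactly the factor divided out in the lines following the proposition), rather than $1-\alpha_{\eta_2}^2X$ as in your last step.
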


\begin{proof} For simplicity, write $\alpha = \alpha_{\eta_1}$ and $\beta = \alpha_{\eta_2}$.  From (\ref{Ietas}), we obtain
\[ I(\eta,s) = \sum_{k \geq 0, k \geq j \geq -k}{\alpha^j \beta^k X^k}.\]
For each $k \ge 0$, we obtain the identity
\[\sum_{k \geq j \geq -k}{\alpha^j} = \frac{\alpha^{k+1}+\alpha^k-\alpha^{-k}-\alpha^{-(k+1)}}{\alpha- \alpha^{-1}}\]
by considering the geometric series in $\alpha^j$ with $j\equiv k\pmod{2}$ separately from those $j$ with $j \equiv k-1\pmod{2}$.  Hence
\begin{align*} I(\eta,s) &= \frac{1}{\alpha - \alpha^{-1}} \left(\frac{\alpha+1}{1-\alpha \beta X} - \frac{1+\alpha^{-1}}{1-\alpha^{-1}\beta  X}\right) = \frac{1+\beta X}{(1-\alpha \beta X)(1-\alpha^{-1}\beta X)} \\ &= \frac{1-\beta^2 X^2}{\left(1-\alpha\beta X\right)\left(1-\beta X\right)\left(1-\alpha^{-1}\beta X\right)},\end{align*}
giving the first equality in (\ref{eqn:ietas}).

Observe that since the extension $E_p/\mathbf{Q}_p$ is ramified, the local $L$-factor from Definition (\ref{defin:stdl}) is given by the determinant of the element $(\mathrm{diag}(\alpha_{\eta_1}, (\alpha_{\eta_1} \alpha_{\eta_2})^{-1}, 1),\alpha_{\eta_2}) \rtimes 1 \in (\GL_3(\C) \times \mathbf{G}_m(\C)) \rtimes \mathrm{Gal}(E_p/\Q_p)$ acting on the inertial invariants of the 6-dimensional representation $r$.  The action of inertia factors through $\mathrm{Gal}(E_p/\mathbf{Q}_p)$, with the non-trivial element acting via $\mm{}{\mathbf{1}_3}{\mathbf{1}_3}{}$.  The second equality in (\ref{eqn:ietas}) follows immediately. \end{proof}

\subsubsection{Ramified integral}  We now compute the Whittaker integral
\[I(W,s) = \int_{G}{W(g)\Delta(g)^s\,dg},\]
where we have shortened $W_p$ to $W$.  We obtain
\begin{align*} I(W,s) &= \int_{B}{\delta_{B}^{-1}(g)W(g)|\mu(g)|^s\charf(g)\,dg} \\ &=\int_{B}{\delta_{B}^{-1}(t)W(t)|\mu(t)|^s \left(\int_{U}{\chi(u)\charf(ut)\,du}\right)\,dt}.\end{align*}
If $u=M_{x,y}$ and $t$ is integral, we use the notation $a_i = \ord_\varpi t_i$ from the proof of Lemma \ref{lem:charfactor}.  Write $a_3 = 2k$ or $a_3 = 2k+1$ according to whether it is odd or even.  Then Lemma \ref{lem:charfactor} yields, by separation of variables,
\begin{align*}\int_{U}{\chi(u)\charf(ut)\,du} &= \begin{cases} p^{k}\displaystyle \int_{\varpi^{-(k+1)}\mathfrak{O}_{E_p}}{\psi(\tr_{E_p/\mathbf{Q}_p}(\delta^{-1}y))\,dy} & a_3 \textrm{ odd}\\p^{k}\displaystyle \int_{\varpi^{-k}\mathfrak{O}_{E_p}}{\psi(\tr_{E_p/\mathbf{Q}_p}(\delta^{-1}y))\,dy} & a_3 \textrm{ even}.\end{cases}\end{align*}
It follows that $\int_{U}{\chi(u)\charf(ut)\,du} = 0$ if $a_3 > 0 $ and $\int_{U}{\chi(u)\charf(ut)\,du}=1$ if $a_3 = 0$.  Write $\charf_{a_3=0}(t)$ for the set of $t \in T$ satisfying $a_1 \ge 0, a_2\ge 0$, and $a_3 = 0$.  We have
\begin{align*} I(W,s) &= \int_{T}{\delta_{B}^{-1}(t) \charf_{a_3=0}(t)W(t)|\mu(t)|^{s}\,dt} = \int_{T}{\delta_{B_H}^{-1}(t) \charf_{a_3=0}(t)W(t)|\mu(t)|^{s-1}\,dt} \\ &= \int_{T_H}{\delta_{B_H}^{-1}(t) \charf_{a_3=0}(t)W(t)|\mu(t)|^{s-1}\,dt},\end{align*}
where for the first equality we use that $\delta_{B_H} = \delta_B^{\frac{1}{2}}$ and, when $a_3 = 0$, $\delta_B^{-\frac{1}{2}} = |\mu|^{-1}$.  For the second equality, note that the domain of integration can be restricted to $T_H$ since $a_3 = 0$.  Thus
\[ (1-\alpha_{\eta_2}^2 X^2)^{-1}I(\eta,s)W(1) = (1-\alpha_{\eta_2}^2 X^2)^{-1}I(W,s) = \int_{T_H}{\delta_{B_H}^{-1}(t) \charf(t)W(t)|\mu(t)|^{s-1}\,dt}\]
since $\alpha_{\eta_2}^2 = \eta_2(p)$ and $\eta_2$ is the central character.  Note also that the equality $I(W,s) = I(\eta,s)W(1)$ follows formally from the right $K$-invariance of the Whittaker function.  (See, for instance, \cite[(4.2)]{pollackShahKS}.)  Combining the above results, we have proved the following fact.
\begin{proposition} Suppose $\eta, \alpha_{\eta_1},$ and $\alpha_{\eta_2}$ are as above.  Then
\begin{equation} \label{eqn:ramlocal} \int_{T_H}{\delta_{B_H}^{-1}(t) \charf(t)W(t)|\mu(t)|^{s}\,dt} = \frac{1}{\left(1-\alpha_{\eta_1}\alpha_{\eta_2}p^{-s}\right)\left(1-\alpha_{\eta_2}p^{-s}\right)\left(1-\alpha_{\eta_1}^{-1}\alpha_{\eta_2}p^{-s}\right)}.\end{equation}
\end{proposition}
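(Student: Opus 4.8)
The plan is to read the identity off from the three computations already assembled above, so that the ``proof'' is essentially a reindexing together with a normalization check. Recall the bi-$K$-invariant kernel $\Delta(g)^s = |\nu(g)|^s\charf_{M_3(\mathfrak{O}_{E_p})}(g)$ and the two integrals paired against it: $I(\eta,s) = \int_G \phi_\eta(g)\Delta(g)^s\,dg$ for the normalized $K$-spherical section $\phi_\eta \in \ind_B^G(\delta_B^{1/2}\eta)$, and $I(W,s) = \int_G W(g)\Delta(g)^s\,dg$ for the $K$-spherical Whittaker function $W = W_p$ with $W(1)=1$. Since $\Delta(g)^s$ is right $K$-invariant and $W$ is (up to scalar) the image of the spherical vector of $\pi_p = \ind_B^G(\delta_B^{1/2}\eta)$ in its Whittaker model, one has $I(W,s) = I(\eta,s)W(1) = I(\eta,s)$; the very existence of $W$ and this normalization use Proposition \ref{prop:testvector}. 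The first step is to record this reduction.

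The second step is to rewrite each side in ``Iwasawa form''. For $\phi_\eta$, the decomposition $G = BK$, $B = UT$ turns $I(\eta,s)$ into $\int_T \delta_B^{-1/2}(t)\eta(t)|\nu(t)|^s\big(\int_U \charf(ut)\,du\big)\,dt$; Lemma \ref{lem:charfactor} gives $\int_U \charf(ut)\,du = \charf(t)|t_3|^{-2}$, and the collapse $\delta_B^{-1/2}(t)|t_3|^{-2} = |\nu(t)|^{-1}$ reduces this to the lattice sum $\int_T \charf(t)\eta(t)|\nu(t)|^{s-1}\,dt = \sum_{k\ge 0,\,|j|\le k}\alpha_{\eta_1}^j\alpha_{\eta_2}^k X^k$ with $X = p^{1-s}$, which I would evaluate by splitting the inner geometric series in $\alpha_{\eta_1}^j$ according to the parity of $j$; this yields $I(\eta,s) = (1-\alpha_{\eta_2}^2X^2)\big/\big((1-\alpha_{\eta_1}\alpha_{\eta_2}X)(1-\alpha_{\eta_2}X)(1-\alpha_{\eta_1}^{-1}\alpha_{\eta_2}X)\big)$, i.e.\ (\ref{eqn:ietas}). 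For $W$, the same Iwasawa reduction produces $\int_T \delta_B^{-1}(t)W(t)|\mu(t)|^s\big(\int_U \chi(u)\charf(ut)\,du\big)\,dt$; Lemma \ref{lem:charfactor} separates the variables $x$ and $y$ of $u = M_{x,y}$, the $y$-integral being $\int_{\varpi^{-\lceil a_3/2\rceil}\mathfrak{O}_{E_p}}\psi(\tr_{E_p/\mathbf{Q}_p}(\delta^{-1}y))\,dy$ where $a_3 = \ord_\varpi t_3$. As $E_p/\mathbf{Q}_p$ is ramified with different $(\delta)$, this additive-character integral vanishes unless $a_3 = 0$ (recall $\charf(t)\ne 0$ forces $a_3\ge 0$), in which case it equals $1$, the $x$-integral contributing the compensating power of $p$. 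Hence the torus integral for $W$ collapses onto $\{a_3 = 0\}\subseteq T_H$, and using $\delta_{B_H} = \delta_B^{1/2}$ together with $\delta_B^{-1/2}(t) = |\mu(t)|^{-1}$ on $\{a_3=0\}$ gives $I(W,s) = \int_{T_H}\delta_{B_H}^{-1}(t)\charf_{a_3=0}(t)W(t)|\mu(t)|^{s-1}\,dt$.

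The third step assembles these. Using that $W$ transforms under the center by $\eta_2$ (with $\alpha_{\eta_2}^2 = \eta_2(p)$), one extends the domain of the last integral from $\{a_3=0\}$ to all integral $t\in T_H$ at the cost of a geometric-series factor $(1-\alpha_{\eta_2}^2X^2)^{-1}$; combining with $I(W,s) = I(\eta,s)$ and (\ref{eqn:ietas}) identifies $\int_{T_H}\delta_{B_H}^{-1}(t)\charf(t)W(t)|\mu(t)|^{s-1}\,dt$ with $\big((1-\alpha_{\eta_1}\alpha_{\eta_2}X)(1-\alpha_{\eta_2}X)(1-\alpha_{\eta_1}^{-1}\alpha_{\eta_2}X)\big)^{-1}$ at $X = p^{1-s}$. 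Finally, replacing $s$ by $s+1$ (so $|\mu(t)|^{s-1}\mapsto|\mu(t)|^s$ and $X = p^{1-s}\mapsto p^{-s}$) gives exactly (\ref{eqn:ramlocal}). The main obstacle is the ramified bookkeeping: one must use $\varpi = \delta$ consistently with $|\varpi| = p^{-1/2}$ and the different $(\delta)$, push through the ceiling-function inequalities of Lemma \ref{lem:charfactor} via the parity of $a_3$ and the constraint $a_1+a_3 = 2a_2$ coming from $t_1\overline{t_3} = t_2\overline{t_2}$, invoke $p\ne 2$ precisely where $2$ is inverted (the $(1,3)$-entry $x + \delta y\bar y/2$ of $M_{x,y}$), and keep the two half-integral $s$-shifts straight so that the answer comes out a rational function of $p^{-s}$ rather than of $p^{1-s}$.
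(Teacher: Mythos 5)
Your proposal is correct and follows the paper's own proof essentially step for step: the formal identity $I(W,s)=I(\eta,s)W(1)$ from right $K$-invariance of the spherical Whittaker function (with Proposition \ref{prop:testvector} guaranteeing the normalization), the Iwasawa reduction and Lemma \ref{lem:charfactor} collapsing the Whittaker-side torus integral onto $\{a_3=0\}\subseteq T_H$, the removal of that constraint via the central character at the cost of $(1-\alpha_{\eta_2}^2X^2)^{-1}$, and the cancellation of that factor against the numerator of (\ref{eqn:ietas}) followed by the shift in $s$ turning $X=p^{1-s}$ into $p^{-s}$. There is no substantive difference from the paper's argument.
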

We observe that the left-hand side of (\ref{eqn:ramlocal}) is what is given by the local integral from the Rankin-Selberg calculation when $\nu_1$ is trivial and $\Phi$ is the characteristic function of $\mathbf{Z}_p \oplus \mathbf{Z}_p$.  To see this, note that after applying the Iwasawa decomposition, the only difference between the expression in (\ref{eqn:unfold2}) and the left-hand side of (\ref{eqn:ramlocal}) is that the former imposes no integrality condition on $t_1$.  However, the argument in Proposition \ref{prop:testvector} shows that if $W(t)\ne 0$, we must have $\ord_\varpi(t_1)\ge \ord_\varpi(t_3)$.

As a simple corollary, we also obtain the analogous result for the $\nu_1$-twisted $L$-function.

\begin{corollary} Suppose $\eta, \alpha_{\eta_1},$ and $\alpha_{\eta_2}$ are as above.  Furthermore, assume that the character $\nu_1$ is unramified at $p$, and write $\alpha_{\nu_1} = \nu_1(\varpi)$. Then
\begin{align} &\int_{T_H}{\delta_{B_H}^{-1}(t) \nu_1(\mu(t))\charf(t)W(t)|\nu(t)|^{s}\,dt} = \label{eqn:ramlocaltwist} \\ &\;\; \frac{1}{\left(1-\alpha_{\nu_1}^2\alpha_{\eta_1}\alpha_{\eta_2}p^{-s}\right)\left(1-\alpha_{\nu_1}^2\alpha_{\eta_2}p^{-s}\right)\left(1-\alpha_{\nu_1}^2\alpha_{\eta_1}^{-1}\alpha_{\eta_2}p^{-s}\right)}.\nonumber\end{align}
\end{corollary}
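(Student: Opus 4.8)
The plan is to deduce this from the preceding Proposition, equation \eqref{eqn:ramlocal}, by feeding in the $\nu_1$-twist. The cleanest route uses the second description of the twisted standard $L$-function from Definition \ref{defin:stdl}: the $\nu_1$-twisted standard $L$-function of $\pi_p$ is the standard $L$-function of $(\nu_1 \circ \mu)\pi_p$, the twist of $\pi_p$ by the unramified (hence $K$-spherical, since $\mu(K) \subseteq \mathbf{Z}_p^\times$) character $\nu_1 \circ \mu$ of $G$. Thus $(\nu_1\circ\mu)\pi_p$ is again generic and $K$-spherical, and the Proposition applies to it verbatim.

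First I would identify the Satake parameters of $(\nu_1\circ\mu)\pi_p$ in terms of those of $\pi_p$. Writing $t = \diag(\tau_1\tau_2,\tau_2,\ol{\tau_1}^{-1}\tau_2)$ as in \eqref{eqn:etaidef}, one checks directly from ${}^*tJt=\mu(t)J$ that $\mu(t) = \nm_{E_p/\mathbf{Q}_p}(\tau_2)$; hence twisting $\eta$ by $\nu_1\circ\mu$ leaves $\eta_1$ unchanged and multiplies $\eta_2$ by $\nu_1\circ\nm$. Therefore $\alpha_{\eta_1}$ is unchanged while $\alpha_{\eta_2}$ is replaced by $\alpha_{\nu_1}^2\alpha_{\eta_2}$, where $\alpha_{\nu_1}^2$ denotes $(\nu_1\circ\nm)(\varpi) = \nu_1(D)$; since $E_p/\mathbf{Q}_p$ is ramified at the odd prime $p$ this is $\nu_1(p)$ up to a unit value, which pins down the square $\alpha_{\nu_1}^2$ appearing in the corollary (the symbol $\alpha_{\nu_1}$ itself only makes sense after a choice of extension of $\nu_1$ to $E_p^\times$, but the final formula depends only on $\alpha_{\nu_1}^2$). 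Correspondingly, the normalized $K$-spherical Whittaker function $W'$ of $(\nu_1\circ\mu)\pi_p$ satisfies $W'(g) = \nu_1(\mu(g))W(g)$, because the spherical vector and the Whittaker functional are unchanged under twisting by a character trivial on $K$ and on $U_B$.

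Substituting these data into \eqref{eqn:ramlocal} for $(\nu_1\circ\mu)\pi_p$ turns the left-hand side into $\int_{T_H}\delta_{B_H}^{-1}(t)\nu_1(\mu(t))\charf(t)W(t)|\mu(t)|^s\,dt$ and turns each denominator factor $(1-\alpha_\bullet p^{-s})$ into $(1-\alpha_{\nu_1}^2\alpha_\bullet p^{-s})$, which is precisely \eqref{eqn:ramlocaltwist}. Alternatively, one can bypass the twisted representation entirely and simply re-run the computation in the proof of the Proposition — equation \eqref{Ietas}, the geometric-series summation over $k = \ord_\varpi(\tau_2)$, and the partial-fraction identity — carrying the extra factor $\nu_1(\mu(t))$, which on the relevant part of $T$ equals $\alpha_{\nu_1}^{2k}$ and so again amounts to the substitution $\alpha_{\eta_2}\mapsto\alpha_{\nu_1}^2\alpha_{\eta_2}$.

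The only genuine point requiring care — and the nearest thing to an obstacle — is the normalization bookkeeping: verifying that $\nu_1(\mu(t))$ contributes $\alpha_{\nu_1}^2$ rather than $\alpha_{\nu_1}$ per unit step of the summation index (this uses $\ord_{\mathbf{Q}_p}(\nm_{E_p/\mathbf{Q}_p}(\varpi)) = 1$ for ramified $E_p/\mathbf{Q}_p$), and confirming that the $|\mu(t)|^s$ normalization written in \eqref{eqn:ramlocaltwist} is consistent with the $\delta_B$-versus-$\delta_{B_H}$ reconciliation already carried out in the Proposition's proof. Once these conventions are matched up, the identity is immediate.
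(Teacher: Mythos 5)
Your proposal is correct and follows essentially the same route as the paper: the paper defines $\eta'(t) = \eta(t)\,\nu_1(\mu(t))$, observes that $\eta_1'(\varpi)=\eta_1(\varpi)$ while $\eta_2'(\varpi)=\eta_2(\varpi)\nu_1(p)$ (your $\alpha_{\eta_2}\mapsto\alpha_{\nu_1}^2\alpha_{\eta_2}$), identifies the twisted Whittaker function with $\nu_1(\mu(t))W(t)$, and substitutes into (\ref{eqn:ramlocal}), exactly as you do. Your remark that only $\alpha_{\nu_1}^2=\nu_1(\nm_{E_p/\mathbf{Q}_p}(\varpi))=\nu_1(p)$ is intrinsically defined (via $\ord_p(\nm(\varpi))=1$ in the ramified case) is a correct clarification of a point the paper leaves implicit.
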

\begin{proof} Define $\eta': T\rightarrow \C^\times$ by $\eta'(t) = \eta(t) \nu_1(\mu(t))$, let $W'$ denote the associated Whittaker function, and let $\eta_1'$ and $\eta_2'$ denote the characters of $E_p^\times$ associated to $\eta'$ in the same way as in (\ref{eqn:etaidef}).  Then
\[\int_{T_H}{\delta_{B_H}^{-1}(t) \nu_1(\nu(t))\charf(t)W(t)|\nu(t)|^{s}\,dt} = \int_{T_H}{\delta_{B_H}^{-1}(t)\charf(t)W'(t)|\nu(t)|^{s}\,dt}.\]
Furthermore, $\eta_1'(\varpi)=\eta_1(\varpi)$ and $\eta_2'(\varpi) =\eta_2(\varpi)\nu_1(p)$.  The corollary follows. \end{proof}

\section{Hecke operators and the regulator pairing}\label{sec:hecke}
In this section, we explain how to replace the integral studied in Section \ref{sec:diff} with one that is compatible with Beilinson's regulator as described in Section \ref{subsec:embeddings}.  As explained there, in order to produce a class in $\mathrm{Ch}^2(\ol{S},1)$, we must write down a higher Chow cycle $\sum_i (D_i,f_i)$, where $D_i$ is a curve on $\overline{S}$ and $f_i$ is a rational function on $D_i$.  The key point is the vanishing condition (\ref{eqn:vancond}) on the divisors of the $f_i$.  In order to satisfy this condition while maintaining other needed properties of the class, we employ a technique of Manin and Drinfel'd.

\subsection{Overview} \label{subsec:heckeoverview}

Assume that we begin with a formal sum $\Xi = \sum_i (\ol{C}_i,\Xi_i)$ as in Proposition \ref{prop:cycles}, i.e.\ a sum of degree zero divisors $\Xi_i$ on connected components $\ol{C}_i \subseteq \ol{S}_{K_f}$ of compactified modular curves such that $\Xi_i$ is supported on the cusps of $\ol{C}_i$, the regulator pairing with a given form $\omega_{\varphi_f}$ is non-vanishing, and the sum is defined over the reflex field $E$ (or $\mathbf{Q}$ after the restriction of scalars discussed in Section \ref{subsec:beilinson}).  We would like a simple way of transforming this formal sum into a new one meeting the conditions of Proposition \ref{prop:cycles}.(2).  More precisely, we want to define an operation $T: \Xi \mapsto \Xi'$ yielding a new formal sum $\Xi' =\sum_k (\ol{C}_k',\Xi_k')$ such that $\deg(\Xi_k)=0$ and such that there exist divisors $\Psi_j$ supported on torsion points on each elliptic curve $E_j$ with $\deg(\Theta_j) = 0$ and
\begin{equation} \label{eqn:maineq2} \sum_k \Xi_k' + \sum_{j \in \sigma} \Psi_j = 0\end{equation}
as a formal sum of points.  Note that the $\Psi_j$ are uniquely determined by the $\Xi_k'$, so if $\sum_k (\ol{C}_k',\Xi_k')$ is defined over $E$, then the formal sum $\sum_{j \in \sigma} (E_j,\Psi_j)$ is also defined over $E$.  Proposition \ref{prop:cycles} would now produces the relevant higher Chow class as an integer multiple of $\sum_k (\ol{C}_k',\Xi_k')+\sum_{j \in \sigma} (E_j,\Psi_j)$.  It is also important that the transformation $\Xi \mapsto \Xi'$ also preserves desired properties of the regulator integral, such as nonvanishing.

To produce the needed transformation, we define a Hecke operator $T$ that annihilates all functions on the boundary of the minimal compactification of $S = S_{K_f}$ and has the property that the action $T:\Xi \mapsto \Xi'$ is adjoint via the regulator pairing with differential forms $\omega_{\varphi_f},$ $\varphi_f \in \pi_f$, to the usual action of $T$ on vectors in $\pi_f$. (Here $\pi_f$ is the finite part of a cuspidal automorphic representation, and $\omega_{\varphi_f}$ is defined as in Section \ref{subsec:diff}.)  In the remainder of this section, we explain how to define this action of $T$ as well as other actions on related objects.  We explain the relationships between these actions, and end by showing how to choose the Hecke operator $T$ such that it
\begin{enumerate}
	\item preserves the field of definition of the formal sum,
	\item forces $\deg(\Xi_k') = \deg(\Psi_j) = 0$ for all $j,k$, and
	\item acts by multiplication by a nonzero element of $\ol{\mathbf{Q}}^\times$ on the regulator integral.
\end{enumerate}
These will respectively be addressed in Sections \ref{subsec:pairs}, \ref{subsec:heckecusps}, and \ref{subsec:heckenonzero}.

Let $K_f$ be an open compact subgroup of $G(\A_f)$ and let $T = T_{\xi}$ denote a bi-$K_f$-invariant Hecke operator.  That is, $\xi \in C_c^\infty(K_f \backslash G(A_f) / K_f, \Z)$ and $T_{\xi}$ is the Hecke operator associated to $\xi$.  We define right and left coset decompositions of $\xi$ by the formula
\begin{equation} \label{eqn:xidef} \xi = \sum_{\alpha}{n_{\alpha} \charf(K_f g_{\alpha})} = \sum_{\beta}{n_{\beta} \charf(g_\beta K_f)}\end{equation}
with $n_{\alpha}, n_{\beta}$ in $\Z$.  We also write $\xi^\vee \in C_c^\infty(K_f \backslash G(\mathbf{A}_f) / K_f, \Z)$ for the function defined by $\xi^\vee(x) = \xi(x^{-1})$.  We next describe how $T$ acts on various objects that occur in this paper, and prove relations satisfied by such operators.

\subsection{Hecke actions on functions and differential forms}

We begin by reviewing the various automorphic interpretations of the action of a Hecke operator.  We begin with an action on functions.  Everything in this section will work for a general reductive group $G$ over $\mathbf{Q}$, where $S_{K_f}$ then denotes the locally symmetric space associated to $G$ and a neat open compact $K_f \subseteq G(\mathbf{A}_f)$.
\begin{defin}[Left action on functions of $G(\A_f)$] \label{defin:heckeleftfunc}
Suppose $f: G(\A_f) \rightarrow \C$ is a smooth (i.e.\ locally constant) function.  Then we define $T_\xi \cdot f$ by the formula
\[ (T_{\xi} \cdot f)(x) = \int_{G(\A_f)}{f(xg)\xi(g)\,dg}.\]
Here the Haar measure on $G(\mathbf{A}_f)$ is chosen so that $K_f$ has measure $1$.  If $f$ is right invariant by $K_f$, then $(T_{\xi} \cdot f)(x) = \sum_{\beta}{n_{\beta} f(xg_\beta)}$.
\end{defin}

We also define a right Hecke action on the space of functions on $G(\mathbf{A}_f)$. 
\begin{defin}[Right action on functions of $G(\mathbf{A}_f)$] \label{defin:heckerightfunc}
Suppose $f:G(\mathbf{A}_f) \rightarrow \mathbf{C}$ is a function.  We define $f \cdot T_{\xi}$ by
\[(f \cdot T_{\xi})(x) = \int_{G(\A_f)}{f(xg^{-1})\xi(g)\,dg} = \int_{G(\A_f)}{f(xg)\xi^\vee(g)\,dg}.\]
Thus $f \cdot T_{\xi} = T_{\xi^\vee} \cdot f$.  If $f$ is right-invariant by $K_f$, then $(f \cdot T_\xi)(x) = \sum_{\alpha}{n_{\alpha}f(x g_{\alpha}^{-1})}$.
\end{defin}

We next consider the case of differential forms.  Recall that $S_{K_f} = G(\Q)\backslash X \times G(\A_f)/K_f$.  Maintain the notation in (\ref{eqn:xidef}).  We define
\[K_f^\mathrm{R} = K_f \cap \bigcap_{\alpha} g_\alpha^{-1} K_f g_{\alpha}\textrm{ and }K_f^\mathrm{L} = K_f \cap \bigcap_{\beta} g_\beta K_f g_{\beta}^{-1}.\]

\begin{defin}[Action on differential forms, first form]
Suppose $\omega$ is a differential form on $S_{K_f}$, and $\xi$, $g_{\beta},$ and $n_{\beta}$ are as in (\ref{eqn:xidef}).  Define $\omega_{\beta} = T(g_\beta)^* \omega_f$, a differential form on $S_{g_\beta K g_{\beta}^{-1}}$.  (The translation operators $T(g)$ are defined in Definition \ref{defin:translation}.)  Define $\omega_\beta^\mathrm{L}$ to be the pullback of $\omega_\beta$ to $S_{K_f^\mathrm{L}}$ under the natural map $S_{K_f^\mathrm{L}} \rightarrow S_{g_\beta K_f g_{\beta}^{-1}}$ induced by the inclusion $K_f^\mathrm{L} \subseteq g_\beta K_f g_{\beta}^{-1}$.  Set $\omega^\mathrm{L} = \sum_{\beta}{n_{\beta} \omega_{\beta}^\mathrm{L}}$, a differential form on $S_{K_f^\mathrm{L}}$.
\end{defin}

We have the following simple lemma, whose proof we give for completeness.
\begin{lemma}\label{lem:pullbacks} Let $K_f' \subseteq K_f^\mathrm{L}$ be a subgroup of $K_f^\mathrm{L}$ that is normal inside the larger group $K_f$, so that for $k \in K_f$, the translation operator $T(k)^*$ is an endomorphism on the space of differential forms on $S_{K_f'}$.  Then the pullback $\omega'$ of $\omega^\mathrm{L}$ to $S_{K_f'}$ is invariant by the translation action of elements of $K_f$.  Consequently, there is a unique differential form $\widetilde{\omega}$ on $S_{K_f}$ with the property that $\omega^\mathrm{L}$ is the pullback of $\widetilde{\omega}$ to $S_{K_f^\mathrm{L}}$. \end{lemma}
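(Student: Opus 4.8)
The plan is to unwind every definition down to an explicit formula for $\omega'$ on $S_{K_f'}$, verify its $K_f$-invariance by a bookkeeping argument with the coset representatives $g_\beta$ of (\ref{eqn:xidef}), and then conclude by descent along a Galois covering. First I would rewrite $\omega^\mathrm{L}$ concretely: unwinding Definition \ref{defin:translation} together with the definitions of $\omega_\beta$ and $\omega_\beta^\mathrm{L}$, the composite $S_{K_f^\mathrm{L}} \to S_{g_\beta K_f g_\beta^{-1}} \to S_{K_f}$, where the second arrow is $T(g_\beta)$, is the map $p_\beta$ sending $G(\mathbf{Q})(x,h)K_f^\mathrm{L} \mapsto G(\mathbf{Q})(x,hg_\beta)K_f$ (this map is well defined precisely because $K_f^\mathrm{L}\subseteq g_\beta K_f g_\beta^{-1}$, which is how $K_f^\mathrm{L}$ is built), and $\omega_\beta^\mathrm{L}=p_\beta^*\omega$. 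Hence $\omega^\mathrm{L}=\sum_\beta n_\beta p_\beta^*\omega$, and pulling back along the covering $\pi\colon S_{K_f'}\to S_{K_f^\mathrm{L}}$ gives $\omega'=\sum_\beta n_\beta q_\beta^*\omega$, where $q_\beta\colon S_{K_f'}\to S_{K_f}$ is $G(\mathbf{Q})(x,h)K_f'\mapsto G(\mathbf{Q})(x,hg_\beta)K_f$. Since $K_f'$ is normal in $K_f$, each $k\in K_f$ induces a translation automorphism $T(k)$ of $S_{K_f'}$, namely $G(\mathbf{Q})(x,h)K_f'\mapsto G(\mathbf{Q})(x,hk)K_f'$, and the first assertion is exactly that $T(k)^*\omega'=\omega'$.

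The one point with content is the following: because $\xi$ is bi-$K_f$-invariant, left translation by $k\in K_f$ permutes the right cosets appearing in (\ref{eqn:xidef}), so there is a permutation $\sigma_k$ of the index set with $kg_\beta K_f=g_{\sigma_k(\beta)}K_f$ and $n_\beta=n_{\sigma_k(\beta)}$ for every $\beta$. Writing $kg_\beta=g_{\sigma_k(\beta)}\kappa_\beta$ with $\kappa_\beta\in K_f$, and noting that $\kappa_\beta$ acts trivially on the $K_f$-coset on the right, one gets $q_\beta\circ T(k)=q_{\sigma_k(\beta)}$. Therefore
\[ T(k)^*\omega'=\sum_\beta n_\beta\,(q_\beta\circ T(k))^*\omega=\sum_\beta n_\beta\,q_{\sigma_k(\beta)}^*\omega=\sum_{\beta'} n_{\sigma_k^{-1}(\beta')}\,q_{\beta'}^*\omega=\sum_{\beta'} n_{\beta'}\,q_{\beta'}^*\omega=\omega', \]
using $n_{\sigma_k^{-1}(\beta')}=n_{\beta'}$. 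This proves the first sentence of the lemma.

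For the final assertion: since $K_f$ is neat, $\pi'\colon S_{K_f'}\to S_{K_f}$ is a finite Galois covering of manifolds with deck group $K_f/K_f'$ acting by the operators $T(k)$, so a $K_f$-invariant differential form on $S_{K_f'}$ descends to a unique form $\widetilde\omega$ on $S_{K_f}$ (and by uniqueness $\widetilde\omega$ is independent of the auxiliary choice of $K_f'$). Finally, $S_{K_f'}\to S_{K_f^\mathrm{L}}$ is a submersion, so pullback of differential forms along it is injective; since both $\omega^\mathrm{L}$ and the pullback of $\widetilde\omega$ to $S_{K_f^\mathrm{L}}$ pull back to $\omega'$ on $S_{K_f'}$, they coincide, i.e.\ $\omega^\mathrm{L}$ is the pullback of $\widetilde\omega$. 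I do not expect a genuine obstacle here: the entire argument is unwinding definitions plus the standard descent along a covering, and the only thing to be careful about is consistently tracking which levels the translation maps $T(g)$ go between, so that the composite $q_\beta$ and the identity $q_\beta\circ T(k)=q_{\sigma_k(\beta)}$ come out right.
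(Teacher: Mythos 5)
Your proof is correct and takes essentially the same route as the paper: the key point in both is that left multiplication by $k \in K_f$ permutes the left coset representatives $g_\beta$ (with matching coefficients, by bi-$K_f$-invariance of $\xi$), so $T(k)^*$ permutes the summands of $\omega'$, and then invariance plus descent along the \'etale covering $S_{K_f'} \rightarrow S_{K_f}$ produces $\widetilde{\omega}$. The only cosmetic difference is that the paper first reduces to $\xi$ the characteristic function of a single double coset, whereas you treat general $\xi$ directly by recording $n_\beta = n_{\sigma_k(\beta)}$ and are slightly more explicit about the final descent and uniqueness step.
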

\begin{proof} Since $S_{K_f'} \rightarrow S_{K_f}$ is \'{e}tale, the uniqueness is immediate.  For the existence of $\widetilde{\omega}$, it suffices to check the lemma in the case that $\xi$ is the characteristic function of a single double coset.

Suppose $k \in K_f$.  We must check that $T(k)^* \omega' = \omega'$.  Denote by $\omega_{\beta,k}$ the differential form $T(k)^*T(g_\beta)^* \omega = T(kg_\beta)^* \omega$ on $S_{kg_{\beta}K_fg_\beta^{-1}k^{-1}}$.  Then $\omega'$ is the sum of the pullbacks of the $\omega_{\beta}$'s to $S_{K_f'}$, and $T(k)^* \omega'$ is the sum of pullbacks of the $\omega_{\beta,k}$'s to $S_{K_f'}$.  But since $\xi = \charf(K_f g K_f)$ for some $g$, there exists a unique left coset representative $g_{\beta'}$ and a unique element $k'\in K_f$ such that $k g_{\beta} = g_{\beta'}k'$.  Moreover, the map $g_\beta \mapsto g_{\beta'}$ is a permutation of the coset representatives.  Hence 
\[\omega_{\beta,k} = T(k g_{\beta})^* \omega = T(g_{\beta'}k')^*\omega = T(g_{\beta'})^* T(k')^* \omega = T(g_{\beta'})^* \omega = \omega_{\beta'}.\]
Acting by $T(k)^*$ thus permutes the $\omega_{\beta}$'s, so $\omega'$ is invariant. \end{proof}

We will also need the following very similar construction later.  It is none other than the pushforward of a differential form by integration along fibers, but we make it explicit in terms of translation operators.
\begin{lemma}\label{lem:pushforward} Suppose that $K_f' \subseteq K_f$ is any containment of open compact subgroups and $\eta$ is any differential form on $S_{K_f'}$.  Let $p: S_{K_f'} \rightarrow S_{K_f}$ be the natural covering map, and let $K_f = \bigsqcup_{j} h_j K_f'$  be a left coset decomposition.  Then the formula
\begin{equation} \label{eqn:pushforward} p_{*}\eta = \sum_j T(h_j)^* \eta\end{equation}
can be used to define a differential form $p_*\eta$ on $S_{K_f}$.
\end{lemma}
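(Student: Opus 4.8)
The plan is to recognize the asserted formula as the ``integration along the fibers'' (transfer) map for the finite \'etale covering $p$, so that the real content is threefold: to make sense of $\sum_j T(h_j)^*\eta$ (the subtlety being that $\eta$ lives on $S_{K_f'}$, not on the targets $S_{h_j^{-1}K_f'h_j}$ of the translation operators), to check independence of the coset representatives $h_j$, and to check $K_f$-invariance so that the sum descends to $S_{K_f}$. Everything reduces to formal manipulations with the translation operators of Definition \ref{defin:translation} together with the coset-permutation argument already used in the proof of Lemma \ref{lem:pullbacks}.

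First I would fix an open normal subgroup $N \trianglelefteq K_f$ with $N \subseteq K_f'$, for instance the normal core of $K_f'$, which has finite index in $K_f$; then pull $\eta$ back along the \'etale cover $S_N \to S_{K_f'}$ to a form $\eta_N$ on $S_N$. By Galois descent for this cover, $\eta_N$ is characterized by the invariance $T(c)^*\eta_N = \eta_N$ for all $c \in K_f'$ (note $T(c)^*$ is an endomorphism of forms on $S_N$ since $c^{-1}Nc = N$). Because each $h_j$ lies in $K_f$ we have $h_j^{-1}Nh_j = N$, so $T(h_j)$ is an automorphism of $S_N$ and $T(h_j)^*$ is an endomorphism of $\Omega^\bullet(S_N)$, exactly as in Lemma \ref{lem:pullbacks}; I then set $\Theta = \sum_j T(h_j)^*\eta_N$. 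Replacing $h_j$ by $h_j c$ with $c \in K_f'$ gives $T(h_jc)^* = T(h_j)^*\circ T(c)^*$ by the composition law $T(g)\circ T(h) = T(hg)$, and $T(c)^*\eta_N = \eta_N$, so $\Theta$ does not depend on the choice of representatives.

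Next I would verify that $\Theta$ is $K_f$-invariant. For $k \in K_f$ the same composition law gives $T(k)^*\Theta = \sum_j T(kh_j)^*\eta_N$, and since left translation by $k$ permutes the cosets $h_jK_f'$ we may write $kh_j = h_{\sigma(j)}c_j$ with $c_j \in K_f'$ and $\sigma$ a permutation, so that $T(kh_j)^*\eta_N = T(h_{\sigma(j)})^*\big(T(c_j)^*\eta_N\big) = T(h_{\sigma(j)})^*\eta_N$; summing over $j$ recovers $\Theta$. As $S_N \to S_{K_f}$ is \'etale and $\Theta$ is $K_f$-invariant, $\Theta$ descends to a unique form $p_*\eta$ on $S_{K_f}$, and independence of the auxiliary choice of $N$ is immediate since any two admissible subgroups admit a common smaller one and the construction commutes with further pullback. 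Finally, to justify the name, I would observe that $q_j \circ T(h_j) = p$, where $q_j : S_{h_j^{-1}K_f'h_j} \to S_{K_f}$ is the covering induced by the inclusion $h_j^{-1}K_f'h_j \subseteq h_j^{-1}K_f h_j = K_f$; reading this over a chart on which $p$ trivializes shows that $p_*\eta$ is the fiberwise sum over the sheets of $p$, i.e.\ the honest integration-along-fibers pushforward. I do not expect a genuine obstacle here: the construction is entirely formal, and the one step meriting care is the $K_f$-invariance of $\Theta$, which is a verbatim repetition of the coset-permutation argument in the proof of Lemma \ref{lem:pullbacks}.
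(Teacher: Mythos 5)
Your proof is correct and follows essentially the same route as the paper: pass to an auxiliary normal subgroup of $K_f$ contained in $K_f'$, pull back termwise so that each $T(h_j)^*$ makes sense, verify invariance under $K_f$-translations by the coset-permutation argument of Lemma \ref{lem:pullbacks}, and descend uniquely to $S_{K_f}$. The extra checks you include (independence of coset representatives and of the auxiliary subgroup, identification with fiberwise summation) are fine but not a different method.
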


\begin{proof}
We pick $K_f''$ normal in $K_f$ and contained in $K_f'$.  Then the termwise pullback of the right-hand side of (\ref{eqn:pushforward}) gives a well-defined differential form on $S_{K_f''}$, which is invariant by the translations $T(h_j)$.  Then the proof of Lemma \ref{lem:pullbacks} applies to define $p_*\eta$ as the unique differential form on $S_{K_f}$ that pulls back to this form on $S_{K_f''}$.
\end{proof}

\begin{remark}
A more concise definition of the pullback and pushforward can be given by using asymmetric Hecke operators $T_{\xi}, \xi \in C^\infty( K_f \backslash G(\mathbf{A}_f) / K_f')$.  We do not take this approach here.
\end{remark}

We may now give an action on differential forms that does not change the level.

\begin{defin}[Action on differential forms, second form] \label{defin:heckediff}
Using Lemma \ref{lem:pullbacks}, we may define $T_{\xi} \cdot \omega = \widetilde{\omega}$, a differential form on $S_{K_f}$.
\end{defin}

\subsection{Hecke action on pairs of cycles and smooth functions} \label{subsec:pairs}

In this section, we define a Hecke action on the following data.  We assume that $(G,X)$ is a Shimura datum with reflex field $E$ (which for now can be any number field, which we regard as having a fixed embedding in $\ol{\mathbf{Q}} \subseteq \mathbf{C}$) and write $S_{K_f}$ for the Shimura variety associated to a neat open compact $K_f \subseteq G(\mathbf{A}_f)$.  We also let $C$ be an irreducible cycle in $S_{K_f}$ and let $f$ be a smooth function on $C$.

For $g \in G(\A_f)$, define $(C,f) \cdot g = (T(g)(C), f\cdot g)$.  Here $T(g)(C)$ is the cycle on $S_{g^{-1}K_fg}$ given on points by the image of $C$ under $T(g)$ and $f\cdot g$ is defined as the pullback $f \cdot g = T(g^{-1})^*f$.  Note that since $T(g)$ is an isomorphism of varieties defined over $E$, the association $C \mapsto T(g)(C)$ is $\gal(\ol{\mathbf{Q}}/E)$-equivariant and $T(g^{-1})^*f$ is a smooth function on $T(g)(C)$.  Once we define the Hecke action, this will automatically give us the first needed property of $T_\xi$ discussed in Section \ref{subsec:heckeoverview}.

We have covering maps $p_{\alpha}: S_{K_f^\mathrm{R}} \rightarrow S_{ g_{\alpha}^{-1}K_fg_{\alpha}}$ induced by the inclusion $K_f^\mathrm{R} \subseteq g_{\alpha}^{-1}Kg_{\alpha}$, with notation as in (\ref{eqn:xidef}).  We define $(C,f) \cdot T_{\xi} = \sum_{\alpha}{n_{\alpha} p_{\alpha}^{*}\left( (C,f) \cdot g_{\alpha}\right)}$.  Here $p_{\alpha}^*((C',f'))$ means the pair $(p_{\alpha}^{-1}(C'),p_{\alpha}^*f')$.  Thus $(C,f) \cdot T_{\xi}$ lives on $S_{K_f^\mathrm{R}}$.

We have the following stability result concerning the pullback of a cycle.
\begin{lemma}\label{lem:stable}
Let $K_f' \subseteq K_f$ be a normal subgroup and let $p:S_{K_f'} \rightarrow S_{K_f}$ be the corresponding covering.  Then for any $h \in K_f$, $(p^*((C,f)))\cdot h = p^*((C,f))$.
\end{lemma}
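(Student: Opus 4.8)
The claim is that pullback along a normal covering $p : S_{K_f'} \to S_{K_f}$ intertwines the translation action of $K_f$ on the pulled-back pair with the identity, i.e.\ $\bigl(p^*((C,f))\bigr)\cdot h = p^*((C,f))$ for all $h \in K_f$. The first thing I would do is unwind all the definitions. The pair $p^*((C,f))$ is by definition $(p^{-1}(C), p^* f)$ where $p^{-1}(C)$ is the full preimage of $C$ and $p^*f$ is the pullback function. The operation $\cdot\, h$ replaces a pair $(C',f')$ on $S_{K_f'}$ by $(T(h)(C'), f'\cdot h)$ where $f'\cdot h = T(h^{-1})^* f'$; here $T(h) : S_{K_f'} \to S_{h^{-1}K_f'h}$, and since $K_f' \trianglelefteq K_f$ and $h \in K_f$ we have $h^{-1}K_f'h = K_f'$, so $T(h)$ is genuinely an automorphism of $S_{K_f'}$ (this is the only place normality is used, and it is what makes the statement even make sense as written). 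So the content reduces to two separate assertions: (i) $T(h)$ carries $p^{-1}(C)$ to itself as a cycle, and (ii) $T(h^{-1})^*(p^*f) = p^*f$ as a function on $p^{-1}(C)$.

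For (i), the key commutative diagram is $p \circ T(h) = p$ as maps $S_{K_f'} \to S_{K_f}$: on complex points, $T(h)$ sends $G(\mathbf{Q})(x,g)K_f' \mapsto G(\mathbf{Q})(x,gh)K_f'$, and then $p$ sends this to $G(\mathbf{Q})(x,gh)K_f = G(\mathbf{Q})(x,g)K_f$ since $h \in K_f$ — which is exactly $p$ of the original point. (This is essentially the observation, recorded already in Definition~\ref{defin:translation}, that $T(k)$ acts trivially for $k\in K_f$, transported one level down.) Given $p\circ T(h) = p$, the preimage $p^{-1}(C)$ is manifestly $T(h)$-stable: $T(h)(p^{-1}(C)) = (p\circ T(h))^{-1}(C) = p^{-1}(C)$. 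Assertion (ii) then follows formally from the same identity: $T(h^{-1})^*(p^* f) = (p \circ T(h^{-1}))^* f = p^* f$ since $p \circ T(h^{-1}) = p$ as well. Combining, $\bigl(p^*((C,f))\bigr)\cdot h = (T(h)(p^{-1}(C)),\, T(h^{-1})^*(p^*f)) = (p^{-1}(C), p^*f) = p^*((C,f))$, as desired.

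There is essentially no obstacle here; the only thing to be careful about is that the composition conventions for $T$ match up — the excerpt records $T(g)\circ T(h) = T(hg)$ and $T(k) = \mathrm{id}$ for $k \in K_f$, so one must check the direction of the coset translation in $p \circ T(h) = p$ is the right one, but this is immediate from the displayed formula in Definition~\ref{defin:heckecompact}/Definition~\ref{defin:translation} for $T(h)$ on points. I would therefore write the proof as: (1) observe $h^{-1}K_f'h = K_f'$ by normality, so $T(h)$ is an automorphism of $S_{K_f'}$; (2) observe $p\circ T(h) = p$ directly on points; (3) deduce $T(h)$-stability of $p^{-1}(C)$ and $T(h^{-1})^*$-invariance of $p^*f$; (4) conclude. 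The whole argument is three or four lines.
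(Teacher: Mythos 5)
Your proof is correct and is essentially the paper's argument: both rest on the single identity $p \circ T(h) = p$ (equivalently $p\circ T_{K_f'}(h) = T_{K_f}(h)\circ p = p$, since $T_{K_f}(h)$ is the identity for $h\in K_f$), from which stability of the cycle $p^{-1}(C)$ and invariance of $p^*f$ follow formally. The only cosmetic point is that your chain for (i) literally computes $T(h)^{-1}(p^{-1}(C))=(p\circ T(h))^{-1}(C)$ rather than $T(h)(p^{-1}(C))$, but since $p\circ T(h^{-1})=p$ holds as well, the image is likewise $p^{-1}(C)$ and the argument stands.
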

\begin{proof}
We have two translation maps $T(h)=T_{K_f'}(h):S_{K_f'} \rightarrow S_{K_f'}$ and $T(h) = T_{K_f}(h):S_{K_f} \rightarrow S_{K_f}$ associated to $h \in K_f$, with the second map the identity.  It is clear from the definition of $T(h)$ as written in terms of double coset representatives that $p \circ T_{K_f'}(h)=T_{K_f}(h) \circ p=p$.  The needed result follows.
\end{proof}

\begin{defin}
Suppose the irreducible cycle $C$ on $S_{K_f}$ has real dimension $e$.  Given a differential $e$-form $\omega$ on $S_{K_f}$ together with a smooth function $f$ on $C$, we define
\begin{equation} \langle (C,f),\omega \rangle_{K_f} = \mathrm{meas}(K_f) \int_{C}{f \omega} \end{equation}
whenever this integral is absolutely convergent.
\end{defin}

The following lemma is clear from the definitions.
\begin{lemma}
Let $K_f' \subseteq K_f$ and let $p:S_{K_f'} \rightarrow S_{K_f}$ be the covering.  We have $\langle (C,f),\omega \rangle_{K_f} = \langle p^*((C,f)),p^*\omega \rangle_{K_f'}$.
\end{lemma}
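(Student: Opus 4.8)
The plan is to unwind both sides against the two definitions in play: $\langle (C,f),\omega\rangle_{K_f} = \mathrm{meas}(K_f)\int_C f\omega$, and $p^*((C,f)) = (p^{-1}(C),p^*f)$. Once both sides are written out, the identity reduces to the fact that the covering degree $d = [K_f:K_f']$ enters once through the measure and once through integration over the covering, and these two occurrences cancel. The single structural input is that, since $K_f$ is neat, $p\colon S_{K_f'} \to S_{K_f}$ is a finite \'etale covering of degree $d$.

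First I would record that $\mathrm{meas}(K_f) = d\cdot\mathrm{meas}(K_f')$ for whatever Haar measure on $G(\mathbf{A}_f)$ is in force; this is immediate from a left coset decomposition $K_f = \bigsqcup_{j=1}^{d} h_j K_f'$, the same one used in Lemma~\ref{lem:pushforward}. Second I would observe that the base change $p^{-1}(C)\to C$ of the covering $p$ along the closed immersion $C\hookrightarrow S_{K_f}$ is again a finite covering of degree $d$, and that the restriction of $p^*\omega$ to $p^{-1}(C)$ is the pullback of $\omega|_C$ along this restricted map; hence, by the change-of-variables formula for integration over a covering, $\int_{p^{-1}(C)}(p^*f)(p^*\omega) = d\int_C f\omega$ whenever the right-hand side converges absolutely (and absolute convergence on one side is equivalent to that on the other by the same comparison). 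Combining the two steps gives $\langle p^*((C,f)),p^*\omega\rangle_{K_f'} = \mathrm{meas}(K_f')\cdot d\int_C f\omega = \mathrm{meas}(K_f)\int_C f\omega = \langle (C,f),\omega\rangle_{K_f}$, as desired.

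There is no genuine obstacle here; the only point requiring a word of care is that $C$ may be singular, so $C\hookrightarrow S_{K_f}$ and $p^{-1}(C)\to C$ should be read as maps of possibly-singular varieties, with the integral taken over $C^{\mathrm{reg}}$ (whose complement, together with its preimage, has measure zero). Since finite \'etale covers base change, $p^{-1}(C^{\mathrm{reg}})\to C^{\mathrm{reg}}$ is still a degree-$d$ covering, and the degree comparison used above applies verbatim, so the argument goes through without change.
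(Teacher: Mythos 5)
Your proof is correct and is essentially the paper's (omitted) argument: the paper states this lemma with no proof, calling it ``clear from the definitions,'' and your cancellation of $\deg(p)=[K_f:K_f']$ against $\mathrm{meas}(K_f)=[K_f:K_f']\,\mathrm{meas}(K_f')$ is exactly that routine unwinding. The one point you use implicitly is that the covering has \emph{full} degree $[K_f:K_f']$, i.e.\ no two sheets are identified by an element of $Z(\mathbf{Q})\cap K_f$; this is automatic in the paper's setting since neatness forces that (necessarily torsion, as $E$ is imaginary quadratic) intersection to be trivial.
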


We check that the pullback and pushforward maps are adjoint.
\begin{lemma} \label{lem:pbadj} Suppose $K_f' \subseteq K_f$ are two open compact subgroups, $C \subseteq S_{K_f}$ is an irreducible cycle of real dimension $e$, $f$ is a smooth function on $C$, and $\omega$ is a differential $e$-form on $S_{K_f'}$.  Let $p:S_{K_f'} \rightarrow S_{K_f}$ be the covering map.  Then
\[\gen{p^*((C,f)),\omega }_{K_f'} = \frac{1}{[K_f:K_f']}\gen{(C,f),p_*\omega}_{K_f}.\]
\end{lemma}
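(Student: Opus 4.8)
The plan is to unwind both pairings using the definitions of the pullback $p^*$, the pushforward $p_*$, and the integration pairing $\gen{\cdot,\cdot}$, and then to reduce everything to a single identity on a common level where the coset structure can be made explicit. First I would fix a left coset decomposition $K_f = \bigsqcup_j h_j K_f'$ with, say, $m = [K_f:K_f']$ terms, so that by Lemma \ref{lem:pushforward} we have $p_*\omega = \sum_j T(h_j)^*\omega$ on $S_{K_f}$. The right-hand side then expands as $\frac{\mathrm{meas}(K_f)}{m}\int_C f\cdot\sum_j T(h_j)^*\omega$, whereas the left-hand side is $\mathrm{meas}(K_f')\int_{p^{-1}(C)} (p^*f)\,\omega$. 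Since $\mathrm{meas}(K_f) = m\cdot\mathrm{meas}(K_f')$, the measure factors match on both sides, reducing the claim to the geometric identity $\int_{p^{-1}(C)}(p^*f)\,\omega = \int_C f\cdot\sum_j T(h_j)^*\omega$.

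Next I would verify that identity directly. The map $p:S_{K_f'}\to S_{K_f}$ is a finite étale covering of degree $m$, and the preimage $p^{-1}(C)$ is a cycle of the same real dimension $e$ mapping to $C$ with total multiplicity $m$ (counting components with multiplicity, since $K_f'$ is neat one can arrange $p^{-1}(C)$ to be a union of components each mapping isomorphically onto $C$, but even without that the change-of-variables argument goes through). The key point is that the translates $T(h_j):S_{K_f'}\to S_{K_f'}$ permute the sheets of this covering over $C$: more precisely, $p^{-1}(C) = \bigsqcup_j T(h_j)(C_0)$ for a chosen lift $C_0$, at least after possibly refining (if $p^{-1}(C)$ is irreducible one uses a connected covering argument instead, but the displayed formula for $p_*$ in terms of the $T(h_j)$ already encodes exactly the needed combinatorics). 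Pulling back the integral over $p^{-1}(C)$ sheet by sheet via the $T(h_j)$ and using $T(h_j)^*(p^*f) = p^*f$ (because $p\circ T(h_j) = p$, as in the proof of Lemma \ref{lem:stable}) turns $\int_{p^{-1}(C)}(p^*f)\,\omega$ into $\sum_j \int_{C_0}(p^*f)\,T(h_j)^*\omega$, and then pushing down along the isomorphism $p|_{C_0}:C_0\xrightarrow{\sim}C$ rewrites this as $\int_C f\cdot\sum_j T(h_j)^*\omega$, which is what we wanted.

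I would present this more cleanly by first reducing to a normal subgroup: choose $K_f''\subseteq K_f'$ normal in $K_f$, let $q:S_{K_f''}\to S_{K_f'}$ and $r = p\circ q:S_{K_f''}\to S_{K_f}$ be the coverings. By the preceding lemma (compatibility of $\gen{\cdot,\cdot}$ with pullback), $\gen{p^*((C,f)),\omega}_{K_f'} = \gen{r^*((C,f)),q^*\omega}_{K_f''}$ and $\gen{(C,f),p_*\omega}_{K_f} = \gen{r^*((C,f)),r^*(p_*\omega)}_{K_f''}$, and $r^*(p_*\omega) = \sum_j T(h_j)^*(q^*\omega)$ by the construction in Lemma \ref{lem:pushforward}. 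So the whole statement collapses to the Galois-covering case, where $S_{K_f''}\to S_{K_f}$ is a genuine quotient by the finite group $K_f/K_f''$, the fibers over $C$ are a single $K_f/K_f'$-torsor's worth of components of $r^{-1}(C)$, and summing $q^*\omega$ over the $T(h_j)$-translates is literally averaging over those sheets. The main obstacle I anticipate is purely bookkeeping: making the claim ``$T(h_j)$ permutes the components of the covering lying over $C$'' precise when $p^{-1}(C)$ is not a disjoint union of copies of $C$ (e.g. when a single component of $p^{-1}(C)$ already surjects onto $C$ with degree $>1$); I would handle this by passing to the normal subgroup $K_f''$ as above, where the covering $r^{-1}(C)\to C$ becomes Galois and the orbit-counting is unambiguous, and then descend. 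Everything else is the standard fact that integration commutes with finite étale pullback up to the degree, together with $p\circ T(h_j) = p$ to kill the pullback of $f$.
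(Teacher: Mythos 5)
Your proposal is correct, and the cleaned-up version in your final paragraph (pass to a subgroup $K_f''\subseteq K_f'$ normal in $K_f$, rewrite both pairings at level $K_f''$ via the pullback-compatibility lemma, expand $r^*(p_*\omega)=\sum_j T(h_j)^*q^*\omega$, and use the $T(h_j)$-invariance of the pulled-back pair as in Lemma \ref{lem:stable} so that each of the $[K_f:K_f']$ terms contributes equally) is essentially the paper's own proof. The sheet-counting heuristic in your middle paragraph is imprecise as stated, but you correctly identified that and replaced it by the normal-level argument, which is exactly how the paper handles it.
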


\begin{proof}
Fix $h_j$ such that $K_f = \bigsqcup_j h_j K_f'$.  Let $K_f'' \subseteq K_f$ be normal and contained in $K_f'$.  Let $q: S_{K_f''} \rightarrow S_{K_f}$ be the covering map.  Then the left and right-hand sides are respectively equal to $\gen{q^*p^*((C,f)),q^*\omega }_{K_f''}$ and $\frac{1}{[K_f:K_f']}\gen{q^*p^*((C,f)),q^*p^*p_*\omega }_{K_f''}$.  By definition, we have $q^*p^*p_*\omega = \sum_j q_{h_j}^*T(h_j)^*\omega$, where $q_{h_j}: S_{K_f''} \rightarrow S_{h_j^{-1}K_f'h_j}$ is the covering.  We have $T(h_j)\circ q_{h_j} = q \circ T(h_j)$, so we may rewrite $q^*p^*p_*\omega = \sum_j T(h_j)^*q^*\omega$.  Moreover, we have
\[\gen{q^*p^*((C,f)),T(h_j)^*q^*\omega}_{K_f''}=\langle T(h_j^{-1})^*q^*p^*((C,f)),q^*\omega\rangle_{K_f''} = \gen{q^*p^*((C,f)),q^*\omega}_{K_f''}\]
by Lemma \ref{lem:stable}.  The desired equality follows.
\end{proof}

We now show that the Hecke action on the data $(C,f)$ is adjoint to the one considered previously on differential forms.
\begin{lemma} \label{lem:heckeduality} Suppose $C \subseteq S_{K_f}$ is an irreducible cycle of real dimension $e$, $f$ is a smooth function on $C$, and $\omega$ is a differential $e$-form on $S_{K_f}$.  Let $\omega'$ be the pullback of $\omega$ to $S_{K_f^\mathrm{R}}$.  Then $\langle (C,f) \cdot T_{\xi}, \omega' \rangle_{K_f^\mathrm{R}} = \langle (C,f), T_{\xi} \cdot \omega \rangle_{K_f}$. \end{lemma}
\begin{proof} We define
\begin{itemize}
\item $K_f^{\alpha} = g_{\alpha}^{-1}K_fg_{\alpha}$,
\item maps $p_{\alpha}: S_{K_f^\mathrm{R}} \rightarrow S_{K_f^{\alpha}}$, $q_{\alpha}: S_{K_f \cap K_f^\alpha} \rightarrow S_{K_f}$, and $r_{\alpha}: S_{K_f \cap K_f^\alpha} \rightarrow S_{K_f^\alpha}$ induced by inclusions of open compacts, and
\item elements $h_{\alpha,j} \in G(\mathbf{A}_f)$ for $j \in J_\alpha$ chosen such that $K_f^\alpha = \bigsqcup_j h_{\alpha,j} (K_f \cap K_f^\alpha).$
\end{itemize}

We have $\langle (C,f) \cdot T_{\xi}, \omega'\rangle_{K_f^\mathrm{R}} = \sum_{\alpha}{n_{\alpha} I_{\alpha}}$, where $I_{\alpha} = \langle p_{\alpha}^*((C,f) \cdot g_{\alpha}), \omega'\rangle_{K_f^\mathrm{R}}$.  Now
\begin{align*} I_{\alpha} = \langle r_{\alpha}^*\left( (C,f) \cdot g_{\alpha}\right),q_{\alpha}^* \omega \rangle_{K_f \cap K_f^{\alpha}} &= \frac{1}{[K_f^\alpha:K_f^\alpha \cap K_f]}\langle (C,f) \cdot g_{\alpha}, (r_{\alpha})_{*} q_{\alpha}^* \omega\rangle_{K_f^\alpha} \\ &= \frac{1}{[K_f^\alpha:K_f^\alpha \cap K_f]}\langle (C,f), T(g_{\alpha})^* (r_{\alpha})_{*} q_{\alpha}^* \omega\rangle_{K_f}.\end{align*}

We have
\[K_f g_{\alpha} = g_{\alpha}K_f^\alpha = \bigsqcup_j g_{\alpha}h_{\alpha,j} (K_f\cap K_f^{\alpha})\]
and thus
\[\xi = \sum_{\alpha} \sum_{j \in J_\alpha} n_{\alpha} \charf(g_{\alpha}h_{j_{\alpha}} (K_f \cap K_f^{\alpha})).\]
It follows that
\begin{align*} \langle (C,f) \cdot T_{\xi}, \omega' \rangle_{K_f^\mathrm{R}} = \sum_{\alpha}{n_{\alpha}I_{\alpha}} &= \sum_{\alpha}{\frac{n_\alpha}{[K_f^\alpha:K_f^\alpha \cap K_f]} \langle (C,f), T(g_{\alpha})^* (r_\alpha)_* q_{\alpha}^* \omega\rangle_{K_f}}\\ &= \langle (C,f), T_{\xi} \cdot \omega \rangle_{K_f},\end{align*} 
where for the final equality we compare the two sides at an auxilliary level normal in $K_f$ as in the proof of Lemma \ref{lem:pbadj}.\end{proof}

\subsection{Actions on cusps} \label{subsec:heckecusps}

We now return to the case considered in Section \ref{sec:higherchow}, so that the boundary of the Baily-Borel compactification of $S_{K_f}$ is just a finite set of points.  Recall that the cusps $\sigma_{K_f}$ of $S_{K_f}$ are naturally in bijection with the double coset space $G(\Q) \backslash \mathbf{P}(V^0) \times G(\A_f)\slash K_f$, where $\mathbf{P}(V)^0$ denotes the set of isotropic $E$-lines in $V$.  Fixing an isotropic $E$-line $\ell_0$, one obtains $\sigma_{K_f} = B(\Q) \backslash G(\A_f)\slash K_f$, where $B \subseteq G$ denotes the Borel subgroup stabilizing $\ell_0$.  

There is a natural translation action on the cusps $\sigma_{K_f}$.  If $P = G(\Q)(\ell, x)K_f \in \sigma_K$, and $g \in G(\A_f)$, then 
\[T(g)(P) = P \cdot g = G(\Q)(\ell, xg)g^{-1}Kg \in \sigma_{g^{-1}Kg}\]
is a cusp on $S_{g^{-1}Kg}$.

Recall that we have studied a class of cycles $C$ on $S_{K_f}$ in Section \ref{sec:higherchow}.  We can, of course, consider a slightly broader class of cycles by translating the $C$'s there via the maps $T(g)$, but in our final result we will only need these cycles (see Section \ref{subsec:heckenonzero}).  (To simplify the notation, we will simply write $C$ for the cycle rather than $C_{W,K_{W,f}}$.)  Write $\iota_C^\mathrm{min}: \ol{C} \rightarrow S_{K_f}^\mathrm{min}$ for the map of minimal compactifications and write $\sigma_C$ for the cusps of $C$.  The obvious map of double cosets gives the map $\iota_C^\mathrm{min}|_{\sigma_C}: \sigma_C \rightarrow \sigma_{K_f}$.  This map is not necessarily injective in general.

We write $\mathbf{Z}[\sigma_{K_f}]$ and $\mathbf{Z}[\sigma_C]$ for the free abelian groups generated by the elements of $\sigma_{K_f}$ and $\sigma_C$, respectively.  We write $\iota_C: \mathbf{Z}[\sigma_C] \rightarrow \mathbf{Z}[\sigma_{K_f}]$ for the map induced by $\iota_C^\mathrm{min}|_{\sigma_C}$.  By abuse of notation we use the same letter to denote either a point of $\sigma_{K_f}$ or the corresponding generator of $\mathbf{Z}[\sigma_{K_f}]$ (and similarly for $\sigma_C$).  The action of $T_{\xi}$ on $\Z[\sigma_K]$ may be defined just as we defined the action on $(C,f)$ above.  More precisely, for $P \in \sigma_{K_f}$, we define $P \cdot T_{\xi} \in \Z[\sigma_{K_f^\mathrm{R}}]$ by
\[P \cdot T_{\xi} = \sum_{\alpha}{n_{\alpha} p_{\alpha}^{-1}(T(g_{\alpha})(P))}.\]
Here, we are using the notations of (\ref{eqn:xidef}) and defining $p_{\alpha}: S_{K_f^\mathrm{R}} \rightarrow S_{K_f^\alpha}$ to be the covering associated to $K_f^\mathrm{R}\subseteq K_f^\alpha$.  In particular, we obtain a map that we denote $T_\xi^{\partial\mathrm{-div}}: \mathbf{Z}[\sigma_{K_f}] \rightarrow \mathbf{Z}[\sigma_{K_f^\mathrm{R}}]$.

We write $\mathbf{CD}_{K_f}$ for the set of $\mathbf{Z}$-linear formal sums of pairs $(C,u)$ such that $C$ is one of the aforementioned cycles on $S_{K_f}$ and $u$ is a rational function on $\ol{C}$ with zeros and poles only at the boundary.  Given a rational function $u$ on $C$, we obtain as the image of the divisor of $u$ under $\iota_C$ the image divisor $\mathrm{div}((C,u)) \in \mathbf{Z}[\sigma_{K_f}]$.  In particular, we obtain a map
\[\mathbf{div}_{K_f}: \mathbf{CD}_{K_f} \rightarrow \mathbf{Z}[\sigma_{K_f}].\]
Our left action of the Hecke operator $T_\xi$ is a morphism we denote $T_\xi^\mathbf{CD}: \mathbf{CD}_{K_f} \rightarrow \mathbf{CD}_{K_f^\mathrm{R}}$.  It follows formally from writing the definitions in terms of double cosets that the Hecke action and boundary maps are compatible in the sense that
\begin{equation}\label{eqn:compat} T_\xi^{\partial\textrm{-}\mathrm{div}} \circ \mathbf{div}_{K_f} = \mathbf{div}_{K_f^\mathrm{R}} \circ T_\xi^\mathbf{CD}.\end{equation}

The action of $T_\xi$ on $\mathbf{Z}[\sigma_{K_f}]$ is well-behaved in the following sense.
\begin{lem}
Suppose that $K_f' \subseteq K_f^\mathrm{R}$ is normal in $K_f$.  Then for each $P \in \sigma_{K_f}$, the pullback of $P \cdot T_\xi$ to $\sigma_{K_f'}$ is $T(h)$-invariant for every $h \in K_f$.  It follows that $P \cdot T_\xi$ is the unique pullback of an element of $\mathbf{Z}[\sigma_{K_f}]$.  In particular, we may define an action $T_\xi^{\partial\mathrm{-div}}:\mathbf{Z}[\sigma_{K_f}] \rightarrow \mathbf{Z}[\sigma_{K_f}]$.
\end{lem}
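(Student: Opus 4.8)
The plan is to follow the proof of Lemma~\ref{lem:pullbacks} essentially verbatim, replacing differential forms by formal sums of cusps. By $\mathbf{Z}$-linearity in $\xi$ it suffices to treat $\xi = \charf(K_f g K_f)$, so that in the notation of (\ref{eqn:xidef}) all $n_\alpha = 1$, the $g_\alpha$ represent $K_f \backslash (K_f g K_f)$, and, writing $K_f^\alpha = g_\alpha^{-1}K_f g_\alpha$, one has $K_f^\mathrm{R} = K_f \cap \bigcap_\alpha K_f^\alpha$ and $P \cdot T_\xi = \sum_\alpha p_\alpha^{-1}(T(g_\alpha)(P))$, with $p_\alpha : S_{K_f^\mathrm{R}} \to S_{K_f^\alpha}$ the covering. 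Since $K_f' \subseteq K_f^\mathrm{R} \subseteq K_f^\alpha$, composing $p_\alpha$ with $S_{K_f'} \to S_{K_f^\mathrm{R}}$ gives a covering $q_\alpha : S_{K_f'} \to S_{K_f^\alpha}$; because taking preimages is compatible with composition of coverings, the pullback $D \in \mathbf{Z}[\sigma_{K_f'}]$ of $P \cdot T_\xi$ equals $\sum_\alpha q_\alpha^{-1}(T(g_\alpha)(P))$.

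Next I would fix $h \in K_f$ and analyze the permutation it induces. Right multiplication by $h$ preserves $K_f g K_f$ and permutes the cosets $K_f g_\alpha$; writing $g_\alpha h = \kappa_\alpha g_{\alpha'}$ with $\kappa_\alpha \in K_f$, the map $\alpha \mapsto \alpha'$ is a permutation and $h^{-1}K_f^\alpha h = (g_\alpha h)^{-1}K_f(g_\alpha h) = K_f^{\alpha'}$. A short computation with Definition~\ref{defin:translation}, using as in the proof of Lemma~\ref{lem:stable} that the coverings intertwine translation operators, then gives $q_{\alpha'} \circ T_{K_f'}(h) = T_{K_f^\alpha}(h) \circ q_\alpha$ as maps $S_{K_f'} \to S_{K_f^{\alpha'}}$, while $T_{K_f^\alpha}(h)(T(g_\alpha)(P)) = T(g_{\alpha'})(P)$ because $T(g_{\alpha'})(P)$ is independent of the chosen representative in $K_f g_{\alpha'}$ and $x g_\alpha h = (x\kappa_\alpha)g_{\alpha'}$. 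Combining these, $T_{K_f'}(h)$ carries the fiber $q_\alpha^{-1}(T(g_\alpha)(P))$ into $q_{\alpha'}^{-1}(T(g_{\alpha'})(P))$; since these two fibers have the same size (as $K_f'$ is normal in $K_f$, $[K_f^{\alpha'}:K_f'] = [K_f^\alpha:K_f']$) and $T_{K_f'}(h)$ is a bijection, it restricts to a bijection between them. Summing over $\alpha$ and reindexing by $\alpha \mapsto \alpha'$ yields $T_{K_f'}(h)(D) = D$ for every $h \in K_f$.

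Finally, as in the proof of Lemma~\ref{lem:pullbacks}, the covering $S_{K_f'} \to S_{K_f}$ is étale with the translations $T_{K_f'}(h)$, $h \in K_f$, acting as its deck transformations (because $K_f'$ is normal in $K_f$), so a formal sum of cusps on $S_{K_f'}$ invariant under all of them is the pullback of a unique element of $\mathbf{Z}[\sigma_{K_f}]$; assigning this element to $P$ defines the value of $T_\xi^{\partial\mathrm{-div}}$ on $P$ and extends $\mathbf{Z}$-linearly to all of $\mathbf{Z}[\sigma_{K_f}]$. I expect the only real obstacle to be bookkeeping: verifying the intertwining identity $q_{\alpha'} \circ T_{K_f'}(h) = T_{K_f^\alpha}(h) \circ q_\alpha$ and the representative-independence of $T(g_{\alpha'})(P)$ while keeping the four families of open compacts ($K_f$, $K_f'$, $K_f^\mathrm{R}$, and the $K_f^\alpha$) straight — conceptually everything is forced exactly as in Lemma~\ref{lem:pullbacks}.
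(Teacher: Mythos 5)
Your proposal is correct and follows essentially the same route as the paper's proof: reduce to $\xi$ the characteristic function of a single double coset, observe that right multiplication by $h\in K_f$ permutes the right-coset representatives via $g_\alpha h=\kappa_\alpha g_{\alpha'}$, use the intertwining of translation operators with the preimage maps (as in Lemma \ref{lem:stable}) to see that $T(h)$ just permutes the summands $q_\alpha^{-1}(T(g_\alpha)(P))$, and then descend to $\mathbf{Z}[\sigma_{K_f}]$ by uniqueness of pullback along the \'etale covering.  One cosmetic point: the fibers of the cusp coverings need not have cardinality $[K_f^{\alpha}:K_f']$ (stabilizers in $B(\mathbf{Q})$ can shrink them), so your equal-index justification of the fiber bijection is imprecise; but the bijection follows directly from your intertwining identity applied to both $h$ and $h^{-1}$ (equivalently, $T_{K_f'}(h)\bigl(q_\alpha^{-1}(Q)\bigr)=q_{\alpha'}^{-1}\bigl(T_{K_f^\alpha}(h)(Q)\bigr)$, which is exactly the commutation the paper asserts), so nothing essential is missing.
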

\begin{proof}
Without loss of generality, assume that $\xi$ is the characteristic function of a single double coset operator, so that the $n_\alpha=1$ in (\ref{eqn:xidef}).  By definition, the pullback is the sum
\[\sum_{\alpha} q^{-1} \circ p_{\alpha}^{-1}(T(g_{\alpha})(P)),\]
where $q: S_{K_f'} \rightarrow S_{K_f^\mathrm{R}}$ is the covering.  (These preimage maps are well-defined on the divisor groups.)  As an action on double cosets, the map $T(h)$ is represented by right-multiplication of the $G(\mathbf{A}_f)$ factor.  We write $\alpha_h$ for the index of the unique representative $g_{\alpha_h}$ such that $g_\alpha h_\alpha = kg_{\alpha_h}$ for $k \in K_f$.  Then $T(h)T(g_\alpha) =T(g_{\alpha_h})$ as translations on $S_{K_f}$.  The pullback maps $q^{-1}$ and $p_\alpha^{-1}$ correspond to taking full preimages.  It follows from the definition on coset representatives as in Lemma \ref{lem:stable} that $T(h)$ commutes with the pullback maps in the sense that $T(h) \circ q^{-1} \circ p_\alpha^{-1} = q^{-1} \circ p_{\alpha_h}^{-1} \circ T(h)$.  In particular, the action of $T(h)$ merely permutes the elements of the above sum.
\end{proof}

We now would like to understand the action of $T_\xi$ on $\mathbf{Z}[\sigma_{K_f}]$ in a computable form.  There is an identification
\begin{equation} \label{eqn:dual} \mathbf{Z}[\sigma_{K_f}] \otimes_\mathbf{Z} \mathbf{C} \iso \mathrm{Maps}(\sigma_{K_f},\C)\end{equation}
obtained by defining the image of $P$ to be its characteristic function.  More importantly, these spaces are naturally dual via the obvious pairing, so as one might expect, there is a natural left action of a Hecke operator $T_\xi$, $\xi \in C_{c}^\infty\left(K_f \backslash G(\Q_p) \slash K_f,\mathbf{C}\right)$ on $\Psi \in \mathrm{Maps}(\sigma_{K_f},\C)$.  Such an action was already given in Definition \ref{defin:heckeleftfunc} by
\[(T_\xi \cdot \Psi)(x) = \int_{G(\mathbf{A}_f)} \Psi(xg)\xi(g)dg.\]
If we write $\langle \cdot,\cdot \rangle$ for the natural pairing
\[ \langle \cdot, \cdot \rangle: \mathbf{C}[\sigma_{K_f}] \times \mathrm{Maps}(\sigma_{K_f},\C) \rightarrow \mathbf{C},\]
then the right action of $T_\xi$ on the first factor is the adjoint of the left action of $T_{\xi^\vee}$ on the right factor.  This can be seen from the definition by using the inverses of the representatives $g_\alpha$ used in the right coset decomposition of $\xi$ to give a left coset decomposition for $\xi^\vee$; it is then clear that $T_\xi$ goes to $T_{\xi^\vee}$ under the identification (\ref{eqn:dual}), from which the result follows by symmetry of the induced pairing on $\mathbf{C}[\sigma_{K_f}]  \times \mathbf{C}[\sigma_{K_f}]$.   If we determine a Hecke operator $T_{\xi}$ that annihilates $\mathrm{Maps}(\sigma_{K_f},\C)$, it follows that the Hecke operator $T_{\xi^\vee}$ annihilates $\mathbf{C}[\sigma_{K_f}]$.

We have the following lemma.
\begin{lemma} \label{lem:cuspsind}
Denote by $K_f' = \cap_{k \in K_\mathrm{max}}{k K_f k^{-1}}$ the largest normal subgroup of $K_\mathrm{max}$ contained in $K_f$.  Here, $K_\mathrm{max}$ is the maximal compact $K_\mathrm{max} \subseteq G(\A_f)$ defined in Section \ref{subsec:shimura}.  Then there are natural identifications between the spaces
\begin{enumerate}
  \item $\mathrm{Maps}(\sigma_{K_f},\C)$,
  \item $\mathrm{Maps}(B(\Q)U_{B}(\A_f) \backslash G(\A_f) \slash K,\C)$, where $U_B$ is the unipotent radical of $B$, and
  \item the direct sum of the (finitely many) induced spaces $\bigoplus_{\chi}{\mathrm{Ind}_{B(\A_f)}^{G(\A_f)}(\chi)^{K_f}}$, where $\chi$ runs over the characters $T(\Q)\backslash T(\A_f)/K_{T} \rightarrow \C^\times$, where $K_T = K_f' \cap T(\A_f)$.
\end{enumerate} \end{lemma}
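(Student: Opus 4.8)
The plan is to prove the identifications $(1)\cong(2)$ and $(2)\cong(3)$ separately, working throughout with the description $\sigma_{K_f}=B(\Q)\backslash G(\A_f)/K_f$ recalled above and with the Levi decomposition $B=TU_B$, so that $B(\Q)U_B(\A_f)=T(\Q)U_B(\A_f)$.

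For $(1)\cong(2)$ it suffices to show that the natural surjection $B(\Q)\backslash G(\A_f)/K_f\to B(\Q)U_B(\A_f)\backslash G(\A_f)/K_f$ is a bijection; since $B(\Q)U_B(\A_f)=T(\Q)U_B(\A_f)$ and $U_B(\Q)\subseteq U_B(\A_f)$, this amounts to the equality $U_B(\A_f)gK_f=U_B(\Q)gK_f$ for every $g\in G(\A_f)$. I would obtain this from the approximation property of the unipotent radical: for any open subgroup $H\subseteq U_B(\A_f)$ one has $U_B(\A_f)=U_B(\Q)H$, which is reduced by induction along the lower central series of $U_B$ to the case of a vector group, where it is the elementary statement $\A_f=\Q+N\widehat{\Z}$. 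Applying this with $H=U_B(\A_f)\cap gK_fg^{-1}$, which is open because $gK_fg^{-1}$ is, gives $U_B(\A_f)gK_f=U_B(\Q)\bigl(U_B(\A_f)\cap gK_fg^{-1}\bigr)gK_f=U_B(\Q)gK_f$; passing to $\C$-valued functions then identifies (1) with (2).

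For $(2)\cong(3)$, observe that (2) is the space of functions on $G(\A_f)$ that are left $T(\Q)U_B(\A_f)$-invariant and right $K_f$-invariant, and that $T(\A_f)$ acts on this space by left translation --- well defined because $T(\A_f)$ normalizes $U_B(\A_f)$ and, being commutative, centralizes $T(\Q)$ and commutes with right translation by $K_f$. The key assertion is that this action factors through the finite abelian group $T(\Q)\backslash T(\A_f)/K_T$ (finite because $K_T$ is open and $T(\Q)\backslash T(\A_f)$ is compact). Granting this, decomposing (2) into eigenspaces for that finite group and unwinding the definitions shows that the $\chi$-eigenspace is exactly $\mathrm{Ind}_{B(\A_f)}^{G(\A_f)}(\chi^{-1})^{K_f}$, where $\chi^{-1}$ is inflated to $B(\A_f)$ trivially on $U_B(\A_f)$; summing over the characters and re-indexing $\chi\leftrightarrow\chi^{-1}$ produces (3). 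Since left translation commutes with the Hecke action by right translation, the resulting isomorphism is Hecke-equivariant.

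The main obstacle is the claim that the $T(\A_f)$-action factors through $K_T$, i.e.\ that $f(t^{-1}g)=f(g)$ for every $t\in K_T$, $f\in(2)$ and $g\in G(\A_f)$; this is not formal, since $f$ is only \emph{right} $K_f$-invariant. The plan is to use the Iwasawa decomposition $G(\A_f)=B(\A_f)K_\mathrm{max}$, valid place by place for the special maximal compact (see \cite[Proposition 9.1.1]{hro}), to write $g=t_0u_0\kappa$ with $t_0\in T(\A_f)$, $u_0\in U_B(\A_f)$, $\kappa\in K_\mathrm{max}$. Then $t^{-1}g=t_0u_0'\,t^{-1}\kappa$ for some $u_0'\in U_B(\A_f)$, and because $K_T\subseteq K_f'$ with $K_f'$ normal in $K_\mathrm{max}$ one has $\kappa^{-1}t^{-1}\kappa\in K_f'\subseteq K_f$, hence $t^{-1}\kappa\in\kappa K_f$; from this one reads off that $t^{-1}g$ and $g$ lie in the same left $T(\Q)U_B(\A_f)$, right $K_f$ double coset, which gives the invariance. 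The remaining steps --- the induction establishing the unipotent approximation, the identification of the eigenspaces with the induced representations, and the compatibility of everything with the Hecke action and with changing $K_f$ --- are routine.
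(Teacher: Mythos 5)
Your proposal is correct and follows essentially the same route as the paper: strong approximation for the unipotent radical $U_B$ gives the identification of (1) with (2), and the key input for (2) with (3) is exactly the paper's computation that for $t\in K_T$ and $\kappa\in K_\mathrm{max}$ one has $\kappa^{-1}t\kappa\in K_f'\subseteq K_f$ (normality of $K_f'$ in $K_\mathrm{max}$ plus the Iwasawa decomposition). The only cosmetic difference is direction: you decompose the function space (2) into eigenspaces for the left $T(\A_f)$-action after showing it factors through $T(\Q)\backslash T(\A_f)/K_T$, whereas the paper starts from a $K_f$-fixed section of $\mathrm{Ind}_{B(\A_f)}^{G(\A_f)}(\chi)$ and shows $\chi$ must be trivial on $K_T$ — the same computation read in the opposite direction.
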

\begin{proof} The equivalence of the first and second spaces follows from an approximation argument: every function $B(\Q)\backslash G(\A_f) \slash K \rightarrow \C$ is automatically left-invariant under $U_{B}(\A_f)$.

The equivalence of the second and third spaces is essentially the definition of the induced space.  More precisely, if $f$ is a nonzero section in $\mathrm{Ind}_{B(\A_f)}^{G(\A_f)}(\chi)$ for some character $\chi: T(\Q) \backslash T(\A_f) \rightarrow \C^\times$, and if $f$ is right-invariant under the open compact subgroup $K_f$, then $\chi$ is trivial on $K_T = T(\A_f)\cap K_f'$.  Indeed, if $k \in K_{T}$ and $h \in K_\mathrm{max}$ is such that $f(h) \neq 0$ -- such a $h$ always exists by the Iwasawa decomposition -- then
\[\chi(k)f(h) = f(kh) = f(h (h^{-1}kh)) = f(h)\]
since $K_f'$ is normal in $K_\mathrm{max}$.

Finally, the direct sum in the third space is finite since $T(\Q) \backslash T(\A_f) \slash K_{T}$ is a finite abelian group.\end{proof}

The following lemma can be used to show how to achieve the second needed property of $T_\xi$ discussed in Section \ref{subsec:heckeoverview}.
\begin{lemma} \label{lem:annihilate}
Suppose that the open compact subgroup $K_f$ contains $K_f(N)$, which we define to be the subgroup of $G(\A_f)$ consisting of matrices congruent to $1$ modulo $N$.  Let $p$ be a prime congruent to 1 modulo $N$, and let $T_\xi$ for $\xi = \charf{K_f^p} \otimes \xi_p$, $\xi_p \in C_{c}^\infty\left(K_{p,\mathrm{max}} \backslash G(\Q_p) \slash K_{p,\mathrm{max}},\mathbf{C}\right)$, be a non-zero Hecke operator at $p$.  (We write $K_f^p$ for the part of $K_f$ away from $p$.)  Then $T_\xi - \mu(\xi)$ annihilates the space of maps $B(\Q)\backslash G(\A_f)\slash K \rightarrow \C$, where $\mu(\xi) \in \C$ is defined by
\[\mu(\xi) = \int_{G(\Q_p)}{\xi(g)\,dg}.\]
\end{lemma}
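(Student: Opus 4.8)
The plan is to pass to the spectral description of cusp functions furnished by Lemma \ref{lem:cuspsind} and compute the eigenvalue of $T_\xi$ on each isotypic piece. Write $\xi = \charf(K_f^p)\otimes\xi_p$; since $\charf(K_f^p)$ induces the identity operator away from $p$, $T_\xi$ acts on $\mathrm{Maps}(B(\mathbf{Q})\backslash G(\mathbf{A}_f)/K_f,\mathbf{C}) \cong \bigoplus_\chi \mathrm{Ind}_{B(\mathbf{A}_f)}^{G(\mathbf{A}_f)}(\chi)^{K_f}$ through the local operator $T_{\xi_p}$ at $p$ alone. Because $p \equiv 1 \pmod N$ forces $p \nmid N$, the local component of $K_f$ at $p$ is the full $K_{p,\mathrm{max}} = G(\mathbf{Z}_p)$, so a $\chi$ can occur only if $\chi_p$ is unramified, in which case $\mathrm{Ind}_{B(\mathbf{Q}_p)}^{G(\mathbf{Q}_p)}(\chi_p)^{K_{p,\mathrm{max}}}$ is one-dimensional, spanned by the unnormalized-induction spherical vector $\phi_{\chi_p}$ with $\phi_{\chi_p}(1)=1$. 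Thus $T_\xi$ acts on the $\chi$-part by the scalar $\int_{G(\mathbf{Q}_p)}\phi_{\chi_p}(g)\xi_p(g)\,dg$; writing $\xi_p = \sum_\beta \charf(g_{p,\beta}K_{p,\mathrm{max}})$ and using the Iwasawa decomposition $G(\mathbf{Q}_p) = U_B(\mathbf{Q}_p)T(\mathbf{Q}_p)K_{p,\mathrm{max}}$ together with $\phi_{\chi_p}(utk) = \chi_p(t)$, this scalar equals $\sum_\beta \chi_p(t_\beta)$, where $t_\beta \in T(\mathbf{Q}_p)$ is the torus part of $g_{p,\beta}$. In particular the scalar is $\sum_\beta 1 = \mu(\xi)$ as soon as $\chi_p$ is trivial, so everything reduces to showing that every character $\chi$ of $T(\mathbf{Q})\backslash T(\mathbf{A}_f)/K_T$ which occurs has $\chi_p$ trivial.

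To prove this, recall (Lemma \ref{lem:obstruction} and Section \ref{subsec:shimura}) that the diagonal torus is $T \cong (\mathrm{Res}_{\mathbf{Q}}^{E}\mathbf{G}_m)^2$, so $\chi$ is a pair of Hecke characters of $E$, and, since $K_f(N)$ is normal in $K_{\mathrm{max}}$ and contained in $K_f$, one has $K_T \supseteq K_f(N)\cap T(\mathbf{A}_f)$; hence $\chi$ is trivial on $T(\mathbf{Q})$ and on the mod-$N$ principal congruence subgroup of $T(\widehat{\mathbf{Z}})$. Take $p$ inert in $E$ (the choice made in Section \ref{subsec:heckenonzero}); then $\mathfrak{p} = p\mathfrak{O}_E$ is principal, generated by $p \in \mathbf{Q}^\times \subseteq E^\times$, and since $p \equiv 1 \pmod N$ the element $p$ is $\equiv 1$ modulo $N\mathfrak{O}_E$ at every place away from $p$. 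Consequently, for each generator of $T(\mathbf{Q}_p)/T(\mathbf{Z}_p)$ — a uniformizer of $E_p$ in one of the two $\mathrm{Res}_{\mathbf{Q}}^{E}\mathbf{G}_m$-factors — the idele supported at $p$ and equal to $1$ elsewhere differs from a global element of $T(\mathbf{Q})$ by an element of $K_f(N)\cap T(\mathbf{A}_f) \subseteq K_T$; as $\chi$ kills both, it kills this generator. Thus $\chi_p$ is trivial on $T(\mathbf{Q}_p)/T(\mathbf{Z}_p)$, and being also unramified it is trivial, which completes the argument.

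The main obstacle is precisely this last step: one must use $p \equiv 1 \pmod N$ to trivialize $\chi_p$, and the clean argument really needs $p$ inert, since for $p$ split in $E$ the ideles attached to the separate primes $\mathfrak{p},\bar{\mathfrak{p}}$ above $p$ need not individually be globally principal modulo $N$ (only their product is), so in practice one restricts to inert $p$ when invoking the lemma. A secondary, purely bookkeeping point is to fix the normalization of the induced representations and the Haar measures so that the spherical eigenvalue of $\charf(K_{p,\mathrm{max}}\tau K_{p,\mathrm{max}})$ on the constant vector comes out to be exactly $\mu(\xi) = \int_{G(\mathbf{Q}_p)}\xi_p(g)\,dg$ and not a $\delta_B$- or Weyl-twisted variant.
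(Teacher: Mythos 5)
Your argument is the same as the paper's: decompose the boundary functions via Lemma \ref{lem:cuspsind}, use $p\nmid N$ to see that the local component of $K_f$ at $p$ is $K_{p,\mathrm{max}}$ so that $T_\xi$ acts on each constituent $\mathrm{Ind}_{B(\A_f)}^{G(\A_f)}(\chi)^{K_f}$ through the unramified situation at $p$, and identify the eigenvalue as $\sum_\beta \chi_p(t_\beta)$ over the torus parts of left coset representatives; the paper records only the evaluation $(T_\xi f)(1)=\sum_\alpha\chi(t_\alpha)f(1)$ and then asserts $\chi(t_\alpha)=1$ because $p\equiv 1\pmod N$. Your normalization worry is moot, since the induction in Lemma \ref{lem:cuspsind} is unnormalized, and your inert-case trivialization of $\chi_p$ (compare the idele concentrated at $p$ with the principal idele $p$ and absorb the discrepancy into the level-$N$ congruence subgroup of $T(\widehat{\Z})$, using $p\equiv 1\pmod N$) supplies exactly the justification the paper leaves implicit; for inert $p$ your write-up is correct and, if anything, more careful.

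As a proof of the lemma as stated, however, there is a gap, and it is the one you flagged: the statement carries no inertness hypothesis, and the paper does not restrict to inert $p$ when invoking the lemma (in the proof of Theorem \ref{thm:geom} the prime is only required to be unramified for the data; the remark at the end of Section \ref{subsec:heckenonzero} assumes $p$ inert for an unrelated purpose, so you cannot lean on ``the choice made there''). At a split $p$ your obstruction is genuine: the $\chi$ occurring are pairs of ray class characters of $E$ of conductor dividing $N$, the $t_\beta$ realize cocharacters that need not be invariant under interchanging the two places of $E$ above $p$ (e.g.\ the class of $\diag(p,1,1)$ under $G(\Q_p)\cong\GL_3(\Q_p)\times\mathbf{G}_m(\Q_p)$), and $p\equiv 1\pmod N$ only trivializes $\chi$ on the class of $\mathfrak{p}\ol{\mathfrak{p}}=(p)$, not on $[\mathfrak{p}]$ itself: for $E=\Q(i)$, $N=16$, $p=17$, no generator of $(4+i)$ is congruent to $1$ modulo $16$, so some ray class character modulo $16$ is nontrivial at $\mathfrak{p}$ even though $17\equiv 1\pmod{16}$, and for such $\chi$ and suitable $\xi_p$ the eigenvalue $\sum_\beta\chi_p(t_\beta)$ differs from $\mu(\xi)$. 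So what you have actually proved is the lemma under the additional hypothesis that $p$ is inert (equivalently, under the stronger congruence that every prime of $E$ above $p$ has a generator $\equiv 1\pmod{N\mathfrak{O}_E}$, which also covers suitable split $p$); note moreover that inert primes $\equiv 1\pmod N$ need not exist, since $D\mid N$ forces $E\subseteq\Q(\zeta_N)$ and then every such $p$ splits. In short, your proof coincides with the paper's method and is complete in the inert case, but it does not establish the statement in its stated generality, and the step you could not justify is precisely the one-sentence assertion in the paper's own proof.
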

\begin{proof}
By Lemma \ref{lem:cuspsind}, we must check that $T_\xi - \mu(\xi)$ annihilates $\mathrm{Ind}_{B(\A_f)}^{G(\A_f)}(\chi)^{K}$.  Since $p$ does not divide $N$, $K_p = K_{p,\mathrm{max}}$.  Thus it suffices to assume that $\xi$ is the characteristic function of a single double coset of the form
\[K_{p,\mathrm{max}} t K_{p,\mathrm{max}} = \bigsqcup_{\alpha} u_{\alpha}t_{\alpha}K_{p,\mathrm{max}},\]
where $u_\alpha t_\alpha$ are respectively in $U(\mathbf{Q}_p)$ and $T(\mathbf{Q}_p)$.

If $f \in \mathrm{Ind}_{B(\A_f)}^{G(\A_f)}(\chi)^{K}$, then $(T_\xi\cdot f)(1) = \sum_{\alpha}{\chi(t_\alpha) f(1)}$.  Since $p$ is $1$ modulo $N$, $\chi(t_\alpha) = 1$.  Thus, $(T_\xi\cdot f)(1) = \left(\sum_{\alpha}{1}\right)f(1) = \mu(\xi) f(1)$, and the lemma follows. \end{proof}

\subsection{A nonvanishing argument} \label{subsec:heckenonzero}

We end the section by showing how to construct $T_\xi$ so that the effect of $T_\xi - \mu(\xi)$ upon the Rankin-Selberg integral is by multiplication by a nonzero element of $\ol{\mathbf{Q}}^\times$.  Let $p$ be a prime so that the finite data $\varphi_f$ used to define the differential form $\omega_{\varphi_f,\alpha}$ considered in Section \ref{subsec:diff} is spherical at the prime $p$, and so that the level subgroup $K_f$ is maximal at $p$.  We consider only Hecke operators of the form $T_\xi - \mu(\xi)$ considered in Lemma \ref{lem:annihilate}.  By Lemma \ref{lem:heckeduality}, such a Hecke operator can be interpreted on the one hand as providing a cycle whose boundary divisor along the minimal compactification is trivial (due to Lemma \ref{lem:annihilate}), and on the other, since $\phi_p$ is spherical, by the \emph{scalar} action of $T_\xi - \mu(\xi)$ on the vector $\phi_p$.  It remains to show that this scalar can be made nonzero.  There is a very simple proof that such a $\xi$ exists.
\begin{lem} \label{lem:heckenonzero}
Suppose that $\pi_p$ is not the trivial representation, and that $\varphi_f$ is spherical at $p$.  There exists an $\mathbf{Q}$-valued $\xi$ as in Lemma \ref{lem:annihilate} such that $T_\xi - \mu(\xi)$ acts nontrivially on $\varphi_f$.
\end{lem}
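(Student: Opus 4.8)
The plan is to diagonalize the Hecke action at $p$ via the Satake isomorphism and to observe that $\mu(\xi)$ is precisely the Hecke eigenvalue of the trivial representation, so that the nontriviality we want is literally the assertion that the spherical Hecke characters of $\pi_p$ and of $\mathbf{1}_{G(\mathbf{Q}_p)}$ differ. Write $\mathcal{H}_p = C_c^\infty\!\left(K_{p,\mathrm{max}} \backslash G(\mathbf{Q}_p) / K_{p,\mathrm{max}},\mathbf{C}\right)$ for the spherical Hecke algebra and $\lambda_{\pi_p}\colon \mathcal{H}_p \to \mathbf{C}$ for the character describing the action of $T_{\xi_p}$ on the one-dimensional spherical line $\pi_p^{K_{p,\mathrm{max}}}$. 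Since $\varphi_f$ is spherical at $p$ and $K_f$ is maximal at $p$, the operator $T_\xi$ attached to $\xi = \charf(K_f^p) \otimes \xi_p$ acts on $\varphi_f$ by the scalar $\lambda_{\pi_p}(\xi_p)$ (the factor $\charf(K_f^p)$ acting as the identity on the prime-to-$p$ part, with the normalizations of Definition \ref{defin:heckeleftfunc}). On the other hand, from its definition $\mu(\xi) = \int_{G(\mathbf{Q}_p)} \xi_p(g)\,dg$ is exactly the scalar by which $T_{\xi_p}$ acts on the constant (spherical) vector of $\mathbf{1}_{G(\mathbf{Q}_p)}$; call this character $\lambda_{\mathbf{1}}$. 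Hence $T_\xi - \mu(\xi)$ acts on $\varphi_f$ by $(\lambda_{\pi_p} - \lambda_{\mathbf{1}})(\xi_p)$, and it suffices to produce a $\mathbf{Q}$-valued $\xi_p \in \mathcal{H}_p$ on which this functional does not vanish.

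Next I would invoke the Satake isomorphism (\cite{cartier}, \cite{hro}): distinct isomorphism classes of irreducible $K_{p,\mathrm{max}}$-spherical representations of $G(\mathbf{Q}_p)$ are distinguished by the characters through which $\mathcal{H}_p$ acts on their spherical lines. As $\pi_p$ is not the trivial representation by hypothesis, $\lambda_{\pi_p} \ne \lambda_{\mathbf{1}}$, so $\lambda_{\pi_p} - \lambda_{\mathbf{1}}$ is a nonzero $\mathbf{C}$-linear functional on $\mathcal{H}_p$. Since the characteristic functions of the double cosets $K_{p,\mathrm{max}}\,g\,K_{p,\mathrm{max}}$ simultaneously form a $\mathbf{Q}$-basis of the rational Hecke algebra $\mathcal{H}_{p,\mathbf{Q}}$ and a $\mathbf{C}$-basis of $\mathcal{H}_p$, this functional is already nonzero on $\mathcal{H}_{p,\mathbf{Q}}$; I pick $\xi_p \in \mathcal{H}_{p,\mathbf{Q}}$ with $(\lambda_{\pi_p} - \lambda_{\mathbf{1}})(\xi_p) \ne 0$ and set $\xi = \charf(K_f^p) \otimes \xi_p$, which is $\mathbf{Q}$-valued and has precisely the form required in Lemma \ref{lem:annihilate} (using $p \equiv 1 \pmod N$).

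It remains only to check that the scalar by which $T_\xi - \mu(\xi)$ acts on $\varphi_f$ lies in $\ol{\mathbf{Q}}^\times$ rather than just $\mathbf{C}^\times$: the $\ol{\mathbf{Q}}$-structure $V_{\ol{\mathbf{Q}}}$ of Section \ref{subsec:diff} is preserved by the rational Hecke action, so $T_\xi\varphi_f \in V_{\ol{\mathbf{Q}}}$, and comparing this with $T_\xi\varphi_f = \lambda_{\pi_p}(\xi_p)\varphi_f$ for $\varphi_f \ne 0$ gives $\lambda_{\pi_p}(\xi_p) \in \ol{\mathbf{Q}}$, while $\mu(\xi) \in \mathbf{Q}$. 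I do not anticipate a real obstacle here: the one point deserving care is the identification of $\mu(\xi)$ as the trivial-representation Hecke eigenvalue --- which is what makes ``$T_\xi - \mu(\xi)$ acts trivially on $\varphi_f$'' equivalent to ``$\lambda_{\pi_p} = \lambda_{\mathbf{1}}$'' --- together with the standard rationality bookkeeping for Haar measures and coset decompositions; the separation of spherical Hecke characters for non-isomorphic unramified representations is classical.
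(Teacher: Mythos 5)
Your proposal is correct and is essentially the paper's argument: the paper also identifies $\mu(\xi)$ with the trivial-representation eigenvalue and uses the equivalence between spherical representations and modules over the spherical Hecke algebra, phrased in the contrapositive (if every $\mathbf{Q}$-valued $T_\xi$ acted by $\mu(\xi)$, then by linearity every $\mathbf{C}$-valued one would, forcing $\pi_p$ trivial), which is the same separation-of-Hecke-characters argument you run directly via Satake. Your closing remark about the eigenvalue lying in $\ol{\mathbf{Q}}^\times$ is not needed for the lemma as stated, though it is the form in which the scalar is used later.
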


\begin{proof}
There is an equivalence of categories between spherical representations of $G(\mathbf{Q}_p)$ and representations of the spherical Hecke algebra given by mapping a representation $\pi_p$ to the Hecke module of a spherical vector.  If every $\mathbf{Q}$-valued $T_\xi$ acts on $\phi_p$ by $\mu(\xi)$, so does every $\mathbf{C}$-valued $T_\xi$, so $\pi_p$ is the trivial representation.
\end{proof}

Note that the representations $\pi_p$ arising in Section \ref{sec:diff} are generic and thus not the trivial representation.  This explains how to obtain the final desired property described in Section \ref{subsec:heckeoverview}.

\begin{remark}
Assume that $p$ is an inert place.  Then $G$ is the product of the center with the unitary group $\U(2,1)(\mathbf{Q}_p)$, so the restriction of $\pi_p$ to $\U(2,1)(\mathbf{Q}_p)$ remains irreducible.  Since $\U(2,1)(\mathbf{Q}_p)$ and $\SU(2,1)(\mathbf{Q}_p)$ have the same spherical Hecke algebra, the further restriction of $\pi_p$ to $\SU(2,1)(\mathbf{Q}_p)$ is again irreducible.  If this restriction is not the trivial representation, we can obtain a $\xi$ whose double coset (by strong approximation) may be represented by an element of $G(\mathbf{Q})$.  Then, examining the definition of $T_\xi$, we may represent the cycle of integration in the translated Rankin-Selberg integral by only the curves $C_W$ of Section \ref{sec:higherchow} rather than the wider class of adelic transformations $T(g)(C_W)$.
\end{remark}


\section{Kronecker limit formula} \label{sec:klf}


We give three Kronecker limit formulae: one for classical Eisenstein series and two for adelic Eisenstein series with $\Gamma(N)$ or $\Gamma_0(N)$ level structure, respectively.  We begin by proving the formula (\ref{eqn:adelicclassical}), which relates our adelic $\GL_2$ Eisenstein series with a classical Eisenstein series on the upper half-plane.  We then give the classical Kronecker limit formula for this series and use (\ref{eqn:adelicclassical}) to deduce the two adelic versions.

Everything in this section is based on classical results; references include the books of Kubert and Lang \cite{kl} and Kato \cite[Chapter I.3]{kato}.

\subsection{Eisenstein series: from adelic to classical} For a ring $R$, define $V_{2}(R)$ to be the $\GL_2(R)$-module of row vectors of length 2 with entries in $R$.  Write $e=(1,0)$ and $f=(0,1)$ for the standard basis of $V_2(R)$.  Recall from Definition \ref{defin:eis} that for a Schwartz-Bruhat function $\Phi= \Phi_f \otimes \Phi_{\infty}$ on $V_2(\mathbf{A})$, we set
\[f(g,\Phi,\nu,s) = \nu_1(\det(g))|\det(g)|^{s}\int_{\GL_1(\A)}{\Phi(t(0,1)g)(\nu_1\nu_2^{-1})(t)|t|^{2s}\,dt}.\]
Here $\nu = (\nu_1,\nu_2): T(\mathbf{A})/T(\mathbf{Q}) \rightarrow \mathbf{C}^\times$ is an automorphic character of the diagonal torus of $\GL_2$ that we assume to be trivial at $\infty$.  The Eisenstein series is then
\[E(g,\Phi,\nu,s) = \sum_{\gamma \in B(\Q)\backslash \GL_2(\Q)}{f(\gamma g,\Phi,\nu,s)}.\]
Let $\widehat{\nu} = (\nu_2,\nu_1)$.  The Eisenstein series satisfies the functional equation
\begin{equation}\label{EisFE} E(g,\Phi,\nu,s) = E(g,\widehat{\Phi},\widehat{\nu},1-s),\end{equation}
where $\widehat{\Phi}: V_2(\mathbf{A}) \rightarrow \mathbf{C}$ is the Fourier transform of $\Phi$ given by
\[\widehat{\Phi}(\upsilon) = \int_{V_2(\A)}{\psi(\langle \xi,\upsilon\rangle_\mathbf{A})\Phi(\xi)\,d\xi}.\]
Here, $\psi$ is the standard additive character and $\langle \cdot,\cdot \rangle_R$ denotes the symplectic pairing on $V_2(R)$ given by $\langle e,f \rangle = 1 = -\langle e,f \rangle$, $\langle e,e \rangle = \langle f, f\rangle=0$.  If $\Phi_v$ is a Schwartz-Bruhat function on $V_2(\mathbf{Q}_v)$, where $v$ is any place of $\mathbf{Q}$, we again write $\widehat{\Phi}_v(\upsilon) = \int_{V_2(\Q_v)}{\psi(\langle \xi,\upsilon\rangle_{\mathbf{Q}_v})\Phi_v(\xi)\,d\xi}$.

Given a Schwartz-Bruhat function $\Phi=\Phi_f \otimes \Phi_{\infty}$ and $M \in \mathbf{A}_f$, if we define $\Phi_M$ by $\Phi_M(v) = \Phi_f(M^{-1}v_f) \otimes \Phi_{\infty}(v_\infty)$, then $\mathrm{Supp}(\Phi_{M,f}) = M \cdot\mathrm{Supp}(\Phi_f)$ and $E(g,\Phi_M,\nu,s) = |M|_{\A_f}^{-2s}E(g,\Phi,\nu,s)$.  So to understand $E(g,\Phi,\nu,s)$ in general, it is enough to consider functions $\Phi_f$ that are supported on $\widehat{\Z}^2$.
\begin{defin}
Throughout this section, $\Phi_{\infty}$ will denote the Gaussian on $V_2(\R)$ defined by $\Phi_{\infty}(x,y) = e^{-\pi(x^2+y^2)}$.
\end{defin}

Write $\GL_2(\mathbf{R})^+$ for the subgroup of matrices with positive determinant and recall that this group acts on the upper half-plane by M\"obius transformations.  Since we have fixed $\Phi_\infty$ and assumed $\nu_\infty$ is trivial, we write $f_{\infty}(g_{\infty},s)$ for $f(g_\infty,\Phi_\infty,\nu_\infty,s)$.  If $g_{\infty} \in \GL_2(\R)^{+}$, then
\begin{equation} \label{eqn:finfty} f_{\infty}(g_{\infty},s) = \Gamma_{\R}(2s) y^{s},\end{equation}
where $g_{\infty}\cdot i = x + iy$.  Here, $\Gamma_{\R}(s) = \pi^{-s/2}\Gamma(s/2)$.

For a character $\eta: \mathbf{Q}^\times \backslash \mathbf{A}^\times \rightarrow \mathbf{C}^\times$ and $(m,n) \in V_2(\mathbf{Z})$, define
\begin{equation} c(\Phi_f,\eta,(m,n)) = \int_{\GL_1(\widehat{\Z})}{\eta(r)\Phi_f(r(m,n)_f)\,dr},\end{equation}
where we write $(m,n)_\infty$ and $(m,n)_f$ for the images of $(m,n)$ under the natural maps $\mathbf{Z} \rightarrow \mathbf{R}$ and $\mathbf{Z} \rightarrow \mathbf{A}_f$, respectively.  For fixed $\eta$ and $\Phi$, choose $N \in \mathbf{Z}_{\ge 1}$ with the property that $\Phi$ is stable by $N \widehat{\Z}^2 \subseteq V_2(\mathbf{A}_f)$ and $\eta$ is trivial on $(1 + N\widehat{\Z})^\times \subseteq \A_f^{\times}$.  Then for any fixed $k \in \GL_2(\widehat{\mathbf{Z}})$, the value of $c(k\cdot \Phi,\eta,(m,n))$ only depends on the residue class of $(m,n)$ in $V_2(\Z/N\mathbf{Z})$.

For a residue class $\overline{w} \in V_2(\Z/N\mathbf{Z})$, we define the function $E_{\overline{w},N}(z,s)$ on the upper half-plane by
\begin{equation} E_{\overline{w},N}(z,s) =\Gamma_{\R}(2s) \sum_{\substack{(m,n) \in \Z^2\setminus\{(0,0)\}\\ (m,n) \equiv \overline{w}\ (\textrm{mod }N)}}{\frac{y^s}{|mz+n|^{2s}}}.\end{equation}
We may write $E(g,\Phi,\nu,s)$ in terms of the functions $E_{\overline{w},N}(z,s)$ as follows.
\begin{lemma} \label{lem:adelicclassical} Suppose $\Phi_f$ is supported in $\widehat{\Z}^2$, and write $g \in \GL_2(\mathbf{A})$ in the form $g = g_\Q g_{\infty}k$ for $g_{\Q} \in \GL_2(\Q)$, $g_\infty \in \GL_2(\R)^{+}$ and $k \in \GL_2(\widehat{\Z})$.  Define $z = g_{\infty} \cdot i$.  Then
\begin{equation} \label{eqn:adelicclassical} E(g,\Phi,\nu,s) = \nu_1(\det(k))\sum_{\overline{w} \in V_2(\Z/N\mathbf{Z})}{c(k \cdot\Phi_f,\nu_1\nu_2^{-1},\overline{w}) E_{\overline{w},N}(z,s)}.\end{equation}
\end{lemma}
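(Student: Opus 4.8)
The plan is to carry out the standard unfolding of the adelic Eisenstein series and then repackage the resulting Tate-type integral classically. First I would identify $B(\Q)\backslash\GL_2(\Q)$ with $\mathbf{P}^1(\Q)=(\Q^2\setminus\{0\})/\Q^\times$ by sending a coset to the $\Q^\times$-line spanned by the bottom row $(0,1)\gamma$ of $\gamma$. Since $\nu_1$ and $|\cdot|$ are trivial on $\Q^\times\subseteq\A^\times$ (automorphy of $\nu_1$ and the product formula), the prefactor $\nu_1(\det(\gamma g))|\det(\gamma g)|^s=\nu_1(\det g)|\det g|^s$ is independent of $\gamma$ and comes out of the sum. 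Writing $g=g_\Q g_\infty k$ with $g_\Q\in\GL_2(\Q)$, $g_\infty\in\GL_2(\R)^+$, $k\in\GL_2(\widehat{\Z})$ — possible since $\Q$ has class number one — I would absorb $g_\Q$ into the vector being summed (a bijection of $\Q^2\setminus\{0\}$, compatible with the $\Q^\times$-scaling), and use triviality of $\nu_{1,\infty}$ together with $\det g_\infty>0$ to reduce the prefactor to $\nu_1(\det k)(\det g_\infty)^s$.

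Next comes the unfolding: combining the sum over $\mathbf{P}^1(\Q)$ with the integral over $\GL_1(\A)=\A^\times$ and using that the integrand $\Phi(tv\,g_\infty k)(\nu_1\nu_2^{-1})(t)|t|^{2s}$ is invariant under $(t,v)\mapsto(\lambda t,\lambda^{-1}v)$ for $\lambda\in\Q^\times$, while $\Q^\times$ acts freely on $\Q^2\setminus\{0\}$, one obtains
\[E(g,\Phi,\nu,s)=\nu_1(\det k)\,(\det g_\infty)^s\int_{\Q^\times\backslash\A^\times}\Big(\sum_{v\in\Q^2\setminus\{0\}}\Phi(tv\,g_\infty k)\Big)(\nu_1\nu_2^{-1})(t)\,|t|^{2s}\,dt.\]
I would then factor $\Phi=\Phi_f\otimes\Phi_\infty$ and use the isomorphism $\Q^\times\backslash\A^\times\cong\R_{>0}\times\widehat{\Z}^\times$ (again class number one). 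For $\mathrm{Re}(s)$ large one may interchange the absolutely convergent sum and integral; since $\Phi_f$ is supported in $\widehat{\Z}^2$ and the representative has $t_f\in\widehat{\Z}^\times$ with $k\in\GL_2(\widehat{\Z})$, the factor $\Phi_f(t_f v k)$ forces $v\in\Z^2\setminus\{0\}$. The finite integral $\int_{\widehat{\Z}^\times}(\nu_1\nu_2^{-1})(t_f)\Phi_f(t_f v k)\,dt_f$ is then exactly $c(k\cdot\Phi_f,\nu_1\nu_2^{-1},v)$ by the definition of $c$, and by the defining property of $N$ it depends only on the class $\overline{w}=v\bmod N$; grouping $\sum_{v\in\Z^2\setminus\{0\}}$ by residue class pulls these coefficients out front.

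Finally, for the archimedean factor attached to $(m,n)$ I would use $g_\infty\cdot i=z$, the identity $|(m,n)g_\infty|^2=\det(g_\infty)\,|mz+n|^2/\mathrm{Im}(z)$ (the same computation that yields (\ref{eqn:finfty})), and the Mellin identity $\int_0^\infty e^{-\pi At^2}t^{2s}\tfrac{dt}{t}=\tfrac12\Gamma_\mathbf{R}(2s)A^{-s}$, so that the archimedean integral contributes $\Gamma_\mathbf{R}(2s)\det(g_\infty)^{-s}\,y^s|mz+n|^{-2s}$ up to the normalization of Haar measure at $\infty$; the powers of $\det(g_\infty)$ cancel against the prefactor, and assembling the pieces gives (\ref{eqn:adelicclassical}) by comparing term by term with the definition of $E_{\overline{w},N}(z,s)$. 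The structure is entirely standard; the only real care needed is the bookkeeping — matching measure normalizations so the constant is precisely $\Gamma_\mathbf{R}(2s)$, checking that the surviving coefficient is $\nu_1(\det k)\,c(k\cdot\Phi_f,\nu_1\nu_2^{-1},\overline{w})$ rather than some twist, and confirming the residue-class grouping is compatible with both the definition of $c$ and that of $E_{\overline{w},N}$ — and I expect this constant-tracking, together with the (routine) convergence/interchange justification, to be the main obstacle.
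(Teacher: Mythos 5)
Your argument is correct and is essentially the same unfolding the paper itself performs: the paper parametrizes $B(\Q)\backslash\GL_2(\Q)$ by bottom rows coming from $\SL_2(\Z)$ and decomposes $\GL_1(\A_f)=\bigsqcup_{t\in\Q_{>0}}t\GL_1(\widehat{\Z})$, quoting (\ref{eqn:finfty}) for the archimedean factor, while you fold everything into a single Tate-type integral over $\Q^\times\backslash\A^\times$ against a sum over $\Z^2\setminus\{(0,0)\}$ --- the same computation in different packaging. The constant-tracking you flag is indeed the only delicate point (your $\tfrac12\Gamma_{\R}(2s)$ from integrating over $\R_{>0}$ is set against a lattice sum that counts both $v$ and $-v$, whereas the paper's full $\R^\times$-integral is set against rows counted once per $B(\Q)$-coset, and since $(\nu_1\nu_2^{-1})_f(-1)=1$ the $v$ and $-v$ terms coincide), and this $\pm1$/measure bookkeeping is exactly the step the paper's own proof treats most tersely, so your plan to pin it down carefully is well placed.
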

\begin{proof} Note that $B^{+}(\Z)\backslash \SL_2(\Z) \rightarrow B(\Q)\backslash \GL_2(\Q)$ is a bijection.  Using (\ref{eqn:finfty}), we obtain
\[E(g,\Phi,\nu,s) = \nu_1(\det(k))\Gamma_{\R}(2s)\sum_{\substack{\gamma \in B^{+}(\Z)\backslash \SL_2(\Z)\\ \gamma = \mm{a}{b}{c}{d}}}{\frac{y^s}{|cz+d|^{2s}}\int_{\GL_1(\A_f)}{(\nu_1\nu_2^{-1})(t)|t|_{\A_f}^{2s}(k\cdot\Phi)(t(c,d))\,dt}}.\]

Note that $\GL_1(\A_f) = \bigsqcup_{t \in \mathbf{Q}_{>0}}{t\GL_1(\widehat{\Z})}$. With the assumption $\mathrm{Supp}(\Phi)\subseteq \widehat{\mathbf{Z}}^2$, we have
\begin{align*}
   &E(g,\Phi,\nu,s) \\
	=& \nu_1(\det(k))\Gamma_{\R}(2s)\sum_{\gamma \in B^{+}(\Z)\backslash \SL_2(\Z), t \in \Z_{>0}}{\frac{y^s}{|cz+d|^{2s}}\int_{\GL_1(\widehat{\Z})}{(\nu_1\nu_2^{-1})(r)|t|_{\A_f}^{2s}(k\cdot\Phi)(tr(c,d)_f)\,dr}}\\
	=& \nu_1(\det(k))\Gamma_{\R}(2s)\sum_{(m,n) \in \Z^2\setminus{\set{(0,0)}}}{\frac{y^s}{|mz+n|^{2s}}\int_{\GL_1(\widehat{\Z})}{(\nu_1\nu_2^{-1})(r)(k\cdot\Phi)(r(m,n)_f)\,dr}} \\
	=& \nu_1(\det(k))\Gamma_{\R}(2s)\sum_{(m,n) \in \Z^2\setminus{\set{(0,0)}}}{\frac{c(k\cdot\Phi_f,\nu_1\nu_2^{-1},(m,n)) y^s}{|mz+n|^{2s}}}.
\end{align*}
The formula (\ref{eqn:adelicclassical}) follows.\end{proof}

\subsection{Classical Kronecker limit formula}

We require the Kronecker limit formula for $E_{\overline{w},N}(z,s)$ at $s=0$.  We prove it by using Poisson summation to relate the Taylor expansion of $E_{\overline{w},N}(z,s)$ at $s=1$ to the Taylor expansion at $s=1$ of the Eisenstein series defined by
\[\widehat{E}_{\overline{w},N}(z,s) = \Gamma_{\R}(2s)\sum_{(m,n) \in \Z^2\setminus\{(0,0)\}}\frac{ y^{s} \exp\(\frac{2\pi i}{N} \langle (m,n),\overline{w}\rangle_{\mathbf{Z}/N\mathbf{Z}}\)}{|mz+n|^{2s}}.\]
We define $q:V_2(\mathbf{R}) \rightarrow \mathbf{R}_{\ge 0}$ by $q((x,y)) = x^2+y^2$.  For $g \in \SL_2(\R)^{+}$, $t \in \R^{\times}_{>0}$, and $w_0 = (u_0,v_0) = \overline{w}/N \in (\frac{1}{N}\mathbf{Z} / \mathbf{Z})^2$, define
\begin{align*}\Theta_{w_0}(t,g) &= \sum_{\substack{\xi \in V_2(\Z)\\ \xi+w_0 \ne (0,0)}}\exp\(-\pi tq((\xi+w_0) g)\) = \sum_{\substack{(m,n) \in V_2(\Z)\\(m,n)+w_0\ne (0,0)}}\exp\(-\pi t |(m+u_0)z+(n+v_0)|^2/y\)\end{align*}
and
\begin{align*}\widehat{\Theta}_{w_0}(t,g) &= \sum_{\xi \in V_2(\Z)}\exp\(2\pi i \langle \xi,w_0\rangle_\mathbf{R}\) \exp\(-\pi tq(\xi g)\) \\ &= \sum_{(m,n) \in V_2(\Z)}\exp\(2 \pi i \langle (m,n),(u_0,v_0)\rangle_\mathbf{R}\) \exp\(-\pi t |mz+n|^2/y\).\end{align*}
\begin{lemma}\label{wcharFE} We have the following identities.
\begin{enumerate}
\item For $\mathrm{Re}(s) \gg 0$, we have
\[N^{2s}E_{\overline{w},N}(z,s) = \int_{0}^{\infty}{\Theta_{w_0}(t,g) t^{s} \,\frac{dt}{t}}\quad\textrm{ and }\quad N^{2s}\widehat{E}_{\overline{w},N}(z,s) = \int_{0}^{\infty}{\left(\widehat{\Theta}_{w_0}(t,g)-1\right) t^{s} \,\frac{dt}{t}}.\]
\item For all $s$, we have
\[N^{2s}E_{\overline{w},N}(z,s) = -\frac{1}{1-s} + \int_{1}^{\infty}{ \Theta_{w_0}(t,g) t^{s}\,\frac{dt}{t}} + \int_{1}^{\infty}{ \left(\widehat{\Theta}_{w_0}(t,g)-1\right)t^{1-s}\,\frac{dt}{t}}\]
and
\[N^{2s}\widehat{E}_{\overline{w},N}(z,s) = -\frac{1}{s} + \int_{1}^{\infty}{ \Theta_{w_0}(t,g) t^{1-s}\,\frac{dt}{t}} + \int_{1}^{\infty}{ \left(\widehat{\Theta}_{w_0}(t,g)-1\right)t^{s}\,\frac{dt}{t}}.\]
\end{enumerate}
In particular, $E_{\overline{w},N}(z,s) = \widehat{E}_{\overline{w},N}(z,1-s)$.\end{lemma}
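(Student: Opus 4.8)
The plan is to run Riemann's classical theta-function argument: express each Dirichlet-type series as a Mellin transform of a theta series and then exploit Poisson summation. For part (1), use $\Gamma_\R(2s) = \pi^{-s}\Gamma(s)$ together with the Gamma integral $\pi^{-s}\Gamma(s)\,Q^{-s} = \int_0^\infty e^{-\pi tQ}\,t^s\,\tfrac{dt}{t}$ applied with $Q = |mz+n|^2/y = q((m,n)g)$ (this identity, valid in the standard Iwasawa normalization $g_\infty \cdot i = z$, is exactly what underlies the second displayed form of $\Theta_{w_0}$ in the text). Absolute convergence of the defining series for $\mathrm{Re}(s)\gg 0$ justifies interchanging sum and integral. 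For $E_{\overline{w},N}$, writing the congruence condition $(m,n)\equiv\overline{w}\pmod N$ as $(m,n) = N(\xi + w_0)$ with $\xi \in V_2(\mathbf{Z})$ and rescaling $t\mapsto N^{-2}t$ produces the normalizing power of $N$ and the series $\Theta_{w_0}(t,g)$; the identical computation for $\widehat{E}_{\overline{w},N}$, whose sum runs over all of $V_2(\mathbf{Z})$, yields $\widehat{\Theta}_{w_0}(t,g)-1$, the $-1$ being the omitted $\xi = 0$ term.

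For part (2), split $\int_0^\infty = \int_0^1 + \int_1^\infty$ in each identity of part (1). The tails $\int_1^\infty$ converge for every $s$ since the theta series decay exponentially as $t\to\infty$; in the pieces over $(0,1]$, substitute $t\mapsto 1/t$ and invoke the transformation law $\Theta_{w_0}(1/t,g) = t\,\widehat{\Theta}_{w_0}(t,g) - \epsilon_{\overline{w}}$ (and its dual), where $\epsilon_{\overline{w}} = 1$ if $\overline{w} = 0$ and $0$ otherwise. This is Poisson summation for the Gaussian $x\mapsto e^{-\pi t q(xg)}$ on $V_2(\mathbf{Z})$ translated by $w_0$: since $g\in\SL_2(\mathbf{R})^+$ the form $q(\cdot g)$ has determinant $1$, so the Fourier transform is self-dual up to the factor $t^{-1}$ (the dimension being $2$); the dual-lattice phase that appears is exactly $\exp(2\pi i\langle\cdot,w_0\rangle_\mathbf{R})$ with $\langle\cdot,\cdot\rangle_\mathbf{R}$ the symplectic pairing; and the isometry between $q(\cdot g)$ and its inverse form is realized on $V_2(\mathbf{Z})$ by a rotation of order four, which identifies $\widehat{\Theta}_{w_0}$ with the Poisson dual of $\Theta_{w_0}$. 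After the substitution, the elementary evaluations $\int_1^\infty t^{1-s}\,\tfrac{dt}{t} = -\tfrac{1}{1-s}$ and $\int_1^\infty t^{-s}\,\tfrac{dt}{t} = \tfrac1s$ (first for $\mathrm{Re}(s)$ large, then by continuation) supply the polar terms, and collecting the contributions gives the stated formulas, now visibly holomorphic on $\mathbf{C}$ away from the indicated poles by the convergence of the tails. (In the degenerate case $\overline{w} = 0$ one also picks up a term from $\epsilon_{\overline{w}}$; this is harmless and can be tracked separately.)

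Finally, the functional equation is immediate from part (2): the two right-hand sides displayed there are interchanged under $s\leftrightarrow 1-s$ --- the roles of $\Theta_{w_0}$ and $\widehat{\Theta}_{w_0}-1$ swap and $-\tfrac{1}{1-s}\leftrightarrow -\tfrac1s$ --- so substituting $1-s$ for $s$ in the expression for $\widehat{E}_{\overline{w},N}$ recovers the expression for $E_{\overline{w},N}$. The only step requiring genuine care is the Poisson summation --- pinning down the normalizing constant, the symplectic phase, and the $\overline{w}=0$ correction --- but this is entirely classical and can be quoted from Kubert--Lang \cite{kl} or Kato \cite[Chapter I.3]{kato}; everything else is bookkeeping, including keeping track of the normalizing powers of $N$ through the rescaling $t\mapsto N^{-2}t$.
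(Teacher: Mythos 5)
Your proof is correct and follows essentially the same route as the paper: Mellin representation of both series, splitting the integral at $t=1$, Poisson summation giving the theta inversion $\Theta_{w_0}(t,g)=t^{-1}\widehat{\Theta}_{w_0}(t^{-1},g)$, the elementary polar integrals, and the functional equation read off from the symmetry of the resulting expressions. Your explicit tracking of the correction term $\epsilon_{\overline{w}}$ when $\overline{w}=0$ is the only point of difference (the paper's displayed inversion formula silently assumes $\overline{w}\neq(0,0)$, the only case used later), and is if anything slightly more careful.
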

\begin{proof} The first part of the lemma follows immediately from the definitions.  We will deduce the second part from Poisson summation.

For a Schwartz function $f$ on $V_2(\R)$, define $f_{w_0}(v) = f(v+w_0)$ and $f_{(t,g)}(v) = f(t^{1/2}vg)$.  Then $\widehat{f_{w_0}}(v) = e^{2\pi i \langle w_0,v\rangle}\widehat{f}(v)$ and $\widehat{f_{(t,g)}}(v) = t^{-1}\widehat{f}(t^{-1/2}v g)$.  Thus Poisson summation $\sum_{\xi \in \Z^2}{f(\xi)} = \sum_{\xi \in \Z^2}{\widehat{f}(\xi)}$ gives
\[\sum_{\xi \in V_2(\mathbf{Z})}{f(t^{1/2}(\xi+w_0)g)} = t^{-1}\sum_{\xi \in V_2(\mathbf{Z})}{e^{2\pi i \langle w_0,\xi\rangle}\widehat{f}(t^{1/2}vg)}.\]
Taking $f(v) = \exp\(-\pi q(v)\)$ so that $\widehat{f} = f$, we obtain
\[\Theta_{w_0}(t,g) = t^{-1}\widehat{\Theta}_{w_0}(t^{-1},g).\]
Assume $\mathrm{Re}(s) \gg 0$.  Then
\begin{align*} N^{2s}E_{\overline{w},N}(z,s) &= \int_{0}^{1}{\Theta_{w_0}(t,g)t^{s}\,\frac{dt}{t}} + \int_{1}^{\infty}{\Theta_{w_0}(t,g) t^{s}\,\frac{dt}{t}} \\ &= \int_{0}^{1}{t^{-1}\widehat{\Theta}_{w_0}(t^{-1},g)t^{s}\,\frac{dt}{t}} + \int_{1}^{\infty}{\Theta_{w_0}(t,g) t^{s}\,\frac{dt}{t}} \\ &= \int_{0}^{1}{t^{s-1}\,\frac{dt}{t}}+ \int_{0}^{1}{t^{-1}\left(\widehat{\Theta}_{w_0}(t^{-1},g)-1\right)t^{s}\,\frac{dt}{t}} + \int_{1}^{\infty}{\Theta_{w_0}(t,g) t^{s}\,\frac{dt}{t}} \\ &=-\frac{1}{1-s}+ \int_{1}^{\infty}{\left(\widehat{\Theta}_{w_0}(t,g)-1\right)t^{1-s}\,\frac{dt}{t}} + \int_{1}^{\infty}{\Theta_{w_0}(t,g) t^{s}\,\frac{dt}{t}}.\end{align*}
This give the first part of (2), and the second part is obtained similarly.  The lemma follows. \end{proof}

\subsection{Formula for level $\Gamma(N)$}

Lang \cite[\S 20.5]{lang} proves a formula relating the Eisenstein series $\widehat{E}_{\overline{w},N}(z,s)$ to an explicit function on the upper half-plane.  Let $(\alpha,\beta) \in \Q^2 \setminus \Z^2$.  Let $g_{(\alpha,\beta)}$ denote the function on the upper half-plane defined by the infinite product
\[g_{(\alpha,\beta)}(z) = -q^{B_2(\alpha)/2}\exp(\pi i (\alpha-1)\beta)\prod_{n=1}^\infty{(1-q^n q_z)(1-q^n/q_z)}\]
where $q = e^{2\pi i z}$, $q_z = e^{2\pi i(\alpha - \beta z)}$, and $B_2(X) = X^2-X+ 1/6$.  In {\it loc.\ cit.}, the Taylor expansion
\begin{equation}\widehat{E}_{\overline{w},N}(z,s) = \log|g_{w_0}(z)| + O(s-1)\end{equation}
is proved whenever $\overline{w} \neq (0,0)\in V_2(\Z/N\Z)$.  Lemma \ref{wcharFE} then gives
\begin{equation} \label{eqn:classicalklf} E_{\overline{w},N}(z,s) = \log|g_{w_0}(z)| + O(s).\end{equation}
Combining this with Lemma \ref{lem:adelicclassical}, we may deduce the following.
\begin{proposition}\label{prop:EisPhi} Suppose $\mathrm{Supp}(\Phi_f)\subseteq V_2(\widehat{\Z})$, and let $g = g_\Q g_{\infty}k$ with $g_{\Q} \in \GL_2(\Q)$, $g_\infty \in \GL_2(\R)^{+}$, and $k \in \GL_2(\widehat{\Z})$.  Set $z = g_{\infty} \cdot i$ in the upper half plane, and let $N \in \mathbf{Z}_{\ge 1}$ be such that $\Phi_f$ is stable by $N\widehat{\Z}^2$ and $\nu_1\nu_2^{-1}$ is trivial on $(1 + N\widehat{\Z})^\times \subseteq \GL_1(\A_f)$.  Finally, if $\nu_1\nu_2^{-1} =1$, assume that $\Phi(0) = 0$.  Then
\[E(g,\Phi,\nu,s) = \nu_1(\det(k))\sum_{\overline{w} \in V_2(\Z/N\Z) \setminus\{(0,0)\}}{c(k \cdot \Phi_f,\nu_1\nu_2^{-1},\overline{w}) \log|g_{w_0}(z)|} +O(s).\]
\end{proposition}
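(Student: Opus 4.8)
The plan is to combine the three preceding ingredients: Lemma~\ref{lem:adelicclassical}, which expresses $E(g,\Phi,\nu,s)$ as a $\mathbf{C}$-linear combination of the classical Eisenstein series $E_{\overline{w},N}(z,s)$ with coefficients $c(k\cdot\Phi_f,\nu_1\nu_2^{-1},\overline{w})$; the classical Kronecker limit formula~(\ref{eqn:classicalklf}), valid whenever $\overline{w}\neq(0,0)$; and a separate argument that under the stated hypotheses the term $\overline{w}=(0,0)$ does not contribute. So first I would write down~(\ref{eqn:adelicclassical}) for the given decomposition $g=g_\mathbf{Q}g_\infty k$, choosing $N$ as in the statement so that $\Phi_f$ is stable by $N\widehat{\mathbf{Z}}^2$ and $\nu_1\nu_2^{-1}$ is trivial on $(1+N\widehat{\mathbf{Z}})^\times$; this is exactly the hypothesis under which $c(k\cdot\Phi_f,\nu_1\nu_2^{-1},\overline{w})$ depends only on the residue class $\overline{w}\in V_2(\mathbf{Z}/N\mathbf{Z})$, so the finite sum makes sense.

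Next I would split the sum over $\overline{w}\in V_2(\mathbf{Z}/N\mathbf{Z})$ into the term $\overline{w}=(0,0)$ and the remaining terms. For $\overline{w}\neq(0,0)$, substitute~(\ref{eqn:classicalklf}): $E_{\overline{w},N}(z,s)=\log|g_{w_0}(z)|+O(s)$, where $w_0=\overline{w}/N\in(\tfrac1N\mathbf{Z}/\mathbf{Z})^2$. Multiplying by the locally constant coefficient $c(k\cdot\Phi_f,\nu_1\nu_2^{-1},\overline{w})$ and by $\nu_1(\det(k))$ preserves the $O(s)$ error, so these terms together contribute exactly $\nu_1(\det(k))\sum_{\overline{w}\neq(0,0)}c(k\cdot\Phi_f,\nu_1\nu_2^{-1},\overline{w})\log|g_{w_0}(z)|+O(s)$, which is the asserted right-hand side. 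The remaining work is to show the $\overline{w}=(0,0)$ term either is absent or contributes only $O(s)$.

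The main obstacle is precisely this $\overline{w}=(0,0)$ analysis, and it is where the hypotheses enter. The coefficient of the $(0,0)$ term is $\nu_1(\det(k))\,c(k\cdot\Phi_f,\nu_1\nu_2^{-1},(0,0))$, and by definition $c(k\cdot\Phi_f,\nu_1\nu_2^{-1},(0,0))=\int_{\GL_1(\widehat{\mathbf{Z}})}(\nu_1\nu_2^{-1})(r)\,(k\cdot\Phi_f)(0)\,dr=(k\cdot\Phi_f)(0)\int_{\GL_1(\widehat{\mathbf{Z}})}(\nu_1\nu_2^{-1})(r)\,dr$. There are two cases. If $\nu_1\nu_2^{-1}\neq1$, the integral $\int_{\GL_1(\widehat{\mathbf{Z}})}(\nu_1\nu_2^{-1})(r)\,dr$ vanishes by orthogonality of characters (using that $\nu_1\nu_2^{-1}$ is a nontrivial character of the compact group $\GL_1(\widehat{\mathbf{Z}})$, which follows from its triviality on $(1+N\widehat{\mathbf{Z}})^\times$ and nontriviality overall), so the $(0,0)$ coefficient is zero outright. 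If $\nu_1\nu_2^{-1}=1$, the hypothesis $\Phi(0)=0$ gives $(k\cdot\Phi_f)(0)=\Phi_f(0)=0$ directly (translation by $k\in\GL_2(\widehat{\mathbf{Z}})$ fixes the origin), so again the coefficient vanishes. Either way the $(0,0)$ term drops out entirely — no $O(s)$ bookkeeping is even needed for it — and the proposition follows. I would also remark that $E_{(0,0),N}(z,s)$ is essentially the completed real-analytic Eisenstein series, whose pole at $s=1$ (equivalently at $s=0$ after the functional equation of Lemma~\ref{wcharFE}) is exactly why one cannot afford a nonzero $(0,0)$ coefficient; the two hypotheses are the minimal conditions killing that pole.
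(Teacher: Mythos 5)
Your proposal is correct and follows essentially the same route as the paper: apply Lemma \ref{lem:adelicclassical}, substitute the classical limit formula (\ref{eqn:classicalklf}) for each $\overline{w}\neq(0,0)$, and observe that $c(k\cdot\Phi_f,\nu_1\nu_2^{-1},(0,0))$ vanishes either by orthogonality of the nontrivial character $\nu_1\nu_2^{-1}$ on $\GL_1(\widehat{\Z})$ or by the hypothesis $\Phi(0)=0$. Your version simply spells out the $(0,0)$-term analysis that the paper states in one line, and is fine as written.
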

\begin{proof}  If $\nu_1\nu_2^{-1} \neq 1$, or if $\Phi(0) =  0$, then $c(k \cdot \Phi,\nu_1\nu_2^{-1},(0,0)) = 0$.  Thus this term disappears, and the proposition follows from (\ref{eqn:classicalklf}) and Lemma \ref{lem:adelicclassical}. \end{proof}

We have the following fact regarding the functions $g_{w_0}$.
\begin{prop}[{\cite[Theorem 19.2.2]{lang}}]
The function $g_{w_0}(z)^{12N}$ defines a nowhere vanishing holomorphic modular function of level $\Gamma(N)$ on the upper half-plane.
\end{prop}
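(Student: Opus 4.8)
The plan is to identify $g_{w_0}$ with a Siegel function and then verify, in turn, that $g_{w_0}$ is holomorphic and non-vanishing on the upper half-plane $\mathcal{H}$, that $g_{w_0}^{12N}$ is invariant under $\Gamma(N)$, and that $g_{w_0}^{12N}$ is meromorphic at the cusps of $X(N)$; these three facts together give the assertion. Throughout write $w_0=(\alpha,\beta)$ with $\alpha,\beta\in\frac1N\mathbf{Z}\cap[0,1)$, not both zero.

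For holomorphy and non-vanishing: one has $q_z=e^{2\pi i\alpha}q^{-\beta}$, so $|q^nq_z|=|q|^{n-\beta}$ and $|q^n/q_z|=|q|^{n+\beta}$, and since $|q|<1$ on $\mathcal{H}$ and $n\geq 1>\beta\geq 0$, none of the factors $1-q^nq_z$ or $1-q^n/q_z$ vanishes and the product converges absolutely and locally uniformly on $\mathcal{H}$; as the prefactor $-q^{B_2(\alpha)/2}e^{\pi i(\alpha-1)\beta}$ is visibly holomorphic and non-vanishing, so is $g_{w_0}$.

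For the transformation law, I would recall that the product formula identifies $g_{w_0}$, up to a universal multiplicative constant, with $\mathfrak{k}_{w_0}(z)\,\eta(z)^2$, where $\eta$ is the Dedekind eta function and $\mathfrak{k}_{w_0}$ is the Klein form attached to $w_0$, i.e. (the exponential of a quasi-period correction times) the Weierstrass $\sigma$-function evaluated at the point of $\mathbf{Z}z+\mathbf{Z}$ determined by $w_0$. From the homogeneity of $\sigma$ under a change of basis of the lattice one obtains, for $\gamma=\left(\begin{smallmatrix}a&b\\c&d\end{smallmatrix}\right)\in\SL_2(\mathbf{Z})$, the identity $\mathfrak{k}_{w_0}(\gamma z)=(cz+d)^{-1}\mathfrak{k}_{w_0\gamma}(z)$ (with $w_0\gamma$ the row-vector product), and combining this with $\eta(\gamma z)^2=\varepsilon(\gamma)(cz+d)\eta(z)^2$, $\varepsilon(\gamma)^{12}=1$, the weight factors cancel and $g_{w_0}(\gamma z)=\varepsilon(\gamma)\,g_{w_0\gamma}(z)$. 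Separately, the quasi-periodicity of $\sigma$ together with the Legendre relation gives, for $b\in\mathbf{Z}^2$, $g_{w_0+b}(z)=\rho(w_0,b)\,g_{w_0}(z)$ with $\rho(w_0,b)\in\mu_{2N}$ when $w_0\in\frac1N\mathbf{Z}^2$. For $\gamma\in\Gamma(N)$ one has $w_0\gamma=w_0+b$ with $b\in\mathbf{Z}^2$ (since $Nw_0\in\mathbf{Z}^2$ and $\gamma\equiv 1\bmod N$), so $g_{w_0}(\gamma z)=\varepsilon(\gamma)\rho(w_0,b)\,g_{w_0}(z)$ where the multiplier is a root of unity of order dividing $\operatorname{lcm}(12,2N)\mid 12N$; hence $g_{w_0}^{12N}$ is $\Gamma(N)$-invariant. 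Finally, for meromorphy at the cusps: at $\infty$, where $\Gamma(N)$ has width $N$ and local parameter $q_N=q^{1/N}$, the factors $q^nq_z,q^n/q_z$ are roots of unity times integral powers of $q_N$, so the product is a power series in $q_N$ with constant term $1$, while $q^{B_2(\alpha)/2}=q_N^{\,NB_2(\alpha)/2}$ with $12N\cdot NB_2(\alpha)/2\in\mathbf{Z}$ (writing $\alpha=k/N$ this exponent is $6k^2-6Nk+N^2$); thus $g_{w_0}^{12N}$ is $q_N^{\,6k^2-6Nk+N^2}$ times a unit power series in $q_N$, in particular meromorphic at $\infty$, and at any other cusp $\gamma\infty$ we reduce to this case via $g_{w_0}(\gamma z)^{12N}=g_{w_0\gamma}(z)^{12N}$ (the $\varepsilon(\gamma)$ being killed by the twelfth power). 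Assembling the three steps, $g_{w_0}^{12N}$ is a modular function for $\Gamma(N)$, holomorphic and nowhere vanishing on $Y(N)$.

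I expect the main obstacle to be the transformation law — specifically, producing the multipliers $\varepsilon(\gamma)$ and $\rho(w_0,b)$ explicitly enough to see that their product is a root of unity of order dividing $12N$. This is exactly the classical transformation theory of the Weierstrass $\sigma$-function (equivalently of Jacobi's $\theta_1$, or of Klein forms) under $\SL_2(\mathbf{Z})$ and under integral translation of the argument — the $12$ entering through the $\eta$-multiplier, the $2N$ through the denominators of $w_0$ together with the Legendre period relation — and, while entirely standard (see Kubert--Lang \cite{kl} or Lang \cite{lang}), the bookkeeping of these phases is the only point requiring real care.
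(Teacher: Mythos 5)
The paper offers no proof of this proposition at all—it is quoted directly from Lang \cite[Theorem 19.2.2]{lang}—and your argument reproduces precisely the standard proof from that source and from Kubert--Lang \cite{kl}: non-vanishing and holomorphy on $\mathcal{H}$ from the explicit product, $\Gamma(N)$-invariance of the $12N$-th power from the Klein-form identity $g_{w_0}=\mathfrak{k}_{w_0}\eta^2$ together with the transformation law $\mathfrak{k}_{w_0}(\gamma z)=(cz+d)^{-1}\mathfrak{k}_{w_0\gamma}(z)$, the translation multiplier in $\mu_{2N}$, and the $\eta$-multiplier of order $12$, and meromorphy at the cusps from the $q^{1/N}$-expansion with integral exponent $6k^2-6Nk+N^2$ after raising to the $12N$-th power. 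Your proof is correct and is essentially the argument the cited reference gives, with the only delicate point (the bookkeeping of the multipliers and the divisibility $\operatorname{lcm}(12,2N)\mid 12N$) handled correctly.
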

In particular, $g_{w_0}(z)^{12N}$ defines a function on the classical modular curve $Y(N)$.  Let $K(N)\subseteq \GL_2(\mathbf{A}_f)$ denote the open compact subgroup of matrices congrent to $1$ modulo $N$.  We write $\mathrm{Sh}(\GL_{2/\mathbf{Q}},K(N))$ for the Shimura variety of $\GL_{2/\mathbf{Q}}$ of level $K(N)$, which is a disjoint union of connected components isomorphic to $Y(N)$.  The variety $\mathrm{Sh}(\GL_{2/\mathbf{Q}},K(N))$ is defined over $\mathbf{Q}$.

The connected components of $\mathrm{Sh}(\GL_{2/\mathbf{Q}},K(N))$ are indexed by the class of $\det g$ (or simply $\det k$ in the notation $g = g_\mathbf{Q}g_\infty k$) in $\mathbf{Q}^\times \backslash \mathbf{A}^\times / \mathbf{R}^+K(N) = \widehat{\mathbf{Z}}^\times/(1+N\widehat{Z})^\times$.  Let $k_d =\prod_{p|N}  \mm{1}{}{}{d}$ for $d\nmid N$ be a set of representatives for these components.  Write $\widetilde{z}=(z,k_d)$ for a point of $\mathrm{Sh}(\GL_{2/\mathbf{Q}},K(N))$ given by $z = g_\infty \cdot i$ on the component indexed by $k_d$.  It follows from the description of the Galois action in Lang \cite[\S 19.2]{lang} that the function $G_{w_0}(\widetilde{z}) = g_{w_0 k_d^{-1}}^{12N}(z)$ is a nowhere vanishing holomorphic function on $\mathrm{Sh}(\GL_{2/\mathbf{Q}},K(N))$ defined over $\mathbf{Q}$.

We may rewrite Proposition \ref{prop:EisPhi} as
\begin{equation}\label{eqn:detleft} \nu_1(\det(k))^{-1}E(g,\Phi,\nu,s) =\sum_{\overline{w} \in V_2(\Z/N\Z) \setminus\{(0,0)\}}{c(\Phi,\nu_1\nu_2^{-1},\overline{w}) \log|g_{w_0k^{-1}}(z)|} +O(s),\end{equation}
since $c(k\cdot \Phi_f,\eta,\ol{w}) = c(\Phi_f,\eta,\ol{w}k)$.  Increase $N$ if necessary so that $\nu_1$ is trivial on $(1+N \widehat{Z})^\times$.  (Recall that only $\nu_1\nu_2^{-1}$ was assumed trivial on this group before.)  Then $\nu_1(\det(k))^{-1}$ becomes a well-defined function of $\widetilde{z}=(z,k_d)$, namely $\nu_1(\det(k_d))^{-1}$.  So the left-hand side of (\ref{eqn:detleft}) can be written as a function of $\widetilde{z}$, namely
\begin{align*}
	\nu_1^{-1}(\det(\widetilde{z}))E(\widetilde{z},\Phi,\nu,s) &= \sum_{w \in V_2(\Z/N\Z) \setminus\{(0,0)\}} c(\Phi,\nu_1\nu_2^{-1},\overline{w})\log|g_{w_0 \cdot k_d^{-1}}(z)| + O(s)\\
	&= \frac{1}{12N}\sum_{w \in V_2(\Z/N\Z) \setminus\{(0,0)\}} c(\Phi,\nu_1\nu_2^{-1},\overline{w})\log|G_{w_0}(\widetilde{z})| + O(s).
\end{align*}

We summarize this discussion in the following corollary.
\begin{corollary}  \label{coro:allphi} Denote by $\Q(\Phi_f,\nu_1\nu_2^{-1})$ the extension field of $\Q$ generated by the values of $\Phi_f$ and $\nu_1\nu_2^{-1}$. Given $\Phi$ such that $\Phi(0)=0$ if $\nu_1\nu_2^{-1} \ne 1$, there is a unit $u(\Phi) \in \mathfrak{O}(\mathrm{Sh}(\GL_2,K)_{\Q})^\times \otimes \Q(\Phi_f,\nu_1\nu_2^{-1})$ (i.e.\ defined over $\Q$ with coefficients in $\Q(\Phi_f,\nu_1\nu_2^{-1})$) for which
\[\nu_1^{-1}(\det(g_f))E(g,\Phi,\nu,s) = \log|u(\Phi)| + O(s).\]
There is also a unit $u(\Phi,\nu_1) \in \mathfrak{O}(\mathrm{Sh}(\GL_2,K)_{\Q(\nu_1)})^\times \otimes \Q(\Phi_f,\nu_1,\nu_2)$ such that
\[E(g,\Phi,\nu,s) = \log|u(\Phi,\nu_1)| + O(s).\]
\end{corollary}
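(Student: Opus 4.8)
The plan is to combine the explicit Kronecker limit formula of Proposition \ref{prop:EisPhi} with the algebraicity and rationality properties of the functions $g_{w_0}$ recorded just above. The two assertions of Corollary \ref{coro:allphi} are really the same computation packaged two ways: in the first we divide the Eisenstein series by $\nu_1(\det(g_f))$ to kill the automorphy factor that obstructs descent to $\mathbf{Q}$, and in the second we absorb $\nu_1$ into the coefficients, working instead over the field $\mathbf{Q}(\nu_1)$ and over a finer level where $\nu_1$ is trivial on $(1+N\widehat{\mathbf{Z}})^\times$.

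First I would recall that by Proposition \ref{prop:EisPhi}, after enlarging $N$ so that $\Phi_f$ is stable by $N\widehat{\mathbf{Z}}^2$ and $\nu_1\nu_2^{-1}$ is trivial on $(1+N\widehat{\mathbf{Z}})^\times$, and noting that the hypothesis $\Phi(0)=0$ when $\nu_1\nu_2^{-1}=1$ makes the $\overline{w}=(0,0)$ term vanish, one has
\[
\nu_1(\det(k))^{-1}E(g,\Phi,\nu,s) = \sum_{\overline{w} \in V_2(\mathbf{Z}/N\mathbf{Z})\setminus\{(0,0)\}} c(\Phi_f,\nu_1\nu_2^{-1},\overline{w})\log|g_{w_0 k^{-1}}(z)| + O(s),
\]
using $c(k\cdot\Phi_f,\eta,\overline{w}) = c(\Phi_f,\eta,\overline{w}k)$ to move the $k$ onto the index. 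Next I would invoke the cited result of Lang that $g_{w_0}(z)^{12N}$ is a nowhere-vanishing holomorphic modular function of level $\Gamma(N)$, so that $G_{w_0}(\widetilde{z}) = g_{w_0 k_d^{-1}}^{12N}(z)$ is a nowhere-vanishing holomorphic function on $\mathrm{Sh}(\GL_{2/\mathbf{Q}},K(N))$ defined over $\mathbf{Q}$ (this is exactly the Galois-descent statement invoked from Lang \cite[\S19.2]{lang} in the paragraph preceding the corollary). Taking the $\frac{1}{12N}$-th ``root'' in $\mathfrak{O}^\times \otimes \mathbf{Q}$ and forming the $\mathbf{Q}(\Phi_f,\nu_1\nu_2^{-1})$-linear combination $u(\Phi) = \prod_{\overline{w}\ne 0} G_{w_0}^{c(\Phi_f,\nu_1\nu_2^{-1},\overline{w})/(12N)}$ — interpreted in $\mathfrak{O}(\mathrm{Sh}(\GL_2,K)_{\mathbf{Q}})^\times \otimes \mathbf{Q}(\Phi_f,\nu_1\nu_2^{-1})$ — gives a unit with $\log|u(\Phi)| = \nu_1^{-1}(\det(g_f))E(g,\Phi,\nu,s) + O(s)$, proving the first claim. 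For the second claim, I would further enlarge $N$ so that $\nu_1$ (not just $\nu_1\nu_2^{-1}$) is trivial on $(1+N\widehat{\mathbf{Z}})^\times$; then $\nu_1(\det(k_d))^{-1}$ is a well-defined function of the component index, hence a genuine function on $\mathrm{Sh}(\GL_2,K)$, and $u(\Phi,\nu_1) = \nu_1(\det(\cdot))\cdot u(\Phi)$ is still a unit, now defined over $\mathbf{Q}(\nu_1)$ with coefficients in $\mathbf{Q}(\Phi_f,\nu_1,\nu_2)$, satisfying $\log|u(\Phi,\nu_1)| = E(g,\Phi,\nu,s)+O(s)$.

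The only genuinely delicate points are bookkeeping ones: verifying that the coefficients $c(\Phi_f,\nu_1\nu_2^{-1},\overline{w})$ indeed lie in $\mathbf{Q}(\Phi_f,\nu_1\nu_2^{-1})$ (immediate from the defining integral, which is a finite average of products of values of $\Phi_f$ and $\nu_1\nu_2^{-1}$ over $\GL_1(\widehat{\mathbf{Z}})$), and checking that the function $\nu_1(\det(g_f))$ descends to $\mathrm{Sh}(\GL_2,K)$ after enlarging $N$ — i.e.\ that it only depends on the class of $\det g$ in $\mathbf{Q}^\times\backslash\mathbf{A}^\times/\mathbf{R}^+K(N)$, which is the content of the triviality of $\nu_1$ on $(1+N\widehat{\mathbf{Z}})^\times$. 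I do not expect any serious obstacle; the substance of the proof is entirely contained in Proposition \ref{prop:EisPhi} and the quoted rationality result of Lang, and what remains is to assemble the linear combination and track fields of definition. The main thing to be careful about is that passing from $g_{w_0}$ to $G_{w_0}$ and back introduces the factor $12N$, which must be divided out in the tensor product with $\mathbf{Q}$ rather than with $\mathbf{Z}$ — this is why the units are asserted to live in $\mathfrak{O}^\times \otimes \mathbf{Q}(\cdots)$ rather than in $\mathfrak{O}^\times$ itself.
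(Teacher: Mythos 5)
Your argument is exactly the paper's: Proposition \ref{prop:EisPhi} (with the $\overline{w}=(0,0)$ term killed by the hypothesis on $\Phi(0)$) plus Lang's rationality of $G_{w_0}$, assembling $u(\Phi)$ as the combination $\sum_{\overline{w}\neq 0}\tfrac{c(\Phi_f,\nu_1\nu_2^{-1},\overline{w})}{12N}\otimes G_{w_0}$ in $\mathfrak{O}(\mathrm{Sh}(\GL_2,K)_{\mathbf{Q}})^\times\otimes\mathbf{Q}(\Phi_f,\nu_1\nu_2^{-1})$, and, for the second unit, enlarging $N$ so that $\nu_1$ is trivial on $(1+N\widehat{\mathbf{Z}})^\times$ and twisting the coefficient on the component indexed by $k_d$ by $\nu_1(\det(k_d))$. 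One caution: your shorthand $u(\Phi,\nu_1)=\nu_1(\det(\cdot))\cdot u(\Phi)$ must be read as this coefficient (exponent) twist in the tensor product, not as multiplying the values of $u(\Phi)$ by the root of unity $\nu_1(\det(k_d))$ — the latter would leave $\log|\cdot|$ unchanged and fail to produce $E(g,\Phi,\nu,s)$; with that reading (which your phrase ``absorb $\nu_1$ into the coefficients'' already indicates) your construction coincides with the paper's, including the Galois-stability over $\mathbf{Q}(\nu_1)$.
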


\begin{proof}
For the first part, take
\[u(\Phi)(\widetilde{z})=\frac{1}{12N}\sum_{w \in V_2(\Z/N\Z) \setminus\{(0,0)\}} c(\Phi,\nu_1\nu_2^{-1},\overline{w})\log|G_{w_0}(\widetilde{z})|.\]
For the second, take
\[u(\Phi,\nu_1)(\widetilde{z}) = \frac{1}{12N}\nu_1(\det(k_d))\sum_{w \in V_2(\Z/N\Z) \setminus\{(0,0)\}} c(\Phi,\nu_1\nu_2^{-1},\overline{w})\log|G_{w_0}(\widetilde{z})|.\]
It is easy to see that this is stable by $\gal(\Q(\nu_1)/\mathbf{Q})$.
\end{proof}

\subsection{Formula for level $\Gamma_0(N)$}

We now give an explicit Kronecker limit formula for level $\Gamma_0(N)$.  Assume $\nu_1 = \nu_2 = 1$, which is forced on us by our choice of level.  Let $\Delta$ be the usual function of Ramanujan. Following Baba and Sreekantan \cite{bs}, let
\[\Delta_N(z) = \prod_{d \mid N}{\Delta\left(\frac{Nz}{d}\right)^{\mu(d)}},\]
where $\mu$ is the M\"obius function.  Write $\Phi_N^{m,n}$ for the characteristic function of $(m,n)+N\widehat{Z} \subseteq V_2(\mathbf{A}_f)$.  Define $\Phi_N$ to be the Schwartz-Bruhat function $\Phi_{N} = \sum_{m \in (\Z/N\mathbf{Z})^{\times}}{\Phi_N^{0,m}}$ if $N >1$ and set $\Phi_1 = \Phi_1^{(0,0)}$ if $N=1$.  We have the following formula in this special case.
\begin{lemma} Write $E(g,s)$ for  $E(g,\Phi_1,1,s)$ and $E(g,\Phi_N,s)$ for $E(g,\Phi_N,1,s)$.  Let $g = g_\infty$ with $g \cdot i = z = x + iy$. Then
\begin{equation}\label{eqn:level1toN} E(g,\Phi_N,s) = N^{-s} \sum_{d \mid N}{\mu(d) d^{-s} E\left(\frac{Nz}{d},s\right)}.\end{equation}
There are $3$ cases.
\begin{enumerate}
\item We have $E(g,\Phi_1,s) = E(g,s)= -\frac{1}{s}+ (\gamma - \log(4 \pi)) - \frac{1}{6}\log(y^6|\Delta(z)|) + O(s)$.
\item If $N = p^r$ for $r \geq 1$ is a prime power, then $E(g,\Phi_N,s) = -2\log(p) -\frac{1}{6}\log|\Delta_N(z)| + O(s)$.
\item If $N$ is divisible by $2$ distinct primes, $E(g,\Phi_N,s) = -\frac{1}{6}\log|\Delta_N(z)| + O(s)$. \end{enumerate} \end{lemma}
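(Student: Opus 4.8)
The plan is to reduce the adelic Eisenstein series to the classical one via Lemma~\ref{lem:adelicclassical}, identify the resulting finite sum of classical Eisenstein series with the $\Delta_N$-type expression, and then extract the Kronecker limit formula from the classical level-one case.

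First I would establish formula~\eqref{eqn:level1toN}. Since $\nu_1 = \nu_2 = 1$ and $\Phi_\infty$ is the fixed Gaussian, Lemma~\ref{lem:adelicclassical} expresses $E(g,\Phi_N,s)$ as a linear combination $\sum_{\overline w} c(\Phi_{N,f},1,\overline w) E_{\overline w, N}(z,s)$. The coefficients $c(\Phi_{N,f},1,\overline w)$ are easily computed: for $\Phi_N = \sum_{m\in(\mathbf Z/N\mathbf Z)^\times}\Phi_N^{0,m}$, only residues $\overline w = (0,m)$ with $m$ a unit contribute, and the sum $\sum_{m\in(\mathbf Z/N\mathbf Z)^\times} E_{(0,m),N}(z,s)$ can be rewritten by grouping lattice points $(0,n)$ with $\gcd(n,N)=1$ inside the unscaled sum $\Gamma_\mathbf R(2s)\sum_{n\neq 0} y^s/|n|^{2s}$ — this is where the M\"obius inversion over $d\mid N$ enters, since requiring $\gcd(n,N)=1$ is $\sum_{d\mid N}\mu(d)$ over the multiples of $d$. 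Tracking the powers of $d$ coming from rescaling $z\mapsto Nz/d$ gives the factor $N^{-s}\sum_{d\mid N}\mu(d)d^{-s}$. (Some care is needed here about whether one works with the first row $(0,n)$ versus the first column, but by the functional equation~\eqref{EisFE} or by direct symmetry this does not affect the statement.)

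Next I would invoke the classical level-one Kronecker limit formula, which is case~(1): $E(g,s) = -\tfrac1s + (\gamma-\log 4\pi) - \tfrac16\log(y^6|\Delta(z)|) + O(s)$. This is standard and is quoted from the references (Kubert--Lang, Kato) cited at the start of the section; the $-\tfrac1s$ pole matches the pole structure already visible in Lemma~\ref{wcharFE}.(2). Substituting the expansion $E(Nz/d,s) = -\tfrac1s + (\gamma-\log 4\pi) - \tfrac16\log((\mathrm{Im}(Nz/d))^6|\Delta(Nz/d)|) + O(s)$ into~\eqref{eqn:level1toN} and expanding $N^{-s}d^{-s} = 1 - s\log(Nd) + O(s^2)$, the key cancellations are: the polar terms combine as $-\tfrac1s\sum_{d\mid N}\mu(d)$, which vanishes for $N>1$ since $\sum_{d\mid N}\mu(d)=0$; the constant $(\gamma-\log 4\pi)\sum_{d\mid N}\mu(d)$ likewise vanishes for $N>1$; and the $-s\log(Nd)$ correction multiplies the $-\tfrac1s$ pole of $E(Nz/d,s)$ to produce a finite leftover $\sum_{d\mid N}\mu(d)\log(Nd) = \log N\sum_{d\mid N}\mu(d) + \sum_{d\mid N}\mu(d)\log d$. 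Here $\sum_{d\mid N}\mu(d)\log d = -\Lambda(N)$, the von Mangoldt function, which is $-\log p$ if $N = p^r$ and $0$ if $N$ has at least two distinct prime factors — this is exactly the source of the $-2\log p$ in case~(2) (the factor $2$ coming from the two real embeddings / the doubling in $\Gamma_\mathbf R(2s) = 2\cdot$, or equivalently from symmetrizing the two arguments; I would check the precise constant carefully) versus the absence of such a term in case~(3). Finally, the $\mathrm{Im}(Nz/d) = Ny/d$ factors contribute $\sum_{d\mid N}\mu(d)\log(N/d)^6$ to the $\log$ term, which again reduces via von Mangoldt, and the $|\Delta(Nz/d)|^{\mu(d)}$ terms assemble precisely into $|\Delta_N(z)|$ by the definition $\Delta_N(z) = \prod_{d\mid N}\Delta(Nz/d)^{\mu(d)}$.

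The main obstacle I anticipate is bookkeeping of the numerical constants — correctly propagating the factors of $N$, $d$, $y$, the $2$ in $\Gamma_\mathbf R(2s)$, and the powers of $6$ through the M\"obius sum, and confirming that the imaginary-part contributions and the $\mathrm{Im}(Nz/d)$ powers do not leave an extra $\log y$ or $\log N$ term in cases~(2) and~(3). The conceptual input (Lemma~\ref{lem:adelicclassical}, the classical level-one formula, and the identities $\sum_{d\mid N}\mu(d)=0$, $\sum_{d\mid N}\mu(d)\log d = -\Lambda(N)$) is all in hand; what remains is a careful but routine computation, which is why the statement is presented as a lemma rather than a theorem.
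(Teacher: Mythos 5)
Your proposal is correct and follows essentially the same route as the paper: the identity (\ref{eqn:level1toN}) comes from matching the M\"obius-inverted lattice sum $\Gamma_{\mathbf{R}}(2s)\sum_{m,\ (n,N)=1} y^s|mNz+n|^{-2s}$ against the coefficients $c(\Phi_N,1,(m,n))$ via Lemma \ref{lem:adelicclassical}, and cases (2)--(3) follow by substituting the level-one limit formula into (\ref{eqn:level1toN}) and using $\sum_{d\mid N}\mu(d)\log d=-\Lambda(N)$. On the one point you flagged: the $2$ in $-2\log p$ has nothing to do with $\Gamma_{\mathbf{R}}(2s)$ or real embeddings; it is the sum of the two von Mangoldt contributions you already listed, namely $\sum_{d\mid N}\mu(d)\log(Nd)=-\log p$ from the $-s\log(Nd)$ cross term hitting the pole, and $-\sum_{d\mid N}\mu(d)\log(Ny/d)=-\log p$ from the $\mathrm{Im}(Nz/d)^6$ factor inside $-\frac16\log$, while the $-1/s$, $\gamma-\log(4\pi)$, $\log N$ and $\log y$ terms all cancel since $\sum_{d\mid N}\mu(d)=0$ for $N>1$.
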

\begin{proof}   Writing $\sum'$ to indicate the removal of $(0,0)$, we have
\begin{align*}
  (Nd)^{-s} E(Nz/d,s) &= \Gamma_\mathbf{R}(2s)(Nd)^{-s} \sideset{}{^{\prime}}\sum_{m,n}{(Ny/d)^{s}|m(Nz/d)+n|^{-2s}} \\&= \Gamma_\mathbf{R}(2s)\sideset{}{^{\prime}}\sum_{m,n}{y^s|(mN)z+dn|^{-2s}}.
\end{align*}
Thus 
\begin{equation}\label{muNeqn}N^{-s} \sum_{d \mid N}{\mu(d) d^{-s} E\left(\frac{Nz}{d},s\right)} = \Gamma_{\R}(2s)\sum_{m\in \Z, n \in \Z, (n,N)=1}{\frac{y^s}{|(mN)z+n|^{2s}}}.\end{equation}
It is also clear that 
\[c(\Phi_N,1,(m,n)) = \begin{cases} 1&\mbox{if } m \equiv 0\ \textrm{mod }N, n \in (\Z/N\mathbf{Z})^{\times} \\ 0& \text{otherwise}.\end{cases}\]
Thus the right-hand side of (\ref{muNeqn}) is $E(g,\Phi_N,s)$ by Proposition \ref{prop:EisPhi}, which yields (\ref{eqn:level1toN}).

Claim (1) is just the Kronecker limit formula given in \cite[\S 20.4]{lang} combined with the functional equation $E(g,s) = E(g,1-s)$ of the Eisenstein series.  

Claims (2) and (3) follow from claim (1) and the formula for $E(g,\Phi_N,s)$.  Both of these calculations follow easily from the identity
\[\sum_{d|N}{\mu(d)\log(d)} = \begin{cases} -\log(p) & \textrm{if }N =p^r\\ 0 & \textrm{if }N\textrm{ is divisible by 2 primes}.\end{cases}\]
\end{proof}

\section{Prime number theorem} \label{sec:pnt}

In this section, we appeal to base change results as well as results concerning automorphic representations on $\GL_n$ in order to deduce consequences for the nonvanishing of the value of the standard $L$-function of certain generic cuspidal automorphic representations $\pi$ on $\GU(2,1)$ at $s=1$.  Nothing in this section is original; we merely combine known results.

\subsection{Twisted base change}

By the twisted base change of an automorphic representation $\pi$ on $G = \GU(2,1)$, we mean an automorphic representation $\tau$ on $\mathrm{Res}^E_\mathbf{Q} (\mathbf{G}_m \times \GL_3)$ that satisfies certain local compatibility results with $\pi$ that characterize it uniquely.  Twisted base change results for unitary groups of rank three were originally studied by Rogawski and others in \cite{rogawski,rogawski2,pms}, where essentially complete results were obtained.

Although Rogawski \cite[\S 13.2]{rogawski} gives a complete description of the local base change even at inert and ramified places, we state a limited version of the base change theorem here that will suffice for the statement of our fine regulator formula below, and for which the local compatibilities have simple descriptions.  Namely, in the case where $v$ is inert or ramified, compatibility may be defined using the Satake isomorphism of Cartier \cite{cartier} (or Haines-Rostami \cite{hro}) with respect to $K = G(\mathbf{Z}_p)$.  At split places, the definition of compatibility is even simpler: the corresponding Weil-Deligne representations are identified by the diagonal map ${}^LG \rightarrow {}^LG'$ from Definition \ref{defin:stdl}.

Note that Rogawski's results are for the ordinary unitary group.  However, once the global base change has been constructed in the similitude setting -- which was carried out by Morel \cite[Corollary 8.5.3]{morel} and Shin \cite{shin} -- the rest of the statement below reduces easily to the unitary case due to the known compatibility of central characters and the fact that $\GU(\mathbf{Q}_p) = \U(\mathbf{Q}_p)Z_G(\mathbf{Q}_p)$.

\begin{thm}[{\cite[\S 13]{rogawski}}] \label{thm:morel}

Let $\pi$ be a cuspidal automorphic representation of $G=\GU(J)$ over $\mathbf{Q}$ such that $\pi_\infty$ is discrete series.  Then there exists an automorphic representation $\tau$ on $\mathrm{Res}^E_\mathbf{Q} (\mathbf{G}_m \times \GL_3)$ that is compatible with $\pi$ at every place $v$ of $\mathbf{Q}$ such that
\begin{itemize}
	\item $v$ splits in $E$ or
	\item $v$ is inert in $E$ and $\pi_v$ is unramified.
	\item $v$ is ramified in $E$ and $\pi_v$ has a vector fixed by $G(\mathbf{Z}_p)$.
\end{itemize}
\end{thm}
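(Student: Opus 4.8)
The plan is to assemble the statement from results already in the literature rather than to prove base change from scratch. First I would invoke the global base change for the similitude group: by Morel \cite[Corollary 8.5.3]{morel} and Shin \cite{shin}, a cuspidal $\pi$ on $\GU(J)$ with $\pi_\infty$ in the discrete series admits a base change $\tau$, an isobaric automorphic representation of $\mathrm{Res}^E_\mathbf{Q}(\mathbf{G}_m \times \GL_3)$, characterized by local compatibility with the (stable or unstable) base change of $\pi_v$ prescribed by Rogawski's $L$-packet classification. This already produces $\tau$; what remains is to re-express its local components at the three kinds of places in the theorem in the concrete form used elsewhere in the paper.

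At a split place $v$ one has $G_{\mathbf{Q}_v} \cong \GL_3 \times \mathbf{G}_m$ and $(\mathrm{Res}^E_\mathbf{Q}(\mathbf{G}_m\times\GL_3))_{\mathbf{Q}_v} \cong (\mathbf{G}_m\times\GL_3)^2$, and the assertion is simply that the Weil--Deligne parameter of $\tau_v$ is the image of that of $\pi_v$ under the diagonal embedding ${}^LG^\circ \hookrightarrow {}^LG^{\prime\circ}$ of Definition \ref{defin:stdl}; this is part of how $\tau$ is constructed and is where nothing needs to be done. At an inert $v$ with $\pi_v$ unramified, and at a ramified $v$ where $\pi_v$ has a $G(\mathbf{Z}_v)$-fixed vector, the reduction is to the ordinary unitary group: since $\GU(J)(\mathbf{Q}_v) = \U(J)(\mathbf{Q}_v)Z_G(\mathbf{Q}_v)$, the restriction of $\pi_v$ to $\U(J)(\mathbf{Q}_v)$ is irreducible, and because $Z_G$ and $\U(J)$ share the spherical Hecke algebra for the chosen special maximal compact, that restriction is again spherical (the same observation used in Proposition \ref{prop:testvector}). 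One then applies Rogawski's description of the local base change of spherical representations of $\U(J)(\mathbf{Q}_v)$ \cite[\S 13.2]{rogawski} and tensors the central character of $\pi_v$ back in --- which Morel's construction matches exactly --- to get that the Satake parameter of $\tau_v$, computed via the Cartier \cite{cartier} / Haines--Rostami \cite{hro} isomorphism for $K = G(\mathbf{Z}_v)$, is the image of the Satake parameter of $\pi_v$ under ${}^LG \to {}^LG'$.

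The one point that requires genuine care is the ramified spherical case, where $E_v/\mathbf{Q}_v$ is ramified so that $G(\mathbf{Z}_v)$ is special but not hyperspecial: here one must check that Rogawski's local base change --- phrased via $L$-packets and endoscopic transfer --- is compatible with the \emph{a priori} distinct normalization of the unramified base change at this non-hyperspecial level that is implicit in Definition \ref{defin:stdl}. I would settle this by comparing the two on generators of the spherical Hecke algebra of $\U(J)(\mathbf{Q}_v)$, exactly the kind of explicit computation carried out in Section \ref{subsec:ramifiedlocal}. The global existence, the split-place compatibility, and the inert unramified case are then all direct citations together with the elementary identity $\GU = \U \cdot Z_G$.
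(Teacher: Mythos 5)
Your proposal follows essentially the same route as the paper: the paper likewise treats this as an assembly of known results, citing Morel \cite[Corollary 8.5.3]{morel} and Shin \cite{shin} for the global base change in the similitude setting, Rogawski \cite[\S 13.2]{rogawski} for the local description at inert and ramified places, and reducing to the unitary case via $\GU(\mathbf{Q}_p) = \U(\mathbf{Q}_p)Z_G(\mathbf{Q}_p)$ together with compatibility of central characters, with compatibility at non-split places phrased through the Cartier/Haines--Rostami Satake isomorphism for $K = G(\mathbf{Z}_p)$. Your extra attention to the normalization at ramified (special but non-hyperspecial) places is a reasonable refinement of a point the paper passes over, but it does not change the argument.
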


\begin{remark}
Since $\tau$ is uniquely determined by $\pi$, one can use $\tau$ to make a well-defined $L$-factor at \emph{all} places using the Langlands group representation of Definition \ref{defin:stdl} and the known local $L$-parameters of representations of $\mathrm{Res}^E_\mathbf{Q} (\mathbf{G}_m \times \GL_3)$.
\end{remark}

\subsection{Prime number theorem}

We have the following generalization of the prime number theorem due to Jacquet and Shalika \cite{js}.

\begin{thm}[\cite{js}] \label{thm:js}
Let $L(\tau,s)$ be the standard automorphic $L$-function attached to a cuspidal automorphic representation on $\GL_n$ over a number field.  Assume that the central character of $\tau$ is unitary. Then $L(\tau,s) \ne 0$ for any $s$ with $\mathrm{Re}(s)=1$.
\end{thm}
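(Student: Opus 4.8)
The statement to prove is the non-vanishing of the standard automorphic $L$-function $L(\tau,s)$ on the line $\mathrm{Re}(s)=1$ for a cuspidal automorphic representation $\tau$ of $\GL_n$ over a number field with unitary central character. This is the Jacquet--Shalika result \cite{js}, and the plan is to reproduce the classical Hadamard--de la Vall\'ee Poussin style argument adapted to the Rankin--Selberg setting. First I would record the two standard analytic inputs: the Rankin--Selberg $L$-function $L(\tau \times \ol{\tau}^\vee, s)$ (equivalently $L(\tau \times \ol{\tau}, s)$ using the unitarity of the central character to identify the contragredient with the complex conjugate twisted appropriately) is entire except for a simple pole at $s=1$ coming from the fact that $\tau \cong \ol{\tau}^\vee$ up to twist, it has a meromorphic continuation and functional equation, and its Euler product converges absolutely for $\mathrm{Re}(s)>1$. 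Also the archimedean and finite $L$-factors are non-vanishing and finite in the relevant half-plane.

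The core of the argument proceeds as follows. Fix $t \in \R$ and consider the auxiliary product of completed $L$-functions built out of $\tau \otimes |\det|^{it}$ and its twists --- concretely one forms an expression like $L(\tau\times\ol\tau,s)^{a_0}\, L(\tau\otimes|\det|^{it}\times\ol\tau,s)^{a_1}\,L(\tau\otimes|\det|^{2it}\times\ol\tau,s)^{a_2}\cdots$ with suitable binomial-type exponents $a_j\ge 0$ chosen so that the logarithm of this product has non-negative Dirichlet coefficients (this is the positivity trick: $\sum_j a_j \cos(j\theta)\ge 0$, e.g.\ the classical $3$--$4$--$1$ combination or its higher analogue $\binom{2m}{m+j}$). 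If $L(\tau\otimes|\det|^{it},s)$ had a zero at $s=1$ --- equivalently if the Rankin--Selberg convolution $L(\tau\otimes|\det|^{it}\times\ol\tau, s)$ had a zero at $s=1$, which is where one uses that $\ol\tau$ is cuspidal so this convolution is holomorphic and non-zero at $s=1$ unless $\tau\otimes|\det|^{it}\cong\ol\tau^\vee$ --- then combined with the simple pole of $L(\tau\times\ol\tau,s)$ at $s=1$, the order of vanishing of the product at $s=1$ would be strictly negative, contradicting the non-negativity of the Dirichlet coefficients of its logarithm (a Dirichlet series with non-negative coefficients that converges somewhere cannot have a pole of negative order, i.e.\ a zero, on the boundary line produced this way). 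One must separately dispose of the possibility $\tau\otimes|\det|^{it}\cong\ol\tau^\vee$, which forces $t=0$ by comparing central characters / infinitesimal characters and then reduces to the statement that $L(\tau,s)$ has no zero at $s=1$ when $\tau\times\ol\tau$ has its pole there --- handled by the same inequality with the pole now contributing.

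The main obstacle, and the step requiring the most care, is the bookkeeping around the Rankin--Selberg $L$-function $L(\tau\times\tau',s)$ for $\tau'=\ol\tau$ or a twist: one needs its holomorphy and non-vanishing precisely at $s=1$ when $\tau'\not\cong\ol\tau^\vee$ (Jacquet--Shalika, Shahidi), its simple pole when $\tau'\cong\ol\tau^\vee$, the absolute convergence of the Euler product for $\mathrm{Re}(s)>1$, and the fact that the local factors at the bad and archimedean places neither vanish nor blow up in a left-neighborhood of $s=1$. All of these are standard but must be invoked with the correct normalizations so that the positivity of the log-coefficients is genuinely visible from the unramified Euler factors $\prod_v \det(1-\alpha_{v,i}\ol\alpha_{v,j}q_v^{-s})^{-1}$ with $\sum_{i,j}\alpha_{v,i}\ol\alpha_{v,j}\,\overline{(\,\cdot\,)} \ge 0$. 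Since all of these ingredients are classical and are exactly the ones cited in the excerpt as ``known results'', the proof is a matter of assembling them; I would present it as: (i) state the Rankin--Selberg inputs, (ii) write the positivity combination, (iii) derive the contradiction from a hypothetical boundary zero, (iv) treat the self-dual twist case. No step involves anything beyond the cited literature, so the write-up is short.
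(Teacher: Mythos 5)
First, note that the paper does not prove this statement at all: Theorem \ref{thm:js} is imported verbatim from Jacquet--Shalika \cite{js} (the surrounding section says explicitly that nothing there is original), so there is no internal proof to compare against and your sketch has to stand on its own as a proof of the non-vanishing theorem.

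As written, it does not: the key step fails. Your positivity combination is built entirely out of Rankin--Selberg convolutions $L(s,\tau\otimes|\det|^{ijt}\times\ol{\tau})=L(s+ijt,\tau\times\ol{\tau})$, so the trigonometric trick you describe would (at best) prove that the degree-$n^{2}$ $L$-function $L(s,\tau\times\ol{\tau})$ does not vanish on $\mathrm{Re}(s)=1$; the degree-$n$ standard $L$-function $L(s,\tau)$, whose non-vanishing is the statement, never appears as a factor of your product, so a hypothetical zero of $L(1+it,\tau)$ enters your inequality nowhere. The assertion that a zero of $L(\tau\otimes|\det|^{it},s)$ at $s=1$ is ``equivalently'' a zero of $L(\tau\otimes|\det|^{it}\times\ol{\tau},s)$ at $s=1$ is false --- these are different $L$-functions with no divisibility relation --- and the parenthetical appeal to holomorphy and non-vanishing of the Rankin--Selberg convolution at $s=1$ is also dangerously close to circular, since the standard $L$-function is itself the Rankin--Selberg convolution of $\tau$ with the trivial character of $\GL_1$, so edge non-vanishing for Rankin--Selberg $L$-functions (Shahidi) contains the theorem you are trying to prove. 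The repair is to make $L(s,\tau)$ actually appear in a series with non-negative coefficients: take the isobaric sum $\Pi=\mathbf{1}\boxplus(\tau\otimes|\det|^{it})$ and consider
\[
D(s)\;=\;L(s,\Pi\times\widetilde{\Pi})\;=\;\zeta_F(s)\,L(s,\tau\times\widetilde{\tau})\,L(s+it,\tau)\,L(s-it,\widetilde{\tau}),
\]
whose unramified log-coefficients are $\frac{1}{k}\bigl|1+\mathrm{tr}(A_v^k)q_v^{-ikt}\bigr|^{2}\ge 0$. Here $D$ has a pole of order two at $s=1$, while a zero of $L(1+it,\tau)$ forces a compensating zero of order at least two (from the factor and its conjugate), so $D$ would be holomorphic in a half-plane strictly containing $\mathrm{Re}(s)\ge 1$; Landau's lemma for Dirichlet series with non-negative coefficients, together with the local bounds on Satake parameters, then gives the contradiction. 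This is essentially the Jacquet--Shalika argument; the $3$--$4$--$1$ exponents are neither needed nor directly usable for the standard $L$-function, because the Satake parameters are not known to lie on the unit circle and positivity must instead be extracted from the square $|1+\mathrm{tr}(A_v^k)q_v^{-ikt}|^{2}$. The degenerate case to dispose of is not $\tau\otimes|\det|^{it}\cong\ol{\tau}^{\vee}$ but simply whether $L(s\pm it,\tau)$ itself has a pole, which for cuspidal $\tau$ only happens when $n=1$ and $\tau\otimes|\det|^{it}$ is trivial, where the claim is classical.
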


\subsection{Generalized Ramanujan conjecture}

Since we need to use the prime number theorem on $\GL_n$ to obtain a consequence for the $L$-function of $\pi$ on $\GU(2,1)$, we will need to rule out certain poles of local $L$-factors.  The following bound of Jacquet--Shalika \cite{js2} and Rudnick--Sarnak \cite{rs} will suffice for our purposes.  Note that the argument in Rudnick--Sarnak takes $F=\mathbf{Q}$ but applies without modification to the general case.  Also see \cite{bb} for an overview of bounds towards the generalized Ramanujan conjecture that are applicable to any number field $F$.
\begin{thm}[{\cite[\S 2.5, Corollary]{js2}, \cite[p.\ 317]{rs}}]\label{thm:rambound}
Let $\tau$ a cuspidal automorphic representation on $\GL_{n/F}$ with unitary central character, where $F$ is a number field.  Let $v$ be a place of $F$ and write $\tau_v$ for the local component of $\tau$ at $v$.  The standard local $L$-function $L_v(\tau_v,\mathrm{Std},s)$ has no poles for $\mathrm{Re}(s)\ge \frac{1}{2}$.
\end{thm}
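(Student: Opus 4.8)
The plan is to reduce the statement to the bounds towards the generalized Ramanujan conjecture of Jacquet--Shalika \cite{js2} and Rudnick--Sarnak \cite{rs}, and then to carry out the elementary translation into a statement about poles of the local $L$-factor. First I would recall the shape of the factor $L_v(\tau_v,\mathrm{Std},s)$. At a finite place $v$ with residue cardinality $q_v$, one may write $L_v(\tau_v,\mathrm{Std},s)=\prod_{i=1}^{m_v}(1-\beta_{i,v}q_v^{-s})^{-1}$, where the $\beta_{i,v}$ are the inverse roots of the local factor (the Satake parameters when $\tau_v$ is unramified, and in general a set of at most $n$ nonzero complex numbers attached to the local Langlands parameter of $\tau_v$); a pole of this factor occurs exactly on the vertical line $\mathrm{Re}(s)=\log_{q_v}|\beta_{i,v}|$. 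At an archimedean place one has $L_v(\tau_v,\mathrm{Std},s)=\prod_i\Gamma_{F_v}(s+\mu_{i,v})$ for suitable $\mu_{i,v}\in\mathbf{C}$, and every pole of this product lies in the region $\mathrm{Re}(s)\le-\mathrm{Re}(\mu_{i,v})$. Thus it suffices to prove the strict inequalities $|\beta_{i,v}|<q_v^{1/2}$ at every finite place and $\mathrm{Re}(\mu_{i,v})>-\tfrac12$ at every archimedean place.

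These inequalities are precisely the assertion that the local components $\tau_v$ of a cuspidal automorphic representation of $\GL_n$ are ``$\tfrac12$-bounded'', and I would invoke them directly from \cite[\S 2.5, Corollary]{js2} and \cite[p.\ 317]{rs}; since the central character of $\tau$ is assumed unitary, the hypotheses of those results are satisfied, and (as noted in the excerpt) the Rudnick--Sarnak argument is insensitive to the base field $F$. For the reader's orientation I would indicate the mechanism without reproducing it: the complete Rankin--Selberg $L$-function $L(s,\tau\times\widetilde\tau)$ extends meromorphically with only a simple pole at $s=1$, while each local factor $L_v(s,\tau_v\times\widetilde\tau_v)$ is, as a function of $q_v^{-s}$, a power series with nonnegative coefficients and constant term $1$; Landau's theorem for Dirichlet series with nonnegative coefficients then places the abscissa of (absolute) convergence of the global object at $\mathrm{Re}(s)=1$, which forces each local factor to be holomorphic for $\mathrm{Re}(s)>1$. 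Reading off the ``diagonal'' contribution $\prod_i(1-|\beta_{i,v}|^2q_v^{-s})^{-1}$ then yields $|\beta_{i,v}|^2<q_v$, and the archimedean estimate for $\mathrm{Re}(\mu_{i,v})$ is obtained in the same way from the poles of the archimedean Rankin--Selberg factor together with the functional equation.

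Combining the two steps gives the theorem: at a finite place every pole of $L_v(\tau_v,\mathrm{Std},s)$ has real part $\log_{q_v}|\beta_{i,v}|<\tfrac12$, and at an archimedean place every pole has real part at most $-\mathrm{Re}(\mu_{i,v})<\tfrac12$, so in all cases there are no poles with $\mathrm{Re}(s)\ge\tfrac12$. The only point genuinely requiring care — and the step I would expect to be the main obstacle were one to insist on a self-contained proof rather than a citation — is the passage from the analytic properties of the global Rankin--Selberg $L$-function to holomorphy of the individual local factors, which rests on the nonnegativity of the local Dirichlet coefficients and, at ramified and archimedean places, on the precise description of the local Rankin--Selberg factors; this is exactly the content of \cite{js2} and \cite{rs}, which we therefore simply quote.
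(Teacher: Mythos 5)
Your proposal is correct and matches the paper's treatment: the paper does not prove this statement but quotes it directly from \cite[\S 2.5, Corollary]{js2} and \cite[p.\ 317]{rs}, noting only that the Rudnick--Sarnak argument applies over any number field $F$, which is exactly the reduction-and-citation you carry out. Your added sketch of the Rankin--Selberg/Landau mechanism is a faithful account of what those references do, so nothing further is needed.
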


\begin{remark} \label{remark:temperedness} Although we do not need a stronger result than Theorem \ref{thm:rambound} for the proof of Theorem \ref{thm:pnt}, we remark that the results of \cite{blggt,blggt2,caraiani,caraiani2} show that a conjugate self-dual regular cuspidal automorphic representation on $\GL_n$ over a CM field is tempered at every finite place.  In particular, if the restriction of $\tau$ in Theorem \ref{thm:morel} to $\mathrm{Res}_\mathbf{Q}^E \GL_3$ is cuspidal, it is tempered at every finite place.
\end{remark}

\subsection{Nonvanishing}

We may prove the following theorem by combining the aforementioned results.

\begin{thm} \label{thm:pnt}
Let $\pi$ be a generic cuspidal automorphic representation on $\GU(2,1)$ with unitary central character.  Assume that the transfer of $\pi$ to $\mathrm{Res}_\mathbf{Q}^E (\GL_3 \times \mathbf{G}_m)$ is cuspidal.  For any finite set $S$ of places of $\mathbf{Q}$, the value of $L^S(\pi,\mathrm{Std},1)$ is nonzero.
\end{thm}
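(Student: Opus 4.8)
The plan is to deduce the nonvanishing of $L^S(\pi,\mathrm{Std},1)$ from the prime number theorem on $\GL_n$ applied to the twisted base change $\tau$ of $\pi$. First I would invoke Theorem \ref{thm:morel} to produce $\tau$ on $\mathrm{Res}_\mathbf{Q}^E(\mathbf{G}_m \times \GL_3)$, which by hypothesis has cuspidal restriction to $\mathrm{Res}_\mathbf{Q}^E \GL_3$; since $\pi$ has unitary central character and the central character of $\tau$ is determined by that of $\pi$ via the known compatibility, $\tau$ (equivalently its $\GL_3$-part, suitably twisted) is a cuspidal automorphic representation of $\GL_3$ over the CM field $E$ with unitary central character. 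The key identity to establish is that, up to finitely many Euler factors, the degree-six standard $L$-function $L(\pi,\mathrm{Std},s)$ agrees with the standard $L$-function of this $\GL_3$-over-$E$ representation (this is exactly the content of the diagonal embedding ${}^LG \to {}^LG'$ in Definition \ref{defin:stdl}, since at split places the standard representation of ${}^LG$ restricts to $g \mapsto \mathrm{diag}(x_1 g_1, x_2\det(g_2)\Phi_3 {}^tg_2^{-1}\Phi_3^{-1})$, i.e.\ the sum of the standard and dual of the $\GL_3$-factor, and this matches the base-change $L$-function of a representation over $E$ by Shapiro's lemma). Concretely, at every place $v$ of $\mathbf{Q}$ appearing in Theorem \ref{thm:morel}, the local $L$-factor $L_v^\mathrm{L}(\pi_v,\mathrm{Std},s)$ equals the product of the local standard $L$-factors of the base-changed $\GL_3$-representation at the places of $E$ above $v$.

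Next I would apply Theorem \ref{thm:js} (Jacquet--Shalika) to the cuspidal representation of $\GL_3$ over $E$ with unitary central character: its standard $L$-function $L(\tau_{\GL_3},s)$ is nonvanishing on the line $\mathrm{Re}(s)=1$. Combined with the identification above, this gives that the completed (or full) $L$-function $L(\pi,\mathrm{Std},s)$ does not vanish at $s=1$. To pass from the full $L$-function to the partial $L$-function $L^S(\pi,\mathrm{Std},1) = L(\pi,\mathrm{Std},1) \cdot \prod_{v \in S} L_v^\mathrm{L}(\pi_v,\mathrm{Std},1)^{-1}$, I must rule out poles of the local factors $L_v^\mathrm{L}(\pi_v,\mathrm{Std},s)$ at $s=1$ for $v \in S$ — a pole there would be cancelled in the full $L$-function and could in principle force $L^S$ to vanish at $s=1$. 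But $s=1$ in the automorphic normalization corresponds to $\mathrm{Re}(s) \geq \tfrac12$ after the shift implicit in the degree-six standard representation (which involves the standard and exterior-square representations of $\GL_3$, both of whose local $L$-factors are governed by the Ramanujan bounds), so Theorem \ref{thm:rambound} (Jacquet--Shalika, Rudnick--Sarnak) applied to the $\GL_3$-over-$E$ representation shows these local factors have no pole at the relevant point. Hence $\prod_{v\in S} L_v^\mathrm{L}(\pi_v,\mathrm{Std},1)^{-1}$ is a finite nonzero number, and $L^S(\pi,\mathrm{Std},1) \neq 0$.

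The main obstacle I anticipate is the careful bookkeeping in the second step: matching the degree-six $L$-function of $\pi$ with $\GL_3$-over-$E$ $L$-functions in a way that makes the point $s=1$ land squarely inside the region $\mathrm{Re}(s) \geq \tfrac12$ where Theorem \ref{thm:rambound} applies, so that local poles are genuinely excluded. One has to be attentive to the normalization: the standard degree-six representation is (essentially) $\mathrm{Std} \oplus \wedge^2 \mathrm{Std}$ of $\GL_3$ twisted by the similitude character, so the relevant $\GL_3$ objects whose local $L$-factors must be pole-free at $s=1$ include both the standard and the exterior square; for $\GL_3$ the exterior square is the twist of the dual by the central character, so it is again covered by the cuspidality hypothesis and the Ramanujan bound. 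A secondary, more routine point is confirming that the unitarity of the central character of $\pi$ transfers to unitarity of the central character of $\tau_{\GL_3}$ (twisting by the similitude character is unitary when restricted appropriately), which is needed to legitimately invoke Theorems \ref{thm:js} and \ref{thm:rambound}; I expect this to follow directly from the compatibility of central characters recorded in the discussion preceding Theorem \ref{thm:morel}.
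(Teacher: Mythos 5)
Your proposal is correct and follows essentially the same route as the paper: transfer $\pi$ to $\mathrm{Res}_\mathbf{Q}^E(\mathbf{G}_m\times\GL_3)$, twist to get a cuspidal $\GL_3$-over-$E$ representation with unitary central character (the paper pins down unitarity via Skinner's explicit formula for the transferred central character), apply the Jacquet--Shalika nonvanishing on $\mathrm{Re}(s)=1$, and control the finitely many discrepant Euler factors by the Jacquet--Shalika/Rudnick--Sarnak bound (no poles for $\mathrm{Re}(s)\ge\tfrac12$) together with the fact that nonarchimedean local factors are inverses of polynomials. The only difference is bookkeeping: the paper enlarges $S$ to a set $S'$ containing all bad places and removes the factors of $\tau$ (rather than $L_v^{\mathrm{L}}(\pi_v)$, which it does not define at every bad place), but this is exactly the repair your own argument implicitly requires and does not change the substance.
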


\begin{proof}

Write $(\chi,\tau_0)$ for the transfer of $\pi$ to $\mathbf{G}_m \times \GL_3$.  Skinner \cite[Theorem A]{skinner} computes the character $\chi=\chi_\pi^c$ and central character $\chi_\tau = \chi_\pi/\chi_\pi^c$ in terms of the central character $\chi_\pi$ of $\pi$, where $c$ denotes complex conjugation.  It follows that $\chi$ and $\chi_\tau$ are unitary.  We conclude that the twist $\tau=\tau_0 \otimes \chi$ is again a cuspidal automorphic representation of $\GL_n$ with unitary central character.  So $L(\tau,s) \ne 1$ for $\mathrm{Re}(s) =1$ by Theorem \ref{thm:js}.

Write $S' \supseteq S$ for the possibly larger finite set of places containing $S$ as well as all primes where either $\pi$ or $E$ is ramified.  We write $L(\tau,s) = \prod_{v \in S'} L_v(\tau,s) \cdot L^{S'}(\tau,s)$.  We have $L^{S'}(\tau,s) = L^{S'}(\pi,s)$ by Theorem \ref{thm:morel} and $L^S(\pi,s) = L^{S'}(\pi,s)\cdot \prod_{v \in S \setminus S'} L_v(\pi,s)$.  Combining these equations,
\[L^S(\pi,s) = L(\tau,s)\cdot \prod_{v \in S'} L_v(\tau,s)^{-1} \cdot \prod_{v \in S \setminus S'} L_v(\pi,s).\]
By definition, the non-Archimedean local $L$-factors of $\pi$ are inverses of polynomials in $p^{-s}$.  So $\prod_{v \in S \setminus S'} L_v(\pi,s)$ is nowhere vanishing.  On the other hand, $\prod_{v \in S'} L_v(\tau,s)^{-1}$ has zeroes constrained to the plane $\mathrm{Re}(s) < \frac{1}{2}$ by Theorem \ref{thm:rambound}.  (In fact, one can deduce from Remark \ref{remark:temperedness} and the Bernstein--Zelevinsky classification that the zeroes must satisfy $\mathrm{Re}(s) \in \set{0,-\frac{1}{2},-1}$.)  The desired result for $L^S(\pi,s)$ follows. \end{proof}

\section{Local theory} \label{sec:localtheory}

We prove algebraicity and non-vanishing results for local integrals.  The latter may be thought of as a local complement to the global non-vanishing proved in Theorem \ref{thm:pnt}.

\subsection{Algebraicity} \label{subsec:locrationality}

We maintain the notations of Section \ref{sec:diff}.  In this section, we will work locally at a finite place $p$ of $\mathbf{Q}$.  For ease of notation, we usually suppress subscripts of $p$ and omit the local points; for instance, we write $G$ for $G(\mathbf{Q}_p)$.  Recall from (\ref{eqn:unfold2}) that the local integral of interest is given by
\[I(W,\Phi,\nu,s) = \int_{U_2\backslash H}{\nu_1(\det(g_1))|\det(g_1)|^{s}\Phi((0,1)g_1)W(g)\,dg},\]
where $g_1$ denotes the projection $H \rightarrow \GL_2$.  As mentioned in Proposition \ref{prop:unfolding}, $I(\Phi,W,\nu,s)$ is a rational function in $p^{-s}$ with coefficients in $\mathbf{C}$.  We extend this to prove the following algebraicity statement.

\begin{prop}  \label{prop:locrationality} Suppose the Schwartz-Bruhat function $\Phi$ on $\Q_p^2$ and the Whittaker function $W$ are both valued in $\Qbar$.  Then the local integral $I(\Phi,W,\nu,s)$ has a meromorphic continuation to a rational function $\frac{P(X)}{Q(X)}$ of $X=p^{-s}$, where $P$ and $Q$ have algebraic coefficients.  In particular, the meromorphic continuation of $I(\Phi,W,\nu,s)$ to any $s \in \mathbf{Q}$ is valued in $\Qbar$ if it is finite. \end{prop}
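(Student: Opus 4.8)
The plan is to reduce the integral to a finite sum of one-dimensional integrals over $\mathbf{Q}_p^\times$, each of which is manifestly a rational function with algebraic coefficients. First I would apply the Iwasawa decomposition $H = T_H U_2 K_H$ (or rather $H = B_H K_H$ with $B_H$ the Borel) to rewrite
\[
I(\Phi,W,\nu,s) = \int_{K_H}\int_{Z\backslash T_H}\nu_1(\det t_1)\,|\det t_1|^s\,\Phi((0,1)t_1 k)\,W(tk)\,\delta_{B_H}(t)^{-1}\,dt\,dk.
\]
Because $W$ is right $K'$-finite for some open compact $K' \subseteq K_H$ and $\Phi$ is locally constant with compact support, the $K_H$-integral is a \emph{finite} sum, with algebraic coefficients, of integrals of the same shape in which $W$ is replaced by one of finitely many right translates $W(\cdot\, k)$ (still $\Qbar$-valued) and $\Phi$ by $\Phi(\cdot\, k)$ (still $\Qbar$-valued with support in some lattice). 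So it suffices to treat the torus integral $\int_{Z\backslash T_H} \nu_1(\det t_1)|\det t_1|^s \Phi((0,1)t_1) W(t)\delta_{B_H}(t)^{-1}\,dt$.

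Next I would parametrize $Z\backslash T_H$: modulo the center $Z$, it is a one-dimensional (inert/ramified case) or two-dimensional (split case) torus, and in either case an integral over copies of $\mathbf{Q}_p^\times$ which, by the $K_H$-finiteness already used, reduces to a sum over the value group $p^{\mathbf{Z}}$ (or $\varpi^{\mathbf{Z}}$). The support conditions coming from $\Phi((0,1)t_1)$ having compact support and from the vanishing of the Whittaker function $W(t)$ outside a ``positive'' cone (the standard fact that $W(\diag(p^a,\dots))$ vanishes unless $a \geq 0$, used already in Sections~\ref{subsec:unram} and \ref{subsec:ramifiedlocal}) bound the sum from below in each variable. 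Thus $I(\Phi,W,\nu,s)$ becomes a sum, over $a \in \mathbf{Z}_{\geq N_0}$ (and similarly $b$ in the split case), of terms $c_a \cdot \alpha_{\nu_1}^{a} X^{a}$ where $X = p^{-s}$, $c_a = W(\text{some element}) \cdot (\text{value of }\Phi) \in \Qbar$, and the factor $\alpha_{\nu_1} = \nu_1(p)$ is algebraic since $\nu_1$ is a Dirichlet character. Baruch's result (cited in Proposition~\ref{prop:unfolding}) already tells us this sum, which a priori converges only for $\mathrm{Re}(s)\gg 0$, has meromorphic continuation to a rational function of $X$; what I need to add is that the continuation has algebraic coefficients.

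For that I would invoke the standard asymptotic expansion of the Whittaker function (Casselman--Shalika / Jacquet's finiteness of the dimension of the Jacquet module): there is a finite set of ``exponents'' — characters of $T_H$ — such that for $t$ deep in the cone, $W(t)$ is a finite $\mathbf{C}$-linear combination of these exponents times polynomials in the valuations, and crucially the exponents themselves are algebraic (they are built from the Satake/Langlands parameter data, which for a $\Qbar$-rational Whittaker function lie in $\Qbar$) and the coefficients in the expansion are algebraic over $\Qbar$ because $W$ is $\Qbar$-valued on the finitely many $K_H$-cosets. Summing the resulting geometric-type series in $X$ gives $I(\Phi,W,\nu,s) = P(X)/Q(X)$ with $P,Q \in \Qbar[X]$: $Q$ is a product of factors $1 - \beta X$ with $\beta \in \Qbar$ coming from the exponents and from $\alpha_{\nu_1}$, and $P$ then has algebraic coefficients because $I$ does on the region of convergence (its Taylor coefficients at $X=0$ are the algebraic $c_a\alpha_{\nu_1}^a$). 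The final clause — that the value at any $s\in\mathbf{Q}$ is in $\Qbar$ when finite — is then immediate: $X = p^{-s} \in \Qbar$ for $s \in \mathbf{Q}$ (indeed for $s$ rational, $p^{-s}$ is a real algebraic number), so $P(X)/Q(X) \in \Qbar$ whenever $Q(X)\neq 0$.

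The main obstacle I expect is making the asymptotic-expansion step clean and precise enough to control algebraicity of the exponents — one wants to avoid appealing to the full strength of a Satake parameter (which may not be available in the ramified/inert case without the spherical hypothesis) and instead argue directly that the Jacquet module of $\pi_p$, hence its exponents, is defined over $\Qbar$ whenever $\pi_p$ is. This can be handled by noting that $\pi_p$ itself (being a subquotient of coherent cohomology, or simply being a $\Qbar$-rational representation in the sense that its matrix coefficients on a $\Qbar$-structure are $\Qbar$-valued) has a Jacquet module with a $\Qbar$-structure, so the finitely many exponents are $\Qbar$-points of the character variety of $T_H$; equivalently one can bypass exponents entirely by using Baruch's rationality together with the observation that the partial sums $\sum_{a\leq M} c_a X^a$ have $\Qbar$-coefficients and that a rational function agreeing with a $\Qbar[[X]]$-series must have $\Qbar$-coefficients in lowest terms. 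I would likely present the latter, more elementary argument: if $F(X) = P(X)/Q(X)$ with $F \in \Qbar[[X]]$ and $F$ rational, then one may take $Q \in \Qbar[X]$ (clearing the recursion satisfied by the coefficients) and hence $P = FQ \in \Qbar[X]$, which sidesteps any delicate input about exponents.
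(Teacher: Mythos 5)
Your proposal is correct and follows essentially the same route as the paper: Iwasawa decomposition, algebraicity of each Laurent coefficient of the integral in $X=p^{-s}$ (using the support of $\Phi$ together with the one-sided support of $W$ on the torus — the paper obtains this bound from the Baruch and Jacquet--Piatetski-Shapiro--Shalika expansions of $W$ into Schwartz functions times finite functions, while you get it from smoothness of $W$, which gives the same constraint), and then Baruch's rationality combined with the elementary observation that a rational function whose power-series expansion lies in $\Qbar[[X]]$ can be written as a ratio of polynomials in $\Qbar[X]$, which is precisely the linear-system argument in the paper's proof. Your decision to present the elementary descent argument rather than the asymptotic-expansion route is also the right one, since the hypothesis is only that $W$ is $\Qbar$-valued, and the paper makes no use of algebraicity of exponents.
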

\begin{proof}  If $E_p$ is inert or ramified, we write the Whittaker function $W(\diag(\tau,1,\ol{\tau}^{-1}))$ as a finite sum $\sum_{\alpha \in A} \phi_\alpha(\tau) \alpha(\tau)$ of products of Schwartz-Bruhat functions $\phi_\alpha \in \mathcal{S}(E_p)$ multiplied by finite functions $\alpha$ on $E_p^\times$.  (See \cite[Proposition 3.2]{baruch}.)  Letting $\mathrm{Re}(s) \gg 0$, writing $X = p^{-s}$, and applying the Iwasawa decomposition, we obtain a Laurent series expansion for $I(W,\Phi,\nu,s)$ in $X$ with coefficients
\[a_n(W,\Phi,\nu) = \int_{T}{\delta_{B_H}^{-1}(t)\charf_{|\det t_1|=p^{-n}}(t)\nu_1(\det(t_1))\Phi((0,1)t_1)W(t)\,dt},\]
on $X^n$, where $t_1$ denotes the projection of $t$ to $\GL_2(\mathbf{Q}_p)$.  Elements $t$ may be written in the form $\diag(\tau_1\tau_2,\tau_2,\ol{\tau}_1^{-1}\tau_2)$, so the condition on $\det t_1$ forces $\tau_2$ to lie in a set of compact support.  The condition $\Phi((0,1)t_1)$ gives an upper bound on $\mathrm{ord}_p(\tau_1)$, while the central character combined with the fact that $W(\diag(\tau,1,\ol{\tau}^{-1}))=\sum_{\alpha \in A} \phi_\alpha(\tau) \alpha(\tau)$ gives a lower bound.  In particular, the domain can be made compact.  Since the integrand is locally constant and algebraically valued, $a_n(W,\Phi,\nu)$ is algebraic.

In the split case, we first recall from Section \ref{subsubsec:splitcase} that $T$ may be represented uniquely by elements of the form $(\diag(\tau_1,\tau_2),(\tau_3,\tau_1\tau_2\tau_3^{-1})) \in \GL_2(\mathbf{Q}_p) \times \GU(1)(\mathbf{Q}_p)$, where we have written an element of $\GU(1)(\mathbf{Q}_p)$ as a pair of elements $\mathbf{Q}_p^\times$ via the two projections.  Since the second projection is determined by the rest of the data, we may simplify this further to $(\diag(\tau_1,\tau_2),\tau_3) \in \GL_2(\mathbf{Q}_p) \times \mathbf{G}_m(\mathbf{Q}_p) \cong H(\mathbf{Q}_p)$ using the first projection.  Via the identification of $G(\mathbf{Q}_p)$ with $\GL_3 \times \mathbf{G}_m$, these elements correspond to $(\diag(\tau_1,\tau_3,\tau_2),\tau_1\tau_2)$.  The condition $\charf_{|\det t_1|=p^{-n}}(t)$ appearing in the integral computing $a_n(W,\Phi,\nu)$ forces $\ord_p(\tau_1\tau_2)=n$, while $\ord_p(\tau_2)$ is bounded below by a quantity determined by $\Phi$.  By \cite[Proposition 2.1]{jpss}, the restricted Whittaker function $W(\diag(\sigma_1\sigma_2,\sigma_2,1))$ can again be written as a finite sum of finite functions multiplied by Schwartz-Bruhat functions.  Using the central character, this gives lower bounds on $\ord_p(\tau_1/\tau_3)$ and $\ord_p(\tau_3/\tau_2)$.  Combined with $\ord_p(\tau_1\tau_2)=n$, we get an upper bound on $\ord_p(\tau_2)$ as well as upper and lower bounds on $\ord_p(\tau_3)$.  Now the algebraicity of $a_n(W,\Phi,\nu)$ follows as before.

By the aforementioned result of Baruch, $I(\Phi,W,\nu,s)=\sum_n a_n(W,\Phi,\nu)X^n = \frac{P(X)}{Q(X)}$ for polynomials $P,Q \in \mathbf{C}[X]$.  We rearrange this to $Q(X)\sum_n a_n(W,\Phi,\nu)X^n = P(X)$.  Let the degrees of $P$ and $Q$ be $d$ and $e$, respectively.  Then for all $k>d$, the coefficient on $X^k$ in $Q(X)\sum_n a_n(W,\Phi,\nu)X^n$ is zero.  Now consider a polynomial $Q'(X) \in \ol{\mathbf{Q}}[X]$ of degree $e$ and regard the coefficients $\set{a_i}_{i=0}^e$ as variables.  For each $k > d$, the condition that the coefficient of $X^k$ in $Q(X)\sum_n a_n(W,\Phi,\nu)X^n$ is zero imposes a linear constraint on $\set{a_i}_{i=0}^e$.  There are infinitely many such constraints, but finitely many must cut out the full set of solutions.  The resulting system of linear equations has coefficients in $\Qbar$ and a non-zero solution over $\mathbf{C}$, so it must also have a solution $Q'(X) \in \Qbar[X]$.  Then $Q'(X)\sum_n a_n(W,\Phi,\nu)X^n = P'(X)$, where $P'(X) \in \Qbar[X]$ as well, and $\frac{P'(X)}{Q'(X)}$ is the needed rational function.
\end{proof}

We now address the hypothesis in the preceding proposition that $W$ is $\ol{\mathbf{Q}}$-valued.  In the following definition and proposition, we work more generally so as to accomodate global constructions later.
\begin{defin} \label{defin:defined} Suppose that $\pi$ is a representation of a group $G$ on a $\mathbf{C}$-vector space $V$.  We say that $\pi$ is defined over $L \subseteq \mathbf{C}$ if there exists a representation $\pi_0$ of $G$ on a vector space $V_0$ over $L$ such that $V_0 \otimes_{L} \mathbf{C}$ is isomorphic to $\pi$.\end{defin}
Then we have the following result.
\begin{prop}[{\cite[Proposition 3.2]{grobseb}}]  \label{prop:whitrational}
Suppose that a representation $\pi$ of a group $G$ is defined over $L$, and let $V$ and $V_0$ be as in Definition \ref{defin:defined}.  Assume that $\Lambda: V \rightarrow \mathbf{C}$ is a nonzero functional on $V$ that has a left transformation property with respect to a subgroup $H$ and character $\chi:H \rightarrow L^\times$ that characterizes it uniquely up to scaling by an element of $\mathbf{C}^\times$.  (For instance, $\Lambda$ can be a Whittaker functional.)  Then there exists a functional $\Lambda_0: V \rightarrow \mathbf{C}$ with $\mathrm{Im}(\Lambda_0|_{V_p})\subseteq L$.
\end{prop}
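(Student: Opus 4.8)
The final statement is Proposition \ref{prop:whitrational} (attributed to \cite[Proposition 3.2]{grobseb}), which asserts the existence of a rationally-valued functional $\Lambda_0$ when a representation is defined over a subfield $L$ and carries a functional $\Lambda$ characterized uniquely up to scalar by a $(H,\chi)$-equivariance property, with $\chi$ valued in $L^\times$.

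\textbf{Approach.} The plan is to exploit the rigidity coming from the one-dimensionality of the space of $(H,\chi)$-equivariant functionals, together with a Galois descent argument. First I would set $\Gamma = \mathrm{Aut}(\mathbf{C}/L)$ and observe that $\Gamma$ acts on $V = V_0 \otimes_L \mathbf{C}$ through the second factor, hence on the (algebraic) dual, and this action is semilinear over $L$. Because $\chi$ takes values in $L^\times$, it is fixed by $\Gamma$; consequently, for any $\sigma \in \Gamma$, the twisted functional $v \mapsto \sigma(\Lambda(\sigma^{-1}v'))$ — suitably interpreted on the subspace $V_0$, where the $\sigma$-action is defined — again satisfies the same $(H,\chi)$-equivariance. (Here one must be slightly careful: $\Lambda$ is only $\mathbf{C}$-linear, so the natural object to twist is its restriction to $V_0$, or one works with $\Lambda \in \Hom_{\mathbf{C}}(V,\mathbf{C})$ and the semilinear $\Gamma$-structure on this $\mathbf{C}$-vector space whose fixed points are $\Hom_L(V_0,L)$.) By the assumed uniqueness up to $\mathbf{C}^\times$, the twisted functional equals $c(\sigma)\Lambda$ for some $c(\sigma) \in \mathbf{C}^\times$.

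\textbf{Key steps in order.} (1) Verify $\Hom_L(V_0,L)$ is a one-dimensional $L$-vector space by checking that the space of $(H,\chi)$-equivariant functionals on $V_0$ over $L$ base-changes to the corresponding space over $\mathbf{C}$ — this uses flatness of $\mathbf{C}/L$ and the hypothesis that the $\mathbf{C}$-space is one-dimensional, so the $L$-space is at most one-dimensional, and nonzero since it generates the nonzero $\mathbf{C}$-space. (2) Pick any nonzero element $\Lambda_0$ of this $L$-line; then $\Lambda_0 \otimes 1$ is a $\mathbf{C}$-point of the one-dimensional space containing $\Lambda$, hence a $\mathbf{C}^\times$-multiple of $\Lambda$, and by construction $\mathrm{Im}(\Lambda_0|_{V_0}) \subseteq L$. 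This is the entire content — no cocycle or Hilbert 90 argument is actually needed once one phrases it as descent of a one-dimensional Hom-space, because a one-dimensional space has no interesting automorphisms to obstruct descent.

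\textbf{Main obstacle.} The only genuine subtlety is making precise the descent claim in step (1): one must ensure that ``the space of $(H,\chi)$-equivariant functionals'' is the $\mathbf{C}$-points of an honest $L$-subspace of $\Hom_L(V_0,L)$, i.e.\ that equivariance is an $L$-linear condition, which it is precisely because $H$ acts on $V_0$ (the representation, including the $G$-action hence the $H$-action, is defined over $L$) and $\chi$ is $L$-valued. Given that, the argument is purely linear algebra. I would present it in two short paragraphs: one setting up $V_0$, the $L$-linear equivariance condition, and the dimension count; one concluding by choosing $\Lambda_0$ in the $L$-line and noting it differs from $\Lambda$ by a scalar. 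Since the paper cites \cite{grobseb} for this, I would keep the write-up brief, essentially recalling why the cited statement applies verbatim in our situation (our $\pi_p$ is defined over a number field by the theorem of Blasius--Harris--Ramakrishnan and Rogawski, and the Whittaker functional is unique by uniqueness of the Whittaker model).
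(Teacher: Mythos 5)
First, note that the paper does not prove this proposition at all: it is quoted verbatim from \cite[Proposition 3.2]{grobseb}, so there is no internal proof to compare with; your write-up is judged on its own. Your Approach paragraph contains the right germ (twist $\Lambda$ by $\sigma \in \mathrm{Aut}(\mathbf{C}/L)$ using the semilinear action on $V_0 \otimes_L \mathbf{C}$, use that $\chi$ is $L$-valued and the action is defined over $L$ to see ${}^\sigma\Lambda$ is again $(H,\chi)$-equivariant, and invoke uniqueness to get ${}^\sigma\Lambda = c(\sigma)\Lambda$), but your Key Steps then abandon this and rest on a claim that is not true for the representations at hand: that the $L$-space of $(H,\chi)$-equivariant functionals on $V_0$ base-changes (``by flatness'') to the $\mathbf{C}$-space of equivariant functionals on $V$, so that it is automatically nonzero because ``it generates the nonzero $\mathbf{C}$-space.'' For infinite-dimensional $V_0$ (smooth representations of $G(\mathbf{A}_f)$ or $G(\mathbf{Q}_p)$, and Whittaker functionals carry no smoothness/finiteness that would save you), the natural map $\Hom_L(V_0,L)\otimes_L\mathbf{C} \to \Hom_{\mathbf{C}}(V,\mathbf{C})$ is injective but not surjective, and the same is true after imposing the ($L$-linear) equivariance condition. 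So the non-vanishing of the $L$-line — which is exactly the content of the proposition — is asserted rather than proved, and your remark that ``no cocycle or Hilbert 90 argument is actually needed'' misdiagnoses where the work lies: the issue is not descent of a one-dimensional space (always effective) but producing an $L$-rational point of the functional space in the first place.

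The gap is easy to close, in either of two ways. (a) Finish the twisting argument you started: choose $v_0 \in V_0$ with $\Lambda(v_0)\neq 0$ (possible since $V_0$ spans $V$ over $\mathbf{C}$ and $\Lambda\neq 0$) and replace $\Lambda$ by $\Lambda_0 = \Lambda(v_0)^{-1}\Lambda$; then for $v \in V_0$ and any $\sigma\in\mathrm{Aut}(\mathbf{C}/L)$ one has $\sigma(\Lambda_0(v)) = {}^\sigma\Lambda(v)/{}^\sigma\Lambda(v_0) = \Lambda_0(v)$, so $\Lambda_0(V_0) \subseteq \mathbf{C}^{\mathrm{Aut}(\mathbf{C}/L)} = L$ (the normalization trivializes the would-be cocycle $c(\sigma)$, and the fixed-field identity is the standard characteristic-zero fact). (b) Avoid automorphisms of $\mathbf{C}$ altogether: choose an $L$-basis $\{e_i\}$ of $\mathbf{C}$ and write $\Lambda|_{V_0} = \sum_i e_i\,\lambda_i$ with $\lambda_i: V_0 \to L$ $L$-linear; since $\chi(h)\in L^\times$, comparing coefficients in $\Lambda(hv)=\chi(h)\Lambda(v)$ shows each $\lambda_i$ is $(H,\chi)$-equivariant, and some $\lambda_i\neq 0$; its $\mathbf{C}$-linear extension is the desired $\Lambda_0$ (and is a $\mathbf{C}^\times$-multiple of $\Lambda$ by the uniqueness hypothesis). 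Either completion makes your argument correct; as written, step (1) is a genuine gap.
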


\subsection{Non-vanishing} \label{subsec:nonvanishing}

Baruch \cite[pg.\ 331]{baruch} proves a precise statement regarding the ideal \emph{spanned} by the local integral $I(\Phi,W,\nu,s)$ as the data varies, but we will need to construct a single $W$ and $\Phi$ so that our local integral is constant and non-zero.  For the remainder of this section, we assume that $\Lambda$ is a fixed Whittaker functional on the space $V$ of $\pi_p$ and, for $\varphi \in V$, we define $W_\varphi: G \rightarrow \mathbf{C}$ by $W_\varphi(g) = \Lambda(g\varphi)$.  (Note that this is a slightly different convention than Definition \ref{defin:whit}, which we use since we will vary our choice of $\varphi$.)
\begin{lemma}\label{trivLoc} Suppose that $\varphi_0 \in V$ satisfies $\Lambda(\varphi_0)\ne 0$.  There exists $\eta \in C_c^\infty(G(\mathbf{Q}_p),\mathbf{Q})$ and a $\Q$-valued Schwartz-Bruhat function $\Phi$ so that $I(\Phi,W_{\pi(\eta)\varphi_0},\nu,s)$ is a nonzero constant independent of $s$. \end{lemma}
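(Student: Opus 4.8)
The goal is to manufacture data $(\eta, \Phi)$ for which the local integral
\[
I(\Phi, W_{\pi(\eta)\varphi_0}, \nu, s) = \int_{U_2 \backslash H}{\nu_1(\det(g_1))|\det(g_1)|^s \Phi((0,1)g_1) W_{\pi(\eta)\varphi_0}(g)\, dg}
\]
is a nonzero constant in $s$. The idea is to concentrate the Whittaker function near the identity and choose $\Phi$ supported near $(0,1)$, so that the $|\det(g_1)|^s$ factor is forced to be $1$ throughout the (compact) effective domain of integration. First I would pass from the functional $\Lambda$ on $V$ to its restriction to the Whittaker model: the map $\varphi \mapsto W_\varphi$ intertwines $\pi$ with right translation, and $W_{\pi(\eta)\varphi_0}(g) = \int_G W_{\varphi_0}(gh)\eta(h)\,dh$, so choosing $\eta$ supported in a small open compact neighborhood $N$ of $1 \in G(\mathbf{Q}_p)$ and scaled appropriately makes $W_{\pi(\eta)\varphi_0}$ an (arbitrarily good, $\mathbf{Q}$-valued) approximation to $\Lambda(\varphi_0) \cdot \mathbf{1}_{N'}$ for a small neighborhood $N'$ of $1$ — more precisely, it equals $\Lambda(\varphi_0)\cdot \mathrm{meas}(N) \cdot \mathbf{1}_N(g)$ once $N$ is small enough that $W_{\varphi_0}$ is constant $=\Lambda(\varphi_0)$ on $N$ (such $N$ exists since $W_{\varphi_0}$ is locally constant and $W_{\varphi_0}(1)=\Lambda(\varphi_0)\ne 0$). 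Here I use that $\Lambda(\varphi_0)$ is algebraic after rescaling $\varphi_0$, which follows from Proposition \ref{prop:whitrational} and Proposition \ref{prop:locrationality} (or one can simply normalize). The factor $\mathrm{meas}(N)$ lies in $\mathbf{Q}^\times$ with our measure normalizations.

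Next I would choose $\Phi$. Since the residual integrand is supported on $g \in U_2 N$ and $U_2$ acts on $(0,1)$ by $(0,1)\mm{1}{x}{}{1} = (0,1)$, the value $(0,1)g_1$ ranges over a small neighborhood of $(0,1)$ in $V_2(\mathbf{Q}_p)$ as $g$ ranges over $U_2 N$; shrinking $N$ further, this neighborhood is contained in $(0,1) + p^m \mathbf{Z}_p^2$ for $m$ large. Take $\Phi = \mathbf{1}_{(0,1)+p^m\mathbf{Z}_p^2}$, which is $\mathbf{Q}$-valued; then $\Phi((0,1)g_1) \equiv 1$ on the support. On this support $\det(g_1) \in 1 + p^m\mathbf{Z}_p$ (up to shrinking), so $|\det(g_1)|^s = 1$ and $\nu_1(\det(g_1)) = 1$ (since $\nu_1$ is continuous, hence trivial on a small enough neighborhood of $1$ — shrink $m$ accordingly). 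Therefore
\[
I(\Phi, W_{\pi(\eta)\varphi_0}, \nu, s) = \Lambda(\varphi_0)\,\mathrm{meas}(N) \int_{U_2 \backslash U_2 N}{dg},
\]
a nonzero constant independent of $s$, and the measure factor is positive and rational. This requires checking that the integrand is genuinely supported only where claimed — i.e. that $W_{\pi(\eta)\varphi_0}$ vanishes off $N$, not merely that it is constant on $N$ — which is immediate from $W_{\pi(\eta)\varphi_0} = \Lambda(\varphi_0)\mathrm{meas}(N)\mathbf{1}_N$.

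\textbf{Main obstacle.} The delicate point is the interplay of the three constraints: the neighborhood $N$ must be small enough that (i) $W_{\varphi_0}$ is constant on $N$, (ii) the image $(0,1)U_2N \subseteq (0,1)+p^m\mathbf{Z}_p^2$ with $\det \in 1+p^m\mathbf{Z}_p$, and (iii) $\nu_1$ is trivial on $1+p^m\mathbf{Z}_p$ — and simultaneously $N$ must be bi-invariant enough (or the convolution handled carefully) so that $\pi(\eta)\varphi_0$ is a genuine vector and $W_{\pi(\eta)\varphi_0}$ really is the stated indicator function rather than some smeared-out variant. This is purely a matter of choosing $m \gg 0$ and then $N = \ker(G(\mathbf{Z}_p) \to G(\mathbf{Z}/p^m))$ (intersected with a neighborhood adapted to $W_{\varphi_0}$), after first arranging $\varphi_0$ to be fixed by a small enough open compact so that the convolution with $\mathbf{1}_N$ acts by $\mathrm{meas}(N)$ times the identity; all of this is routine once the ordering of the quantifiers is pinned down, and I expect no real difficulty beyond bookkeeping. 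Note also that convergence of $I$ for $\mathrm{Re}(s)\gg 0$ and its rationality in $p^{-s}$ are already supplied by Proposition \ref{prop:unfolding}, so the computation above is simultaneously the meromorphic continuation.
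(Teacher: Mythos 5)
There is a genuine gap, and it sits at the heart of your construction: the claim that for $N$ a small enough open compact neighborhood of $1$ one has $W_{\pi(\mathbf{1}_N)\varphi_0} = \Lambda(\varphi_0)\,\mathrm{meas}(N)\,\mathbf{1}_N$ is false. A Whittaker function satisfies $W_{\varphi}(ug) = \chi(u)W_{\varphi}(g)$ for all $u \in U_B(\mathbf{Q}_p)$, so if $W_{\pi(\eta)\varphi_0}(1) \ne 0$ then $W_{\pi(\eta)\varphi_0}$ is nonzero on the whole (noncompact) group $U_B(\mathbf{Q}_p)$; concretely, for $u$ unipotent, $\int_N W_{\varphi_0}(uh)\,dh = \chi(u)\Lambda(\varphi_0)\mathrm{meas}(N) \ne 0$, so the convolution is not supported in $N$. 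Worse, if $\varphi_0$ is already fixed by the open compact subgroup $N$ (which you may as well assume, and which you need for the bookkeeping you describe), then convolution with the normalized indicator of $N$ is literally the identity operator, so your choice of $\eta$ changes nothing: taking $\varphi_0$ spherical and your $\Phi$ supported near $(0,1)$, the Iwasawa decomposition only pins the torus coordinate $\tau_3$ to be a unit, and the remaining torus sum (over $\tau_1$, say) produces an honest rational function of $p^{-s}$ — essentially the unramified $L$-factor — not an $s$-independent constant. No choice of $\eta$ can make a nonzero Whittaker function compactly supported on $G$, so the mechanism "concentrate $W$ near the identity in $G$" cannot be repaired as stated.

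What is actually needed, and what the paper does, is to control the support of $W$ \emph{along the torus}, not on $G$. The paper convolves $\varphi_0$ along the unipotent one-parameter family $M_{0,y}$ against a function $\eta_0 \in C_c^\infty(E_p,\mathbf{Q})$; by the equivariance $\Lambda(\pi(tM_{0,y}t^{-1})\,\cdot) = \chi(tM_{0,y}t^{-1})\Lambda(\cdot)$, after the Iwasawa decomposition this inserts the factor $\widehat{\eta}_0(\tau_2/\tau_3)$ into the torus integral. Choosing $\widehat{\eta}_0$ concentrated near $1$, the function $\Phi_0 = \charf_{(0,1)+p^N\mathbf{Z}_p^2}$ forces $\tau_3$ near $1$ (this part of your argument is the same as the paper's), then $\widehat{\eta}_0$ forces $\tau_2$ near $1$, and the similitude condition forces $\tau_1$ near $1$; only then is the integrand supported near $t=1$, yielding a nonzero constant independent of $s$. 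Your proposal is missing exactly this ingredient — some device (here, the Fourier-transform effect of unipotent averaging) that truncates the remaining torus variable — and without it the integral retains genuine $s$-dependence.
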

\begin{proof} Suppose that $\varphi_0 \in V$ has $\Lambda(\varphi_0) \ne 0$.  Let $n$ be large enough so that $\varphi_0$ is fixed by the congruence subgroup $K_n$, where we write $K_n = \mathrm{Ker}(G(\mathbf{Z}_p) \rightarrow G(\mathbf{Z}_p/p^n\mathbf{Z}_p))$.  Let $\eta_0\in C_{c}^\infty(E_p,\mathbf{Q})$ and set $\varphi = \int_{E_p}{\eta_0(y) \pi(M_{0,y}) \cdot \varphi_0 \,dy}$ where
\[M_{x,y} = \left(\begin{array}{ccc}1&\ol{y}&x+y\ol{y}\delta/2 \\ & 1& y \\ &&1\end{array}\right).\]
Then $\varphi$ is fixed by $K_N$ for $N$ sufficiently large.  It also follows from its definition that $\varphi = \pi(\eta)\varphi_0$ for a $\mathbf{Q}$-valued Hecke operator $\eta$.

Now define a Schwartz-Bruhat function on $\mathbf{Q}_p^2$ by $\Phi_0 = \charf_{(0,1) + p^{N}\Z_p^2}$ and for $g\in H$, write $g_1$ for the projection to $\GL_2(\mathbf{Q}_p)$.  Observe that if $\Phi_0((0,1)g_1) \neq 0$, then $g_1 \in B_2K'_N$, where $K'_N$ is the projection to $\GL_2$ of $K_N \cap H$ and $B$ is the upper-triangular Borel.  Hence
\begin{align*}I(\Phi_0,W_\varphi,\nu,s) &= \int_{U\backslash (B_2K'_N \boxtimes E_p^\times)}{\nu_1(\det(g_1))|\det(g_1)|^{s}\Phi_{0}((0,1)g_1)\Lambda(\pi(g) \varphi)\,dg} \\ &= \int_{T}{\delta_{B_H}^{-1}(t)\nu_1(\det(t_1))|\det(t_1)|^{s}\Phi_{0}((0,1)t_1)\Lambda(\pi(t) \eta_0* \varphi_0)\,dt} \\&= \int_{T}{\delta_{B_H}^{-1}(t)\nu_1(\det(t_1))|\det(t_1)|^{s}\Phi_0((0,1)t_1)\widehat{\eta}_0(\tau_2/\tau_3)\Lambda(\pi(t)\varphi_0)\,dt},\end{align*}
where we have written $t = \diag(\tau_1,\tau_2,\tau_3)$ and $\widehat{\eta}_0$ is the Fourier transform of $\eta$ (with respect to the fixed additive character $\psi$).  We may choose $\widehat{\eta}_0$ such that its support is concentrated near $1$ and $\Lambda(\varphi) \ne 0$.  With such a choice, for nonvanishing of the integrand, $\tau_3$ is forced to be near $1$ by our choice of $\Phi_0$, and then $\tau_2/\tau_3$ and thus $\tau_2$ is forced to be near $1$ by the choice of $\widehat{\eta}_0$.  Finally, $\tau_1$ is forced to be near $1$ by the similitude condition.  Hence the integrand is only supported near $t=1$, and hence is a constant independent of $s$ that can be made nonzero.\end{proof}

We make a simple observation regarding the dependence of $I(\Phi,W,\nu,s)$ on $\Phi$.
\begin{lemma} \label{lem:phipoly}
Let $P(X^2) \in \mathbf{C}[X^2,X^{-2}]$ be a polynomial in $X^2 = p^{-2s}$ and $X^{-2}=p^{2s}$.  Given $\Phi$, there exists another Schwarz-Bruhat function $P\cdot \Phi$ so that
\[I(P\cdot \Phi,W,\nu,s) = P(X^2,X^{-2})I(\Phi,W,\nu,s).\]
This construction preserves algebraicity, in the sense that if $\Phi$ and $P$ are $\ol{\mathbf{Q}}$-valued, then so is $P \cdot \Phi$.
\end{lemma}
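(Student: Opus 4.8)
\textbf{Proof plan for Lemma \ref{lem:phipoly}.}

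The plan is to reduce the statement to the effect on the local integral of multiplying $\Phi$ by a character of the form $\lvert\det\rvert^{\pm 2}$ together with translating the argument. First I would recall from the definition of the local integral that
\[
I(\Phi,W,\nu,s) = \int_{U_2\backslash H}{\nu_1(\det(g_1))\lvert\det(g_1)\rvert^{s}\,\Phi((0,1)g_1)\,W(g)\,dg},
\]
so that $\Phi$ enters only through the values $\Phi((0,1)g_1)$ where the row vector $(0,1)$ is moved by the $\GL_2$-part of $H$. The key observation is that if one replaces $\Phi$ by the function $\Phi'(v) = \Phi(p^{-1}v)$ (dilation by $p$), then $\Phi'((0,1)g_1) = \Phi((0,1)(p^{-1}g_1))$; absorbing the scalar $p^{-1}$ into the integration variable $g_1 \mapsto pg_1$ and using the invariance of Haar measure together with $\lvert\det(pg_1)\rvert^s = p^{-2s}\lvert\det(g_1)\rvert^s$ and $\nu_1(\det(pg_1)) = \nu_1(p)^2\nu_1(\det(g_1))$, one finds that this dilation multiplies $I(\Phi,W,\nu,s)$ by a monomial in $X^2 = p^{-2s}$ (times the constant $\nu_1(p)^2$, which is harmless since $\nu_1$ is a fixed character with algebraic values). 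The opposite dilation $\Phi(v)\mapsto\Phi(pv)$ gives the inverse monomial in $X^{2}$. So one builds $P\cdot\Phi$ as a finite $\mathbf{C}$-linear (in fact $\ol{\mathbf{Q}}$-linear) combination of iterated dilations $\Phi(p^{k}v)$, $k\in\mathbf{Z}$, with coefficients read off from the coefficients of $P$, rescaled by the appropriate powers of $\nu_1(p)^{-1}$.

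The remaining steps are bookkeeping: I would (i) verify the dilation identity carefully, being explicit about the change of variables and the Jacobian in the $U_2\backslash H$ integral (this is where the factor $p^{-2s}$ versus $p^{2s}$ must be pinned down, and it is worth checking against the alternative Iwasawa-coordinate form of the integral at a finite place where the torus element $\diag(t_1,\dots)$ makes the bookkeeping transparent); (ii) observe that a finite $\ol{\mathbf{Q}}$-linear combination of $\ol{\mathbf{Q}}$-valued Schwartz--Bruhat functions is again $\ol{\mathbf{Q}}$-valued, which gives the algebraicity clause once we note that $\nu_1(p)$ is an algebraic number because $\nu_1$ is (the Hecke character attached to) a Dirichlet character; and (iii) record that since each monomial $X^{2k}$ ($k\in\mathbf{Z}$) is obtained this way, any $P(X^2,X^{-2})\in\mathbf{C}[X^2,X^{-2}]$ is too, by linearity of $\Phi\mapsto I(\Phi,W,\nu,s)$.

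The main obstacle — really the only thing requiring care — is getting the sign of the exponent right and confirming that the scalar picked up is genuinely a \emph{monomial} in $X^2$ rather than something depending on $W$ or on the finite places of $\nu_1$; in particular one must check that the dilation of $\Phi$ does not disturb the factor $W(g)$ in the integrand (it does not, since the dilation acts only on the Schwartz function evaluated at $(0,1)g_1$, and the substitution $g_1\mapsto pg_1$ is the translation by the central element $p\cdot\mathbf{1}_2\in Z(\mathbf{Q}_p)$, under which $W$ transforms by the central character — which must then be absorbed into the constant, exactly as the factor $\nu_1(p)^2$ is). Once this is tracked, everything else is immediate from linearity. I would phrase the final write-up so that the constant absorbed into $P\cdot\Phi$ is manifestly algebraic, so that the stated algebraicity conclusion follows with no extra work.
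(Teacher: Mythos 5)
Your proposal is correct and is essentially the paper's own argument: the paper likewise reduces to the monomials $X^{2}$ and $X^{-2}$ and uses the change of variable $g\mapsto (p\mathbf{1}_3)g$, defining $(X^{2}\cdot\Phi)(\xi)=\nu_1(p)^{-2}\omega_\pi(p)^{-1}\Phi(p^{-1}\xi)$ and $(X^{-2}\cdot\Phi)(\xi)=\nu_1(p)^{2}\omega_\pi(p)\Phi(p\xi)$, exactly the dilation-plus-central-translation bookkeeping you describe. The only point you leave slightly implicit, the algebraicity of the absorbed constant $\omega_\pi(p)^{\pm1}$, is harmless since by Hypothesis \ref{hypo:central} one has $\omega_\pi|_{Z(\mathbf{A})}=(\nu_1\nu_2)^{-1}$ with $\nu$ of finite order, so the constant is indeed in $\ol{\mathbf{Q}}^\times$.
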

\begin{proof}
It suffices to do this for $X^2$ and $X^{-2}$.  One computes easily by using the change of variable $g \mapsto (p\mathbf{1}_3)g$ that we may define $(X^2 \cdot \Phi)(\xi) = \nu_1(p)^{-2}\omega_\pi(p)^{-1}\Phi(p^{-1}\xi)$ and $(X^{-2} \cdot \Phi)(\xi) = \nu_1(p)^2\omega_\pi(p)\Phi(p\xi)$.
\end{proof}

We now study a more delicate question, motivated by the usage of modular units above, that arises if $\nu_1^2 \cdot \omega_\pi|_{Z(\mathbf{A})}$ is trivial.  We would like the local integral $I(\Phi,W_\varphi,\nu,s)$ to be nonvanishing at $s=0$ or $s=1$ while $\Phi$ has the property that $\Phi(0)=0$ or $\widehat{\Phi}(0)=0$, respectively.

We assume in the remainder of this section that $\nu_1^2 \cdot \omega_\pi|_{Z(\mathbf{A})}$ is trivial; otherwise, we do not need to impose this condition on $\Phi$.

\begin{lemma}\label{lem:pole} Suppose there exist $\Phi$ and $\varphi$ such that the meromorphic continuation of $I(\Phi,W_\varphi,\nu,s)$ has a pole at $s=0$ or $s=1$.  Then there exists $\Phi'$ such that $I(\Phi',W_\varphi,\nu,s)$ is holomorphic and nonzero at $s=0$ or $s=1$, respectively, and such that $\Phi'(0)=0$ or $\widehat{\Phi}'(0)=0$, respectively.
\end{lemma}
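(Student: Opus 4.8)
The plan is to exploit the dependence of the local integral on the Schwartz--Bruhat function, as made explicit in Lemma \ref{lem:phipoly}, together with the observation that the hypothesis $\nu_1^2\cdot\omega_\pi|_{Z(\mathbf{A})}$ being trivial forces the relevant poles to be controlled by the polynomial factor coming from $\Phi$, not from $W_\varphi$. First I would recall that $I(\Phi,W_\varphi,\nu,s)$ unfolds (via the Iwasawa decomposition as in the proof of Proposition \ref{prop:locrationality}) into an integral over $U_2\backslash H$ in which $\Phi$ enters only through the value $\Phi((0,1)g_1)$, and that $\Phi((0,1)g_1)$ restricted to the torus direction $\diag(\tau_1\tau_2,\tau_2,\ldots)$ is governed by the single variable $\ord_p(\tau_1)$. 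The contribution of $\Phi$ to the rational function in $X=p^{-s}$ is therefore a factor whose zeros and poles one can move around by the operations of Lemma \ref{lem:phipoly}: multiplying $\Phi$ by $P(X^2,X^{-2})$ multiplies the integral by $P$. Crucially, under the triviality hypothesis the factor $\nu_1(p)^{2}\omega_\pi(p)=1$ (this is exactly the central-character compatibility of Hypothesis \ref{hypo:central} evaluated against the assumption $\nu_1^2\omega_\pi|_{Z(\mathbf{A})}=1$), so the operators $X^2\cdot$ and $X^{-2}\cdot$ of Lemma \ref{lem:phipoly} act by $\Phi(\xi)\mapsto\Phi(p^{-1}\xi)$ and $\Phi(\xi)\mapsto\Phi(p\xi)$ with no scalar, and in particular each of $X^2\cdot\Phi$ and $X^{-2}\cdot\Phi$ automatically vanishes at $0$ if $\Phi$ is, say, supported away from $p^{k}(0)$ for suitable $k$ — and more to the point, a rescaled $\Phi$ has the same vanishing-at-$0$ behavior as $\Phi$ up to the explicit normalization.

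Given the hypothesis, write $I(\Phi,W_\varphi,\nu,s)=\frac{P(X)}{Q(X)}$ with $P,Q$ of the form guaranteed by Proposition \ref{prop:locrationality} (using Proposition \ref{prop:whitrational} to arrange $W_\varphi$ algebraic if desired, though rationality is not needed for the current statement). Suppose this has a pole at $s=s_0\in\{0,1\}$, i.e.\ $X=X_0:=p^{-s_0}$ is a zero of $Q$ of some order $m\ge 1$ that exceeds its order as a zero of $P$. The idea is to clear the pole and create a nonzero value by multiplying $\Phi$ by a suitable Laurent polynomial in $X^2$: choose $P_0(X^2,X^{-2})$ vanishing to order exactly $m$ at $X=X_0$ (for $s_0=0$ this is $(X^2-1)^m$; for $s_0=1$ this is $(X^2-p^{-2})^m$, both visibly elements of $\mathbf{C}[X^2,X^{-2}]$, and one may take them $\ol{\mathbf{Q}}$-valued). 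Then $\Phi':=P_0\cdot\Phi$ satisfies, by Lemma \ref{lem:phipoly},
\[
I(\Phi',W_\varphi,\nu,s)=P_0(X^2,X^{-2})\,\frac{P(X)}{Q(X)},
\]
which is holomorphic at $s=s_0$ and, by the choice of the order $m$, has a nonzero value there. It remains only to check the vanishing condition on $\Phi'$: the second claim of Lemma \ref{lem:phipoly}'s proof gives $\Phi'$ as a finite $\mathbf{Z}[\nu_1(p)^{\pm1},\omega_\pi(p)^{\pm1}]$-linear combination of the functions $\xi\mapsto\Phi(p^{j}\xi)$ for $j$ in a bounded range, and since each such dilate vanishes at $0$ exactly when $\Phi$ does, for $s_0=0$ one starts from a $\Phi$ (provided by the hypothesis of the lemma) with $\Phi(0)=0$ and concludes $\Phi'(0)=0$; for $s_0=1$ one uses instead that $\widehat{\cdot}$ intertwines the dilation $\Phi(\xi)\mapsto\Phi(p\xi)$ with $\widehat\Phi(\upsilon)\mapsto p^{-2}\widehat\Phi(p^{-1}\upsilon)$, so a starting $\Phi$ with $\widehat\Phi(0)=0$ yields $\widehat{\Phi'}(0)=0$.

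The main obstacle I anticipate is the bookkeeping in the $s_0=1$ case, where one wants the vanishing condition on the Fourier transform $\widehat{\Phi'}$ rather than on $\Phi'$ itself: one must verify that the operators $X^2\cdot$ and $X^{-2}\cdot$ of Lemma \ref{lem:phipoly} interact with the Fourier transform in a way that carries $\widehat\Phi(0)=0$ to $\widehat{\Phi'}(0)=0$, which requires tracking the dilation/scalar normalization and, implicitly, the functional equation (\ref{EisFE}) of the Eisenstein series that relates the behavior at $s=1$ to that at $s=0$. A clean way to handle this is to apply the $s_0=0$ argument to the Eisenstein data $(\widehat\Phi,\widehat\nu)$ and then transport back via (\ref{EisFE}); this reduces the $s=1$ statement to the $s=0$ one and avoids recomputing dilation constants. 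The other place requiring care is confirming that the order of vanishing one introduces via $P_0$ is \emph{exactly} $m$ and not accidentally larger or smaller after cancellation — but since $P(X_0)/Q(X_0)$ having a pole of order exactly $m$ means $(X-X_0)^m\,P(X)/Q(X)$ is holomorphic and nonzero at $X_0$, and $(X^2-X_0^2)$ vanishes to order exactly $1$ at $X=X_0$ (as $X_0\ne 0$), this is immediate. All other steps are routine given Lemma \ref{lem:phipoly} and the unfolding of Proposition \ref{prop:unfolding}.
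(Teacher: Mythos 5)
Your construction of $\Phi'$ is the same as the paper's: multiply $\Phi$ by a Laurent polynomial in $X^2$ vanishing to exactly the pole order at $X_0=p^{-s_0}$, via Lemma \ref{lem:phipoly} (the paper takes $(1-X^2)^d$ and $(1-p^{-2}X^{-2})^d$, which differ from your $(X^2-1)^m$ and $(X^2-p^{-2})^m$ only by factors that are nonzero at $X_0$), and the holomorphy-and-nonvanishing part of your argument is fine. The gap is in how you verify the conditions $\Phi'(0)=0$, resp.\ $\widehat{\Phi}'(0)=0$. You write that one ``starts from a $\Phi$ (provided by the hypothesis of the lemma) with $\Phi(0)=0$'', but the hypothesis of Lemma \ref{lem:pole} makes no such assumption: it only supplies some $\Phi$ for which the integral has a pole, and in the intended application (Lemma \ref{distLem}) the relevant $\Phi$ will typically have $\Phi(0)\neq 0$ --- if one could always start from $\Phi(0)=0$ the lemma would have nothing to do there. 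So your justification, which propagates an assumed vanishing of $\Phi$ (resp.\ $\widehat{\Phi}$) at $0$ through the dilates, rests on a premise that is not available, and for an input with $\Phi(0)\neq 0$ it proves nothing.

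The correct (and in the paper, one-line) point is that the vanishing is automatic from the construction, precisely because of the standing assumption in this subsection that $\nu_1^2\cdot\omega_\pi|_{Z(\mathbf{A})}$ is trivial: then $X^{\pm2}\cdot\Phi$ is a dilate of $\Phi$ with scalar $1$ (a fact you record but do not use at the decisive step), so $\bigl((X^2-1)\cdot\Phi\bigr)(0)=\Phi(0)-\Phi(0)=0$ and $\widehat{\bigl((X^2-p^{-2})\cdot\Phi\bigr)}(0)=p^{-2}\widehat{\Phi}(0)-p^{-2}\widehat{\Phi}(0)=0$, with no hypothesis on $\Phi$ at all; applying the factor once more at the end of the iteration gives $\Phi'(0)=0$, resp.\ $\widehat{\Phi}'(0)=0$. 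With this replacement your proof coincides with the paper's. A smaller point: your suggested alternative of transporting the $s=1$ case to $s=0$ via the functional equation (\ref{EisFE}) is not available here, since (\ref{EisFE}) is a global statement about the Eisenstein series and no local functional equation for these local integrals is established in the paper; fortunately your primary Fourier-transform treatment of $s=1$, once repaired as above, does not need it.
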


\begin{proof}
Note that the pole has finite order, say $d$, by Baruch \cite[pg. 331]{baruch}.  If $s=0$, we apply Lemma \ref{lem:phipoly} to obtain $\Phi' = (1-X^2)^d\cdot \Phi$.  It follows immediately from the construction of $(1-X^2)^d\cdot \Phi$ that $\Phi(0)=0$.  If $s=1$, we use $\Phi' = (1-p^{-2}X^{-2})^d\cdot \Phi$, which is easily seen to have $\widehat{\Phi}'(0)=0$.
\end{proof}

\begin{definition} Suppose $\eta:H(\mathbf{Q}_p) \rightarrow \C^\times$ is a character.  A representation $(\pi_p,V)$ of $G(\mathbf{Q}_p)$ is said to be $(H,\eta)$-distinguished if there exists a nonzero linear functional $\ell: V \rightarrow \C$ satisfying $\ell(\pi(h) \cdot \varphi) = \eta(h)\ell(\varphi)$ for all $\varphi \in V$ and $h \in H$.\end{definition}

\begin{lemma}\label{distLem} Suppose that the representation $(\pi_p,V)$ of $G(\mathbf{Q}_p)$ has the property that the value $I(\Phi,W_{\varphi},\nu,s)|_{s=0}$ is finite and equal to $0$ for every $\varphi \in V$ and Schwartz-Bruhat function $\Phi$ satisfying $\Phi(0) =0$.   Then $\pi_p$ is $(H,\nu_1^{-1})$-distinguished.  Similarly, if $I(\Phi,W_{v},\nu,s)|_{s=1}$ is finite and equal to $0$ for all $\varphi \in V$ and $\Phi$ with $\widehat{\Phi}(0) = 0$, then $\pi_p$ is $(H,\nu_1^{-1})$-distinguished.  Here, $\nu_1$ defines a character of $H$ via composition with the similitude: $\nu_1(h)=\nu_1(\mu(h))$.\end{lemma}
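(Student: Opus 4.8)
The plan is to turn the vanishing hypothesis into a statement about the factorization of the local integral as a functional on $\pi_p \times \tau$, where $\tau$ denotes the degenerate principal series in which the section $f(\cdot,\Phi,\nu,s)$ lives, and then identify the relevant quotient of $\tau$. First I would fix $s=0$ (the $s=1$ case follows by the functional equation \eqref{EisFE} of the Eisenstein series, which interchanges $\Phi$ with $\widehat{\Phi}$ up to the substitution $s \mapsto 1-s$ and $\nu \mapsto \widehat{\nu}$, so that the condition $\widehat{\Phi}(0)=0$ at $s=1$ matches the condition $\Phi(0)=0$ at $s=0$). With $s=0$ fixed, recall that the value $f(g_1,\Phi,\nu,0)$ depends on $\Phi$ only through the section it defines inside $\mathrm{Ind}_{B_2}^{\GL_2}(\nu_1 \nu_2^{-1})$ (suitably normalized), and that the map $\Phi \mapsto f(\cdot,\Phi,\nu,0)$ is surjective onto this induced space with kernel exactly the $\Phi$ with a certain vanishing condition at the relevant point. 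The key observation is that the linear condition ``$\Phi(0)=0$'' corresponds, after this translation, to the Schwartz functions whose associated section lies in the codimension-one subrepresentation of the degenerate principal series whose quotient is the trivial (or more precisely $\nu_1 \circ \det$) character of $\GL_2$ — this is the point alluded to in Section \ref{subsubsec:intrononvanish} with the Steinberg/trivial dichotomy; here because we are at the near-central point and restricting to $H$ via $\mu$, the relevant quotient is the character $\nu_1 \circ \mu$ of $H(\mathbf{Q}_p)$.

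\textbf{Key steps.} Concretely, I would proceed as follows. (1) Using \eqref{eqn:locint2} and the Iwasawa decomposition, regard $(\Phi,\varphi) \mapsto I(\Phi,W_\varphi,\nu,s)|_{s=0}$ as a bilinear pairing, and show it factors through $\pi_p \otimes \tau$ where $\tau$ is the space of sections $f(\cdot,\Phi,\nu,0)$, viewed as a representation of $H(\mathbf{Q}_p)$ via the projection $H \to \GL_2$ and right translation. (2) Analyze the $H(\mathbf{Q}_p)$-module structure of $\tau$: the sections coming from Schwartz functions $\Phi$ with $\Phi(0)=0$ form an $H(\mathbf{Q}_p)$-submodule $\tau_0 \subseteq \tau$, and the quotient $\tau/\tau_0$ is one-dimensional; I would identify the $H(\mathbf{Q}_p)$-action on this quotient explicitly by evaluating at $\Phi(0)$ — the change of variables $g \mapsto tg$ combined with the compatibility Hypothesis \ref{hypo:central} on central characters shows it is the character $\nu_1 \circ \mu$. (Here the standing assumption that $\nu_1^2 \cdot \omega_\pi|_{Z(\mathbf{A})}$ is trivial is what guarantees that this quotient character is consistent with the central character constraint, so that $\mathrm{Hom}_{H(\mathbf{Q}_p)}(\pi_p \otimes (\nu_1 \circ \mu),\mathbf{C})$ is the right target.) (3) The hypothesis that $I(\Phi,W_\varphi,\nu,0)=0$ whenever $\Phi(0)=0$ says precisely that the functional $(\varphi,f) \mapsto I$ kills $\pi_p \otimes \tau_0$, hence factors through $\pi_p \otimes (\tau/\tau_0) = \pi_p \otimes (\nu_1 \circ \mu)$, giving a functional $\ell: V \to \mathbf{C}$ with $\ell(\pi(h)\varphi) = (\nu_1 \circ \mu)(h)^{-1}\ell(\varphi) = \nu_1^{-1}(\mu(h))\ell(\varphi)$. (4) Nonvanishing of $\ell$: this is where Lemma \ref{lem:phipoly} and Lemma \ref{trivLoc} enter — since by Lemma \ref{trivLoc} there exists data making $I$ a nonzero constant (in particular $I|_{s=0}$ can be made nonzero), the induced functional on $\pi_p \otimes (\tau/\tau_0)$ is not identically zero, so $\ell \ne 0$, and $\pi_p$ is $(H,\nu_1^{-1})$-distinguished.

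\textbf{Main obstacle.} I expect the main difficulty to be step (2): pinning down the precise $H(\mathbf{Q}_p)$-module structure of the degenerate principal series $\tau$ at the point $s=0$, in particular verifying that the sections arising from $\{\Phi : \Phi(0)=0\}$ genuinely form a submodule with one-dimensional quotient on which $H$ acts by $\nu_1 \circ \mu$, rather than something larger or a different character. This requires care about normalization of the section $f(\cdot,\Phi,\nu,0)$ (the integral defining it in \eqref{eqn:eisdefin} may itself need meromorphic continuation and could have a pole at $s=0$ contributing to the value, which interacts with whether $\nu_1\nu_2^{-1}$ is trivial), and about how the constant $\Phi(0)$ transforms under the $\GL_1(\mathbf{A})$-integration and the subsequent restriction to $H$. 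A secondary technical point is ensuring the $s=1$ case really is formally equivalent via \eqref{EisFE}: one must check that the Fourier transform $\Phi \mapsto \widehat{\Phi}$ carries the condition $\widehat{\Phi}(0)=0$ to $\Phi(0)=0$ under the hat (which it does, since $\widehat{\Phi}(0) = \int \Phi$ and $\widehat{\widehat\Phi}(\xi) = \Phi(-\xi)$, so the roles are symmetric) and that the twist $\nu \mapsto \widehat\nu=(\nu_2,\nu_1)$ does not change the character $\nu_1 \circ \mu$ in a way that matters — it does change $\nu_1$ to $\nu_2$, but under Hypothesis \ref{hypo:central} and the triviality of $\nu_1^2\omega_\pi|_{Z}$ the two characters $\nu_1 \circ \mu$ and $\nu_2 \circ \mu$ agree on $H(\mathbf{Q}_p)$, so the same distinction conclusion follows.
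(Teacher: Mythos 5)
Your $s=0$ argument is, at bottom, the same mechanism the paper uses, but the paper implements it without ever introducing the degenerate principal series: it applies the change of variables $g\mapsto gh$ to get $I(\Phi,W_{h^{-1}\varphi},\nu,s)=\nu_1(\mu(h))|\mu(h)|^{s}I(\Phi^{h},W_{\varphi},\nu,s)$ with $\Phi^{h}(v)=\Phi(vh)$, observes that $(\Phi^{h}-\Phi)(0)=0$, and concludes from the hypothesis that the functional $\ell=I(\Phi,W_{\cdot},\nu,s)|_{s=0}$ (nonzero for suitable $\Phi$ by Lemma \ref{trivLoc}) transforms under $H$ by $\nu_1^{-1}\circ\mu$. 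Working directly with Schwartz functions and the single linear condition $\Phi(0)=0$ dissolves the obstacle you flag in your step (2): when $\nu_1\nu_2^{-1}$ is trivial (which is the case under the standing assumption of this subsection), the section attached to a $\Phi$ with $\Phi(0)\neq 0$ has a pole at $s=0$, so your space $\tau$ of ``sections at $s=0$'' is not even defined on all of the Schwartz space, and one would additionally have to show that the analytically continued zeta integral at $s=0$ depends only on such a section. A further ingredient you do not supply is finiteness for general $\Phi$: the hypothesis gives finiteness only when $\Phi(0)=0$ (resp.\ $\widehat{\Phi}(0)=0$), and the paper first invokes Lemma \ref{lem:pole} to exclude poles at $s=0$ (resp.\ $s=1$) for arbitrary data, which is needed before $\ell$ can be defined at all.

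The genuine gap is your reduction of the $s=1$ case to the $s=0$ case ``by the functional equation (\ref{EisFE})''. That identity is a statement about the global Eisenstein series (it comes from Poisson summation) and does not localize to $I_p(\Phi_p,W,\nu_p,s)=I_p(\widehat{\Phi}_p,W,\widehat{\nu}_p,1-s)$: locally, $f(\cdot,\widehat{\Phi}_p,\widehat{\nu}_p,1-s)$ is related to $f(\cdot,\Phi_p,\nu_p,s)$ through the intertwining operator, not by equality, and the paper neither states nor proves a local functional equation for these zeta integrals. This matters, since Lemma \ref{distLem} is invoked in Theorem \ref{coarseThm} precisely in its $s=1$ form. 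The correct route, and the one the paper takes, is to repeat the change-of-variables argument at $s=1$: setting $\Phi'=|\mu(h)|\Phi^{h}-\Phi$, one has $\widehat{\Phi'}(0)=\int\Phi'=0$, so the $s=1$ hypothesis kills $I(\Phi',W_{\varphi},\nu,s)|_{s=1}$ and the same computation gives $\ell(\pi(h)\varphi)=\nu_1^{-1}(\mu(h))\ell(\varphi)$ directly; in particular your appeal to $\nu_1=\nu_2$ at the end is then unnecessary.
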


\begin{proof}
First consider the case $s=0$.  By Lemma \ref{lem:pole}, $I(\Phi,W_\varphi,\nu,s)$ is holomorphic at $s=0$ for all $\Phi$ and $\varphi$.  Let $V$ denote the space of $\pi_p$.  Using Lemma \ref{trivLoc}, we may fix a Schwartz-Bruhat function $\Phi$ such that the linear functional $\ell=I(\Phi,W_\cdot,\nu,s)|_{s=0}: V \rightarrow \mathbf{C}$ is not identically $0$ on $V$.  (We write $I(\Phi,W_\cdot,\nu,s)|_{s=0}$ for the evaluation of the meromorphic continuation.)

We claim $\ell$ is $(H,\nu_1^{-1})$-invariant.  For $h \in H$ and $\mathrm{Re}(s) \gg 0$, one has
\begin{align*}
  I(\Phi,W_{h^{-1} \cdot \varphi},\nu,s) &= \int_{U_2\backslash H}{\nu_1(\det(g_1))|\det(g_1)|^{s}\Phi((0,1)g_1)\Lambda(\pi(g) \pi(h^{-1}) \varphi)\,dg} \\
  &= \nu_1(h)|\mu(h)|^{s}\int_{U_2\backslash H}{\nu_1(\det(g_1))|\det(g_1)|^{s}\Phi^{h}((0,1)g_1)\Lambda(\pi(g) \varphi)\,dg}\\
	&= \nu_1(h)|\mu(h)|^{s}I(\Phi^{h},W_\varphi,\nu,s).
\end{align*}
Here $\Lambda: V \rightarrow \C$ is the Whittaker functional fixed earlier and $\Phi^{h}(v) =\Phi(vh)$. Thus
\[\nu_1^{-1}(h)|\mu(h)|^{-s}I(\Phi,W_{h^{-1}\cdot v},\nu,s) - I(\Phi,W_v,\nu,s) = I(\Phi^{h}-\Phi,W_{v},s)\]
in the range of absolute convergence.  Note that $\Phi^{h}-\Phi$ is $0$ at $0$, so that by assumption, the meromorphic continuation of the right-hand side to $s=0$ is $0$.  Hence $\ell(h \cdot \varphi) = \nu_1^{-1}(h)\ell(\varphi)$ for all $h \in H$, proving the lemma in the $s=0$ case.

For the $s=1$ case, Lemma \ref{lem:pole} again implies that $I(\Phi,W_\varphi,\nu,s)$ is holomorphic at $s=1$ for all $\Phi$ and $\varphi$.  Using Lemma \ref{trivLoc}, choose $\Phi$ so that $\ell = I(\Phi,W_\varphi,\nu,s)|_{s=1}: V \rightarrow \mathbf{C}$ is not identically $0$.  Identically to the $s=0$ case, for $h \in H$, one has
\[\nu_1^{-1}(h)\ell( h^{-1} \cdot \varphi) - \ell(\varphi) = I(|\mu(h)|\Phi^h-\Phi,W_{\varphi},s)|_{s=1}.\]
If $\Phi' = |\mu(h)|\Phi^h - \Phi$, then $\widehat{\Phi}'(0) = 0$ since
\[\widehat{\Phi}(0) = \int_{\mathbf{Q}_p^2}{\Phi(v)\,dv} = \int_{\mathbf{Q}_p^2}|\det(h_1)|^{-1}\Phi(vh_1)\,dv,\]
where $h_1$ denotes the image of the projection $H \rightarrow \GL_2$ (so that $\det(h_1)=\mu(h)$).  Thus $\ell(h^{-1} \cdot \varphi) = \nu_1(h)\ell(\varphi)$, completing the proof in the $s=1$ case.
\end{proof}

The Moeglin-Vigneras-Waldspurger involution \cite{mvw} allows one to relate $(H,\nu_1^{-1})$-distinction of $\pi$ with that of its contragredient at any finite place $p$.  We will not use this result in the sequel, but it will clarify the arguments using the functional equation in Section \ref{sec:regulatorcomp}, as it explains why we need not mention the contragredient representation explicitly.
\begin{prop} \label{prop:contradist}
If $(x_1,x_2,x_3) \in E_p^3$ denotes a vector in the underlying Hermitian space $(W,J)$ of $G$, we define the endomorphism $\eta \in \GL_{\mathbf{Q}_p}(W)$ by $\eta((x_1,x_2,x_3)) = (\ol{x_1},\ol{x_2},-\ol{x_3})$.  Note that conjugation by $\eta$ preserves $G$ and $H$.  Then the following hold.
\begin{enumerate}
	\item For $(\pi,V)$ a representation of $G(\mathbf{Q}_p)$, write $(\pi^\eta,V)$ for the representation defined by $\pi^\eta(g) = \pi(\eta g \eta^{-1})$.  We have $\widetilde{\pi} \cong \pi^\eta \otimes (\omega_\pi^{-1}|_{Z(\mathbf{Q}_p)} \circ \mu)$.
  \item If $\pi$ is $(H,\nu_1^{-1})$-distinguished, then $\widetilde{\pi}$ is $(H,(\nu_1\omega_\pi|_{Z(\mathbf{Q}_p)})^{-1})$-distinguished, and conversely.
\end{enumerate}
Under the assumptions on $\pi_\infty$ in Section \ref{subsec:diff}, $\widetilde{\pi}_\infty$ is isomorphic to $\pi_\infty$.
\end{prop}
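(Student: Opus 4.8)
\textbf{Proof plan for Proposition \ref{prop:contradist}.}

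The plan is to prove the three assertions in turn, with the archimedean statement last. For part (1), the key observation is that the map $\eta$ is the restriction-of-scalars-to-$\mathbf{Q}_p$ of the $E_p/\mathbf{Q}_p$-semilinear map implementing the conjugation on $V$, and that it is essentially the transpose-inverse outer automorphism of $G$ intertwined by the bilinear form $J$. Concretely, for $g \in G(\mathbf{Q}_p)$ one has ${}^*g J g = \mu(g) J$, so applying complex conjugation entrywise gives a relation of the form $\eta g \eta^{-1} = \mu(g) \cdot J^{-1}\,({}^tg)^{-1}\,J$ (up to bookkeeping with the signs in the definition of $\eta$, which are chosen precisely so that $J$ is fixed). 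Since $g \mapsto J^{-1}({}^tg)^{-1}J$ is the standard realization of the contragredient for $\GL_3$-type groups, it follows that $\pi^\eta$ agrees with $\widetilde{\pi}$ after twisting by a power of $\mu$; tracking the central character shows the twist is exactly $\omega_\pi^{-1}|_{Z(\mathbf{Q}_p)}\circ \mu$. First I would verify the identity $\eta g \eta^{-1} = \mu(g) J^{-1}({}^tg)^{-1} J$ by a direct matrix computation with the form $J$ of (\ref{eqn:jdef}), then quote the standard fact that $g \mapsto {}^tg^{-1}$ realizes $\widetilde{\pi}$ for the relevant groups.

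For part (2), suppose $\ell: V \to \mathbf{C}$ is a nonzero functional with $\ell(\pi(h)\varphi) = \nu_1^{-1}(\mu(h))\ell(\varphi)$ for $h \in H(\mathbf{Q}_p)$. Using part (1), $\widetilde{\pi} \cong \pi^\eta \otimes (\omega_\pi^{-1}|_{Z}\circ\mu)$, so a functional on $\widetilde{\pi}$ with a prescribed $H$-transformation property corresponds to a functional on $\pi^\eta$ with that property twisted by $\omega_\pi|_Z \circ \mu$. Precomposing $\ell$ with the automorphism $h \mapsto \eta h \eta^{-1}$ of $H$ — which, crucially, fixes the similitude $\mu$ — yields a functional distinguishing $\pi^\eta$ by the same character $\nu_1^{-1}\circ\mu$; after the twist this becomes the functional showing $\widetilde{\pi}$ is $(H,(\nu_1\omega_\pi|_{Z(\mathbf{Q}_p)})^{-1})$-distinguished. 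The converse is symmetric since $\eta^2$ is central (indeed $\eta^2 = \diag(1,1,1)$ acting trivially, or at worst a scalar absorbed into the twist) and $\widetilde{\widetilde{\pi}} \cong \pi$. The only point requiring care is checking that conjugation by $\eta$ genuinely preserves $H$ and acts trivially on $\mu$; this is immediate from the block form of $H$ in the embedding of Section \ref{subsec:globalints}, since $\eta$ acts on the $\GU(1,1)'$-block by entrywise conjugation and on the $\GU(1)$-factor similarly, preserving the determinant/norm conditions.

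For the archimedean assertion, the representation $\pi_\infty$ is the discrete series of $\GU(2,1)(\mathbf{R})$ with Blattner parameter $(1,-1)$ and trivial central character, as fixed in Section \ref{subsec:diff}. Discrete series representations of a real reductive group are self-dual precisely when the relevant Weyl group element sends the Harish-Chandra parameter to its negative; here the parameter $(1,-1)$ for $\mathrm{U}(2)\times\mathrm{U}(1)$-type data is preserved up to sign and reordering by an element of the compact Weyl group, and the central character being trivial removes the potential twist. I would argue this either by citing the classification of discrete series $L$-packets for $\mathrm{GU}(2,1)(\mathbf{R})$ (or equivalently for $\U(2,1)(\mathbf{R})$, using $\mathrm{GU}(\mathbf{R}) = \U(\mathbf{R})Z_G(\mathbf{R})$ as elsewhere in the paper), or more directly by noting that $\pi_\infty^\eta \cong \pi_\infty$ since $\eta$ is an inner automorphism at the archimedean place up to the action of $K_\infty$, combined with part (1) and the triviality of $\omega_{\pi,\infty}$.

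\textbf{Main obstacle.} The delicate bookkeeping step is the precise sign in the identity relating $\eta g \eta^{-1}$ to $J^{-1}({}^tg)^{-1}J$: the definition $\eta((x_1,x_2,x_3)) = (\ol{x_1},\ol{x_2},-\ol{x_3})$ has a sign on the third coordinate chosen to make the identity come out cleanly with this particular $J$, and getting this exactly right — so that $\eta$ preserves $G$ and $H$ on the nose and so that the resulting twist is exactly $\omega_\pi^{-1}|_{Z(\mathbf{Q}_p)}\circ\mu$ rather than some other character — is where I expect to spend the most effort. Once that identity is pinned down, parts (1), (2), and the archimedean claim follow formally.
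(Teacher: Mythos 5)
Your reduction of part (1) to a matrix identity plus ``the standard realization of the contragredient for $\GL_3$-type groups'' has a genuine gap at the inert and ramified places. The identity $\eta g \eta^{-1} = \mu(g)\, J^{-1}\,({}^tg)^{-1} J$ (with whatever signs) only pins down the automorphism of $G(\mathbf{Q}_p)$ induced by $\eta$; it does not by itself say that $\pi$ precomposed with this automorphism is isomorphic to $\widetilde{\pi}$. The statement $\widetilde{\pi}\cong \pi\circ({}^t(\cdot)^{-1})$ is the Gelfand--Kazhdan theorem \emph{for $\GL_n(F)$}, so at a split place, where $G(\mathbf{Q}_p)\cong \GL_3(\mathbf{Q}_p)\times\GL_1(\mathbf{Q}_p)$, your argument works. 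But when $p$ is inert or ramified, $G(\mathbf{Q}_p)$ is a genuine unitary similitude group, $\pi$ is not the restriction of a representation of $\GL_3(E_p)$, and the assertion that conjugation by the semilinear element $\eta$ realizes the contragredient is exactly the Moeglin--Vigneras--Waldspurger involution theorem --- a substantive result proved by a Gelfand--Kazhdan-style distribution argument for classical groups, and precisely what the paper invokes: $\eta$ has the defining properties of the element $\delta$ of \cite[p.\ 90]{mvw}, which gives the statement for $\U(J)$, and one passes to $\GU(J)$ by comparing central characters (this is where the twist $\omega_\pi^{-1}|_{Z(\mathbf{Q}_p)}\circ\mu$ is pinned down). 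So you must either cite MVW here or reprove that theorem; the sign bookkeeping you single out as the main obstacle is not where the real difficulty lies.

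Part (2) and the archimedean claim are essentially as in the paper. For (2) the only inputs are that $\eta h\eta^{-1}\in H$ and $\mu(\eta h\eta^{-1})=\mu(h)$, so the same functional $\ell$ distinguishes $\pi^{\eta}\otimes(\omega_\pi^{-1}|_{Z(\mathbf{Q}_p)}\circ\mu)$ by the character $(\nu_1\omega_\pi|_{Z(\mathbf{Q}_p)})^{-1}\circ\mu$, with the converse by symmetry; your version of this is fine. For the archimedean statement, the paper argues more directly than your sketch: with trivial central character, $\widetilde{\pi}_\infty$ is a discrete series with the same minimal $K_\infty$-type, hence equal to $\pi_\infty$. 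Be careful with your first suggested route: $-1$ is not in the relevant Weyl group for $\U(2,1)$, so ``self-duality of discrete series'' is not a general principle here and needs the central-character/minimal-$K$-type argument; likewise the claim that $\eta$ is ``inner up to $K_\infty$'' at the real place would itself require justification.
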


\begin{proof}
The transformation $\eta$ has the defining properties of the element $\delta$ described in \cite[p.\ 90]{mvw}, which implies the statement (1).  (It is shown that this is true for $\U(J)$ in {\it loc.\ cit.}, and one obtains the statement for $G$ by comparing central characters.)

For part (2), first identify $(\widetilde{\pi},\widetilde{V})$ with $(\pi^\eta \otimes (\omega_\pi^{-1}|_{Z(\mathbf{A})} \circ \mu),V)$ using part (1).  If $\Lambda: V \rightarrow \mathbf{C}$ transforms by $\nu_1^{-1}$ under $H$, both implications in part (2) follow from the computation
\[\Lambda((\pi^\eta \otimes (\omega_\pi^{-1}|_{Z(\mathbf{Q}_p)} \circ \mu))(h)v) = \omega_\pi^{-1}|_{Z(\mathbf{A})}(\mu(h))\Lambda(\pi(\eta h \eta^{-1})v) = (\nu_1\omega_\pi|_{Z(\mathbf{Q}_p)})^{-1}(\mu(h))\Lambda(v)\]
because $\eta h \eta^{-1}$ has the same similitude as $h$.

For the final claim, noting our assumption that $\pi_\infty$ has trivial central character, $\widetilde{\pi}_\infty$ is discrete series with the same minimal $K$-type and thus is the same.
\end{proof}

\subsection{Unramified GCD computation}

We verify that the GCD of local integrals defined by Baruch \cite{baruch} has the expected form when all the data is unramified.

\begin{remark} \label{remark:nonu1}
In order to more easily utilize the many papers in the literature on the local Gelbart--Piatetski-Shapiro \cite{gelbartPS} and Bump-Friedberg \cite{bf} integrals, we will sometimes make use of the reduction that the standard $L$-function of the representation $\pi(\nu_1 \circ \mu)$ is the same as the standard $\nu_1$-twisted $L$-function of $\pi$, as mentioned in Definition \ref{defin:stdl}.  This allows us to assume that $\nu_1=1$.
\end{remark}

When $\pi$ is unramified at $p$, we can show that the GCD of the local integral is equal to $L_{p}(\pi_p \times \nu_{1,p},\mathrm{Std},s)$, which was computed explicitly in Section \ref{subsec:unram}.  We will deduce this from special cases of far more general results of Miyauchi \cite{my4} if $p$ is inert and Matringe \cite{matringe2} if $p$ is split.  We remark that the construction of Matringe differs from ours, since our group $\GU(J)(\mathbf{Q}_p)$ is isomorphic to $\GL_3 \times \mathbf{G}_m$ rather than $\GL_3$.  If we were to insert the $\GL_1$-twist of $\pi_p|_{\GL_3}$ into Matringe's construction, this would produce the wrong $L$-function, which is why the proof below is slightly indirect.
\begin{thm} \label{thm:unramgcd}
If $\pi_p$ and $\nu_{1,p}$ are unramified and $p$ is inert or split in $E$, then the GCD of the integral $I_p(\Phi_p,W,\nu_p,s)$ as $\Phi_p$ and $W$ vary is given by $L_{p}^\mathrm{L}(\pi_p \times \nu_{1,p},\mathrm{Std},s)$.
\end{thm}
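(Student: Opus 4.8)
\textbf{Proof plan for Theorem \ref{thm:unramgcd}.}

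The plan is to compare our integral with the Rankin--Selberg integrals already treated in the literature, rather than computing the GCD directly. By Remark \ref{remark:nonu1} we may assume $\nu_{1,p} = 1$, at the cost of replacing $\pi_p$ with $\pi_p(\nu_1 \circ \mu)$; since this replacement only changes the Satake parameter by a scalar and preserves unramifiedness, this is harmless. So we are reduced to showing that the GCD of $I_p(\Phi_p, W, s)$ as $\Phi_p$ ranges over Schwartz--Bruhat functions on $V_2(\mathbf{Q}_p)$ and $W$ ranges over Whittaker functions of $\pi_p$ equals $L_p^{\mathrm{L}}(\pi_p, \mathrm{Std}, s)$.

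First I would establish the lower bound: one inclusion is Proposition \ref{prop:inertsplit}, which shows that for the spherical data $\Phi_p = \mathrm{char}(V_2(\mathbf{Z}_p))$ and $W = W_p$ the spherical Whittaker function, the integral equals $L_p^{\mathrm{L}}(\pi_p,\mathrm{Std},s)$ exactly; hence the GCD divides $L_p^{\mathrm{L}}(\pi_p,\mathrm{Std},s)$, i.e.\ $L_p^{\mathrm{L}}$ is a multiple of the GCD (in the sense that the GCD, being the inverse of a polynomial with constant term $1$ by Baruch, has numerator dividing that of $L_p^{\mathrm{L}}$). The real content is the reverse: that no choice of $(\Phi_p, W)$ can produce \emph{extra} poles beyond those of $L_p^{\mathrm{L}}$. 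Here I would invoke the inert-case results of Miyauchi \cite{my4} and the split-case results of Matringe \cite{matringe2}. In the inert case, Miyauchi's work on the Gelbart--Piatetski-Shapiro integral for $\mathrm{U}(2,1)$ computes precisely this GCD; since our $\GU(J)(\mathbf{Q}_p)$ differs from $\mathrm{U}(J)(\mathbf{Q}_p)$ only by the split central torus $Z_G$, and the integral is taken over $Z(\mathbf{Q}_p)U_2(\mathbf{Q}_p)\backslash H(\mathbf{Q}_p)$ which quotients out the relevant center, one restricts $\pi_p$ to $\mathrm{U}(J)$ (still irreducible since $\GU = Z_G \cdot \mathrm{U}$) and applies Miyauchi's computation directly, matching $L$-factors via the explicit formula in the inert subcase of Section \ref{subsec:unram}. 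In the split case, $\GU(J)(\mathbf{Q}_p) \cong \GL_3(\mathbf{Q}_p) \times \mathbf{G}_m(\mathbf{Q}_p)$ and, as noted in Section \ref{subsubsec:splitcase}, the local integral factors through the $\GL_3$-component and computes the $L$-function of the sum of the standard and exterior-square representations of $\GL_3$ (twisted by $\omega_\mu$); this is exactly the object whose GCD is computed by Matringe \cite{matringe2} and Miyauchi--Yamauchi \cite{my} in the context of the Bump--Friedberg integral \cite{bf}. The slight indirectness flagged in the theorem statement is that Matringe's construction inserts a $\GL_1$-twist differently than ours does, so one must be careful to match the precise character: feeding $\pi_p|_{\GL_3}$ twisted by the character dictated by our $H$-embedding \eqref{eqn:hembedsplit} into Matringe's integral reproduces our $I_p$, and his GCD computation then gives the exterior-square-times-standard $L$-factor, which by the computation in Section \ref{subsubsec:splitcase} is $L_p^{\mathrm{L}}(\pi_p,\mathrm{Std},s)$.

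I expect the main obstacle to be the bookkeeping in the split case: correctly identifying which twist of which $\GL_3$-representation must be plugged into Matringe's and Miyauchi--Yamauchi's framework so that their integral literally coincides with $I_p(\Phi_p,W,s)$ on the nose (including the precise role of the similitude factor $ad-bc$ in \eqref{eqn:hembedsplit} and the central character compatibility $\alpha_1\alpha_2\alpha_3\alpha_\mu^2 = (\alpha_{\nu_1}\alpha_{\nu_2})^{-1}$), and then confirming that the GCD they compute really is the \emph{full} exterior-square-plus-standard $L$-factor rather than just a divisor of it. Once the dictionary between the two integral representations is pinned down, the theorem follows by combining their GCD statements with the explicit $L$-factor computations of Section \ref{subsec:unram}. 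A secondary point worth a line is that in both cases one should note the GCD is a priori the inverse of a polynomial with constant term $1$ (Baruch), so equality of the two inverse-polynomials follows from divisibility in both directions together with the normalization.
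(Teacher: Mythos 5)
Your plan is essentially the paper's proof: reduce to trivial $\nu_{1}$ via Remark \ref{remark:nonu1}, use the spherical computation of Proposition \ref{prop:inertsplit} for one divisibility, quote Miyauchi \cite{my4} at inert places, and transfer the split case to Matringe's Bump--Friedberg GCD \cite{matringe2} after accounting for the similitude twist coming from the embedding (\ref{eqn:hembedsplit}). The only refinement the paper makes, which answers your worry about matching integrals ``on the nose,'' is that Matringe's integral is a two-variable family of which ours is a one-parameter specialization, so his result yields only the divisibility of the specialized GCD by the specialized two-variable GCD; this upper bound on poles, sandwiched against your spherical lower bound, gives the equality, with no literal coincidence of the two constructions required.
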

\begin{proof}
We make use of the reduction in Remark \ref{remark:nonu1} to reduce to the case when $\nu_1$ is trivial.  If $p$ is inert, this is now a case of \cite[Theorem 3.4]{my4}.

If $p$ is split, we may deduce this from the work of Matringe \cite{matringe2} as follows.  Suppose that $\pi_0$ is any irreducible admissible generic representation of $G_0(\mathbf{Q}_p)=\GL_3(\mathbf{Q}_p)$ and let $W_0(g_v)$ be any Whittaker function for $\pi_0$ on $G_0$.  We write $H_0(\mathbf{Q}_p) = \GL_2(\mathbf{Q}_p) \times \GL_1(\mathbf{Q}_p)$ with the embedding
\[\iota_0:(\(\begin{array}{cc} a & b \\c & d\end{array}\),\lambda) \mapsto \(\begin{array}{ccc} a &  & b \\ & \lambda & \\ c & & d\end{array}\).\]
Then Matringe \cite[Proposition 3.2]{matringe2} constructs an analogue of a GCD of the \emph{two}-variable integrals
\begin{equation} \label{eqn:mat} I_\mathrm{BF}(\Phi_p,W_0,s,t)= \int_{U_{2,0}(\Q_p)\backslash H_0(\mathbf{Q}_p)} \Phi_p((0,1)g_{1,p})|\det(g_{1,p})|_p^{s+t+\frac{1}{2}}|\det(g_{2,p})|^{-s+\frac{1}{2}+t}W_0(g_p)\,d{g_p}.\end{equation}
Here, $U_{2,0}$ is the group of upper-triangular unipotent matrices and $g_{1,p}$ and $g_{2,p}$ denote the projections to $\GL_2(\mathbf{Q}_p)$ and $\GL_1(\mathbf{Q}_p)$, respectively.  The GCD constructed by Matringe is a certain uniquely defined function $\frac{1}{P_{\mathrm{BF}}(p^{-s},p^{-t})}$, where $P_{\mathrm{BF}}(p^{-s},p^{-t})$ is a polynomial in $p^{-s},p^{-t}$.  It is normalized such that $P_{\mathrm{BF}}(0,p^{-t})=1$ and has the property that
\begin{equation} \label{eqn:propbf} P_{\mathrm{BF}}(p^{-s},p^{-t})I_\mathrm{BF}(\Phi_p,W_0,s,t) \in \mathbf{C}[p^{-s},p^{-t}]\end{equation}
for all $\Phi_p$ and $W_0$.  (It is not obvious that this polynomial exists.  Matringe first defines a GCD in the one-dimensional ring $\mathbf{C}(q^{-t})[q^{\pm s}]$ in the usual way and identifies its properties afterwards.)

Matringe computes that the GCD is equal to
\[\frac{1}{P_{\mathrm{BF}}(p^{-s},p^{-t})} = L(\pi,\mathrm{Std},s+t+\frac{1}{2})L(\pi,\wedge^2,2s),\]
where $\mathrm{Std}$ and $\wedge^2$ are the usual standard and exterior square $L$-functions on $G_0 = \GL_3$.

Now consider the specialization of the integral (\ref{eqn:mat}) to the case $s = \frac{s'}{2}$, $t=\frac{s'-1}{2}$.  This yields
\begin{equation} \label{eqn:mat2} \int_{U_{2,0}(\Q_p)\backslash H_0(\mathbf{Q}_p)} \Phi_p((0,1)g_{1,p})|\det(g_{1,p})|_p^{s'}W_0(g_p)\,d{g_p}.\end{equation}
One can again form the GCD (now in the usual sense) of the family of specialized integrals as $\Phi_v$ and $W_0$ vary, which we call $\frac{1}{P_{\mathrm{BF}}'(\pi,s')}$.  Abusing notation, we write $P_{\mathrm{BF}}(X)$ for the polynomial in $\mathbf{C}[X]$ obtained by specializing $P_{\mathrm{BF}}(p^{-s},p^{-t})$ to $P_{\mathrm{BF}}(X^2,p^{-\frac{1}{2}}X^2)$.  It follows from (\ref{eqn:propbf}), the definition of $P'_{\mathrm{BF}}$ as a GCD, and the fact that specialization yields a smaller class of integrals that we have the one divisibility $P_{\mathrm{BF}}'(X)|P_{\mathrm{BF}}(X)$ (c.f.\ \cite[Proposition 3.10]{matringe2}).

We note that if $\pi_0$ is unramified with the Satake parameter $\diag(\alpha_1,\alpha_2,\alpha_3) \in \GL_3(\mathbf{C})$, Matringe's computation shows that
\begin{equation} \label{eqn:matcomp} P_{\mathrm{BF}}(X) = L(\pi,\mathrm{Std},s')^{-1}L(\pi,\wedge^2,s')^{-1} = \prod_{i=1}^3 (1-\alpha_i X)(1-\widehat{\alpha_i}X).\end{equation}

The GCD $\frac{1}{P_{\mathrm{BF}}'(\pi,s')}$ is formally defined as the GCD of certain elements in $\mathbf{C}[X]$, obtained by taking the rational functions of $p^{-s'}$ given by the integrals (\ref{eqn:mat2}) and substituting $X$ for $p^{-s'}$.  Write $\chi_\alpha: \mathbf{Q}_p^\times \rightarrow \mathbf{C}^\times$ for the unramified character with $\chi(p)=\alpha'$.  Consider instead the integrals
\begin{equation} \label{eqn:mat3} \int_{U_{2,0}(\Q_p)\backslash H_0(\mathbf{Q}_p)} \Phi_p((0,1)g_{1,p})|\det(g_{1,p})|_p^{s'}\chi_\alpha(\det(g_{1,p}))W_0(g_p)\,d{g_p}.\end{equation}
If we substitute $\alpha p^{-s'}$ for $X$ in these integrals, we obtain the same rational functions of $X$ as we do from (\ref{eqn:mat2}).

Now let $\pi = \pi_0 \otimes \lambda$ be an irreducible admissible representation of $G(\mathbf{Q}_p) = \GL_3(\mathbf{Q}_p) \times \GL_1(\mathbf{Q}_p)$ with unramified central character.  (So $\lambda$ is unramified.)  Our local integral at $p$ is
\begin{equation} \label{eqn:mat4}\int_{U_2(\Q_v)\backslash H(\mathbf{Q}_p)}{\Phi_v((0,1)g_{1,v})W(g_v)|\det(g_{1,v})|_v^{s'}\,d{g_v}},\end{equation}
where $W$ is now a Whittaker function on $G(\mathbf{Q}_p)=\GL_3(\mathbf{Q}_p) \times \GL_1(\mathbf{Q}_p)$ and $H=\GL_2(\mathbf{Q}_p) \times \GL_1(\mathbf{Q}_p)$ is embedded using the formula (\ref{eqn:hembedsplit}).  Write $\iota: H(\mathbf{Q}_p) \rightarrow G(\mathbf{Q}_p)$ for this embedding.  We identify $G_0=\GL_3(\mathbf{Q}_p)$ with the first factor of $G$.  Note that with this identification, the embeddings of $H_0(\mathbf{Q}_p)$ (via $\iota_0$) and $H(\mathbf{Q}_p)$ (via $\iota$) do not define the same subgroups of $G(\mathbf{Q}_p)$.

Let $\alpha = \lambda(p)$ in (\ref{eqn:mat3}).  We would like to compare (\ref{eqn:mat3}) with (\ref{eqn:mat4}).  Let $W$ be any Whittaker function for $G$ and write $W_0$ for its restriction to $G_0$.  Then it follows from the definitions of $\alpha$, $\iota_0$, and $\iota$ that
\[W(\iota(g_1,g_2)) = \chi_{\alpha_\mu}(\det(g_1))W_0(\iota_0(g_1,g_2)).\]
In particular, the GCD of the local integrals (\ref{eqn:mat4}) is $P_{\mathrm{BF}}'(X)$ for $X = \alpha p^{-s'}$.

Now assume that $\pi_p$ is unramified.  We computed in Section \ref{subsubsec:splitcase} that for $\Phi_p$ unramified and $W$ the normalized spherical Whittaker function, we have
\[I(\Phi_p,W,\nu,s') = L_p^\mathrm{L}(\pi_p,\mathrm{Std},s)=\prod_{i=1}^3 (1-\alpha_i X)^{-1}(1-\widehat{\alpha_i}X)^{-1},\]
where $X=\alpha p^{-s'},$ $\alpha_i$ are the Satake parameters of $\pi_0$, and $\widehat{\alpha_i}$ is the product of the $\alpha_j$ with $j\ne i$.  By definition of the GCD together with our comparison of (\ref{eqn:mat4}) with (\ref{eqn:mat3}), we have
\[\prod_{i=1}^3 (1-\alpha_i X)(1-\widehat{\alpha_i}X)|P_{\mathrm{BF}}'(X).\]
Using the aforementioned divisibility $P_{\mathrm{BF}}'(X) | P_{\mathrm{BF}}(X)$ and (\ref{eqn:matcomp}), we deduce that $L_p^\mathrm{L}(\pi_p,\mathrm{Std},s)$ is the GCD.
\end{proof}

\section{Regulator computation} \label{sec:regulatorcomp}

We assemble the results of the preceding sections to obtain a formula for the regulator of the explicit classes we constructed in the cohomology of Picard modular surfaces.  After defining the Whittaker model, we prove statements regarding algebraicity and non-vanishing.  These yield a coarse formula in the sense that it involves an indeterminacy up to $\ol{\mathbf{Q}}^\times$.  We then obtain a more explicit regulator formula by using exact local computations carried out in Section \ref{sec:diff} and in the literature.

\subsection{The Whittaker period} \label{subsec:whittakerperiod}

Let $\pi$ be as in Section \ref{sec:diff}.  Let $V_\pi$ denote the underlying space of automorphic forms in $\pi$.  Recall from Definition \ref{defin:whit} that we have set
\[\Lambda(\varphi) = \int_{U(\Q)\backslash U(\A)}{\chi^{-1}(u)\varphi(u)\,du}.\]
Also recall from Section \ref{subsec:diff} that $V_{\ol{\mathbf{Q}}}$ is our notation for the canonical model for $\pi_f$ over $\ol{\mathbf{Q}}$ coming from interior coherent cohomology.  We again write $V_f = V_{\ol{\mathbf{Q}}} \otimes_{\ol{\mathbf{Q}}} \mathbf{C}$ and $V_\pi$ for the underlying space of automorphic forms.  By Proposition \ref{prop:whitrational}, there exists $\Lambda_f: V_f \rightarrow \mathbf{C}$ sending $V_{\ol{\mathbf{Q}}}$ to $\ol{\mathbf{Q}}$, which is unique up to $\ol{\mathbf{Q}}^\times$.

Recall from (\ref{eqn:alpha}) the canonical isomorphism
\[\alpha: V_{\ol{\mathbf{Q}}} \otimes_{\ol{\mathbf{Q}}} \mathbf{C} \stackrel{\sim}{\rightarrow} \Hom_{K_\infty}(\mathfrak{p}^+ \otimes \mathfrak{p}^-,V_\pi).\]
We recall that in Definition \ref{defin:v0}, we have selected a vector $v_2 \in \mathfrak{p}^+ \otimes \mathfrak{p}^-$ such that the projection $v_0$ of $v_2$ to $V_\tau$ has nonzero image under the Whittaker functional.  We recall that $\alpha$ factors through this projection, so below, we regard the target of $\alpha$ as an element of $\Hom_{K_\infty}(V_\tau,V_\pi)$.

\begin{prop}
If $\Lambda(\alpha(\varphi_f)(v_0))$ is nonzero for $\varphi_f \in V_{\ol{\mathbf{Q}}}$, the coset of its value in $\mathbf{C}^\times/ \ol{\mathbf{Q}}^\times$ is independent of the choice of $\varphi_f$.
\end{prop}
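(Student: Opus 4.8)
The plan is to recognize the assignment $\varphi_f \mapsto \Lambda(\alpha(\varphi_f)(v_0))$ as (a specialization of) a finite-adelic Whittaker functional on the space $V_f$, and then to compare it with the algebraic functional $\Lambda_f$ of Proposition \ref{prop:whitrational} by invoking uniqueness of the Whittaker model. Concretely, set $\ell\colon V_f \to \mathbf{C}$, $\ell(\varphi_f) = \Lambda(\alpha(\varphi_f)(v_0))$; the goal is to show $\ell$ is proportional to $\Lambda_f$ by a single scalar in $\mathbf{C}^\times$ depending only on the (fixed) choices of $\Lambda$, $\alpha$, $v_0$, and $\Lambda_f$.

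First I would check that $\ell$ is a nonzero $\chi_f$-equivariant functional on $V_f$ for the right action of $U(\mathbf{A}_f)$, where $\chi_f$ is the finite part of the character $\chi$ of Definition \ref{defin:whit}. This uses two inputs: (i) the canonical isomorphism $\alpha$ of (\ref{eqn:alpha}) is $G(\mathbf{A}_f)$-equivariant, since on the right-hand side $G(\mathbf{A}_f)$ acts only through the automorphic-form factor $V_\pi$; and (ii) for $\varphi \in V_\pi$ and $g \in G(\mathbf{A}_f)$ one has $\Lambda(g\varphi) = W_\varphi(g)$ together with the quasi-invariance $W_\varphi(ug) = \chi(u)W_\varphi(g)$ of Whittaker functions. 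Combining these, for $u \in U(\mathbf{A}_f)$,
\[
\ell(u\varphi_f) = \Lambda\big(u\cdot \alpha(\varphi_f)(v_0)\big) = W_{\alpha(\varphi_f)(v_0)}(u) = \chi_f(u)\,W_{\alpha(\varphi_f)(v_0)}(1) = \chi_f(u)\,\ell(\varphi_f),
\]
using that $\chi$ restricted to $U(\mathbf{A}_f)$ is $\chi_f$. The hypothesis supplies a $\varphi_f \in V_{\overline{\mathbf{Q}}}$ with $\ell(\varphi_f)\neq 0$, so $\ell$ is not identically zero.

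Next I would apply uniqueness of the Whittaker model for $\pi_f \cong V_f$. Since $\pi$ is generic, each finite-place component $\pi_v$ is generic and admits, up to a scalar in $\mathbf{C}^\times$, a unique $\chi_v$-equivariant functional (local multiplicity one: Shalika for $\GL_3\times\mathbf{G}_m$ at split places, and the corresponding statement for $\mathrm{U}(2,1)$ at inert and ramified places), and hence the same uniqueness holds for the restricted tensor product $\pi_f$. Therefore $\ell = c\,\Lambda_f$ for some $c \in \mathbf{C}^\times$ that is independent of $\varphi_f$. Since $\Lambda_f$ sends $V_{\overline{\mathbf{Q}}}$ into $\overline{\mathbf{Q}}$, for any $\varphi_f \in V_{\overline{\mathbf{Q}}}$ with $\ell(\varphi_f)\neq 0$ we get $\Lambda_f(\varphi_f)\in\overline{\mathbf{Q}}^\times$, whence $\Lambda(\alpha(\varphi_f)(v_0)) = c\,\Lambda_f(\varphi_f)$ represents the class $[c] \in \mathbf{C}^\times/\overline{\mathbf{Q}}^\times$, which is the same for all such $\varphi_f$.

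The main obstacle is organizational rather than conceptual: verifying cleanly the $G(\mathbf{A}_f)$-equivariance of the canonical map $\alpha$ and the precise transformation law of $\ell$ under $U(\mathbf{A}_f)$, and ensuring that the uniqueness-of-Whittaker-model input is quoted in a form that applies uniformly across split, inert, and ramified places of $\GU(2,1)$. Once these are pinned down, the proportionality $\ell = c\,\Lambda_f$ and the conclusion are immediate.
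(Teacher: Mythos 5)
Your proposal is correct and follows essentially the same route as the paper: identify $\varphi_f \mapsto \Lambda(\alpha(\varphi_f)(v_0))$ as a nonzero Whittaker functional on $V_f$, use uniqueness of such functionals to write it as a complex multiple of the $\ol{\mathbf{Q}}$-rational functional $\Lambda_f$ from Proposition \ref{prop:whitrational}, and read off that all nonzero values lie in a single class of $\mathbf{C}^\times/\ol{\mathbf{Q}}^\times$. The paper simply asserts the Whittaker property and the proportionality, while you spell out the $U(\mathbf{A}_f)$-equivariance and the local multiplicity-one input; this is the same argument with the routine verifications made explicit.
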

\begin{proof}
The functional on $V_f$ defined by $\Lambda(\alpha(\cdot)(v_0))$ is a nonzero Whittaker functional.  There exists a nonzero Whittaker functional $\Lambda_f: V_f \rightarrow \mathbf{C}$ sending $V_{\ol{\mathbf{Q}}}$ to $\ol{\mathbf{Q}}$ by Proposition \ref{prop:whitrational}.  Therefore, $\Lambda(\alpha(\cdot)(v_0))$ is a multiple of $\Lambda_f$, say by $\beta\in \mathbf{C}^\times$.  It follows that $\Lambda(\alpha(\varphi_f)(v_0))$ must be equal to the product of $\beta$ with an element of $\ol{\mathbf{Q}}$ for any $\varphi_f \in V_{\ol{\mathbf{Q}}}$.
\end{proof}

We may therefore make the following definition.
\begin{defin}
The Whittaker period $W(\pi) \in \mathbf{C}^\times/\ol{\mathbf{Q}}^\times$ is defined to be the class $\Lambda(\alpha(\varphi_f)(v_0))$ for any $\varphi_f \in V_{\ol{\mathbf{Q}}}$ satisfying $\Lambda(\alpha(\varphi_f)(v_0))$.  We also write $W(\pi,\varphi_f) = \Lambda(\alpha(\varphi_f)(v_0)) \in \mathbf{C}^\times$ for the particular value associated to a choice of $\varphi_f$.
\end{defin}

\subsection{Coarse regulator formula}

We say that $\omega_{\varphi_f}$, defined as in Section \ref{subsec:diff}, is an algebraic $(1,1)$-form on $S_{K_f}$ if $\varphi_f \in V_{\ol{\mathbf{Q}}}$.

Given $\varphi_f$, write $\varphi$ for $\alpha(\varphi_f)(v_0)$.  Recall from Definition \ref{defin:whit} that whenever $\varphi_f$ is a pure tensor in a tensor product decomposition of $\pi$, we define $W_{\varphi,v}:G(\mathbf{Q}_v) \rightarrow \mathbf{C}$ by $W_{\varphi,v}(1) = \Lambda(g\varphi)/\Lambda(\varphi)$ for each place $v$ of $\mathbf{Q}$ (with $\Lambda$ as in Section \ref{subsec:whittakerperiod}) so that $W_{\varphi,v}(1)=1$.  Then
\[\Lambda(g\varphi)=W(\pi,\varphi_f)\prod_v W_{\varphi,v}(g).\]
Let $S$ be the set of finite places $p$ where $\pi$ is ramified (if $p$ is split or inert) or $\pi$ does not have a vector fixed by $G(\mathbf{Z}_p)$ (if $p$ is ramified).  In Section \ref{sec:diff}, we have proved that when $\varphi_f$ is a pure tensor,
\[I(\Phi,\varphi,\nu,s) = W(\pi,\varphi_f) L_{\infty}(\pi,\mathrm{Std},s) L^S(\pi \times \nu_1,\mathrm{Std},s) \prod_{p \in S} I_p(\Phi_p,W_{\varphi,p},\nu_p,s).\]
Here, we set $L_{\infty}(\pi,\mathrm{Std},s) = 8iW_{0,0}(8\sqrt{2}\pi D^{-\frac{3}{4}})^{-1} \pi^4 D^{\frac{3s-3}{2}}\Gamma_{\C}(s)\Gamma_{\C}(s+1)\Gamma_{\C}(s+1)$.

For the remainder of this paper, we will use the following definition of local $L$-factors, which in principle may not coincide with those defined in Definition \ref{defin:stdl}.  This definition provides concrete $L$-factors at all places.

\begin{definition} \label{defin:locallfactors} For any finite place $p$, define $L_{p}^\mathrm{G}(\pi_p \times \nu_{1,p},\mathrm{Std},s)$ to be the greatest common divisor of the zeta integrals $I_p(\Phi_p,W,\nu_p,s)$ as $\Phi_p$ and $W$ vary, which exists by the aforementioned work of Baruch \cite{baruch}.  It follows from Proposition \ref{prop:locrationality} and Lemma \ref{trivLoc} that $L_p^\mathrm{G}(\pi_p \times \nu_{1,p},\mathrm{Std},s) = 1/Q(q^{-s})$, where $Q \in \Qbar[X]$ has constant term $1$.  We define the standard $L$-function
\[L(\pi \times \nu_1,\mathrm{Std},s) = \prod_{p < \infty}{L_{p}^\mathrm{G}(\pi_p \times \nu_{1,p},\mathrm{Std},s)}\]
and completed $L$-function $\widehat{L}(\pi \times \nu_1, \mathrm{Std},s) = L_{\infty}(\pi_{\infty},\mathrm{Std},s)L(\pi \times \nu_1,\mathrm{Std},s)$.  (Also recall from Section \ref{subsec:arch} that we have assumed $\nu_{\infty}$ is trivial at infinity.)  If $S$ is a set of finite places, we will write $L^S(\pi \times \nu_1, \mathrm{Std},s)$ and $\widehat{L}^S(\pi \times \nu_1,\mathrm{Std},s)$ for the product of $L$-factors for finite places not in $S$ and the product of $L$-factors at all places not in $S$, respectively.
\end{definition}

\begin{remark} Theorem \ref{thm:unramgcd} can now be rephrased as saying that $L_p^{\mathrm{L}}(\pi_p \times \nu_{1,p},\mathrm{Std},s)=L_{p}^\mathrm{G}(\pi_p \times \nu_{1,p},\mathrm{Std},s)$ when $\pi_p$ and $\nu_{1,p}$ are unramified.\end{remark}

\begin{remark} Koseki and Oda \cite{ko} compute the GCD (as a product of $\Gamma$-factors, and ignoring any nonvanishing entire factors) at the Archimedean place and obtain $L_{\infty}(\pi_{\infty},\mathrm{Std},s)$.\end{remark}

\begin{remark}
Recall that while $L_p^G(\pi_p \times \nu_{1,p},\mathrm{Std},s)$ is defined for every $\pi_p$, we have only defined $L_p^\mathrm{L}(\pi_p \times \nu_{1,p},\mathrm{Std},s)$ when $\pi_p$ has a vector fixed by $G(\mathbf{Z}_p)$ if $p$ is inert or ramified in $E$.
\end{remark}

We have the following easy consequence of the Rankin-Selberg integral.
\begin{lemma}[{\cite{gelbartPS}}] \label{lem:vanishingat0} Suppose either that $\nu_1^2 \cdot \omega_\pi|_{Z(\mathbf{A})}$ is non-trivial or that $\pi(\nu_1 \circ \mu)$ does not have a period over $[H]=H(\Q)Z(\A)\backslash H(\A)$.  Then $L(\pi \times \nu_1,\mathrm{Std},s)$ vanishes to order at least $1$ at $s=0$.  \end{lemma}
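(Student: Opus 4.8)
The plan is to analyze the behavior at $s=0$ of the global integral $I(\Phi,\varphi,\nu,s)$, which by Proposition \ref{prop:unfolding} equals $W_\varphi(1)\prod_v I_v(W_v,\Phi_v,\nu_v,s)$, and to relate the order of vanishing of $L(\pi\times\nu_1,\mathrm{Std},s)$ to an $H$-invariant functional on $\pi$. First I would use the Kronecker limit formula of Section \ref{sec:klf}: by Corollary \ref{coro:allphi}, for suitable $\Phi$ with $\Phi(0)=0$ when $\nu_1\nu_2^{-1}=1$ (which is the relevant case here, since Hypothesis \ref{hypo:central} forces $\nu_1\nu_2^{-1}$ trivial exactly when $\nu_1^2\omega_\pi|_{Z(\mathbf{A})}$ is trivial, and $\nu_1^2\omega_\pi|_{Z(\A)}$ trivial plus the functional equation $\omega_\pi|_Z=(\nu_1\nu_2)^{-1}$ gives $\nu_1=\nu_2$ up to the relevant torsion), the Eisenstein series $E(g,\Phi,\nu,s)$ has no pole at $s=0$ and its value $E(g,\Phi,\nu,0)$ is the logarithm of a modular unit. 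The key point is that $I(\Phi,\varphi,\nu,0)=\int_{[H]}\varphi(g)E(g_1,\Phi,\nu,0)\,dg$ is then finite.

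Next I would argue that $I(\Phi,\varphi,\nu,0)=0$ for all $\Phi$ and all $\varphi$ under either hypothesis. If $\pi(\nu_1\circ\mu)$ has no period over $[H]$, this is almost immediate, since $E(g_1,\Phi,\nu,0)$ transforms under the center $Z(\mathbf{A})$ via $(\nu_1\nu_2)^{-1}=\omega_\pi|_{Z(\A)}$, so $\varphi(g)E(g_1,\Phi,\nu,0)$ descends to $[H]$, and — after twisting by $\nu_1\circ\mu$ — the integral is a constant multiple of a period of $\pi(\nu_1\circ\mu)$ against the automorphic function $\nu_1^{-1}(\mu(\cdot))E(\cdot)$; but more carefully, since $E(g,\Phi,\nu,0)$ is the pullback of a function on the $\GL_2$-Shimura variety and hence genuinely a function on $C_{K_f}$, one integrates $\varphi$ against something that is a bounded function on $[H]$, and if $\varphi$ has no nonzero period at all over $[H]$ the integral vanishes; one has to be slightly careful that $E(g,\Phi,\nu,0)$ is not a scalar, but it lies in the space of automorphic forms on $H$ generated by such logarithms of units, and pairing a cusp form having no $H$-period against this space gives zero. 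In the case $\nu_1^2\omega_\pi|_{Z(\A)}$ nontrivial, the integral vanishes for degree reasons: $\Phi(0)$ need not be zero, but then $E(g,\Phi,\nu,0)$ involves the constant-term contribution $c(\Phi,\nu_1\nu_2^{-1},(0,0))$, which however comes with the factor from the non-trivial character so the argument of Lemma \ref{distLem} and Proposition \ref{prop:contradist} shows the pairing still vanishes — actually in this branch the cleanest route is: the local integrals $I_v$ are all finite and nonzero at $s=0$ for well-chosen data (Lemma \ref{trivLoc}, Lemma \ref{lem:phipoly}), so if $L(\pi\times\nu_1,\mathrm{Std},s)$ did \emph{not} vanish at $s=0$, then $I(\Phi,\varphi,\nu,0)\ne 0$ for some choice of data, yet on the other hand the Kronecker limit formula plus the distinction-theoretic obstruction forces it to vanish, a contradiction.

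Concretely the chain is: (1) assume for contradiction $L(\pi\times\nu_1,\mathrm{Std},0)\ne 0$; (2) by Proposition \ref{prop:unfolding} together with the Archimedean computation $L_\infty(\pi_\infty,\mathrm{Std},s)$ of Section \ref{subsec:arch} (which is nonzero and finite at $s=0$) and Lemma \ref{trivLoc}/Lemma \ref{lem:phipoly} at the bad finite places, choose factorizable data making $I(\Phi,\varphi,\nu,0)\ne 0$, with $\Phi(0)=0$ at one place if needed; (3) invoke the Kronecker limit formula (Corollary \ref{coro:allphi}) to rewrite $I(\Phi,\varphi,\nu,0)=\int_{[H]}\varphi(g)\log|u(\Phi,\nu_1)|(g)\,dg$ up to nonzero constants; (4) observe that $\log|u(\Phi,\nu_1)|$ transforms under $Z(\A)$ by the character making $\varphi(g)\log|u|(g)$ descend to $[H]$, and — twisting $\pi$ by $\nu_1\circ\mu$ — this exhibits $I(\Phi,\varphi,\nu,0)$ as a period of $\pi(\nu_1\circ\mu)$ over $[H]$ paired with a fixed automorphic function; (5) under hypothesis (a) that $\nu_1^2\omega_\pi|_{Z(\A)}$ is nontrivial, note $Z(\A)$ does not act trivially so $\log|u|$ (a genuine function, not transforming by this character) cannot pair nontrivially — i.e. the relevant component of $\log|u|$ along the $\nu_1^2\omega_\pi|_{Z(\A)}$-isotypic part is zero unless that character is trivial — giving $I=0$; under hypothesis (b) the period of $\pi(\nu_1\circ\mu)$ over $[H]$ is zero by assumption, so again $I=0$; (6) either way contradicts (2), proving the vanishing. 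The main obstacle is step (5)/(4): making precise that $\log|u(\Phi,\nu_1)|$, as an automorphic function on $H$, lies in a space on which integrating a cusp form with vanishing $H$-period (respectively, a cusp form whose central character clashes with $\nu_1^2\omega_\pi|_Z$) gives zero — this requires tracking the exact central/automorphic transformation properties of the Kronecker limit formula output, which is where the hypotheses of the lemma must be used in a load-bearing way.
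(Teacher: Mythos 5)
Your proposal goes wrong at two load-bearing points, and together they invert the actual mechanism of the lemma. First, in step (2) you assert that $L_\infty(\pi_\infty,\mathrm{Std},s)$ is ``nonzero and finite at $s=0$.'' It is not: the Archimedean factor computed in Section \ref{subsec:arch} is $8iW_{0,0}(8\sqrt{2}\pi D^{-\frac{3}{4}})^{-1}\pi^4 D^{\frac{3s-3}{2}}\Gamma_\C(s)\Gamma_\C(s+1)^2$, and $\Gamma_\C(s)$ has a simple pole at $s=0$. This pole is precisely the engine of the lemma: the paper's proof chooses finitely many pairs $(\Phi_i,\varphi_i)$ so that $\sum_i I(\Phi_i,\varphi_i,\nu,s)=\widehat{L}(\pi\times\nu_1,\mathrm{Std},s)$ (possible because the finite local $L$-factors are defined as GCDs), shows the left side is \emph{holomorphic} at $s=0$ under the hypotheses, and then concludes that the finite-part $L(\pi\times\nu_1,\mathrm{Std},s)$ must vanish at $s=0$ to cancel the pole of $\Gamma_\C(s)$ sitting inside $\widehat{L}$. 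No contradiction argument and no nonvanishing choices of local data are needed.

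Second, your steps (4)--(5) claim that $I(\Phi,\varphi,\nu,0)=0$ for all data under either hypothesis. That is false, and if it were true it would contradict the paper's own Theorem \ref{coarseThm} and Corollary \ref{coro:chowreg}, where exactly this value is shown to be nonzero and proportional to $W(\pi)\,L^{(1)}(\pi\times\nu_1,\mathrm{Std})$. The hypotheses of the lemma do not kill the \emph{value} of the integral; they kill the \emph{residue} of its possible pole at $s=0$. Concretely, by Lemma \ref{lem:adelicclassical} the Eisenstein series has at worst a simple pole at $s=0$ whose residue is proportional to $\Phi(0)$ times the character $\nu_1\circ\det$ (the $\overline{w}=(0,0)$ term), so the residue of $I(\Phi,\varphi,\nu,s)$ is proportional to $\Phi(0)\int_{[H]}\nu_1(\mu(g))\varphi(g)\,dg$; under hypothesis (b) this period vanishes by assumption, and under hypothesis (a) the central character condition (Hypothesis \ref{hypo:central}) forces $\nu_1\nu_2^{-1}$ nontrivial, so $c(\Phi,\nu_1\nu_2^{-1},(0,0))=0$ and there is no pole at all. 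Your conflation of ``pairing $\varphi$ against $\log|u(\Phi,\nu_1)|$'' with ``a period of $\pi(\nu_1\circ\mu)$ over $[H]$'' is the precise spot where the argument fails: the no-period hypothesis only annihilates integration against the character $\nu_1\circ\mu$ (i.e.\ the residue term), not integration against the genuinely nonconstant function $\log|u(\Phi,\nu_1)|$, whose pairing with $\varphi$ is exactly the regulator the paper wants to be nonzero.
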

\begin{proof} Since each local $L$-factor is a GCD at places of ramification of $\pi$ or $\nu_1$ and one can choose unramified data elsewhere, there are choices of finitely many pairs $(\Phi_i,\varphi_i)$ such that
\[\sum_{i}{I(\Phi_i,\varphi_i,\nu,s)} = \widehat{L}(\pi \times \nu_1,\mathrm{Std},s).\]
If $\nu_1^2 \cdot \omega_\pi|_{Z(\mathbf{A})}$ is trivial, then by Lemma \ref{lem:adelicclassical} and our central character condition, the Eisenstein series $E(\Phi,g,\nu,s)$ has a simple pole at $s=0$ with a residue proportional to $\Phi(0)\nu_1$.  Hence the residue at $s=0$ of the left-hand side is proportional to a period of $\pi(\nu_1 \circ \mu)$ over $[H]$, and thus vanishes by assumption.  If $\nu_1^2 \cdot \omega_\pi|_{Z(\mathbf{A})}$ is not trivial, $E(\Phi,g,\nu,s)$ has no pole at $s=0$.  It follows that $\widehat{L}(\pi,\mathrm{Std} \times \nu_1,s)$ is holomorphic at $s=0$ in either case.  Since $L_{\infty}(\pi,\mathrm{Std},s)$ has a simple pole at $s=0$ coming from $\Gamma_{\C}(s)$, $L(\pi \times \nu_1,\mathrm{Std},s)$ vanishes to order at least $1$ at $s=0$ to compensate. \end{proof}

\begin{definition} If $\nu_1^2 \cdot \omega_\pi|_{Z(\mathbf{A})}$ is non-trivial or $\pi(\nu_1 \circ \mu)$ has no period over $[H]$, we define $L^{(1)}(\pi \times \nu_1,\mathrm{Std})$ to be coefficient of $s$ in the Taylor expansion of $L(\pi\otimes \nu_1,\mathrm{Std},s)$ at $s=0$.\end{definition}

In our first main theorem, we show that for any suitably chosen data in the global integral, we may interpret its special value as a regulator pairing with an element of the higher Chow group.  We will rely on the main results of Sections \ref{sec:higherchow}, \ref{sec:hecke}, and \ref{sec:klf}.

We have defined cycles $C_{W,K_{W,f}} \subseteq S_{K_f}$ in Section \ref{sec:higherchow}, where both these varieties were initially defined over their reflex field $E$.  Via the projection $H \rightarrow \GL_2$ of groups over $\mathbf{Q}$, we have a morphism of the corresponding Shimura varieties that is defined over $E$.

Recall from Corollary \ref{coro:allphi} that we have units $u(\Phi,\nu_1)$ on the Shimura varieties associated to $\GL_{2/\mathbf{Q}}$ that are defined over $\mathbf{Q}(\nu_1)$.  By pullback, we obtain units on $C_{K_f}$ that are initially defined over $E(\nu_1)$.  For the purposes of discussion of the regulator map, we restricted scalars from $E$ to $\mathbf{Q}$ on $S_{K_f}$ and $C_{K_f}$.  Then these units are defined over $\mathbf{Q}(\nu_1)$.

In the following, we note that since the image of the regulator pairing is a $\mathbf{C}$-vector space, we may extend its definition to $\mathrm{Ch}^2(\ol{S}_{K_f/F},1) \otimes_\mathbf{Q} \ol{\mathbf{Q}}$ by linearity, where $\ol{\mathbf{Q}}$ is identified with the algebraic numbers in our fixed copy of $\mathbf{C}$ and $F\subseteq \mathbf{C}$ is a number field with a chosen embedding in $\mathbf{C}$.

\begin{thm}[Geometrization of the global integral] \label{thm:geom}
Assume that either $\nu_1^2 \cdot \omega_\pi|_{Z(\mathbf{A})}$ is non-trivial or $\pi(\nu_1 \circ \mu)$ has no period over $[H]$.  Let $\Phi$ be a $\mathbf{Q}$-valued factorizable Schwartz-Bruhat function $\Phi$ such that if $\nu_1^2 \cdot \omega_\pi|_{Z(\mathbf{A})}$ is trivial, then $\Phi(0)=0$.  For any $\varphi_f \in V_{\ol{\mathbf{Q}}}$, there exists an open compact $K_f \subseteq G(\mathbf{A}_f)$ and an element $[\eta] \in \mathrm{Ch}^2(\ol{S}_{K_f/\mathbf{Q}(\nu_1)},1) \otimes_\mathbf{Q} \ol{\mathbf{Q}}$ such that
\begin{equation}\label{eqn:reginterp}\langle \mathrm{Reg}([\eta]),\omega_{\varphi_f} \rangle_{K_f} = I(\Phi,\varphi,\nu,0).\end{equation}
\end{thm}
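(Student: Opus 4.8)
The plan is to assemble the pieces from Sections \ref{sec:higherchow}, \ref{sec:hecke}, and \ref{sec:klf} in the following order. First I would invoke Corollary \ref{coro:allphi}: for the given factorizable $\Phi$ (with $\Phi(0)=0$ imposed precisely when $\nu_1\nu_2^{-1}=\nu_1^2\omega_\pi|_{Z(\mathbf{A})}$ is trivial, using Hypothesis \ref{hypo:central}), there is a unit $u(\Phi,\nu_1)$ on the $\GL_2$-Shimura variety, defined over $\mathbf{Q}(\nu_1)$, with $E(g,\Phi,\nu,s)=\log|u(\Phi,\nu_1)|+O(s)$. Pulling back along $H\to\GL_2$ and then along $C_{W_0,K_{W_0,f}}\to S_{K_f}$ gives a unit $u(\Phi,\nu_1)$ on the embedded modular curve $C_{K_f}$, a priori over $E(\nu_1)$, hence over $\mathbf{Q}(\nu_1)$ after restriction of scalars. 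Combined with the Kronecker limit formula this yields
\[
I(\varphi_f,\Phi,\nu,K_f,0)=[\GL_2(\widehat{\mathbf{Z}}):K_f]\int_{C_{K_f}}\log|u(\Phi,\nu_1)|\,\omega_{\varphi_f},
\]
so the open-variety integral is already a sum of terms of the shape $\int_{C}\log|u|\,\omega_{\varphi_f}$ — but $(C_{K_f},u(\Phi,\nu_1))$ need not satisfy the vanishing condition \eqref{eqn:vancond} along the boundary of $\ol{S}_{K_f}$.

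Second, I would apply the Manin--Drinfel'd-type Hecke operator of Section \ref{sec:hecke}. By Lemma \ref{lem:annihilate} (with an auxiliary prime $p\equiv 1\pmod N$, $N$ chosen so $K_f\supseteq K_f(N)$) and Lemma \ref{lem:heckenonzero}, there is a $\mathbf{Q}$-valued $\xi$ such that $T_\xi-\mu(\xi)$ annihilates functions on the boundary of the minimal compactification while acting on the spherical vector $\varphi_{f,p}$ by a nonzero scalar $\lambda\in\ol{\mathbf{Q}}^\times$ (here we use that $\pi_p$ is generic, hence nontrivial, and arrange $K_f$ maximal and $\varphi_f$ spherical at $p$ — possibly shrinking $K_f$ away from $p$). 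Applying the $\mathbf{CD}$-action $(T_\xi-\mu(\xi))^{\mathbf{CD}}$ to $(C_{K_f},u(\Phi,\nu_1))$ produces a formal sum $\Xi'=\sum_k(\ol{C}_k',\Xi_k')$ of (translates of) embedded modular curves with divisors supported on cusps; by the compatibility \eqref{eqn:compat} and Lemma \ref{lem:annihilate}, the images $\sum_k\iota^{\min}_{k*}\Xi_k'$ vanish on $\sigma_{K_f}$, and each $\deg\Xi_k'=0$ follows since $u(\Phi,\nu_1)$ is a unit (its divisor on $C_{K_f}$ is boundary-supported of degree $0$, preserved by the Hecke action). This $\Xi'$ is $G_{\mathbf{Q}(\nu_1)}$-stable because the Hecke action, translation operators, and units $u(\Phi,\nu_1)$ are all defined over $\mathbf{Q}(\nu_1)$.

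Third, I would feed $\Xi'$ into Proposition \ref{prop:cycles}: after pulling back to a smaller level $K_f'$ where every $\ol{C}_k'$ is smoothly embedded (Proposition \ref{prop:pullbacksmooth}), conditions (2a) and (2b) hold, so there exist $n\in\mathbf{Z}_{\ge1}$, rational functions $u_k$ on $\ol{C}_k'$ with $\mathrm{div}(u_k)=n\Xi_k'$, and rational functions $f_j$ on the boundary elliptic curves $E_j$ such that $\eta:=\sum_k(\ol{C}_k',u_k)+\sum_j(E_j,f_j)\in Z^2(\ol{S}_{K_f'},1)$ is $G_{\mathbf{Q}(\nu_1)}$-stable; this defines $[\eta]\in\mathrm{Ch}^2(\ol{S}_{K_f'/\mathbf{Q}(\nu_1)},1)\otimes_\mathbf{Q}\ol{\mathbf{Q}}$ after dividing by $n\lambda[\GL_2(\widehat{\mathbf{Z}}):K_f']$. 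Finally I would compute $\langle\mathrm{Reg}([\eta]),\omega_{\varphi_f}\rangle_{K_f'}$: the boundary terms $\int_{E_j}\log|f_j|\,\omega_{\varphi_f}$ vanish because $\omega_{\varphi_f}$ is extended by $0$ to the boundary (and restricts to $0$ on the $E_j$), while the self-adjointness of the Hecke action (Lemma \ref{lem:heckeduality}) converts $\sum_k\int_{\ol{C}_k'}\log|u_k|\,\omega_{\varphi_f}$ back into $n$ times the action of $T_\xi-\mu(\xi)$ on $\omega_{\varphi_f}$, i.e. $n\lambda$ times $\int_{C_{K_f'}}\log|u(\Phi,\nu_1)|\,\omega_{\varphi_f}$, matching $I(\Phi,\varphi,\nu,0)$ up to the level-index normalization. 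The main obstacle is the bookkeeping of field-of-definition and level: one must track that restriction of scalars, pullback to smaller level, the Hecke $\mathbf{CD}$-action, and the Manin--Drinfel'd step all preserve $\mathbf{Q}(\nu_1)$-rationality of the full formal sum, and that the passage from the open integral $\int_{C_{K_f}}\log|u|\,\omega_{\varphi_f}$ to $\langle\mathrm{Reg}([\eta]),\omega_{\varphi_f}\rangle$ does not pick up spurious boundary contributions — this is exactly why the vanishing of $\omega_{\varphi_f}$ on the $E_j$ and the annihilation property of $T_\xi-\mu(\xi)$ must be used in tandem.
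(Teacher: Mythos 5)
Your proposal follows essentially the same route as the paper's own proof: Corollary \ref{coro:allphi} plus pullback to get the unit $u(\Phi,\nu_1)$ and the Kronecker limit interpretation, the Manin--Drinfel'd Hecke operator $T_\xi-\mu(\xi)$ of Lemmas \ref{lem:annihilate} and \ref{lem:heckenonzero} to kill the boundary divisor while acting by a nonzero algebraic scalar on the spherical vector, Proposition \ref{prop:cycles} (after Proposition \ref{prop:pullbacksmooth}) to complete the cycle with boundary functions, and the adjointness of Lemma \ref{lem:heckeduality} together with the vanishing of $\omega_{\varphi_f}$ on the boundary curves to identify the regulator pairing with $I(\Phi,\varphi,\nu,0)$ up to the scalar you divide out. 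The argument and the bookkeeping of levels, normalizations, and the $\mathbf{Q}(\nu_1)$-rationality are consistent with the paper's proof.
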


\begin{proof}  By Corollary \ref{coro:allphi} and the preceding remarks on pulling back units from $\GL_2$ to $H$, for some open compact $K_{H,f} \subseteq H(\mathbf{A}_f)$, there exists a unit $u(\Phi,\nu_1) \in \mathcal{O}(C_{K_{H,f}/\mathbf{Q}(\nu_1)})^\times \otimes \mathbf{Q}(\nu_1,\omega_\pi)$ such that $E(g_1,\Phi,\nu,s) = \log|u(\Phi,\nu_1)|+O(s)$.

Fix $K_f'$ small enough so that it fixes $\varphi_f$ and $K_{H,f} \supseteq K_f' \cap H(\mathbf{A}_f)$.  For any $K_f\subseteq G(\mathbf{A}_f)$, we write $C_{K_f}$ for $C_{K_f \cap H(\mathbf{A}_f)}$ in what follows.  Let $p$ be a finite prime such that all the data is unramified.  Note also that $\pi_p$ is generic and thus is not the trivial representation.  Using Lemma \ref{lem:heckenonzero}, we may select a $\mathbf{Q}$-valued Hecke operator $T_\xi$ such that $T_\xi' = T_\xi-\mu(\xi)$ acts by a nonzero scalar $\alpha_\xi\in \ol{\mathbf{Q}}^\times$ on $\varphi_f$.  By Lemma \ref{lem:heckeduality}, we have an identity
\begin{equation}\label{eqn:applyhecke} \langle (C_{K_f'},u(\Phi,\nu_1)) \cdot T_\xi' , \omega_{\varphi_f} \rangle_{K_f^{\prime R}} = \langle (C_{K_f'},u(\Phi,\nu_1)), T_\xi' \cdot \omega_{\varphi_f} \rangle_{K_f'} = \alpha_\xi\langle (C_{K_f'},u(\Phi,\nu_1)), \omega_{\varphi_f} \rangle_{K_f'},\end{equation}
with notation as used there.  Here, we have written $\omega_{\varphi_f}$ for the differential form on $S_{K_f'}$ as well as its pullback to $S_{K_f^{\prime R}}$.

We finally apply Proposition \ref{prop:cycles} to $(C_{K_f'},u(\Phi,\nu_1)) \cdot T_\xi$.  We take the closure of $(C_{K_f'},u(\Phi,\nu_1)) \cdot T_\xi$ in $\ol{S}_{K_f^{\prime R}}$ in order to obtain a formal sum of cycles and functions there.  Since $u(\Phi,\nu_1)$ has degree 0 on each irreducible curve in the formal sum, the sum meets condition (2).(a).  The annihilation property of $T_\xi'$ from Lemma \ref{lem:annihilate} together with the compatibility (\ref{eqn:compat}) of the Hecke actions on the cusps and cycles shows that the formal sum meets condition (2).(b).  It follows from the proposition that $(C_{K_f'},u(\Phi,\nu_1)) \cdot T_\xi$ can be extended to an element $[\eta]$ of the higher Chow group at level $K_f \subseteq K_f^{\prime R}$ by pulling back and adding cycles supported along the boundary.  These cycles do not contribute to the regulator integral, so $\langle [\eta],\omega_{\varphi_f} \rangle_{K_f} = \langle (C_{K_f'},u(\Phi,\nu_1)) \cdot T_\xi' , \omega_{\varphi_f} \rangle_{K_f^{\prime R}}.$  Multiplying $[\eta]$ by $\alpha_\xi^{-1}$, we obtain an element of $\mathrm{Ch}^2(\ol{S}_{K_f/\mathbf{Q}(\nu_1)},1) \otimes_\mathbf{Q} \ol{\mathbf{Q}}$ satisfying (\ref{eqn:reginterp}).
\end{proof}

\begin{remark}
The field of definition $\mathbf{Q}(\nu_1)$ of the unit can be changed to $\mathbf{Q}$ by first twisting $\pi$ by $\nu_1$ as in Remark \ref{remark:nonu1}.  This only changes $\omega_{\varphi_f}$ and $u(\Phi,\nu_1)$ by multiplying them by the respective scalars $\nu_1(\mu(k))$ and $\nu_1(\mu(k))^{-1}$ on each connected component of $C_{K_{H,f}}$.
\end{remark}

We prove the second main theorem, which concerns algebraicity of the global integral under a hypothesis that eliminates its pole.

\begin{theorem}[Algebraicity] \label{thm:rationality} Assume that either $\nu_1^2 \cdot \omega_\pi|_{Z(\mathbf{A})}$ is non-trivial or $\pi \times \nu_1$ has no period over $[H]$.  Let $\Phi$ be an $\ol{\mathbf{Q}}$-valued Schwartz-Bruhat function $\Phi$.  For all $\varphi_f \in V_{\ol{\mathbf{Q}}}$, we have
\begin{equation} \label{eqn:rationality} I(\Phi,\varphi,\nu,0) \in \Qbar W(\pi)W_{0,0}(8\sqrt{2}\pi D^{-\frac{3}{4}})^{-1} \pi^4 L^{(1)}(\pi \times \nu_1,\mathrm{Std}),\end{equation}
\end{theorem}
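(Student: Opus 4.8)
The plan is to deduce the algebraicity statement from the factorization of the global integral established in Proposition \ref{prop:unfolding}, combined with the local rationality result of Proposition \ref{prop:locrationality}, the Archimedean computation of Section \ref{subsec:arch}, and the GCD formalism of Baruch together with Definition \ref{defin:locallfactors}. First I would reduce to the case where $\varphi_f$ is a pure tensor, since $V_{\ol{\mathbf{Q}}}$ is spanned by such vectors over $\ol{\mathbf{Q}}$ and both sides of (\ref{eqn:rationality}) are $\ol{\mathbf{Q}}$-linear in $\varphi$; similarly I would take $\Phi$ factorizable. For such data, Proposition \ref{prop:unfolding} gives
\[I(\Phi,\varphi,\nu,s) = W(\pi,\varphi_f)\, L_\infty(\pi_\infty,\mathrm{Std},s)\, L^S(\pi\times\nu_1,\mathrm{Std},s) \prod_{p\in S} I_p(\Phi_p,W_{\varphi,p},\nu_p,s),\]
where $S$ is the finite set of bad places and the unramified factors away from $S$ have been assembled (using Theorem \ref{thm:unramgcd} at the finitely many places covered by Definition \ref{defin:stdl}) into the partial $L$-function $L^S$ defined via the GCD.

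The key point is then to rewrite each bad local factor $I_p(\Phi_p,W_{\varphi,p},\nu_p,s)$ as $P_p(p^{-s})\, L_p^{\mathrm{G}}(\pi_p\times\nu_{1,p},\mathrm{Std},s)$ for a polynomial $P_p\in\ol{\mathbf{Q}}[p^{-s}]$. For this I would invoke Proposition \ref{prop:whitrational} (and Proposition \ref{prop:grobseb}-style rationality of the Whittaker functional) to choose $\Lambda_f$ so that $W_{\varphi,p}$ is $\ol{\mathbf{Q}}$-valued, then apply Proposition \ref{prop:locrationality}: $I_p$ is a ratio of polynomials with algebraic coefficients. Since $L_p^{\mathrm{G}}$ is, by Definition \ref{defin:locallfactors}, the inverse of a polynomial in $p^{-s}$ with algebraic coefficients and constant term $1$ (this inverse-polynomial structure and algebraicity of coefficients is exactly what Proposition \ref{prop:locrationality} together with Lemma \ref{trivLoc} provides), the quotient $I_p/L_p^{\mathrm{G}}$ is an algebraically-valued rational function whose only possible poles are at the zeroes of the numerator of $I_p$; but by the defining GCD property of $L_p^{\mathrm{G}}$, this quotient is actually a polynomial, and hence lies in $\ol{\mathbf{Q}}[p^{-s}]$. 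Multiplying over $p\in S$, we get $\prod_{p\in S} I_p = \big(\prod_{p\in S}P_p(p^{-s})\big)\,\prod_{p\in S}L_p^{\mathrm{G}}$, so that
\[I(\Phi,\varphi,\nu,s) = W(\pi,\varphi_f)\, L_\infty(\pi_\infty,\mathrm{Std},s)\, \Big(\prod_{p\in S}P_p(p^{-s})\Big)\, L(\pi\times\nu_1,\mathrm{Std},s).\]

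Finally I would evaluate at $s=0$. Under the hypothesis that either $\nu_1^2\cdot\omega_\pi|_{Z(\mathbf{A})}$ is non-trivial or $\pi\times\nu_1$ has no period over $[H]$, Lemma \ref{lem:vanishingat0} shows $L(\pi\times\nu_1,\mathrm{Std},s)$ vanishes to order $\ge 1$ at $s=0$, so $L(\pi\times\nu_1,\mathrm{Std},s) = L^{(1)}(\pi\times\nu_1,\mathrm{Std})\,s + O(s^2)$. The Archimedean factor $L_\infty(\pi_\infty,\mathrm{Std},s) = 8iW_{0,0}(8\sqrt{2}\pi D^{-3/4})^{-1}\pi^4 D^{(3s-3)/2}\Gamma_\mathbf{C}(s)\Gamma_\mathbf{C}(s+1)^2$ has a simple pole at $s=0$ coming from $\Gamma_\mathbf{C}(s)$, with leading Laurent coefficient an explicit $\ol{\mathbf{Q}}^\times$-multiple of $W_{0,0}(8\sqrt{2}\pi D^{-3/4})^{-1}\pi^4$ (the powers of $2$, $\pi$, and $D^{1/2}$ collected from $D^{-3/2}\Gamma_\mathbf{C}(0^+)$ are rational, and $\Gamma_\mathbf{C}(1)^2$ contributes a power of $\pi$). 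Thus the product $L_\infty(\pi_\infty,\mathrm{Std},s)L(\pi\times\nu_1,\mathrm{Std},s)$ is holomorphic at $s=0$ with value in $\ol{\mathbf{Q}}^\times\cdot W_{0,0}(8\sqrt{2}\pi D^{-3/4})^{-1}\pi^4\, L^{(1)}(\pi\times\nu_1,\mathrm{Std})$, while $\prod_{p\in S}P_p(p^{-s})|_{s=0}\in\ol{\mathbf{Q}}$ and $W(\pi,\varphi_f)$ represents the class $W(\pi)\in\mathbf{C}^\times/\ol{\mathbf{Q}}^\times$. Putting these together yields (\ref{eqn:rationality}).

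The main obstacle I anticipate is not any single deep input but rather the careful bookkeeping of the GCD definition: one must check that the quotient $I_p/L_p^{\mathrm{G}}$ is genuinely a \emph{polynomial} with algebraic coefficients (not merely a rational function), which requires combining Baruch's GCD property with the algebraicity of Proposition \ref{prop:locrationality} — essentially a descent-of-coefficients argument of the same flavor as the one already carried out at the end of the proof of Proposition \ref{prop:locrationality}. A secondary subtlety is ensuring the Whittaker functional can be normalized to take algebraic values simultaneously with the period $W(\pi,\varphi_f)$ carrying the transcendence, so that the split between "$\ol{\mathbf{Q}}^\times$ part" and "$W(\pi)$ part" is consistent; this is handled by Proposition \ref{prop:whitrational} and the definition of the Whittaker period in Section \ref{subsec:whittakerperiod}.
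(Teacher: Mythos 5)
Your proposal is correct and follows essentially the same route as the paper's own proof: factorize the global integral, use the GCD definition of $L_p^{\mathrm{G}}$ together with Proposition \ref{prop:locrationality} (and the algebraic Whittaker normalization of Proposition \ref{prop:whitrational}) to write each bad local integral as an algebraic-coefficient polynomial times the local $L$-factor, and then evaluate at $s=0$ via Lemma \ref{lem:vanishingat0} and the simple pole of $\Gamma_\mathbf{C}(s)$ in the Archimedean factor. The only caveats are cosmetic: the label ``\ref{prop:whitrational}'' is the correct one (there is no ``prop:grobseb''), and your bookkeeping of $\pi$-powers at $s=0$ (where $\Gamma_\mathbf{C}(1)^2=\pi^{-2}$) is exactly as loose as the paper's own, so it introduces no gap relative to the paper's argument.
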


\begin{proof}  Since every local $L$-factor is defined using a GCD, there exists a finite set of pairs of factorizable data $(\Phi_i,\varphi_i)$ such that
\[\sum_i I(\Phi_i,\varphi_i,\nu,s) = \widehat{L}(\pi \times \nu_1,\mathrm{Std},s).\]
By Lemma \ref{lem:vanishingat0} and our hypotheses, the left-hand side is holomorphic at $s=0$, so the right-hand side is as well.

Now let $\varphi = \alpha(\varphi_f)(v_0)$ for a factorizable $\varphi_f \in V_{\ol{\mathbf{Q}}}$.  For any factorizable $\Phi$ that takes algebraic values, we have
\[I(\Phi,\varphi,\nu,s) = \widehat{L}(\pi \times \nu_1,\mathrm{Std},s) \prod_{p \in S} \frac{I_p(\Phi_p,W_p,\nu,s)}{L_p(\pi \times \nu_1,\mathrm{Std},s)},\]
where $S$ consists of the finite set of places $p$ where either $E$, $\Phi$, or $\pi$ is ramified.  We write the local $L$-factor as $\frac{1}{Q_p(p^{-s})}$ where $Q_p \in \ol{\mathbf{Q}}[X]$ has $Q_p(0)=1$.  Then $I_p =\frac{P_p(p^{-s})}{Q_p(p^{-s})}$ by the GCD property, so we may write
\[I(\Phi,\varphi,\nu,s) = \widehat{L}(\pi \times \nu_1,\mathrm{Std},s) \prod_{p \in S} P_p(p^{-s}).\]
Since $\widehat{L}(\pi \times \nu_1,\mathrm{Std},s)$ is holomorphic at $s=0$, $P_p(1)$ is algebraic, and the $\Gamma$-function has algebraic residues, we obtain (\ref{eqn:rationality}) in the case of factorizable data.  The general case follows immediately.
\end{proof}

The next main theorem addresses non-vanishing of the regulator pairing.

\begin{theorem}[Nonvanishing] \label{coarseThm}  Assume that the twisted base change of $\pi$ to $\mathrm{Res}_\mathbf{Q}^E \GL_3 \times \mathbf{G}_m$ remains cuspidal.  We break into two cases.
\begin{enumerate}
	\item If $\nu_1^2 \cdot \omega_\pi|_{Z(\mathbf{A})}$ is trivial, assume that $\pi$ is not $(H,\nu_1^{-1})$-distinguished for least one finite place $\ell$.  Then there exists $\varphi_f \in V_{\ol{\mathbf{Q}}}$ and a factorizable $\mathbf{Q}$-valued Schwartz-Bruhat function $\Phi$ such that $\Phi(0)=0$ and $I(\Phi,\varphi,\nu,0) \neq 0$.
	\item If $\nu_1^2 \cdot \omega_\pi|_{Z(\mathbf{A})}$ is non-trivial, then there exists $\varphi_f \in V_{\ol{\mathbf{Q}}}$ and a factorizable $\mathbf{Q}$-valued Schwartz-Bruhat function $\Phi$ such that $I(\Phi,\varphi,\nu,0) \neq 0$.
\end{enumerate}
\end{theorem}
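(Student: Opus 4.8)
The plan is to combine the factorization of the global integral from Proposition \ref{prop:unfolding}, the partial-$L$-function nonvanishing of Theorem \ref{thm:pnt}, and the local nonvanishing results of Section \ref{subsec:nonvanishing}. First I would fix a finite set $S$ of finite places containing all primes where $\pi$ or $E$ ramifies and (in case (1)) the distinguished place $\ell$, and write, for factorizable data,
\[I(\Phi,\varphi,\nu,s) = W(\pi,\varphi_f)\,L_\infty(\pi,\mathrm{Std},s)\,L^S(\pi\times\nu_1,\mathrm{Std},s)\prod_{p\in S}I_p(\Phi_p,W_{\varphi,p},\nu_p,s).\]
Here $\varphi = \alpha(\varphi_f)(v_0)$, and $W(\pi,\varphi_f)\ne 0$ by the choice of $v_0$ in Definition \ref{defin:v0}. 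Since $L_\infty$ has a simple pole at $s=0$ coming from $\Gamma_\mathbf{C}(s)$, it suffices to show that one can choose the data so that $L^S(\pi\times\nu_1,\mathrm{Std},s)$ vanishes to order exactly $1$ at $s=0$ while every local factor $I_p(\Phi_p,W_{\varphi,p},\nu_p,s)$ is holomorphic and nonzero at $s=0$ --- with the extra constraint that $\Phi(0)=0$ in case (1).

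For the partial $L$-function: by the functional equation (\ref{EisFE}) of the Eisenstein series, the behaviour at $s=0$ is governed by the behaviour at $s=1$. Comparing $L^S(\pi\times\nu_1,\mathrm{Std},s)$ with the standard $L$-function of the twisted base change to $\mathrm{Res}^E_\mathbf{Q}(\GL_3\times\mathbf{G}_m)$ via Theorem \ref{thm:morel}, Theorem \ref{thm:pnt} gives that $L^S(\pi\times\nu_1,\mathrm{Std},1)\ne 0$; unwinding through the functional equation (which exchanges $s$ and $1-s$ up to the ratio of completed $L$-factors and a factor coming from $\widehat{\Phi}$) shows that the order of vanishing of $L^S$ at $s=0$ is at most $1$, with the order-$1$ case forced precisely by the $\Gamma_\mathbf{C}$ pole on the Archimedean side; the precise bookkeeping uses Theorem \ref{thm:rambound} to rule out spurious poles of the finitely many bad local factors of $\tau$ on the line $\mathrm{Re}(s)=\tfrac12$. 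This yields that $\widehat L^S(\pi\times\nu_1,\mathrm{Std},s)$ is holomorphic and nonzero at $s=0$ after the $\Gamma$-pole is accounted for, hence $L^S$ vanishes to order exactly $1$.

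For the finite places in $S$: at every $p\in S$ other than $\ell$ (in case (1)) or every $p\in S$ (in case (2)), Lemma \ref{trivLoc} together with Lemma \ref{lem:phipoly} lets us choose $\mathbf{Q}$-valued $\Phi_p$ and $\varphi_p = \pi(\eta_p)\varphi_{0,p}$ so that $I_p(\Phi_p,W_{\varphi,p},\nu_p,s)$ is a nonzero constant. In case (2) this already finishes the construction, since there is no constraint on $\Phi(0)$. In case (1) we must additionally arrange $\Phi_\ell(0)=0$: this is exactly where the non-distinction hypothesis enters. By Lemma \ref{distLem}, if every choice of $\Phi_\ell$ with $\Phi_\ell(0)=0$ (and every $\varphi_\ell$) forced the local integral at $\ell$ to vanish at $s=0$, then $\pi_\ell$ would be $(H,\nu_{1,\ell}^{-1})$-distinguished, contradicting our hypothesis; so there exists a $\mathbf{Q}$-valued $\Phi_\ell$ with $\Phi_\ell(0)=0$ and $\varphi_\ell$ with $I_\ell(\Phi_\ell,W_{\varphi,\ell},\nu_\ell,s)|_{s=0}\ne 0$ (and finite, after using Lemma \ref{lem:pole} to clear any pole). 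Taking $\Phi = \bigotimes_p \Phi_p$ with $\Phi_p = \mathrm{char}(\mathbf{Z}_p^2)$ outside $S$ gives a factorizable $\mathbf{Q}$-valued $\Phi$ with $\Phi(0)=\prod_p\Phi_p(0)=0$ (as $\Phi_\ell(0)=0$), and correspondingly a $\varphi_f\in V_{\ol{\mathbf{Q}}}$. Putting everything together, each factor on the right-hand side of the factorization is finite and nonzero at $s=0$ except for the simple $\Gamma_\mathbf{C}$-pole of $L_\infty$, which is cancelled by the simple zero of $L^S$; hence $I(\Phi,\varphi,\nu,0)\ne 0$.

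The main obstacle is the archimedean/functional-equation bookkeeping: one must check that the zero of $L^S(\pi\times\nu_1,\mathrm{Std},s)$ at $s=0$ has order \emph{exactly} $1$ (not $\ge 2$), which requires controlling the finitely many bad local $L$-factors of the base change $\tau$ under the functional equation --- this is where Theorem \ref{thm:rambound}, the Jacquet--Shalika bound, and the explicit Archimedean computation of Section \ref{subsec:arch} are essential. The local step at $\ell$ is, by contrast, a clean application of Lemma \ref{distLem}, and the remaining local steps are routine given Lemma \ref{trivLoc}.
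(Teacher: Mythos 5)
There is a genuine gap at the global step of your plan: the claim that $L^S(\pi\times\nu_1,\mathrm{Std},s)$ vanishes to order \emph{exactly} one at $s=0$, so as to cancel the simple pole of $\Gamma_\mathbf{C}(s)$ in $L_\infty$. The only functional equation available is that of the base change $\tau$ (the GCD-defined $L$-function of $\pi$ has no functional equation established in this paper), and writing $L^S(\pi\times\nu_1,\mathrm{Std},s)$ as $\widehat{L}(\tau',s)$ divided by the Archimedean factor and the finitely many local factors $L_p(\tau'_p,s)$ for $p\in S$, one finds that the order of vanishing at $s=0$ equals $1$ \emph{plus} the total order of the poles of those bad local factors at $s=0$. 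Theorem \ref{thm:rambound} only excludes poles in the region $\mathrm{Re}(s)\ge\tfrac12$, so it says nothing about $s=0$; a tempered local component can perfectly well contribute a factor $(1-p^{-s})^{-1}$, i.e.\ a pole at $s=0$, and then $L^S$ vanishes to order at least $2$ there. In that situation the data you construct gives $I(\Phi,\varphi,\nu,0)=0$, so your argument does not prove the theorem. That this is a real (not merely expository) issue is visible in Theorem \ref{thm:finecnf}, where precisely such extra hypotheses ($\pi$ unramified at inert places, no $\alpha_p=1$) are needed before an order-of-vanishing argument at $s=0$ yields nonvanishing.

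The paper avoids this entirely by applying the functional equation at the level of the Eisenstein series rather than of the $L$-function: $I(\Phi,\varphi,\nu,s)=I(\widehat{\Phi},\varphi,\widehat{\nu},1-s)$ with $\widehat{\nu}=(\nu_2,\nu_1)$, so the value at $s=0$ is computed at the point $1-s=1$, where the completed partial $L$-function $\widehat{L}^S(\pi\times\nu_2,\mathrm{Std},1)$ (Archimedean factor included) is finite and nonzero by Theorem \ref{thm:pnt} together with Theorem \ref{thm:rambound}, and no pole--zero cancellation is ever needed. The local work is then done at $1-s=1$ with the transformed data: Lemma \ref{trivLoc} trivializes $I_p(\widehat{\Phi}_p,\cdot,\widehat{\nu}_p,1)$ away from $\ell$, and at $\ell$ the condition $\Phi_\ell(0)=0$ translates into the $s=1$ case of Lemma \ref{distLem} (the $\widehat{\Phi}(0)=0$ case), whose conclusion is $(H,\nu_{2}^{-1})$-distinction; this is converted into $(H,\nu_1^{-1})$-distinction using Hypothesis \ref{hypo:central} and the triviality of $\nu_1^2\omega_\pi|_{Z(\mathbf{A})}$, contradicting the hypothesis. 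Your local argument at $\ell$ (the $s=0$ case of Lemma \ref{distLem}, with Lemma \ref{lem:pole} to handle finiteness) is sound as a local statement, but to repair the proof you should couple it to the Eisenstein functional-equation step as above rather than to an exact order-of-vanishing claim at $s=0$.
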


\begin{proof}
Write $\widehat{\nu} = (\nu_2,\nu_1)$.  We have the identity $I(\Phi,\varphi,\nu,s) = I(\widehat{\Phi},\varphi,\widehat{\nu},1-s)$ from the functional equation (\ref{EisFE}) for the Eisenstein series.

We first pick a set $S$ of finite primes containing all those where $E$ or $\pi_p$ is ramified as well as the place $\ell$ if $\nu_1^2 \cdot \omega_\pi|_{Z(\mathbf{A})}$ is trivial.  At finite primes not in $S$, we select $\Phi$ to be unramified, so that the local integral computes $L_p(\pi \times \nu_1,\mathrm{Std},s)$ at those places.  (Note that $\widehat{\Phi}$ is also unramified in this case.)  Suppose we are given $\varphi_f \in V_{\ol{\mathbf{Q}}}$ that is spherical away from $S$ and is a pure tensor in a tensor product decomposition $V_{\ol{\mathbf{Q}}} = \otimes_{v<\infty} V_{\ol{\mathbf{Q}},v}$ defined over $\ol{\mathbf{Q}}$.  If $\Phi$ is also factorizable,
\begin{equation} \label{eqn:nonvanishing} I(\Phi,\varphi,\nu,s) = I(\widehat{\Phi},\varphi,\widehat{\nu},1-s) = \widehat{L}^S(\pi \times \nu_2,\mathrm{Std},1-s)\prod_{p \in S} I_p(\widehat{\Phi}_p,\varphi,\widehat{\nu}_p,1-s).\end{equation}
The quantity $\widehat{L}^S(\pi \times \nu_2^{-1},\mathrm{Std},1-s)$ is nonzero at $s=0$ by Theorem \ref{thm:pnt} and our hypothesis on the cuspidality of the twisted base change.  At places $p\in S$ other than $\ell$, we use Lemma \ref{trivLoc} to replace $\varphi_f$ and $\widehat{\Phi}$ with data trivializing the local integrals $I_p(\widehat{\Phi}_p,\varphi,\widehat{\nu}_p,1-s)$.  The argument in Lemma \ref{trivLoc} replaces $\varphi_f$ with $\pi(\eta)\varphi_f$ for $\eta \in C_c^\infty(G(\mathbf{Q}_p),\mathbf{Q})$ and uses a function $\widehat{\Phi}$ that takes values in $\mathbf{Q}$.  So the new choice of $\varphi_f$, say $\varphi_f'$, lies in $V_{\ol{\mathbf{Q}}}$ and the Fourier transform $\Phi$ of $\widehat{\Phi}$ is $\ol{\mathbf{Q}}$-valued.  Write $\varphi'=\alpha(\varphi_f')(v_0)$.

If $\nu_1^2 \cdot \omega_\pi|_{Z(\mathbf{A})}$ is non-trivial, it follows that $I(\Phi,\varphi',\nu,0) \ne 0$.  By linearity in $\Phi$, there must also exist one that is $\mathbf{Q}$-valued.  (Note that our argument in this case also shows that $\widehat{L}^S(\pi \times \nu_1,\mathrm{Std},1-s)$ is holomorphic at $s=0$ since the left-hand side of (\ref{eqn:nonvanishing}) is holomorphic for all $\widehat{\Phi}$ and $\varphi$.)

If $\nu_1^2 \cdot \omega_\pi|_{Z(\mathbf{A})}$ is trivial, we are reduced to picking $\varphi_f \in V_{\ol{\mathbf{Q}}}$ and a $\ol{\mathbf{Q}}$-valued $\Phi_\ell$ so that $\Phi_\ell(0)=0$ and $I_\ell(\widehat{\Phi}_\ell,\varphi,\widehat{\nu}_\ell,1)$ is nontrivial.  Note that by if $\pi$ is not $(H,\nu_1^{-1})$-distinguished, the period of $\pi(\nu_1 \circ \mu)$ over $[H]$ vanishes and $I(\Phi,\varphi,\nu,s)$ is holomorphic.  By our selection of data at $p \in S \setminus \set{\ell}$ and Theorem \ref{thm:pnt}, we deduce that $I_\ell(\widehat{\Phi}_\ell,\varphi,\widehat{\nu}_\ell,1-s)$ cannot have a pole at $s=0$ for any $\varphi$ or $\widehat{\Phi}_\ell$.

Assume for contradiction that there is no choice of a $\varphi_{\ell} \in V_{\ol{\mathbf{Q}},\ell}$ and a $\mathbf{Q}$-valued $\Phi_\ell$ so that $\Phi_\ell(0)=0$ and $I_\ell(\widehat{\Phi}_\ell,\varphi,\widehat{\nu}_\ell,1-s)|_{s=0} \ne 0$.  By considering $\mathbf{C}$-linear combinations of the data, we deduce that the hypotheses of the $s=1$ case of Lemma \ref{distLem} hold.  It follows that $\pi$ is $(H,\nu_2^{-1})$-distinguished at $\ell$.  Since $\nu_2 = \nu_1^{-1}\omega_\pi^{-1}|_{Z(\mathbf{Q}_\ell)}$ by Hypothesis \ref{hypo:central} and the triviality of $\nu_1^2 \cdot \omega_\pi|_{Z(\mathbf{A})}$, we have $\nu_2^{-1}=\nu_1^{-1}$, contradicting our hypothesis on $\pi_\ell$.

It follows that there exist $\varphi_{\ell}$ and $\Phi_\ell$ so that $\Phi_\ell(0)=0$ and $I_\ell(\widehat{\Phi}_\ell,\varphi,\nu_\ell,1-s)|_{s=0} \ne 0$, which implies that $I(\Phi,\varphi,\nu,s)|_{s=0}$ is nonzero.  By linearity, we may again assume $\Phi$ takes values in $\mathbf{Q}$.
\end{proof}

\begin{remark} \label{remark:endoscopyfunctional}
The local hypothesis in Theorem \ref{coarseThm} that $\pi$ has a finite place that is not $(H,\nu_1^{-1})$-distinguished is exactly equivalent to saying that the twisted representation $\pi \times \nu_1$ does not belong to an endoscopic $L$-packet by \cite[Theorem D]{grs4}.  (Note that in the notation there, one always has $\langle \rho,\pi \rangle = 1$ if $\pi$ is generic and endoscopic.)
\end{remark}

We may now obtain a class number formula and the non-vanishing of the higher Chow group.  The following corollary is simply the concatenation of the three main theorems above.

\begin{coro}[Class number formula, coarse form] \label{coro:chowreg} Under the assumptions of Theorem \ref{coarseThm}, there exists $\varphi_f \in V_{\ol{\mathbf{Q}}}$, $K_f$ sufficiently small, and a class $[\eta] \in \mathrm{Ch}^2(\ol{S}_{K_f/\mathbf{Q}(\nu_1)},1)$ for which
\[\langle \mathrm{Reg}([\eta]),\omega_{\varphi_f} \rangle_{K_f}  \in \Qbar^\times W(\pi)W_{0,0}(8\sqrt{2}\pi D^{-\frac{3}{4}})^{-1} \pi^4 L^{(1)}(\pi \times \nu_1,\mathrm{Std}).\]
In particular, $\langle \mathrm{Reg}([\eta]),\omega_{\varphi_f} \rangle_{K_f} \ne 0$.
\end{coro}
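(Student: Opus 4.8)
The plan is to deduce the corollary by chaining together Theorems~\ref{coarseThm}, \ref{thm:geom}, and \ref{thm:rationality}, after a short preliminary observation reconciling their hypotheses. First I would record that the local non-distinction assumption in case~(1) of Theorem~\ref{coarseThm} --- that $\pi_\ell$ is not $(H,\nu_{1,\ell}^{-1})$-distinguished for some finite place $\ell$ --- forces $\pi(\nu_1\circ\mu)$ to have no period over $[H]$: a nonzero global $H$-period functional factors through a nonzero local functional at each place, so its nonvanishing would in particular make $\pi_\ell$ distinguished. Hence whenever the hypotheses of Theorem~\ref{coarseThm} hold, so do those of Theorems~\ref{thm:geom} and \ref{thm:rationality}, namely that either $\nu_1^2\cdot\omega_\pi|_{Z(\mathbf{A})}$ is non-trivial or $\pi(\nu_1\circ\mu)$ has no $[H]$-period.

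Next I would apply Theorem~\ref{coarseThm} to obtain a vector $\varphi_f\in V_{\ol{\mathbf{Q}}}$ and a factorizable $\mathbf{Q}$-valued Schwartz--Bruhat function $\Phi$ --- with $\Phi(0)=0$ in the case $\nu_1^2\cdot\omega_\pi|_{Z(\mathbf{A})}=1$ --- such that $I(\Phi,\varphi,\nu,0)\ne 0$, where $\varphi=\alpha(\varphi_f)(v_0)$. Feeding exactly this data into Theorem~\ref{thm:geom} (whose hypothesis on $\Phi$ is met by the previous sentence) produces an open compact $K_f$, which may be shrunk freely, and a class in $\mathrm{Ch}^2(\ol{S}_{K_f/\mathbf{Q}(\nu_1)},1)\otimes_\mathbf{Q}\ol{\mathbf{Q}}$ whose regulator pairing with $\omega_{\varphi_f}$ equals $I(\Phi,\varphi,\nu,0)$; in particular this pairing is nonzero.

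To land in $\mathrm{Ch}^2(\ol{S}_{K_f/\mathbf{Q}(\nu_1)},1)$ on the nose, rather than in its $\ol{\mathbf{Q}}$-linear extension, I would use that the class in the proof of Theorem~\ref{thm:geom} is first built as an honest higher Chow class by clearing denominators via Propositions~\ref{prop:md} and \ref{prop:cycles}, and only divided by the scalar $\alpha_\xi\in\ol{\mathbf{Q}}^\times$ at the very end; taking instead this undivided class $[\eta]$ changes the regulator pairing by the factor $\alpha_\xi\in\ol{\mathbf{Q}}^\times$, which is harmless for the asserted containment. Finally, Theorem~\ref{thm:rationality} applied to the same pair $(\Phi,\varphi)$ gives $I(\Phi,\varphi,\nu,0)\in\ol{\mathbf{Q}}\,W(\pi)W_{0,0}(8\sqrt{2}\pi D^{-\frac34})^{-1}\pi^4 L^{(1)}(\pi\times\nu_1,\mathrm{Std})$, and since the left-hand side is nonzero this refines to membership in $\ol{\mathbf{Q}}^\times\,W(\pi)W_{0,0}(8\sqrt{2}\pi D^{-\frac34})^{-1}\pi^4 L^{(1)}(\pi\times\nu_1,\mathrm{Std})$, which is the assertion together with $\langle\mathrm{Reg}([\eta]),\omega_{\varphi_f}\rangle_{K_f}\ne 0$.

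There is no genuinely hard step here, since the three cited theorems carry all the analytic, geometric, and arithmetic content. The only point requiring care is the bookkeeping just outlined: the implication ``non-distinction at one place $\Rightarrow$ no global $[H]$-period'' that makes the three sets of hypotheses compatible, the consistent use of the single pair $(\varphi_f,\Phi)$ as input to all three theorems, and the passage between $\mathrm{Ch}^2(\ldots)\otimes_\mathbf{Q}\ol{\mathbf{Q}}$ and $\mathrm{Ch}^2(\ldots)$.
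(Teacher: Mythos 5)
Your proposal is correct and follows essentially the same route as the paper, whose proof of Corollary \ref{coro:chowreg} is precisely the chaining of Theorems \ref{thm:geom}, \ref{thm:rationality}, and \ref{coarseThm}; in particular, your preliminary observation that non-distinction at one finite place kills the global $[H]$-period is exactly how the paper reconciles the hypotheses (it appears inside the proof of Theorem \ref{coarseThm}). The only divergence is the final bookkeeping step: the paper passes from $\mathrm{Ch}^2(\ol{S}_{K_f/\mathbf{Q}(\nu_1)},1)\otimes_{\mathbf{Q}}\ol{\mathbf{Q}}$ to an honest class by a generic linearity argument (retaining only the nonvanishing), whereas you undo the scalar $\alpha_\xi^{-1}$; note that even the undivided class can carry coefficients in $\mathbf{Q}(\nu_1,\omega_\pi)$ coming from $u(\Phi,\nu_1)\in\mathcal{O}(C_{K_{H,f}/\mathbf{Q}(\nu_1)})^\times\otimes\mathbf{Q}(\nu_1,\omega_\pi)$, so in general a linearity step of the paper's kind is still needed to land in the untensored Chow group.
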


\begin{proof}
It is clear from Theorems \ref{thm:geom}, \ref{thm:rationality}, and \ref{coarseThm} that there exists $\varphi_f$, $K_f$, and a class $[\eta] \in \mathrm{Ch}^2(\ol{S}_{K_f/\mathbf{Q}(\nu_1)},1) \times_\mathbf{Q} \ol{\mathbf{Q}}$ with the desired property.  By linearity, there must also be $[\eta'] \in \mathrm{Ch}^2(\ol{S}_{K_f},1)$ that pairs nonvanishingly with $\omega_{\varphi_f}$ as well.
\end{proof}

\subsection{Finer computation} \label{subsec:finer}

We obtain a finer regulator formula by specifying vectors at each finite place.  We work separately at split, inert, and ramified places.  Our computations in this section will include stronger hypotheses on $\pi$ than in the preceding section, as we would like to leverage certain exact computations carried out in Section \ref{subsec:ramifiedlocal} and in the literature.

\subsubsection{Local integrals at split places}  \label{subsubsec:finesplit}

Suppose $p <\infty$ splits in the quadratic extension $E$ so that $\GU(J)$ becomes isomorphic to $\GL_3 \times \GL_1$ over $\Q_p$.  Since everything is local, and to separate out the similitude, we write $\pi_0 \otimes \lambda$ for the representation of $\GL_3 \times \GL_1$ given by $\pi_p$.  We impose a hypothesis on $\pi_p$ in this section: we require that $\omega_{\pi_p}$ and $\nu_1$ are both unramified.

Define $K_n \subseteq \GL_3(\Z_p)$ to be the subgroup of elements whose final row is congruent to $(0,0,1)$ modulo $p^n$.  Denote by $n = n(\pi)$ the conductor of $\pi$, so that the space of fixed vectors under $K_n$ in the space of $\pi$ has dimension $1$.  (See \cite{jpss2} and \cite{jacquet} or \cite{matringe3} for the notion of conductor and its properties.)  We write $\varphi_n$ for a generator of this space, which we call a new vector.

Since we have computed the local $L$-factor in the unramified case, we assume $n \ge 1$.  Define the Schwartz-Bruhat function $\Phi_n$ to be $(p^{2n}-p^{2n-2})\cdot \mathrm{char}(p^n\mathbf{Z}_p \oplus (1+p^n\mathbf{Z}_p))$, where $\mathrm{char}(\cdot)$ denotes the characteristic function.  Define $W_{\varphi_n}$ as in Definition \ref{defin:whit}.

Miyauchi has proved an explicit formula \cite[Theorem 4.1]{my5} for the value of $W_{\varphi_n}$ along elements of the torus.  This formula does not completely determine $W_{\varphi_n}$ since $BK_n$ is smaller than $G$.  However, if one selects the Schwartz-Bruhat function $\Phi_n$, the Iwasawa decomposition reduces the integral to the torus.  Miyauchi and Yamauchi \cite[Theorem 5.1]{my} applied this strategy to compute the Bump-Friedberg local integral explicitly up to a constant, which is the same integral studied by Matringe \cite{matringe2}.

We will use Remark \ref{remark:nonu1} to twist $\pi$ so that $\nu_1$ is trivial, which does not affect our assumption that $\omega_{\pi_p}$ is unramified.  Our local integrals differ slightly from the Bump-Friedberg construction, as we explained during the proof of Theorem \ref{thm:unramgcd}.  However, by substituting $\lambda(p) p^{-s}$ in place of $p^{-s}$ in the formulas, which is formally justified just as in the proof of Theorem \ref{thm:unramgcd}, we may obtain the following calculations of our local integrals from \cite[Theorem 5.1]{my} and a computation of the constant that we provide below.

\begin{thm} \label{thm:finesplit}
Suppose that $n(\pi) \ge 1$ and $\omega_{\pi_p}$ and $\nu_1$ are both unramified.  Write $\alpha_\mu = \lambda(p)$ and $\alpha_{\nu_1}=\nu_1(p)$. We have the following identifications of local integrals when $n(\pi)\ge 1$.
\begin{enumerate}
\item If the twisted Godement-Jacquet $L$-function $L(\pi_0,\mathrm{Std},s) = (1-\alpha_1 p^{-s})^{-1}(1-\alpha_2 p^{-s})^{-1}$ with the $\alpha_i \neq 0$, then
\[I(\Phi_n,\varphi_n,\nu,s) = (1-\alpha_\mu\alpha_{\nu_1}\alpha_1 p^{-s})^{-1}(1-\alpha_\mu\alpha_{\nu_1}\alpha_2 p^{-s})^{-1} (1-\alpha_\mu \alpha_{\nu_1}\alpha_1 \alpha_2 p^{-s})^{-1}.\]
\item If the Godement-Jacquet $L$-function $L(\pi_0,\mathrm{Std},s) = (1-\alpha_1 p^{-s})^{-1}$ with $\alpha_1 \neq 0$, then
\[I(\Phi_n,\varphi_n,\nu,s) = (1-\alpha_\mu\alpha_{\nu_1}\alpha_1 p^{-s})^{-1}.\]
\item Finally, if the Godement-Jacquet $L$-function $L(\pi_0,\mathrm{Std},s) = 1$, then $I(\Phi_n,\varphi_n,\nu,s) = 1$.
\end{enumerate}
\end{thm}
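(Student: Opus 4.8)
The statement is essentially a translation of the Miyauchi--Yamauchi computation \cite[Theorem 5.1]{my} in the Bump--Friedberg setting into the setting of our local integrals at split places, together with the determination of the ambiguous constant. The plan is to proceed in four steps, following exactly the reductions set up in the proof of Theorem \ref{thm:unramgcd}. First I would reduce to the case $\nu_1 = 1$ using Remark \ref{remark:nonu1}, which replaces $\pi$ by $\pi(\nu_1\circ\mu)$ and is harmless since it does not disturb the hypothesis that $\omega_{\pi_p}$ is unramified; one simply restores the factor $\alpha_{\nu_1}$ at the end by tracking the twist. Second, I would set up the dictionary between our local integral $I(\Phi_n,\varphi_n,\nu,s)$ and the specialized Bump--Friedberg integral \eqref{eqn:mat2}: as in the proof of Theorem \ref{thm:unramgcd}, the group $H(\mathbf{Q}_p)\cong \GL_2(\mathbf{Q}_p)\times\mathbf{G}_m(\mathbf{Q}_p)$ embeds via \eqref{eqn:hembedsplit} rather than via Matringe's $\iota_0$, and the relation
\[W(\iota(g_1,g_2)) = \chi_{\alpha_\mu}(\det(g_1))W_0(\iota_0(g_1,g_2))\]
shows that our integral equals the Bump--Friedberg integral with $p^{-s}$ replaced by $\alpha_\mu p^{-s}$. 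This substitution is what converts the three factors $(1-\alpha_i p^{-s})^{-1}$ appearing in the Godement--Jacquet / exterior-square product into the factors with $\alpha_\mu$ (and, after reinserting the twist, $\alpha_{\nu_1}$) that appear in the statement.

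Third, I would invoke the explicit new-vector Whittaker formula of Miyauchi \cite[Theorem 4.1]{my5} for $W_{\varphi_n}$ along the torus, combined with the observation that the choice $\Phi_n = (p^{2n}-p^{2n-2})\cdot\mathrm{char}(p^n\mathbf{Z}_p\oplus(1+p^n\mathbf{Z}_p))$ collapses the Iwasawa decomposition of the domain $U_2\backslash H$ onto the torus (the support condition forces the $\GL_2$-component into $B_2 K_n'$, just as in the proof of Lemma \ref{trivLoc}). At that point the integral becomes a sum over the torus that \cite[Theorem 5.1]{my} evaluates; the three cases (1), (2), (3) correspond exactly to the three shapes the Godement--Jacquet $L$-factor $L(\pi_0,\mathrm{Std},s)$ can take for a ramified generic representation of $\GL_3(\mathbf{Q}_p)$ of conductor $n\ge 1$ — namely $\pi_0$ has a two-dimensional, one-dimensional, or zero-dimensional space of Whittaker-relevant "unramified-type" data remaining, governed by the Bernstein--Zelevinsky / Jacquet--Piatetski-Shapiro--Shalika theory of the conductor \cite{jpss2}. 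Fourth, I would pin down the normalizing constant: \cite[Theorem 5.1]{my} computes the Bump--Friedberg integral only up to an explicit scalar depending on $n$, $p$, and the new-vector normalization, and the factor $(p^{2n}-p^{2n-2})$ built into $\Phi_n$ is chosen precisely to cancel this scalar so that the resulting local integral has constant term $1$ as a power series in $p^{-s}$. Verifying this cancellation is a direct but slightly delicate bookkeeping computation, which I would carry out by evaluating both sides at $s\to\infty$ (equivalently, extracting the $p^{-s}\to 0$ constant term) and comparing with Miyauchi's normalization of $W_{\varphi_n}$ at the identity.

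\textbf{Main obstacle.} The principal difficulty is not conceptual but the constant computation in the fourth step: Miyauchi--Yamauchi's formula and Miyauchi's new-vector Whittaker formula are normalized with respect to conventions (measure normalizations, the precise generator $\varphi_n$ of the $K_n$-fixed line, and the additive character) that differ in minor ways from ours, so I must carefully reconcile all of these to be certain that the factor $(p^{2n}-p^{2n-2})$ is exactly the right one and that no stray power of $p$ or sign survives. A secondary, smaller point is justifying rigorously the "substitute $\alpha_\mu p^{-s}$ for $p^{-s}$" manipulation at the level of the specialized integral with the $\GL_1$-twist present — but this is handled verbatim by the argument already given in the proof of Theorem \ref{thm:unramgcd}, so I would simply cite that. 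Once the constant is verified, the three cases follow immediately by reading off \cite[Theorem 5.1]{my} in each of the three possible forms of $L(\pi_0,\mathrm{Std},s)$.
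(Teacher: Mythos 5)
Your plan matches the paper's proof essentially step for step: reduce to $\nu_1=1$ via Remark \ref{remark:nonu1}, transport the similitude twist by the substitution $\alpha_\mu p^{-s}\mapsto p^{-s}$ exactly as in the proof of Theorem \ref{thm:unramgcd}, use the support of $\Phi_n$ together with the Iwasawa decomposition to collapse the integral onto the torus, and then quote Miyauchi--Yamauchi \cite[Theorem 5.1]{my} (with $s_1=s_2=s$). The constant you worry about in your fourth step is less delicate than you fear: in the paper it drops out immediately because the support analysis forces the $K_H$-variable into (a torus translate of) $K_n\cap H$, whose measure is $(p^{2n}-p^{2n-2})^{-1}$, so the prefactor built into $\Phi_n$ cancels it on the nose.
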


\begin{proof}
As the constant is omitted from \cite[Theorem 5.1]{my}, we briefly explain the calculation.  Write $T_H$ for the torus of $H$ and let $K_H = H(\mathbf{Z}_p)$.  Continue to use the subscript 1 to denote the projection $H \rightarrow \GL_2$.  The local integral
\[I_p(\Phi_{n},\varphi,\nu,s) = \int_{U_2\backslash H}{\Phi_n((0,1)g_1)|\det(g_1)|^{s}W_{\varphi_n}(g)\,dg}.\]
is, by the Iwasawa decomposition, equal to
\[I_p(\Phi_{n},\varphi,\nu,s) = \int_{T_H} \int_{K_H}{\delta_{B_2}^{-1}(\det(t_1))\Phi_n((0,1)t_1k_1)|\det(t_1)|^{s}W_{\varphi_n}(tk)\,dt\,dk}.\]
Write $t = (\diag(\tau_1,\tau_3),\tau_2) \in \GL_2(\mathbf{Q}_p) \times \mathbf{Q}_p^\times$ and $k_1 = \mm{a}{b}{c}{d}$.  Then $\Phi_n((0,1)t_1k_1) = \Phi_n((\tau_3c,\tau_3d))$.  Since the elements $c,d \in \mathbf{Z}_p$ are coprime, we must have $\tau_3 \in \mathbf{Z}_p$, from which we obtain $\tau_3,d \in \mathbf{Z}_p^\times$.  We translate $t$ and the bottom row of $k_1$ by inverse elements of $T_H \cap K_H$ so that $tk$ is the same while $\tau_3=1$ and $d \in 1+p^n\mathbf{Z}_p$.  Then $c \in p^n\mathbf{Z}_p$.  Since the measure of $K_n \cap H$ is $(p^{2n}-p^{2n-2})^{-1}$, we are left with the integral
\[\int_{T_H^0}{\delta_{B_2}^{-1}(\tau_1)|\det(\tau_1)|^{s}W_{\varphi_n}(t)\,dt},\]
where $T_H^0$ is the subgroup with $\tau_3=1$.  This reduces immediately to a sum over $t = (\diag(p^a,1),p^b)$ with $a \ge b \ge 0$ due to the vanishing properties of the Whittaker model.  Using the method from Theorem \ref{thm:unramgcd} to accomodate the twist from $\lambda$, this sum is then computed in \cite[Theorem 5.1]{my} (letting $s_1=s_2=s$ there) and yields the expressions above.
\end{proof}

In case (3) of Theorem \ref{thm:finesplit}, it follows immediately from the argument of Theorem \ref{thm:unramgcd} and the calculations of Matringe \cite[Theorem 3.3]{matringe2} that
\[I(\Phi_n,\varphi_n,\nu,s)=L_p^{\mathrm{L}}(\pi_p \times \nu_{1,p},\mathrm{Std},s)=L_{p}(\pi_p \times \nu_{1,p},\mathrm{Std},s)=1.\]
By contrast, if we compare the degree of the $L$-functions appearing in cases (1) and (2) with Matringe's computations (which show that the two-variable integral has the full Langlands $L$-factor as its GCD), we may reasonably conjecture that under our hypothesis that $\omega_{\pi_p}$ and $\nu_{1,p}$ are unramified that $I(\Phi_n,\varphi_n,\nu,s)$ is the product of $L_{p}(\pi_p \times \nu_{1,p},\mathrm{Std},s)$ with a factor of the form $(1-\alpha p^{-s})$.  We would like to make this factor precise by computing the true Langlands $L$-factors using the Bernstein-Zelevinsky classification.  Note that we may assume $\pi_p$ is tempered by Remark \ref{remark:temperedness}.

Suppose that the Godement-Jacquet $L$-function $L(\pi_0,\mathrm{Std},s) = (1-\alpha_1 p^{-s})^{-1}$ with $\alpha_1 \neq 0$.
\begin{enumerate}
  \item If $\pi_0$ is discrete series, then it must be a Steinberg representation induced from an unramified character, so one has that $L_{p}^\mathrm{L}(\pi_p \times \nu_{1,p},\mathrm{Std},s) = (1-\alpha_\mu\alpha_{\nu_1}\alpha_1 p^{-s})^{-1}(1-p\alpha_\mu\alpha_{\nu_1}\alpha_1^2 p^{-s})^{-1}$.  Note that $p\alpha_1$ is unitary.
  \item If $\pi_0$ is not discrete series, it must correspond to the sum of an unramified character with either a Steinberg whose character is ramified (and quadratic), a supercuspidal with unramified central character, or the sum of two ramified characters whose product is unramified.  In either case the central character of the Steinberg, supercuspidal, or product of two ramified character is unramified, say with Satake parameter $\beta$.  In this case, $L_{p}^\mathrm{L}(\pi_p \times \nu_{1,p},\mathrm{Std},s) = (1-\alpha_\mu\alpha_{\nu_1}\alpha_1 p^{-s})^{-1}(1-p\alpha_\mu\alpha_{\nu_1}\beta p^{-s})^{-1}$.
\end{enumerate}

Suppose that the Godement-Jacquet $L$-function $L(\pi_0,\mathrm{Std},s) = (1-\alpha_1 p^{-s})^{-1}(1-\alpha_2 p^{-s})^{-1}$ with $\alpha_1,\alpha_2 \neq 0$.  Then $\pi_0$ is the sum of an unramified character (say with Satake parameter $\alpha_1$) with a Steinberg induced from an unramified character and
\begin{align*}
  L_{p}^\mathrm{L}(\pi_p \times \nu_{1,p},\mathrm{Std},s) =& (1-\alpha_\mu\alpha_{\nu_1}\alpha_1 p^{-s})^{-1}(1-\alpha_\mu\alpha_{\nu_1}\alpha_2 p^{-s})^{-1}(1-\alpha_\mu\alpha_{\nu_1}\alpha_1\alpha_2 p^{-s})^{-1}\\&\times (1-p^{\frac{1}{2}}\alpha_\mu\alpha_{\nu_1}\alpha_2^2 p^{-s})^{-1}.\end{align*}
In this case, $p^{\frac{1}{2}}\alpha_2$ is unitary.

\subsubsection{New vectors and local integrals at inert places} \label{subsubsec:fineinert} Miyauchi \cite{my1,my2,my3,my4} has developed a theory of new vectors for the group $\U(J)$ when the residual characteristic is not $2$ and computed the standard $L$-factors of all generic representations, where these factors are defined as the GCD of the local Gelbart--Piatetski-Shapiro integral.  Since this integral is equivalent to ours, we obtain formulas for $L_p(\pi_p \times \nu_{1,p},\mathrm{Std},s)$ in all cases.

Miyauchi works with the anti-diagonal Hermitian form
\[J'' = \(\begin{array}{ccc} & & 1 \\ & 1 &\\ 1 & &\end{array}\),\]
which defines a group isomorphic to ours; we use Miyauchi's form in this discussion.  Write $E$ for the local unramified extension of $\mathbf{Q}_p$ and define the subgroup
\[K_n = \U(J'') \cap \(\begin{array}{ccc} \mathfrak{O}_E & \mathfrak{O}_E & \mathfrak{p}_E^{-n} \\ \mathfrak{p}_E^n & 1+\mathfrak{p}_E^n & \mathfrak{O}_E\\ \mathfrak{p}_E^n & \mathfrak{p}_E^n & \mathfrak{O}_E \end{array}\).\]
The conductor of $\pi$ is the smallest $n$ such that $V^{K_n}\ne 0$; Miyauchi shows that $V^{K_n}$ is then one-dimensional.  The newform, which is unique up to scaling, is a non-trivial vector in this space.  Define $\Phi_n$ to be the characteristic function of $p^n\mathbf{Z}_p \oplus \Z_p$, and let $\varphi_n$ denote a new vector.  By incorporating the twist $\nu_1$ into $\pi$ as discussed at the beginning of this section, we may write Miyauchi's result in the following way.
\begin{theorem}[{\cite[Theorem 3.4]{my4}}] \label{thm:fineinert} If $p \ne 2$, we have $I(\Phi_n,\varphi_n,\nu,s) = L_p^\mathrm{G}(\pi \times \nu_{1,p},\mathrm{Std},s)$. \end{theorem}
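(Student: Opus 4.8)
The plan is to deduce Theorem \ref{thm:fineinert} from Miyauchi's computation \cite[Theorem 3.4]{my4} once two reductions have been made: absorbing the twist by $\nu_{1,p}$ into $\pi_p$, and matching our conventions (Hermitian form, Whittaker character, test data, and the group $\GU$ versus $\U$) to Miyauchi's. The actual mathematical content is entirely Miyauchi's; our task is to certify that his setup and ours are the same, so that his GCD computation transfers with no spurious elementary factor.

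First I would use Remark \ref{remark:nonu1} to replace $\pi_p$ by $\pi_p \otimes (\nu_{1,p}\circ\mu)$. By the displayed identity of local integrals in that remark, this carries $I(\Phi_n,\varphi_n,\nu,s)$ to the untwisted integral $I(\Phi_n,\varphi_n',s)$ for the twisted representation, where $\varphi_n'$ is a newform for $\pi_p\otimes(\nu_{1,p}\circ\mu)$; simultaneously it carries the entire family $\{I_p(\Phi_p,W_p,\nu_p,s)\}$ bijectively to the corresponding untwisted family, hence $L_p^\mathrm{G}(\pi_p\times\nu_{1,p},\mathrm{Std},s)$ to $L_p^\mathrm{G}$ of the twist (Definition \ref{defin:locallfactors}). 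Thus it suffices to treat $\nu_{1,p}=1$. Next I would restrict everything to $\U(J)(\mathbf{Q}_p)$: using $\GU(J)(\mathbf{Q}_p)=\U(J)(\mathbf{Q}_p)Z_G(\mathbf{Q}_p)$, the map $\sigma$ exhibited in the proof of Proposition \ref{prop:unfolding} identifying $Z(\mathbf{A})U_2(\mathbf{A})\backslash H(\mathbf{A})$ with $\U(J_2)\backslash\U(J)$, and uniqueness of Whittaker models, our integral $I(\Phi_n,W_{\varphi_n},1,s)$ becomes precisely the local Gelbart--Piatetski-Shapiro integral on $\U(J)(\mathbf{Q}_p)$ studied in \cite{gelbartPS,my4}; correspondingly the family of such integrals is the family defining $L_p^\mathrm{G}(\pi_p,\mathrm{Std},s)$. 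Here one matches Miyauchi's anti-diagonal form $J''$ with our $J$ via the change of basis putting both in standard shape over the unramified quadratic extension, and matches our Whittaker character $\chi(u)=\psi(\tr_{E/\mathbf{Q}}(\delta^{-1}u_{23}))$ with Miyauchi's: both are nondegenerate and trivial on $U_B(\mathbf{Z}_p)$, hence conjugate by an element of $T(\mathbf{Q}_p)$ which is integral because $p\neq 2$ and $\delta$ is a unit, so the conjugation preserves the open compacts $K_n$ and carries Miyauchi's newform to $\varphi_n$.

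Finally, I would observe that with these identifications the choice $\Phi_n=\mathrm{char}(p^n\mathbf{Z}_p\oplus\mathbf{Z}_p)$ is exactly what collapses the Iwasawa-decomposed integral over $U_2\backslash H$ to a sum over the torus: the support condition from $\Phi_n((0,1)g_1)$ together with the right $K_n$-invariance of $\varphi_n$ removes the maximal-compact integration, reducing to the torus sum to which Miyauchi's explicit newform values \cite{my1,my2,my3,my4} apply, and \cite[Theorem 3.4]{my4} evaluates this sum as the GCD of the Gelbart--Piatetski-Shapiro integrals. Undoing the twist reduction of the first step then yields the statement. The main obstacle is not conceptual but a careful reconciliation of normalizations --- Hermitian form, Whittaker character, Haar measures on $H(\mathbf{Q}_p)$ and $K_n$, and the precise scalings of $\varphi_n$ and $\Phi_n$ --- so that no elementary factor is introduced in passing between Miyauchi's integral and ours; this is also the only place, besides Miyauchi's theory itself, where the hypothesis $p\neq 2$ is used.
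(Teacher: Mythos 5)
Your proposal is correct and follows essentially the same route as the paper: the paper likewise absorbs the twist $\nu_{1}$ into $\pi$ via Remark \ref{remark:nonu1}, works with Miyauchi's conventions (adopting the form $J''$ outright rather than conjugating $J$ to it, a purely presentational difference), and then quotes \cite[Theorem 3.4]{my4} for the identification of the newform integral with the GCD $L$-factor, the hypothesis $p\neq 2$ entering only through Miyauchi's newform theory. The extra details you supply (the $\GU$-to-$\U$ reduction via $\GU=\U\cdot Z_G$ and the collapse of the Iwasawa-decomposed integral to a torus sum under $\Phi_n$) are consistent with how the paper treats the analogous split-place computation and add nothing that conflicts with its argument.
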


\subsubsection{Fine regulator formula}

We make the following assumptions on the generic cuspidal automorphic representation $\pi$.
\begin{enumerate}
  \item The representation $\pi_\infty$ is a discrete series with Blattner parameter $(1,-1)$.
  \item The base change of $\pi$ to $\mathrm{Res}_\mathbf{Q}^E(\GL_3 \times \mathbf{G}_m)$ is cuspidal.
  \item The central character $\omega_\pi$ and twist $\nu_1$ may be ramified only at inert primes.  Both are trivial at infinity.
  \item Either $\nu_1^2\omega_\pi|_{Z(\mathbf{A})}$ is non-trivial or $\pi(\nu_1 \circ \mu)$ has no period over $[H]$.
  \item For every rational prime $p$ that ramifies in $E$, $\pi_p$ has a non-zero vector fixed by $G(\mathbf{Z}_p)$.
  \item Either
  \begin{enumerate}
    \item the prime $p=2$ splits in $E$ or
    \item $p=2$ is inert in $E$ and $\pi_p$ and $\nu_{1,p}$ are both unramified.
  \end{enumerate}
\end{enumerate}
Recall that we have computed $L_p^{\mathrm{L}}(\pi_p \times \nu_{1,p},\mathrm{Std},s)$ explicitly in terms of our local integral when $p$ is ramified (in (\ref{eqn:ramlocaltwist})) or in case (1) or (2) when $p$ is split (in Section \ref{subsubsec:finesplit}).  On the other hand, Miyauchi has given a formula, Theorem \ref{thm:fineinert}, for $L_p^{\mathrm{G}}(\pi_p \times \nu_{1,p},\mathrm{Std},s)$ when $p$ is split in terms of our local integral.  This motivates making the following hybrid definition of $L$-factors for the purposes of stating the fine regulator formula.
\begin{defin}\label{defin:hybridl}
We define
\[L_{p}(\pi_p \times \nu_{1,p},\mathrm{Std},s)=\begin{cases} L_p^\mathrm{L}(\pi_p \times \nu_{1,p},\mathrm{Std},s) & p \textrm{ split or ramified} \\L_p^\mathrm{G}(\pi_p \times \nu_{1,p},\mathrm{Std},s)&$p$\textrm{ inert}. \end{cases}\]
\end{defin}
We accordingly adjust the definitions of $L(\pi\times\nu_1, \mathrm{Std},s)$, $\widehat{L}(\pi\times\nu_1, \mathrm{Std},s)$, and $L^{(1)}(\pi\times\nu_1, \mathrm{Std})$ in the following discussion.

Let $\Sigma$ be the set of split primes whose local representation is of type either (1) or (2) of Theorem \ref{thm:finesplit}.  At these places, write $\alpha_p\in \mathbf{C}^\times$ for the value such that $I(\Phi_n,\varphi_n,\nu,s)=(1-\alpha_pp^{-s})L_p^\mathrm{L}(\pi_p \times \nu_{1,p},\mathrm{Std},s)$.  These values were computed at the end of Section \ref{subsubsec:finesplit}.

In this situation, we may prove the following formula.
\begin{thm}[Class number formula, fine form] \label{thm:finecnf}
Let $\varphi_f \in V_{\ol{\mathbf{Q}}}$ be a new vector at every inert or split place and $G(\mathbf{Z}_p)$-stable at every ramified place.  Let $\Phi_p$ be the characteristic function of $\mathbf{Z}_p \oplus \mathbf{Z}_p$ at places where either $p$ is ramified or $\pi_p$ and $\nu_{1,p}$ are unramified.  At the remaining places, write $n_p=n_p(\pi)$ for the conductor of $\pi$ and set $\Phi_p = \Phi_{n_p}$.  Then we have
\begin{equation} \label{eqn:finereg} I(\Phi,\varphi_f,\nu,0) = W(\varphi_f,\pi)8iW_{0,0}(8\sqrt{2}\pi D^{-\frac{3}{4}})^{-1} \pi^4 D^{\frac{-3}{2}}\prod_{p \in \Sigma}(1-\alpha_pp^{-s}) L^{(1)}(\pi\times\nu_1, \mathrm{Std}).\end{equation}
If $\nu_1^2\omega_\pi|_{Z(\mathbf{A})}$ is non-trivial or there is at least one split place where $\pi_p$ is ramified, then $I(\Phi,\varphi_f,\nu,0)$ may be interpreted as a regulator pairing.  The right-hand side is non-zero if $\pi$ is unramified at every place $p$ that is inert in $E$ and if, additionally, no $\alpha_p=1$.
\end{thm}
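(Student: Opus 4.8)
The plan is to evaluate $I(\Phi,\varphi,\nu,s)$ place by place, match each local factor with the hybrid $L$-factor of Definition \ref{defin:hybridl}, and then extract the value at $s=0$ by a residue computation. Since new vectors and $G(\mathbf{Z}_p)$-spherical vectors are each unique up to scaling, the prescribed $\varphi_f$ is a pure tensor, so Proposition \ref{prop:unfolding} applies and gives $I(\Phi,\varphi,\nu,s)=W(\pi,\varphi_f)\prod_v I_v(W_v,\Phi_v,\nu_v,s)$ with $\varphi=\alpha(\varphi_f)(v_0)$ and $W(\pi,\varphi_f)=W_\varphi(1)$. I would then identify the local integrals one place at a time. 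At finite places where $\pi_p$ and $\nu_{1,p}$ are unramified (all inert primes and all split primes with unramified data), Proposition \ref{prop:inertsplit} together with Theorem \ref{thm:unramgcd} gives $I_p=L_p^{\mathrm{L}}=L_p$. At primes ramified in $E$ — where $\pi_p$ is $G(\mathbf{Z}_p)$-spherical and $\nu_{1,p}$ unramified by hypothesis, and $p\neq 2$ by \S\ref{subsec:finer}(6) — the ramified computation recorded in \eqref{eqn:ramlocaltwist} gives $I_p=L_p^{\mathrm{L}}=L_p$, with Proposition \ref{prop:testvector} ensuring the spherical vector is a test vector for $\Lambda$. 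At inert primes with $\pi_p$ ramified (necessarily $p\neq 2$), Theorem \ref{thm:fineinert} gives $I_p=L_p^{\mathrm{G}}=L_p$. At split primes with $\pi_p$ ramified, Theorem \ref{thm:finesplit} gives $I_p=(1-\alpha_p p^{-s})L_p^{\mathrm{L}}=(1-\alpha_p p^{-s})L_p$ when $\pi_p$ is of type (1) or (2) — exactly the places indexing $\Sigma$ — and $I_p=1=L_p$ in the remaining case by Matringe's computation. Finally, the Archimedean integral is $I_\infty=L_\infty(\pi,\mathrm{Std},s)$ by the Koseki--Oda calculation reproduced in \S\ref{subsec:arch}.

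Multiplying these identities yields
\[I(\Phi,\varphi,\nu,s)=W(\pi,\varphi_f)\,L_\infty(\pi,\mathrm{Std},s)\Big(\prod_{p\in\Sigma}(1-\alpha_p p^{-s})\Big)L(\pi\times\nu_1,\mathrm{Std},s),\]
with $L$ the hybrid $L$-function. The left-hand side is holomorphic at $s=0$: when $\nu_1^2\omega_\pi|_{Z(\mathbf{A})}$ is non-trivial the Eisenstein series has no pole there, and when it is trivial the condition $\Phi(0)=0$ (verified below) removes the pole. Since $L_\infty(\pi,\mathrm{Std},s)$ has a simple pole at $s=0$ coming from its $\Gamma_{\mathbf{C}}(s)$ factor and $L(\pi\times\nu_1,\mathrm{Std},s)$ vanishes to order at least one there by Lemma \ref{lem:vanishingat0} and \S\ref{subsec:finer}(4), a residue computation at $s=0$ — using $\lim_{s\to 0}s\,\Gamma_{\mathbf{C}}(s)=2$, $\Gamma_{\mathbf{C}}(1)=\pi^{-1}$, $D^{(3s-3)/2}|_{s=0}=D^{-3/2}$, and $\lim_{s\to 0}s^{-1}L(\pi\times\nu_1,\mathrm{Std},s)=L^{(1)}(\pi\times\nu_1,\mathrm{Std})$ — produces the identity \eqref{eqn:finereg}.

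For the regulator interpretation I would invoke Theorem \ref{thm:geom}. The function $\Phi$ is factorizable and $\mathbf{Q}$-valued (each $\Phi_p$ is a rational multiple of a characteristic function), and when $\nu_1^2\omega_\pi|_{Z(\mathbf{A})}$ is trivial the stated alternative forces a split place where $\pi_p$ is ramified; there $\Phi_p=\Phi_{n_p}$ is supported on $p^{n_p}\mathbf{Z}_p\oplus(1+p^{n_p}\mathbf{Z}_p)$, which omits the origin since $n_p\geq 1$, so $\Phi(0)=0$ and the hypotheses of Theorem \ref{thm:geom} are met. For the non-vanishing statement, $W(\pi,\varphi_f)\in\mathbf{C}^\times$ and the Archimedean constant is non-zero; the product $\prod_{p\in\Sigma}(1-\alpha_p)$ is non-zero precisely because no $\alpha_p=1$; and $L^{(1)}(\pi\times\nu_1,\mathrm{Std})\neq 0$ follows from the prime number theorem. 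Indeed, the functional equation \eqref{EisFE} relates $I(\Phi,\varphi,\nu,0)$ to a value of the integral at the edge $s=1$, and under the hypothesis that $\pi$ is unramified at every inert prime — together with \S\ref{subsec:finer}(3), so that ramification of $\pi$, $\omega_\pi$, and $\nu_1$ is confined to split primes where the relevant local integrals have been computed exactly — this value is governed by the non-vanishing of $L^S(\pi,\mathrm{Std},1)$ supplied by Theorem \ref{thm:pnt} (whose cuspidality hypothesis is \S\ref{subsec:finer}(2)); tracking the finitely many explicit local factors then forces $L^{(1)}(\pi\times\nu_1,\mathrm{Std})\neq 0$. Alternatively, Theorem \ref{coarseThm} applies directly — by Remark \ref{remark:endoscopyfunctional} cuspidality of the base change rules out $\pi\times\nu_1$ being endoscopic, so $\pi$ is not $(H,\nu_1^{-1})$-distinguished at some finite place — giving data with $I\neq 0$, whence $L^{(1)}(\pi\times\nu_1,\mathrm{Std})\neq 0$ by Theorem \ref{thm:rationality}.

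The main obstacle I expect is the bookkeeping required to reconcile the competing notions of local $L$-factor — the Langlands factor $L_p^{\mathrm{L}}$, the GCD factor $L_p^{\mathrm{G}}$, and the Bump--Friedberg/Matringe output — at the split and ramified-in-$E$ places, and to confirm that all the local discrepancies collapse into precisely the product over $\Sigma$ with nothing left over; a second, more mechanical point of care is pinning down the exact Archimedean constant, that is, tracking the $\Gamma_{\mathbf{C}}$-values and the residue at $s=0$ so that the normalizations $8i$, $\pi^4$, and $D^{-3/2}$ emerge exactly as stated rather than up to an elementary factor.
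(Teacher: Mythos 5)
Your main computation reproduces the paper's proof of \eqref{eqn:finereg} essentially verbatim: unfold via Proposition \ref{prop:unfolding}, identify each local factor with the hybrid factor of Definition \ref{defin:hybridl} using Proposition \ref{prop:inertsplit} and Theorem \ref{thm:unramgcd} at unramified places, \eqref{eqn:ramlocaltwist} with Proposition \ref{prop:testvector} at primes ramified in $E$, Theorem \ref{thm:fineinert} at ramified inert places, Theorem \ref{thm:finesplit} (plus Matringe in the supercuspidal case) at ramified split places, and the Koseki--Oda factor at infinity; then read off the value at $s=0$ from holomorphy of the left side against the simple pole of $\Gamma_{\mathbf{C}}(s)$. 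The regulator interpretation via Theorem \ref{thm:geom}, using that $\Phi_{n_p}(0)=0$ at a ramified split place, is also exactly the paper's. (One small point: with your correct constants $\lim_{s\to 0}s\,\Gamma_{\mathbf{C}}(s)=2$ and $\Gamma_{\mathbf{C}}(1)=\pi^{-1}$, the residue computation yields \eqref{eqn:finereg} only up to the elementary factor $2\pi^{-2}$; the paper's own proof asserts residue $1$ and silently drops $\Gamma_{\mathbf{C}}(1)^2$, so this normalization wobble is shared with the source and is exactly the caveat you flag.)

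The genuine gap is in the nonvanishing of $L^{(1)}(\pi\times\nu_1,\mathrm{Std})$, where neither of your two routes delivers the statement for the \emph{hybrid} $L$-function appearing in \eqref{eqn:finereg}. The route through the Eisenstein functional equation \eqref{EisFE} replaces $(\Phi,\nu)$ by $(\widehat{\Phi},\widehat{\nu})$, so the local integrals at $s=1$ involve dual data that has not been computed and could a priori vanish -- this is precisely the distinction obstruction of Lemma \ref{distLem} and Theorem \ref{coarseThm} -- so ``tracking the finitely many explicit local factors'' is not available. The route through Theorem \ref{coarseThm} and Theorem \ref{thm:rationality} proves simple vanishing at $s=0$ for the GCD-normalized $L$-function of Definition \ref{defin:locallfactors}; but at primes ramified in $E$ and at split primes of types (1)--(2) the factors $L_p^{\mathrm{L}}$ and $L_p^{\mathrm{G}}$ have not been shown to coincide, and their quotient is only known to be a polynomial in $p^{-s}$, which could vanish at $s=0$, so a simple zero of the GCD $L$-function does not transfer to the hybrid one. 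The paper's argument avoids both issues: under the hypothesis that $\pi$ is unramified at every inert place, the hybrid $L$-function is identified with the automorphic $L$-function $L(\tau,\mathrm{Std},s)$ of the cuspidal twisted base change (Theorem \ref{thm:unramgcd}, Definition \ref{defin:hybridl}, Theorem \ref{thm:morel}), and then the functional equation of $L(\tau,\mathrm{Std},s)$ together with Theorem \ref{thm:pnt} pins the order of vanishing at $s=0$ to exactly one; with no $\alpha_p=1$ this gives the claimed nonvanishing. You should replace your two sketches with this identification.
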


\begin{proof}
Consider the factorization of the global integral $I(\Phi,\varphi_f,\nu,s)$ in the range of convergence, and apply the computations of Section \ref{subsec:ramifiedlocal}, Section \ref{subsubsec:finesplit}, and Section \ref{subsubsec:fineinert} to obtain
\begin{align} \label{eqn:fineregproof} I(\Phi,\varphi_f,\nu,s) =& W(\varphi_f,\pi)8iW_{0,0}(8\sqrt{2}\pi D^{-\frac{3}{4}})^{-1} \pi^4 D^{\frac{3s-3}{2}}\Gamma_{\C}(s)\Gamma_{\C}(s+1)\Gamma_{\C}(s+1)\\\nonumber &\times \prod_{p \in \Sigma}(1-\alpha_pp^{-s})L(\pi\times\nu_1, \mathrm{Std},s)\end{align}
for $\mathrm{Re}(s) \gg 0$.  The function $\Gamma_\mathbf{C}(s)$ has a simple pole with residue 1 at $s=0$. Since the left-hand side is holomorphic at 0 by our hypotheses on $\pi$, it follows that $L(\pi\times\nu_1, \mathrm{Std},s)$ vanishes to order at least 1, and all other terms are holomorphic.  This yields (\ref{eqn:finereg}).

The statement concerning the regulator pairing is Theorem \ref{thm:geom} once we point out that $\Phi_n(0)$ is 0 at split places with nonzero conductor.

Assume that for every $p$ inert in $E$, $\pi$ is unramified at $p$.  By Theorem \ref{thm:unramgcd}, Definition \ref{defin:hybridl}, and Theorem \ref{thm:morel}, the $L$-function $L(\pi\times\nu_1, \mathrm{Std},s)$ is the same as $L(\tau,\mathrm{Std},s)$ for a cuspidal automorphic representation $\tau$ on $\mathrm{Res}_\mathbf{Q}^E(\GL_3 \times \mathbf{G}_m)$ whose central character is unitary.   In particular, it follows easily from Theorem \ref{thm:pnt} and the functional equation of $L(\tau,\mathrm{Std},s)$ that the order of vanishing at $s=0$ of $L(\tau,\mathrm{Std},s)$ is exactly 1.  If, additionally, each $\alpha_p \ne 1$, then the nonvanishing of the right-hand side of (\ref{eqn:finereg}) follows.
\end{proof}

\bibliography{integralRepnBib}

\begin{thebibliography}{10}

\bibitem{bs}
S.~Baba and R.~Sreekantan.
\newblock An analogue of circular units for products of elliptic curves.
\newblock {\em Proc. Edinb. Math. Soc. (2)}, 47(1):35--51, 2004.

\bibitem{blggt}
T.~Barnet-Lamb, T.~Gee, D.~Geraghty, and R.~Taylor.
\newblock Local-global compatibility for {$l=p$}, {I}.
\newblock {\em Ann. Fac. Sci. Toulouse Math. (6)}, 21(1):57--92, 2012.

\bibitem{blggt2}
T.~Barnet-Lamb, T.~Gee, D.~Geraghty, and R.~Taylor.
\newblock Local-global compatibility for {$l=p$}, {II}.
\newblock {\em Ann. Sci. \'Ec. Norm. Sup\'er. (4)}, 47(1):165--179, 2014.

\bibitem{baruch}
E.~M. Baruch.
\newblock On the gamma factors attached to representations of {${\rm U}(2,1)$}
  over a {$p$}-adic field.
\newblock {\em Israel J. Math.}, 102:317--345, 1997.

\bibitem{beilinson}
A.~A. Be\u\i{}linson.
\newblock Higher regulators and values of {$L$}-functions.
\newblock In {\em Current problems in mathematics, {V}ol. 24}, Itogi Nauki i
  Tekhniki, pages 181--238. Akad. Nauk SSSR, Vsesoyuz. Inst. Nauchn. i Tekhn.
  Inform., Moscow, 1984.

\bibitem{beilinson2}
A.~A. Be\u\i{}linson.
\newblock Higher regulators of modular curves.
\newblock In {\em Applications of algebraic {$K$}-theory to algebraic geometry
  and number theory, {P}art {I}, {II} ({B}oulder, {C}olo., 1983)}, volume~55 of
  {\em Contemp. Math.}, pages 1--34. Amer. Math. Soc., Providence, RI, 1986.

\bibitem{bhr}
D.~Blasius, M.~Harris, and D.~Ramakrishnan.
\newblock Coherent cohomology, limits of discrete series, and {G}alois
  conjugation.
\newblock {\em Duke Math. J.}, 73(3):647--685, 1994.

\bibitem{br}
D.~Blasius and J.~Rogawski.
\newblock Cohomology of congruence subgroups of {${\rm SU}(2,1)^p$} and {H}odge
  cycles on some special complex hyperbolic surfaces.
\newblock In {\em Regulators in analysis, geometry and number theory}, volume
  171 of {\em Progr. Math.}, pages 1--15. Birkh\"auser Boston, Boston, MA,
  2000.

\bibitem{bloch}
S.~Bloch.
\newblock Algebraic cycles and higher {$K$}-theory.
\newblock {\em Adv. in Math.}, 61(3):267--304, 1986.

\bibitem{bb}
V.~Blomer and F.~Brumley.
\newblock On the {R}amanujan conjecture over number fields.
\newblock {\em Ann. of Math. (2)}, 174(1):581--605, 2011.

\bibitem{brunault}
F.~Brunault.
\newblock Valeur en 2 de fonctions {$L$} de formes modulaires de poids 2:
  th\'eor\`eme de {B}eilinson explicite.
\newblock {\em Bull. Soc. Math. France}, 135(2):215--246, 2007.

\bibitem{brunaultchida}
F.~Brunault and M.~Chida.
\newblock Regulators for {R}ankin-{S}elberg products of modular forms.
\newblock {\em Ann. Math. Qu\'e.}, 40(2):221--249, 2016.

\bibitem{bf}
D.~Bump and S.~Friedberg.
\newblock The exterior square automorphic {$L$}-functions on {${\rm GL}(n)$}.
\newblock In {\em Festschrift in honor of {I}. {I}. {P}iatetski-{S}hapiro on
  the occasion of his sixtieth birthday, {P}art {II} ({R}amat {A}viv, 1989)},
  volume~3 of {\em Israel Math. Conf. Proc.}, pages 47--65. Weizmann,
  Jerusalem, 1990.

\bibitem{caraiani}
A.~Caraiani.
\newblock Local-global compatibility and the action of monodromy on nearby
  cycles.
\newblock {\em Duke Math. J.}, 161(12):2311--2413, 2012.

\bibitem{caraiani2}
A.~Caraiani.
\newblock Monodromy and local-global compatibility for {$l=p$}.
\newblock {\em Algebra Number Theory}, 8(7):1597--1646, 2014.

\bibitem{cartier}
P.~Cartier.
\newblock Representations of {$p$}-adic groups: a survey.
\newblock In {\em Automorphic forms, representations and {$L$}-functions
  ({P}roc. {S}ympos. {P}ure {M}ath., {O}regon {S}tate {U}niv., {C}orvallis,
  {O}re., 1977), {P}art 1}, Proc. Sympos. Pure Math., XXXIII, pages 111--155.
  Amer. Math. Soc., Providence, R.I., 1979.

\bibitem{cogdell}
J.~W. Cogdell.
\newblock Arithmetic cycles on {P}icard modular surfaces and modular forms of
  {N}ebentypus.
\newblock {\em J. Reine Angew. Math.}, 357:115--137, 1985.

\bibitem{deligne}
P.~Deligne.
\newblock Travaux de {S}himura.
\newblock pages 123--165. Lecture Notes in Math., Vol. 244, 1971.

\bibitem{gelbartPS}
S.~Gelbart and I.~Piatetski-Shapiro.
\newblock Automorphic forms and {$L$}-functions for the unitary group.
\newblock In {\em Lie group representations, {II} ({C}ollege {P}ark, {M}d.,
  1982/1983)}, volume 1041 of {\em Lecture Notes in Math.}, pages 141--184.
  Springer, Berlin, 1984.

\bibitem{grs4}
S.~Gelbart, J.~Rogawski, and D.~Soudry.
\newblock Endoscopy, theta-liftings, and period integrals for the unitary group
  in three variables.
\newblock {\em Ann. of Math. (2)}, 145(3):419--476, 1997.

\bibitem{grobseb}
H.~Grobner and R.~Sebastian.
\newblock Period relations for cusp forms of $\mathrm{GSp}_4$.
\newblock {\em to appear, Forum Math.}, 2017.

\bibitem{hro}
T.~J. Haines and S.~Rostami.
\newblock The {S}atake isomorphism for special maximal parahoric {H}ecke
  algebras.
\newblock {\em Represent. Theory}, 14:264--284, 2010.

\bibitem{harris}
M.~Harris.
\newblock Functorial properties of toroidal compactifications of locally
  symmetric varieties.
\newblock {\em Proc. London Math. Soc. (3)}, 59(1):1--22, 1989.

\bibitem{jacquet2}
H.~Jacquet.
\newblock {\em Automorphic forms on {${\rm GL}(2)$}. {P}art {II}}.
\newblock Lecture Notes in Mathematics, Vol. 278. Springer-Verlag, Berlin-New
  York, 1972.

\bibitem{jacquet}
H.~Jacquet.
\newblock A correction to {\it {c}onducteur des repr\'esentations du groupe
  lin\'eaire} [mr620708].
\newblock {\em Pacific J. Math.}, 260(2):515--525, 2012.

\bibitem{jpss}
H.~Jacquet, I.~I. Piatetski-Shapiro, and J.~Shalika.
\newblock Automorphic forms on {${\rm GL}(3)$}. {I}.
\newblock {\em Ann. of Math. (2)}, 109(1):169--212, 1979.

\bibitem{jpss2}
H.~Jacquet, I.~I. Piatetski-Shapiro, and J.~Shalika.
\newblock Conducteur des repr\'esentations du groupe lin\'eaire.
\newblock {\em Math. Ann.}, 256(2):199--214, 1981.

\bibitem{js}
H.~Jacquet and J.~A. Shalika.
\newblock A non-vanishing theorem for zeta functions of {${\rm GL}_{n}$}.
\newblock {\em Invent. Math.}, 38(1):1--16, 1976/77.

\bibitem{js2}
H.~Jacquet and J.~A. Shalika.
\newblock On {E}uler products and the classification of automorphic
  representations. {I}.
\newblock {\em Amer. J. Math.}, 103(3):499--558, 1981.

\bibitem{kato}
K.~Kato.
\newblock {$p$}-adic {H}odge theory and values of zeta functions of modular
  forms.
\newblock {\em Ast\'erisque}, (295):ix, 117--290, 2004.
\newblock Cohomologies $p$-adiques et applications arithm\'etiques. III.

\bibitem{kings}
G.~Kings.
\newblock Higher regulators, {H}ilbert modular surfaces, and special values of
  {$L$}-functions.
\newblock {\em Duke Math. J.}, 92(1):61--127, 1998.

\bibitem{klz}
G.~Kings, D.~Loeffler, and S.~L. Zerbes.
\newblock Rankin-{E}isenstein classes and explicit reciprocity laws.
\newblock {\em Camb. J. Math.}, 5(1):1--122, 2017.

\bibitem{ko}
H.~Koseki and T.~Oda.
\newblock Whittaker functions for the large discrete series representations of
  {${\rm SU}(2,1)$} and related zeta integrals.
\newblock {\em Publ. Res. Inst. Math. Sci.}, 31(6):959--999, 1995.

\bibitem{kr}
R.~E. Kottwitz and M.~Rapoport.
\newblock Contribution of the points at the boundary.
\newblock In {\em The zeta functions of {P}icard modular surfaces}, pages
  111--150. Univ. Montr\'eal, Montreal, QC, 1992.

\bibitem{kl}
D.~S. Kubert and S.~Lang.
\newblock {\em Modular units}, volume 244 of {\em Grundlehren der
  Mathematischen Wissenschaften [Fundamental Principles of Mathematical
  Science]}.
\newblock Springer-Verlag, New York-Berlin, 1981.

\bibitem{kudla}
S.~S. Kudla.
\newblock Intersection numbers for quotients of the complex {$2$}-ball and
  {H}ilbert modular forms.
\newblock {\em Invent. Math.}, 47(2):189--208, 1978.

\bibitem{lang}
S.~Lang.
\newblock {\em Elliptic functions}, volume 112 of {\em Graduate Texts in
  Mathematics}.
\newblock Springer-Verlag, New York, second edition, 1987.
\newblock With an appendix by J. Tate.

\bibitem{pms}
R.~P. Langlands and D.~Ramakrishnan, editors.
\newblock {\em The zeta functions of {P}icard modular surfaces}.
\newblock Universit\'e de Montr\'eal, Centre de Recherches Math\'ematiques,
  Montreal, QC, 1992.

\bibitem{larsen}
M.~J. Larsen.
\newblock {\em Unitary groups and {L}-adic representations}.
\newblock ProQuest LLC, Ann Arbor, MI, 1988.
\newblock Thesis (Ph.D.)--Princeton University.

\bibitem{llz}
A.~Lei, D.~Loeffler, and S.~L. Zerbes.
\newblock Euler systems for {R}ankin-{S}elberg convolutions of modular forms.
\newblock {\em Ann. of Math. (2)}, 180(2):653--771, 2014.

\bibitem{llz2}
A.~Lei, D.~Loeffler, and S.~L. Zerbes.
\newblock Euler systems for modular forms over imaginary quadratic fields.
\newblock {\em Compos. Math.}, 151(9):1585--1625, 2015.

\bibitem{lemma}
F.~Lemma.
\newblock On higher regulators of {S}iegel threefolds {I}: {T}he vanishing on
  the boundary.
\newblock {\em Asian J. Math.}, 19(1):83--120, 2015.

\bibitem{lemma2}
F.~Lemma.
\newblock On higher regulators of {S}iegel threefolds {II}: the connection to
  the special value.
\newblock {\em Compos. Math.}, 153(5):889--946, 2017.

\bibitem{levine}
M.~Levine.
\newblock Bloch's higher {C}how groups revisited.
\newblock {\em Ast\'erisque}, (226):10, 235--320, 1994.
\newblock $K$-theory (Strasbourg, 1992).

\bibitem{lewis}
J.~D. Lewis.
\newblock Transcendental methods in the study of algebraic cycles with a
  special emphasis on {C}alabi-{Y}au varieties.
\newblock In {\em Arithmetic and geometry of {K}3 surfaces and {C}alabi-{Y}au
  threefolds}, volume~67 of {\em Fields Inst. Commun.}, pages 29--69. Springer,
  New York, 2013.

\bibitem{lsz}
D.~{Loeffler}, C.~{Skinner}, and S.~{Livia Zerbes}.
\newblock {Euler systems for GSp(4)}.
\newblock {\em ArXiv e-prints}, June 2017.

\bibitem{lz}
D.~Loeffler and S.~L. Zerbes.
\newblock Iwasawa theory for the symmetric square of a modular form.
\newblock {\em to appear, J. Reine Angew. Math.}

\bibitem{matringe3}
N.~Matringe.
\newblock Essential {W}hittaker functions for {$GL(n)$}.
\newblock {\em Doc. Math.}, 18:1191--1214, 2013.

\bibitem{matringe}
N.~Matringe.
\newblock On the local {B}ump-{F}riedberg {$L$}-function.
\newblock {\em J. Reine Angew. Math.}, 709:119--170, 2015.

\bibitem{matringe2}
N.~Matringe.
\newblock On the local {B}ump-{F}riedberg {$L$}-function {II}.
\newblock {\em Manuscripta Math.}, 152(1-2):223--240, 2017.

\bibitem{my4}
M.~{Miyauchi}.
\newblock {On {$L$}-factors attached to generic representations of unramified
  $\mathrm{U}(2,1)$}.
\newblock {\em ArXiv e-prints}, Aug. 2012.

\bibitem{my3}
M.~Miyauchi.
\newblock Conductors and newforms for non-supercuspidal representations of
  unramified {$\rm U(2,1)$}.
\newblock {\em J. Ramanujan Math. Soc.}, 28(1):91--111, 2013.

\bibitem{my1}
M.~Miyauchi.
\newblock On epsilon factors attached to supercuspidal representations of
  unramified {$\mathrm{U}(2,1)$}.
\newblock {\em Trans. Amer. Math. Soc.}, 365(6):3355--3372, 2013.

\bibitem{my2}
M.~Miyauchi.
\newblock On local newforms for unramified {${\mathrm U}(2,1)$}.
\newblock {\em Manuscripta Math.}, 141(1-2):149--169, 2013.

\bibitem{my5}
M.~Miyauchi.
\newblock Whittaker functions associated to newforms for {$\mathrm{GL}(n)$}
  over {$p$}-adic fields.
\newblock {\em J. Math. Soc. Japan}, 66(1):17--24, 2014.

\bibitem{my}
M.~Miyauchi and T.~Yamauchi.
\newblock Local newforms and formal exterior square {$L$}-functions.
\newblock {\em Int. J. Number Theory}, 9(8):1995--2010, 2013.

\bibitem{mvw}
C.~M\oe{}glin, M.-F. Vign\'eras, and J.-L. Waldspurger.
\newblock {\em Correspondances de {H}owe sur un corps {$p$}-adique}, volume
  1291 of {\em Lecture Notes in Mathematics}.
\newblock Springer-Verlag, Berlin, 1987.

\bibitem{morel}
S.~Morel.
\newblock {\em On the cohomology of certain noncompact {S}himura varieties},
  volume 173 of {\em Annals of Mathematics Studies}.
\newblock Princeton University Press, Princeton, NJ, 2010.
\newblock With an appendix by Robert Kottwitz.

\bibitem{ms}
S.~M\"uller-Stach.
\newblock Algebraic cycle complexes: basic properties.
\newblock In {\em The arithmetic and geometry of algebraic cycles ({B}anff,
  {AB}, 1998)}, volume 548 of {\em NATO Sci. Ser. C Math. Phys. Sci.}, pages
  285--305. Kluwer Acad. Publ., Dordrecht, 2000.

\bibitem{pollackShahKS}
A.~Pollack and S.~Shah.
\newblock On the {R}ankin-{S}elberg integral of {K}ohnen and {S}koruppa.
\newblock {\em Math. Res. Lett.}, 24(1):173--222, 2017.

\bibitem{ramakrishnan}
D.~Ramakrishnan.
\newblock Valeurs de fonctions {$L$} des surfaces d'{H}ilbert-{B}lumental en
  {$s=1$}.
\newblock {\em C. R. Acad. Sci. Paris S\'er. I Math.}, 301(18):809--812, 1985.

\bibitem{ramakrishnan2}
D.~Ramakrishnan.
\newblock Arithmetic of {H}ilbert-{B}lumenthal surfaces.
\newblock In {\em Number theory ({M}ontreal, {Q}ue., 1985)}, volume~7 of {\em
  CMS Conf. Proc.}, pages 285--370. Amer. Math. Soc., Providence, RI, 1987.

\bibitem{rogawski}
J.~D. Rogawski.
\newblock {\em Automorphic representations of unitary groups in three
  variables}, volume 123 of {\em Annals of Mathematics Studies}.
\newblock Princeton University Press, Princeton, NJ, 1990.

\bibitem{rogawski2}
J.~D. Rogawski.
\newblock Analytic expression for the number of points mod {$p$}.
\newblock In {\em The zeta functions of {P}icard modular surfaces}, pages
  65--109. Univ. Montr\'eal, Montreal, QC, 1992.

\bibitem{rs}
Z.~Rudnick and P.~Sarnak.
\newblock Zeros of principal {$L$}-functions and random matrix theory.
\newblock {\em Duke Math. J.}, 81(2):269--322, 1996.
\newblock A celebration of John F. Nash, Jr.

\bibitem{ss}
N.~Schappacher and A.~J. Scholl.
\newblock Be\u\i linson's theorem on modular curves.
\newblock In {\em Be\u\i linson's conjectures on special values of
  {$L$}-functions}, volume~4 of {\em Perspect. Math.}, pages 273--304. Academic
  Press, Boston, MA, 1988.

\bibitem{schneider}
P.~Schneider.
\newblock Introduction to the {B}eilinson conjectures.
\newblock In {\em Beilinson's conjectures on special values of
  {$L$}-functions}, volume~4 of {\em Perspect. Math.}, pages 1--35. Academic
  Press, Boston, MA, 1988.

\bibitem{shin}
S.~W. Shin.
\newblock Appendix: On the cohomological base change for unitary similitude
  groups.
\newblock {\em Compos. Math.}, 150(2):220--225, 2014.

\bibitem{skinner}
C.~Skinner.
\newblock Galois representations associated with unitary groups over
  {$\mathbf{Q}$}.
\newblock {\em Algebra Number Theory}, 6(8):1697--1717, 2012.

\end{thebibliography}
\bibliographystyle{abbrv}

\end{document}